\newcommand{\mysection}[1]{\newpage \vspace*{0.5in} \section{#1}}
\newtheorem{thm}{Theorem}[section]
\newtheorem{lem}[thm]{Lemma}
\newtheorem{prop}[thm]{Proposition}
\newtheorem{cor}[thm]{Corollary}
\newtheorem{sch}[thm]{Scholium}
\theoremstyle{remark}
\newtheorem{rmk}[thm]{Remark}
\theoremstyle{definition}
\newtheorem{defn}[thm]{Definition}
\newcommand{\ds}{\displaystyle}
\newcommand{\x}{\times}
\let\temp\phi
\let\phi\varphi
\let\varphi\temp
\newcommand{\p}[2]{\ds \hat{\phi}_{(#1,#2)}}
\newcommand{\ph}{\ds \hat{\phi}}
\newcommand{\urlb}[1]{\textcolor{blue}{\url{#1}}}
\newcommand{\strutdepth}{\dp\strutbox}
\newcommand{\marginalnote}[1]
   {\strut\vadjust{\kern-\strutdepth\domarginalnote{#1}}}
\newcommand{\domarginalnote}[1]{\vtop to \strutdepth{
  \baselineskip\strutdepth
   \vss\llap{ #1\ \ }\null}}  %get sevenpoint font here
\newcounter{showlabelflag}
\newcounter{makelabelflag}
\newcommand{\makelabels}{\setcounter{makelabelflag}{1}}
\newcommand{\hidelabels}{\setcounter{showlabelflag}{2}}
\newcommand{\mylabel}[1]{
  \ifthenelse{\value{makelabelflag}=1}
    {\label{#1}}{}
  \ifthenelse{\value{showlabelflag}=1}
    {\marginpar{#1}}{}\relax}
\newcommand{\figw}{\centering
   \begin{subfigure}{0.45\textwidth}
     \includegraphics[width=\linewidth]{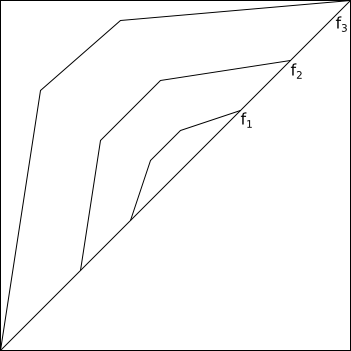}
     \caption{Generators of a standard representation}\mylabel{plw3}
   \end{subfigure}
\hspace{.3cm}
   \begin{subfigure}{0.45\textwidth}
     \includegraphics[width=\linewidth]{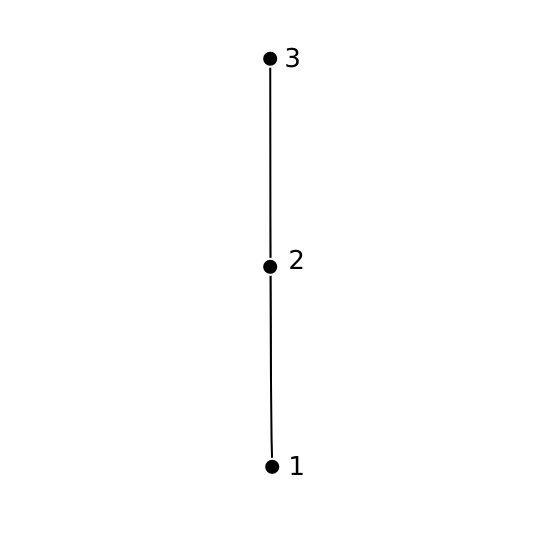}
     \caption{The associated Hasse diagram of orbitals of generators}\mylabel{hw3}
   \end{subfigure}}
\newcommand{\figsplit}{\centering
   \begin{subfigure}{0.45\textwidth}
     \includegraphics[width=\linewidth]{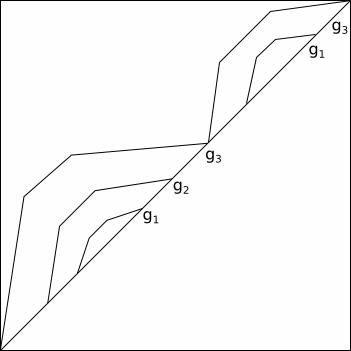}
     \caption{Second maximal tower is split when overlayed with the first}\mylabel{plw3_split}
   \end{subfigure}
\hspace{.3cm}
   \begin{subfigure}{0.45\textwidth}
     \includegraphics[width=\linewidth]{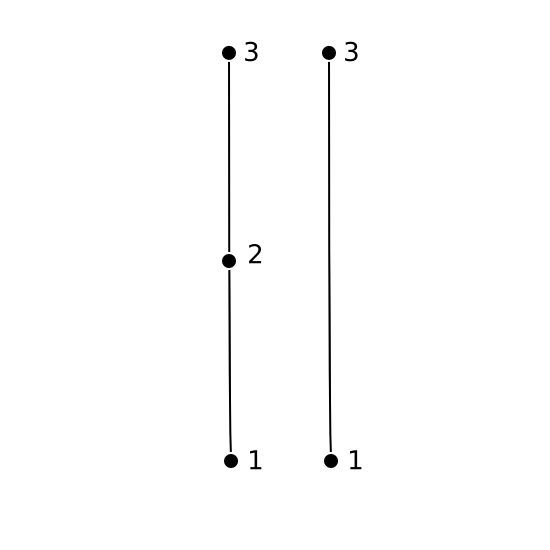}
     \caption{The associated Hasse diagram of orbitals}\mylabel{hw3_split}
   \end{subfigure}}
   \newcommand{\purityandnonpurity}{\begin{figure}
\begin{center}
\includegraphics[width=6cm, keepaspectratio]{w3.png}
\caption{A pure tower}\mylabel{purity}
\includegraphics[width=6cm, keepaspectratio]{w3_split.png}
\caption{Some nonpure towers}\mylabel{nonpurity}
\end{center}
\end{figure}}
\title{\vspace*{0.5in}LOCALLY SOLVABLE SUBGROUPS OF PLo(I)}
\author{\\ \\ \\    \\     \\ \\ \\ \\      \\ \\ \\ BY \\ \\ AMANDA TAYLOR \\ \\ BA, University of Maine at Farmington, 2006 \\ MA, Binghamton University, 2009 \\ \\ \\    \\ \\ \\ \\     \\ \\ \\ \\     \\ \\ \\DISSERTATION \\ \\ Submitted in partial fulfillment of the requirements for \\ the degree of Doctor of Philosophy in Mathematical Sciences \\ in the Graduate School of \\ Binghamton University \\ State University of New York \\ 2017}
\date{}
\begin{document}

\pagenumbering{roman}
\maketitle \thispagestyle{empty}

\newpage
\thispagestyle{empty}

\vbox to 8.3truein{}

\centerline{\copyright\ Copyright by Amanda Lee Taylor 2017}
\centerline{}
\centerline{All Rights Reserved}

\newpage

{\baselineskip = 10pt

\vbox to 3.5truein{}

\centerline{Accepted in partial fulfillment of the requirements for}
\centerline{the degree of Doctor of Philosophy in Mathematical Sciences}
\centerline{in the Graduate School of}
\centerline{Binghamton University}
\centerline{State University of New York}
\centerline{2017}
\vskip 105pt

\centerline{July 24, 2017}
\vskip 105pt

\centerline{Matthew G. Brin, Chair and Faculty Advisor}
\centerline{Department of Mathematical Sciences, Binghamton University}
\vskip 10pt

\centerline{Ross Geoghegan, Member}
\centerline{Department of Mathematical Sciences, Binghamton University}
\vskip 10pt

\centerline{Fernando Guzm\'an, Member}
\centerline{Department of Mathematical Sciences, Binghamton University}
\vskip 10pt

\centerline{Justin Moore, Outside Examiner}
\centerline{Department of Mathematics, Cornell University}

}

\newpage

\doublespacing

\centerline{\bf Abstract}
We show that locally solvable subgroups of PLo(I) are countable.  Then for each countable ordered set, we construct a locally solvable subgroup of Thompson's Group F.  We develop machinery for understanding embeddings from solvable subgroups into solvable subgroups.  Finally, we apply this machinery to show the ordered sets used in our construction are invariant under isomorphisms between the groups constructed.  Therefore, we effectively distinguish the groups and provide uncountably many non-isomorphic locally solvable, hence elementary amenable, subgroups of Thompson's Group F.

\newpage

\vspace*{3in}
\noindent{\it To my sister, Stacy, who loved me when no one else did and to my friend, Katrina, who passed away in December 2016.}

\newpage

\centerline{\bf Acknowledgements}
Thank you, Frank Underkuffler, for teaching an undergraduate philosophy course that inspired me, as an art student, to pursue physics and mathematics.  You helped set me on an intellectual path to make meaningful connections which inspire me to this day.  Your unhappiness to learn I was pursuing undergraduate math research instead of art seems  laughable in retrospect.  But sometimes even we don't understand the implications, effects, and importance of our own work, and that's okay.  What's most important is that you cared.

Thank you, Paul Gies, for creating a student-centered proofs-based course focused on non-Euclidean geometry that transformed my experience and appreciation of mathematics and convinced me that I wanted to study math for my undergraduate degree.   I would also like to thank you for seeing potential in me and for your unwavering support that I should pursue a graduate degree, even when I didn't believe it myself.

To my mom, who worked for years as a secretary in a finance department and was promoted to accountant because she was doing the work for it already, in spite of never obtaining any degree:  Thank you for teaching me that persistence and hard work are one of the most important factors in success.  And thank you for being a voracious reader when you weren't too tired from work.

To my dad who loves to make people laugh, to try new things, and to garden: Thanks for helping me appreciate the little things a little more.

To Fernando Guzm\'an:  Thank you for creating graduate classes that conveyed your expertise in ways that were inspiring and empowering.  Thank you also for supporting and challenging me.  Professors can have a huge impact on their students by reaching out in small ways; you are very good at that, and it makes a huge difference.

And last but certainly not least: Thank you Matthew G. Brin for being my Ph.D. advisor, for your humility and genuinely deep love of mathematics, for your sense of humor, your insights into necessary skills for becoming a better researcher, and for your crucial role and support in completing this work.

\newpage
\tableofcontents

\mysection{Introduction}\mylabel{intro}
\pagenumbering{arabic}

Thompson's group F was discovered in 1965 by Richard Thompson in connection with his work in logic. It is the group of all piecewise linear orientation-preserving homeomorphisms of the unit interval with finitely many points of non-differentiability ({\bf breakpoints}) occurring at dyadic rationals and whose slopes are integral powers of 2.  The binary operation is function composition.  Remarkably, the group is two-generated by the functions whose graphs are below.

\begin{center}
\includegraphics[width=10cm, keepaspectratio]{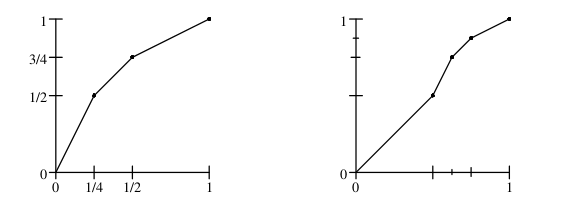}\tiny{image credit: \cite{belk}}
\end{center}

There are several results about F which have garnered interest in it.  F is torsion-free but has no free subgroups of rank 2 or higher, it is lawless, it is finitely presented, it has a beautiful infinite presentation, it is the universal conjugacy idempotent, and it has close ties with associativity (see \cite{nf2}, \cite{cfp}, \cite{fguz}, \cite{bgeo}).  The many intriguing properties of F mean if one thinks something may be true for infinite groups, F is a great test case.  There are also many different representations of F which aid in understanding and calculating.  Problems in F are challenging, too, which makes them fun. 

F has many generalizations which also have interesting properties. However, the subgroup structure of F and these groups is poorly understood, and there are many open questions.  The solvable subgroups were classified by Collin Bleak in papers which came out of his 2005 thesis (see \cite{geo}, \cite{alg}, \cite{min}).  

The current dissertation is an exploration of a more general class of groups called the locally solvable subgroups of Thompson's Group F.  A group is {\bf locally solvable} if every finitely generated subgroup is solvable.  Facts about F are often true of the larger group PLo(I).  This applies to the results of this thesis and they will all be stated about PLo(I).  {\bf PLo(I)} is the group of piecewise linear orientation-preserving homeomorphisms of the unit interval with finitely many breakpoints. 

After proving locally solvable subgroups of PLo(I) are countable, we develop presentations, normal forms, and information about representations of finitely iterated wreath products with $\mathbb{Z}$.  Next we use these results to construct a locally solvable subgroup of PLo(I) for each countable ordered set.  Finally, we develop machinery regarding embeddings between finitely iterated wreath products with $\mathbb{Z}$ which is used to prove the constructed groups are non-isomorphic when the ordered sets are distinct.  A simple consequence is that there are uncountably many non-isomorphic locally solvable subgroups of Thompson's Group F.

The work of this paper began in 2011.  At that time, the author became interested in conjectures surrounding elementary amenable subgroups presented in \cite{ea} and thought that results regarding locally solvable subgroups could be of some use in a classification of these groups.  Furthermore, the author thought the key to understanding the nature of the difficult and elusive question of amenability of $F$ may be through further classification of subgroup structure. 

Since that time, new results have supported the author's vision. In February 2012, the author showed that locally solvable subgroups of PLo(I) are countable and presented these results in conferences in 2012 and 2013.    Since then, other researchers in the field have taken a renewed interest in classifying subgroup structure and proven beautiful and remarkable results (see \cite{markedgroups}, \cite{loc solv}, \cite{chaingroups}).

The history of research in groups of piecewise linear homeomorphisms also indicates there is value in relating geometric properties on pairs of elements to global algebraic properties of groups.  Bleak's results about solvable subgroups are of this nature.  This kind of geometric-to-algebraic study also appeared in earlier papers of Matthew G. Brin such as \cite{ubiquity} and \cite{ea}.  We make heavy use of geometric properties of pairs of elements throughout this thesis.

\subsection{Preliminary Definitions and Background}

Our group actions are right actions, so we use the notation $(x)fg$ to mean $((x)f)g$.  Also, for conjugation and commutators, $f^g := g^{-1}fg$ and $[f,g] := f^{-1}g^{-1}fg$.

Given a function $f \in PLo(I)$, $Supp(f) := \{x \in I \, | \, (x)f \neq x \}$ which we call the {\bf support} of $f$.  Similarly, given a subgroup $G$ in PLo(I), the {\bf support} of $G$ is $Supp(G) := \{x \in I \, | \, (x)f \neq x \text{ for some } f \in G\}$.  An {\bf orbital} of a function $f$ in $PLo(I)$ is a maximal interval contained in $Supp(f)$. Orbitals are open intervals in $\mathbb{R}$, and every function in PLo(I) has only finitely many orbitals because they have only finitely many breakpoints.  An orbital of a subgroup $G$ in PLo(I) is a maximal open interval contained in $Supp(G)$. Note that an orbital of $G$ is a union of orbitals of elements of $G$.    To emphasize whether we are talking about an orbital of a function or an orbital of a group, we sometimes refer to an orbital as an {\bf element orbital} or {\bf group orbital}, respectively.  A {\bf bump} of $f$ is the restriction of $f$ to a single orbital.  A function is called a {\bf one-bump function} if it has only one orbital. 

A {\bf signed orbital} is a pair $(A,f)$ where $f \in PLo(I)$ and $A$ is an orbital of $f$.   We call $A$ the {\bf orbital} of $(A,f)$ and $f$ the {\bf signature}.  Signed orbitals are simply symbols used to represent bumps of functions, and they are clearly in one-to-one correspondence with bumps of functions.  Thus we move freely between the two concepts.

For any subset $C$ of elements of $PLo(I)$, let $\mathcal{SO}(C)$ be the set of all signed orbitals of $C$ and $\mathcal{O}(C)$ the set of all the orbitals of elements in 
$\mathcal{SO}(C)$.  For a set with a single element $g \in$ PLo(I), we will omit set brackets in the previous notations and write only $\mathcal{SO}(g)$ or $\mathcal{O}(g)$. If $C$ is a collection of signed orbitals of PLo(I), we the notations $\mathcal{O}(C)$ and $\mathcal{S}(C)$ to denote the set of all orbitals of $C$ and the set of all signatures of $C$, respectively.   Note that $\mathcal{O}(G)$ refers to the set of all {\bf element orbitals} of elements of $G$, not to group orbitals.

We call a signed orbital $(A,f)$ {\bf pure} if $f$ is a one-bump function.  We call a set $S$ of signed orbitals {\bf pure} if the signatures of $S$ are all one-bump functions.  A one-bump function may also be referred to as {\bf pure}, and a set of functions is {\bf pure} if each element of the set is pure.  

 A {\bf fundamental domain} of a signed orbital $(A,f)$ is a half open interval $[x,xf)$ where $x$ is a point in the orbital $A$. A {\bf fundamental domain} of $f$ is a union of half open intervals---one fundamental domain for each signed orbital of $f$. A set $S$ of signed orbitals is {\bf fundamental} if whenever $(A, f), (B, g) \in S$ are such that $A \subset B$, then $A$ is in a fundamental domain of $(B, g)$.\\

Any collection of orbitals is partially ordered by inclusion. Thus $\mathcal{O}(G)$ is a poset, and we will use some poset notation throughout this paper.  Let $(P, \leq)$ be a partially ordered set and let $A \in P$.  We write $\downarrow A = \{B \, | \, B < A \}$ and refer to this as the {\bf downset of $A$ in $P$}. Similarly, the {\bf upset of $A$ in $P$} is $\uparrow A = \{B | A < B \}$.  Note these sets do not contain $A$.  Also, there is no reference to $P$ in the notation $\downarrow A$ or $\uparrow A$, so the poset will be specified or clear from the context. If $S \subseteq P$, then define $\ds \downarrow S = (\bigcup_{A \in S} \downarrow A) - S$. Note $(\bigcup_{A \in S} \downarrow A) - S = \bigcup_{A \in S}( \downarrow A - S)$.  Define $\uparrow S$ similarly.

An ordering on the poset $\mathcal{SO}(G)$ is inherited from $\mathcal{O}(G)$ by making new chains for distinct elements with the same orbitals.  More explicitly, order $\mathcal{S}(G)$ by the trivial partial order on $G$, that is, if $a \neq b$ in $G$, $a$ and $b$ are incomparable.  Let $P_2$ be this partial order and $P_1$ be the partial order on $\mathcal{O}(G)$. Define the {\bf lexicographical partial order} on a product $A \times B$ of partially ordered sets by  $(a,b) \leq (c,d)$   if and only if $a < c$ or $(a=c \text{ and } b \leq d)$. Extend $P_1$ to a partial order on $\mathcal{SO}(G)$ by taking the product $P = P_1 \times P_2$ with the lexicographical partial ordering.  We use this partial order on $\mathcal{SO}(G)$ throughout this thesis.

We call a chain in the poset $\mathcal{O}(G)$ a {\bf stack} in $G$.  A subset of a stack is also a stack, and hence is called a {\bf substack}.  A {\bf tower} is a chain of orbitals together with an assignment of a signature to each orbital. Hence a tower is set of signed orbitals which is naturally order isomorphic to its underlying stack.  A tower can also be described as a chain in $\mathcal{SO}(G)$ when equipped with partial order $P$ in the last paragraph.  Every subset of a tower is also a tower, so is called a {\bf subtower}.  For convenience, whenever we list the elements of a tower, we will list them from smallest to largest unless otherwise specified.  We often work with totally ordered sets and refer to them simply as {\bf ordered sets}. Whenever a different type of ordering is used, it will be specified or clear from the context.  

Geometrically, one can think of a tower as bumps with nested orbitals.  However, it is possible that signatures of a tower have multiple bumps which create more complicated dynamics.  Since a tower is a set of signed orbitals, our prior definitions for pure and fundamental sets of signed orbitals apply to towers also.  See Figures \ref{purity} and \ref{nonpurity} for examples of pure and non-pure towers. The nonpure situation can get much more complex than pictured.  Since pure towers are simpler to understand, often our arguments build-up from pure to nonpure situations. 

\purityandnonpurity

 Towers were introduced in Collin Bleak's Ph.D. thesis to help classify solvable subgroups of PLo(I).  The results of his thesis were mostly summed up in 3 published papers \cite{geo}, \cite{alg}, and \cite{min}.  The main results of \cite{geo} and \cite{alg} will be of great use to us here.  

Define the {\bf depth} of $G$ denoted depth(G) to be the supremum of all cardinalities of towers in $G$.  The main result of \cite{geo} states

\begin{thm}\mylabel{thm geo}
 A subgroup $G$ of $PLo(I)$ is solvable with derived length $n$ if and only if $depth(G) = n$.
\end{thm}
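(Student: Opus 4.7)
The plan is to prove the two inequalities $\text{depth}(G) \leq \text{derived length}(G)$ and $\text{derived length}(G) \leq \text{depth}(G)$ (allowing either side to be infinite) and combine them. Both directions proceed by induction and both rest on the same principle in dual form: the commutator of two elements whose orbitals form a tower of size $2$ lives in a strictly ``lower stratum'' of the orbital hierarchy than its inputs, so passing to the commutator subgroup should correspond to dropping the top level off of every tower.

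For $\text{depth}(G) \leq \text{derived length}(G)$, I would argue the contrapositive: a tower $(A_1,f_1) < (A_2,f_2) < \cdots < (A_{n+1}, f_{n+1})$ in $G$ forces $G^{(n)} \neq \{e\}$. The key geometric move is a ping-pong: since $A_n \subsetneq A_{n+1}$ and $A_{n+1}$ is an orbital of $f_{n+1}$, for sufficiently large $k$ the translate $A_n \cdot f_{n+1}^{\,k}$ is disjoint from $A_n$ inside $A_{n+1}$. Consequently $[f_n, f_{n+1}^{\,k}] \neq e$, lies in $[G,G] = G^{(1)}$, and its orbital structure still supports a tower of size $n$ built from the lower levels of the original. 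Iterating $n$ times, one extracts a non-trivial element of $G^{(n)}$, showing $G$ is not solvable of derived length $\leq n$.

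For the reverse inequality, induct on $n = \text{depth}(G)$. The base case $n=0$ is trivial; the inductive step reduces to the following key lemma: if $\text{depth}(G) = n$, then $\text{depth}([G,G]) \leq n-1$. To prove it, I would show that any element orbital appearing in an element of $[G,G]$ is strictly dominated in $\mathcal{O}(G)$ by some orbital from a maximal tower; that is, commutation strips away the topmost bump. Granting the lemma, $[G,G]$ is solvable of derived length at most $n-1$ by induction, so $G$ is solvable of derived length at most $n$. Combined with the previous paragraph applied to a maximal tower, equality follows.

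The main obstacle is that signatures in a tower need not be one-bump functions, so both the ping-pong and the commutator-reduction arguments become delicate: a signature $f_i$ may have additional bumps outside $A_{i+1}$ whose dynamics interfere with the clean ``translate inside $A_{i+1}$ and commute'' picture. My strategy for navigating this would be to prove the clean statements first for pure towers, where the geometry makes the ping-pong and the orbital-of-commutator bookkeeping immediate, and then bootstrap to the general case by restricting signatures to the relevant orbitals or by extracting pure subtowers of the same size. Verifying that one can always make such reductions without losing tower size, and that the commutators produced remain inside $G$ rather than in some larger enveloping group, is where I expect most of the technical labor to sit.
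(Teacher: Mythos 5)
You should first be aware that this theorem is not proved in the dissertation at all: it is quoted as the main result of Bleak's paper \cite{geo} and used as a black box throughout, so there is no in-paper proof to compare yours against. Your two-inequality skeleton does match the architecture of the published argument, but both halves of your sketch have gaps sitting exactly where the real work is.

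In the direction ``a tower of height $n+1$ forces $G^{(n)}\neq\{e\}$,'' your iteration never leaves $G^{(1)}$. After one step you have $c=[f_n,f_{n+1}^{\,k}]\in G^{(1)}$ sitting above $(A_1,f_1)<\cdots<(A_{n-1},f_{n-1})$, but those lower signatures still lie only in $G$, so the next commutator you can form, say $[f_{n-1},c^{\,m}]$, lies in $[G,G^{(1)}]\subseteq G^{(1)}$ rather than in $G^{(2)}$; the naive iteration produces elements of the first derived subgroup forever. The repair is to push the \emph{entire} tower into the derived subgroup at once: choose a single $k$ with $A_n f_{n+1}^{\,k}\cap A_n=\emptyset$, set $g_i=[f_i,f_{n+1}^{\,k}]$ for all $i\le n$, verify (this is where the multi-bump bookkeeping you flag actually bites) that each $g_i$ retains an orbital containing $A_{i-1}$ so that the $g_i$ form a tower of height $n$ inside $G^{(1)}$, and only then invoke the inductive hypothesis for $G^{(1)}$. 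In the reverse direction, your key lemma $\mathrm{depth}(G)=n\Rightarrow\mathrm{depth}([G,G])\le n-1$ is indeed the crux, but the justification offered (``commutation strips away the topmost bump'') restates the lemma rather than proving it. The easy part is that a single commutator of two elements sharing an orbital $A$ has leading and trailing slope $1$ at the ends of $A$, hence is the identity on its first and last affine components there and so has all its orbitals strictly inside $A$. A general element of $[G,G]$, however, is a product of several commutators whose constituents need not share orbitals or even have comparable ones; one must first exclude transition chains (using finiteness of depth) to control how the supports overlap, and then run the germ analysis showing that elements sharing an orbital are powers of a common one-bump function near its ends. That analysis is the technical core of \cite{geo} and \cite{alg} (it is what underlies Lemma \ref{it counts} and Lemma \ref{control} in this paper), and nothing in your sketch substitutes for it.
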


The power of this theorem is that it allows us to make inductive arguments on towers in solvable groups.  When $G$ is solvable, we may refer to the cardinality of a tower $T \in \mathcal{SO}(G)$ as the {\bf height} of $T$.  The following terminology also appeared in \cite{geo} and is of central concern to this dissertation.

\begin{defn}
 A {\bf transition chain} is a pair of signed orbitals $(A,f)$, $(B,g)$ such that $A \cap B \neq \emptyset$, $A \not\subset B$, and $B \not\subset A$.  We say $G$ has {\bf no transition chains} if no pair of elements of $\mathcal{SO}(G)$ is a transition chain.  
\end{defn}

Visually, a transition chain is a pair of bumps which overlap, but neither properly contains the other.  When transitions chains are not allowed, group actions on the interval are much simpler.  

Let $\mathcal{T}$ be the collection of all groups in $PL_o(I)$ without transition chains.  Thanks to Theorem 3.2 in \cite{loc solv}, we can work geometrically with locally solvable subgroups of PLo(I):

\begin{thm}\mylabel{loc solv}
A subgroup of $PLo(I)$ is locally solvable if and only if it has no transition chains.
\end{thm}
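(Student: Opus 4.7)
My plan is to handle the two implications separately: the ``only if'' direction by an explicit infinite-tower construction, and the ``if'' direction by bounding the depth of an arbitrary finitely generated subgroup.

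For the ``only if'' direction I would argue the contrapositive. Suppose $(A,f),(B,g)$ is a transition chain in $G$, with $A=(a_1,a_2)$, $B=(b_1,b_2)$, and, after relabeling, $a_1<b_1<a_2<b_2$. After replacing $g$ by $g^{-1}$ if necessary, I may assume $g$ pushes points to the right on a neighborhood of $b_1$ inside $B$; then $g^{-1}$ strictly decreases each point of $(b_1,b_2)$ while fixing $b_1$. Since $a_1<b_1$ lies outside $\mathrm{Supp}(g)$, we have $a_1 g^{-n}=a_1$ for all $n$, while $a_2 g^{-n}$ is strictly decreasing and bounded below by $b_1$. Thus $Ag^{-n}=(a_1,a_2 g^{-n})$ is an orbital of the conjugate $f^{g^{-n}}\in\langle f,g\rangle$, and $\{(Ag^{-n},f^{g^{-n}})\}_{n\ge 0}$ is an infinite strictly descending chain in $\mathcal{SO}(\langle f,g\rangle)$, i.e.\ an infinite tower. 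By Theorem \ref{thm geo}, $\langle f,g\rangle$ is not solvable, so $G$ is not locally solvable.

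For the ``if'' direction, let $H=\langle h_1,\ldots,h_k\rangle$ be a finitely generated subgroup of $G$. Inheriting the no-transition-chains hypothesis, any two orbitals in $\mathcal{O}(H)$ are either nested or disjoint, so $\mathcal{O}(H)$ is a forest under inclusion. By Theorem \ref{thm geo}, it suffices to prove this forest has finite height, and my plan is to bound it by the (clearly finite) height of the sub-forest $\mathcal{O}(\{h_1,\ldots,h_k\})$ of generator orbitals. I would argue by induction on word length that every orbital of a word in the $h_i$ sits inside a generator orbital, and that a strictly descending chain of element-orbitals cannot be strictly longer than a chain of generator orbitals nested in the same way.

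The main obstacle is this last step. Composing two generators can produce breakpoints, and hence orbital endpoints, at positions that are not endpoints of any generator orbital, so I must show that in the absence of transition chains such ``new'' orbitals nonetheless fit into the existing generator forest without lengthening the longest chain. The key local observation I would exploit is that at the endpoint of a new orbital the PL-slope behavior is forced, by the no-transition condition, to match that of a single surrounding generator bump on one side, so the endpoint is pinned to the generator forest in a controlled way. This careful breakpoint analysis at endpoints is the heart of the argument in \cite{loc solv}.
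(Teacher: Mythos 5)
First, note that the paper does not prove this statement at all: it is imported verbatim as Theorem 3.2 of \cite{loc solv}, and the only related remark in the text is that the ``transition chain $\Rightarrow$ not solvable'' direction is elementary. So your proposal is not being measured against an in-paper argument, and it must stand on its own. As written it has two genuine gaps. In the ``only if'' direction, the step ``since $a_1<b_1$ lies outside $\mathrm{Supp}(g)$'' is unjustified: $g$ may have further orbitals besides $B$, and one of them may contain $a_1$. If $g^{-1}$ moves $a_1$ to the left, then the intervals $Ag^{-n}=(a_1g^{-n},a_2g^{-n})$ have strictly decreasing left \emph{and} right endpoints; they all contain $b_1$, so they pairwise overlap, but they are not nested --- each pair is itself a transition chain rather than a link of a tower. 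The conclusion is still true and the direction is indeed elementary, but it needs an extra argument (e.g.\ an induction on the orbitals of $g$ meeting $\overline{A}$, or a preliminary reduction of the transition chain) that you have not supplied.

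The ``if'' direction is the substantive half, and your proposed route fails at exactly the step you flag. The bound ``$\mathrm{depth}(H)\le$ height of the generator-orbital forest $\mathcal{O}(\{h_1,\dots,h_k\})$'' is false. Take $f$ a one-bump function with orbital $(0,1)$ and $h$ a one-bump function whose orbital $B$ lies in a fundamental domain of $f$, and set $g=h^{-1}f$. By Lemma \ref{multiply}, $g$ is a one-bump function with orbital $(0,1)$, so $H=\langle f,g\rangle=\langle f,h\rangle\cong\mathbb{Z}\wr\mathbb{Z}$ has no transition chains and depth $2$ by Theorem \ref{thm geo}, while the forest of generator orbitals for $\{f,g\}$ is the single node $(0,1)$ and has height $1$. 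Products of generators can create orbitals strictly below every generator orbital, so no induction on word length of the kind you describe can terminate at the generator forest; controlling how deep such new orbitals can stack is precisely the content of \cite{loc solv}, and your sketch ultimately defers to that paper (``the heart of the argument in \cite{loc solv}'') rather than proving it. In short: the easy direction needs the $a_1\in\mathrm{Supp}(g)$ case handled, and the hard direction rests on a false lemma and is otherwise outsourced to the very reference the theorem is quoted from.
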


The two main results of this dissertation are

\begin{thm}\mylabel{countableloc}
A locally solvable subgroup of PLo(I) is countable.
\end{thm}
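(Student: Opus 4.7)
By Theorem \ref{loc solv}, it suffices to show that any subgroup $G \leq PLo(I)$ without transition chains is countable. The first step is a reduction to the case of a single group orbital. The group orbitals of $G$ are pairwise disjoint open subintervals of $I$, hence at most countably many, say $\{A_i\}$. Since each $g \in G$ preserves $Supp(G)$ and is orientation-preserving, $g$ must fix each complementary gap pointwise and hence fixes each closure $\overline{A_i}$ setwise; restriction therefore defines an injective homomorphism $G \to \prod_i PLo(\overline{A_i})$. Because each $g$ has only finitely many element orbitals and each is contained in one of the $A_i$, the image of $g$ is non-identity in only finitely many coordinates, so $G$ embeds in the restricted direct sum $\bigoplus_i \pi_i(G)$, where each $\pi_i(G) \leq PLo(\overline{A_i})$ inherits the no-transition-chain property and now has a single group orbital. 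Since a restricted direct sum of countably many countable groups is countable, the problem reduces to showing countability when $G$ has the single group orbital $(0,1)$.

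For the reduced case, the element orbitals form a laminar family whose union is $(0,1)$. I would argue by contradiction, assuming $G$ is uncountable, and try to extract a pair of orbitals that overlap without either containing the other. The starting idea is pigeonhole on germs: the leftmost element orbital of each nontrivial $g$ has some left endpoint in $[0,1)$, and by pigeonhole either uncountably many $g$ share the same left endpoint or the set of such left endpoints is itself uncountable. In the former case, I would refine further to an uncountable subfamily sharing the same initial linear slope at that endpoint, so that ratios of two such elements are identity just to the right of the common endpoint and their leftmost element orbitals are shifted strictly further right; iterating this, together with a symmetric analysis at the right endpoint of some group-orbital-spanning orbital, should yield two orbitals $(c,1)$ and $(0,d)$ with $c<d$, producing a transition chain.

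The main obstacle is making this pigeonhole rigorous. The laminar hypothesis constrains $\mathcal{O}(G)$ combinatorially, but converting this into a cardinality bound on $G$ requires carefully tracking how element orbitals of products are created and how they interact. In particular one must ensure that additional breakpoint cancellations do not eliminate or truncate the newly produced orbitals, and that the iteration terminates in a genuine transition chain rather than an infinite regress of nested orbitals, which is compatible with the hypothesis. An alternative route I would consider is a structural description of $G$: show that $G$ lies in a countable tower of wreath-like extensions assembled from the orbital tree of $G$, each extension contributing only countably many new elements; but this too requires a separate argument that the orbital tree has countable height and countable branching, which itself uses the no-transition-chain hypothesis.
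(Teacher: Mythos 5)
There is a genuine gap: your proposal is a plan whose hard steps are left unfilled, and the paper's proof shows those steps require substantive ideas you have not supplied. The reduction to a single group orbital is fine but does not touch the real difficulty. What the paper actually establishes, and what any proof must establish in some form, are two quantitative facts: (i) the set $\mathcal{O}(G)$ of all element orbitals is countable, and (ii) for each fixed orbital $O$, only countably many bumps of elements of $G$ have orbital $O$. Your laminar-family observation gives neither. A laminar family of open intervals can easily be uncountable (take the chain $\{(0,t) : t \in (0,1)\}$), so something beyond laminarity is needed even to count a single nested chain of orbitals; the paper does this (Lemma \ref{towers count}) by using the no-transition-chain hypothesis to place each smaller orbital inside a fundamental domain of the signature of the next orbital up, which manufactures pairwise disjoint intervals each containing a rational. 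Countability of all of $\mathcal{O}(G)$ (Lemma \ref{orbitals count}) then needs a further argument — the paper stratifies orbitals by length into the bands $((2/3)^n,(2/3)^{n-1}]$ and shows that within a band the orbitals meeting a fixed one must form a chain, since two disjoint suborbitals of comparable length cannot both fit without violating the length constraint.

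For (ii), your step ``refine further to an uncountable subfamily sharing the same initial linear slope'' silently assumes there are only countably many possible initial slopes at a given orbital endpoint; that is precisely Lemma \ref{it counts}, which rests on Bleak's controller structure for elements sharing an orbital in a solvable group and is not something pigeonhole gives you for free. Even granting it, your iteration is not shown to terminate in a transition chain: as you yourself note, an infinite regress of properly nested orbitals is perfectly consistent with the hypothesis, so the contradiction never materializes from the argument as sketched. The paper avoids this entirely by arguing directly rather than by contradiction: it encodes each bump with $n$ breakpoints by its initial slope together with its bouncepoints and the slopes leaving them, shows this encoding is injective, and shows the coordinate sets are countable because every bouncepoint is an endpoint of an orbital of a quotient $fg^{-1}$ (hence lands in the already-countable $\mathcal{O}(G)$) and every slope is controlled by Lemma \ref{it counts}. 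To repair your proposal you would essentially have to reconstruct these three lemmas, at which point the contradiction framing adds nothing.
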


\begin{thm}\mylabel{main}
For each countable ordered set $C$, there exists a locally solvable subgroup $W_C$ of Thompson's Group F which has generating tower $T$ that is order isomorphic to $C$.  Furthermore, $W_C$ is isomorphic to $W_D$ if and only if $C$ is order isomorphic to $D$.
\end{thm}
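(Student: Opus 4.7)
The plan is to prove the theorem in two parts. For the construction, I would first embed $C$ into a chain of properly nested open subintervals of $(0,1)$ with dyadic endpoints, $c \mapsto (a_c,b_c)$, with $c < c'$ iff $(a_c,b_c) \subsetneq (a_{c'},b_{c'})$; this is possible for any countable chain by an enumeration of $C$ together with an inductive choice of dyadic rationals. I would also arrange that for $c < c'$, the interval $(a_c, b_c)$ lies inside a single fundamental domain of the bump to be placed at $(a_{c'}, b_{c'})$. For each $c$ I then choose a one-bump element $g_c \in F$ with $Supp(g_c) = (a_c, b_c)$ and set $W_C := \langle g_c : c \in C \rangle$, so that the generating signed orbitals $\{((a_c, b_c), g_c) : c \in C\}$ form a fundamental pure tower $T$ order isomorphic to $C$.

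For local solvability, I would invoke Theorem \ref{loc solv} and show $W_C$ has no transition chains by induction on word length in the generators. Because the generating tower is fundamental and pure, products and conjugates of generators produce new element orbitals that remain either disjoint from or properly nested inside each existing orbital---no partial overlap can arise---so $\mathcal{SO}(W_C)$ is transition-chain-free and $W_C$ is locally solvable.

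The forward direction of the distinction statement is immediate: an order isomorphism $C \to D$ induces a group isomorphism $W_C \to W_D$ sending generators to generators. For the converse, assume $\phi\colon W_C \to W_D$ is a group isomorphism. For any finite subchain $c_1 < \cdots < c_n$ in $C$, the subgroup $H := \langle g_{c_1}, \ldots, g_{c_n} \rangle$ is a finitely iterated wreath product with $\mathbb{Z}$ of depth $n$ by Theorem \ref{thm geo}, and the embedding machinery developed earlier in the dissertation tells us how $\phi(H)$ sits inside $W_D$: in particular, it forces the existence of a subtower of the generating tower of $W_D$ of height $n$ and order type matching the source subchain. Using $\phi$ and $\phi^{-1}$ symmetrically, I would set up a back-and-forth argument along enumerations of $C$ and $D$ to amalgamate these local order-preserving matchings into a global order isomorphism $C \to D$.

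The principal obstacle is the back-and-forth step. The wreath-product embedding theorems yield correspondences between finite subtowers that are not canonical---an image $\phi(g_c)$ is generally not a single-bump element of $W_D$, and several candidates in the target tower may witness an embedded subchain---so the crucial work is showing these correspondences can be chosen coherently across nested finite subchains. Verifying that the choices at stage $n+1$ can always be made to extend those at stage $n$, and that nothing in the tower of $W_D$ is missed in the limit, is what converts the embedding-machinery output into an actual order isomorphism $C \to D$, and I expect it to be where most of the technical effort lies.
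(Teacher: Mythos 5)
Your construction half is essentially the paper's: nested dyadic intervals realizing $C$ as a pure fundamental tower of one-bump elements of $F$ (the paper packages the interval choices via a modified Cantor set, and gets local solvability by observing that any $n$ signatures generate $\mathbb{Z}\wr_n\mathbb{Z}$ via Corollary \ref{stdwr}, rather than by your word-length induction, but both routes are fine). The forward direction of the rigidity statement is likewise immediate in both treatments.

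The converse is where there is a genuine gap, and you have in fact pointed at it yourself without closing it. A back-and-forth built only from the fact that each finite subchain of $C$ forces an embedded subtower of matching order type in $W_D$ (and symmetrically) yields, on its face, only \emph{mutual embeddability} of $C$ and $D$ --- and the paper explicitly warns that this is strictly weaker than order isomorphism, since there is no Cantor--Schroeder--Bernstein theorem for linear orders ($\mathbb{Q}$ and $\mathbb{Q}+{*}$ mutually embed but are not isomorphic). The missing idea is a canonical, choice-free invariant that makes the finite-stage correspondences cohere: the paper's notion of \emph{level}. Writing $W_C$ and $W_D$ as direct limits and interleaving the isomorphism into an inj-isomorphism of direct systems, Proposition \ref{level embed} shows that \emph{every} bump of the image of a one-bump generator sits at the same level of the target group (so ``the level of $d_i(g_{ki})$'' is well defined despite the image having many bumps --- exactly the non-canonicity you worry about), Lemma \ref{level gen} shows these level assignments are compatible around the commuting triangles of the inj-isomorphism, and Proposition \ref{set inj-iso} assembles them into order-preserving maps between the finite level sets whose direct limits are $C$ and $D$. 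Without some such invariant and the compatibility argument across the commuting triangles, your amalgamation step does not go through, so the proposal as written does not yet prove that $W_C\cong W_D$ implies $C\cong D$.
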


We prove Theorem \ref{main} by developing geometric ways of viewing embeddings and relations.  The interplay between relations and geometric relationships between orbitals of elements will play a crucial role in our results.

  For our purposes, we need more structure than the cardinality of a tower, so we define a notion called the {\bf type} of a tower.

\begin{defn}
 If $T$ is a tower in $G$, then the {\bf type} of the tower T denoted type(T) is the order type of the ordered set of right endpoints of the orbitals in $\mathcal{O}(T)$.  
\end{defn}

Types of towers give a wealth of information about underlying wreath product structures in the group.  In fact, there is some rigidity of towers under injective homomorphisms which allows us to show the last part of Theorem \ref{main}.  A corollary is

\begin{cor}\mylabel{uncountable}
 There are uncountably many isomorphism classes of groups in $\mathcal{T}$.
\end{cor}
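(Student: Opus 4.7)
The proof plan is essentially a one-step deduction from Theorem \ref{main} combined with the observation that there are uncountably many pairwise non-isomorphic countable linearly ordered sets. Thus the main task is just to exhibit such a family of order types and invoke the theorem.

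First, I would exhibit an uncountable family of pairwise non-order-isomorphic countable ordered sets. The cleanest choice is the collection of countable ordinals $\{\alpha : \alpha < \omega_1\}$: every countable ordinal is (by definition) a countable linearly ordered set, any two distinct ordinals are non-isomorphic as ordered sets (an isomorphism of well-orderings forces equality), and the collection $\omega_1$ has cardinality $\aleph_1$, which is uncountable. One could equally well use the $2^{\aleph_0}$ distinct countable order types realized as subsets of $\mathbb{Q}$, but countable ordinals already suffice for the corollary as stated.

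Next, for each countable ordinal $\alpha < \omega_1$, Theorem \ref{main} provides a locally solvable subgroup $W_\alpha$ of Thompson's group $F \subset PLo(I)$ with generating tower order isomorphic to $\alpha$. By Theorem \ref{loc solv}, every locally solvable subgroup of $PLo(I)$ has no transition chains, so each $W_\alpha$ lies in $\mathcal{T}$. The second half of Theorem \ref{main} then says that $W_\alpha \cong W_\beta$ as abstract groups if and only if $\alpha$ and $\beta$ are order isomorphic, which for distinct countable ordinals cannot happen. Hence the family $\{W_\alpha : \alpha < \omega_1\}$ consists of pairwise non-isomorphic members of $\mathcal{T}$, and its index set is uncountable.

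There is really no obstacle here; the entire content of the corollary is absorbed into Theorem \ref{main}, which is the hard theorem supplying both the construction of the groups $W_C$ and (via the invariance of the tower type under isomorphism) the distinguishing invariant. Once that theorem is established, the corollary reduces to choosing any uncountable supply of pairwise non-isomorphic countable linear orders, and well-orderings of countable ordinals give the most immediate such supply.
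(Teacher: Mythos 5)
Your proof is correct and follows the same route as the paper, which likewise cites the uncountability of the set of countable ordinals and then invokes Theorem \ref{main}. Your version simply spells out the details (non-isomorphism of distinct ordinals, membership of each $W_\alpha$ in $\mathcal{T}$ via Theorem \ref{loc solv}) that the paper leaves implicit.
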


\begin{proof}
There are uncountably many countable ordinals \cite{ordinals}.  Thus there are uncountably many countable ordered sets.
\end{proof}

\sloppy Our main groups of interest  can be built by starting with $\mathbb{Z}$ and forming repeated direct summands, wreath products, and direct limits.  By a wreath product, we mean the {\bf standard restricted wreath product}: The group {\bf A wreath B} denoted $A \wr B$ is $\bigoplus_{i \in B} A \rtimes B$ where $B$ acts on the indexing set of the direct sum by right multiplication.  The group $\bigoplus_{i \in B} A$ is called the {\bf base group} of $A \wr B$.

Consider the following collection of groups indexed by ordinals.
$$W_0 = 1 \text{, } W_\alpha = 
\begin{cases}
W_{\alpha -1} \wr \mathbb{Z}, & \text{if } \alpha \text{ is not a limit ordinal} \\
\ds \lim_{ \beta < \alpha} W_\beta, & \text{if }\alpha \text{ is a limit ordinal}
\end{cases}$$
where $\wr$ is the restricted wreath product and $lim$ is the direct limit using the embedding of $W_{\beta-1}$ in $W_\beta$ as a summand of the base group of $W_\beta$.  We show that if $W_\alpha$ embeds in PLo(I), then $\alpha$ is countable.  Furthermore, we generalize the groups $W_\alpha$ to each countable ordered set $C$, show they embed in $F$, and we use our geometric representations to show they are isomorphic if and only if the underlying ordered sets are isomorphic.  This proves Theorem \ref{main}.

To prove these results, we require more basic terminology and facts.  If $A = (x,y)$ is an orbital of a function $f \in$ PLo(I), we call $x$ and $y$ the {\bf ends} of $A$.  We say something happens {\bf near an end} or {\bf near the ends} of $A$ if it true on some interval $(x,a) \subseteq A$ or true on 2 intervals $(x,a), (b,y) \subseteq A$, respectively.  Given two element orbitals $A$ and $B$, we say $A$ {\bf shares an end} with $B$ or $A$ and $B$ {\bf share an end} if $A \subset B$ or $B \subset A$ and at least one of their endpoints is the same.  If $f, g \in PLo(I)$ both have orbital $A$, then we say $f$ and $g$ {\bf share an orbital} or {\bf share the orbital $A$}. If $A$ is an orbital of $f$ and $x \in A$, we say $f$ {\bf moves x to the right} if $xf > x$. We say $f$ {\bf moves x to the left} if $xf < x$.  Continuity implies that if $xf > x$ for some $x \in A$ then $xf > x$ for all $x \in A$.  In this case, we say $f$ {\bf moves points to the right on $A$}, and similarly for left. 

 {\bf Affine components of f} are the components of $[0,1]-B_f$ where $B_f$ is the set of breakpoints of $f$.  Affine components are naturally ordered from left to right.  We may refer to the first or last affine components of $f$ as the {\bf leading affine component} and {\bf trailing affine component}, respectively.  The slopes of the first and last affine components of $f$ are called the {\bf leading and trailing slopes}, also, {\bf initial and terminal slopes} of $f$.  
We often consider a relative version of these definitions on a particular orbital $A$ of $f$, in which case we append {\bf on A} to any of the previous descriptions. Since the elements of our groups are functions which act on the unit interval, we can also discuss whether a relation between functions is satisfied at a point in $I$.  Let $A \subseteq I$. We say that a relation R holds or is true {\bf on $A$} if R is true at every point in $A$. 

The following facts are elementary.

\begin{lem} \mylabel{containment}
If $G$ has no transition chains, and $A,B \in \mathcal{O}(G)$ are such that $A \cap B \neq \emptyset$, then
$A \subset B$, $B \subset A$, or $A=B$.
\end{lem}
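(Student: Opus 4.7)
The plan is to reduce the statement directly to the definition of a transition chain. Since $A, B \in \mathcal{O}(G)$, by the definition of $\mathcal{O}(G)$ there exist elements $f, g \in G$ such that $A$ is an orbital of $f$ and $B$ is an orbital of $g$. Consequently the pairs $(A, f)$ and $(B, g)$ are signed orbitals in $\mathcal{SO}(G)$, and the hypothesis that $G$ has no transition chains therefore applies to this pair.

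I would then argue by contradiction. Suppose $A \cap B \neq \emptyset$ but none of $A \subset B$, $B \subset A$, or $A = B$ holds. Then in particular $A \not\subset B$ and $B \not\subset A$, so the signed orbitals $(A, f), (B, g)$ satisfy precisely the three clauses in the definition of a transition chain, namely nonempty overlap together with mutual non-containment. This contradicts the assumption that $G$ contains no transition chains, and the lemma follows.

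The only potentially subtle point — and what I would be careful about in writing this up — is the distinction between element orbitals and group orbitals. The notation $\mathcal{O}(G)$ was defined earlier in the excerpt to mean the element orbitals arising from signed orbitals of elements of $G$, not the (larger) group orbitals of $G$ itself. Because element orbitals always come equipped with a witnessing element, the lift from orbitals in $\mathcal{O}(G)$ to signed orbitals in $\mathcal{SO}(G)$ used above is automatic. With that check in hand, there is no real obstacle: the lemma is essentially the contrapositive of the definition of ``no transition chains,'' restated at the level of orbitals rather than signed orbitals.
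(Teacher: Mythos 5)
Your proof is correct: the paper states this lemma without proof as an elementary fact, and your argument --- lifting $A,B$ to signed orbitals via witnessing elements and observing that the failure of all three conclusions is precisely the definition of a transition chain --- is exactly the intended unwinding of the definition. Your remark about element orbitals versus group orbitals is the right point to be careful about, and you handle it correctly.
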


\begin{rmk} Let $T$ be a tower in a subgroup $G$ of PLo(I).
 \begin{enumerate}[label={(\arabic*)},ref={\thecor~(\arabic*)}]
 \item Each subset of $T$ is a tower.
 \item $\mathcal{O}(T)$ and each subset of it is a stack.
 \item $\uparrow \mathcal{O}(T)$ in $\mathcal{O}(G)$ is a stack.
 \item $\downarrow \mathcal{O}(T)$ in $\mathcal{O}(G)$ may not be a stack.
 \end{enumerate}
 \end{rmk}

\begin{lem} 
 \begin{enumerate}[label={(\arabic*)},ref={\thecor~(\arabic*)}] 
 \item If $T$ is a tower of elements in PLo(I), then its underlying stack $\mathcal{O}(T)$ is order isomorphic to $T$.
\item  \mylabel{basic conj orbitals} If $g, c \in$ PLo(I) and the orbitals of $g$ are $\{A_i \, | \, 1 \leq i \leq n \}$, then the orbitals of $g^c$ are $\{A_i c \, | \, 1 \leq i \leq n\}$, and the map $\phi: \mathcal{O}(g) \longrightarrow \mathcal{O}(g^c)$ which takes $A_i$ to $A_i c$ is a bijection.
\item \mylabel{basic conj towers} If $T = \{(A_i,a_i) \, | \, i \in I \}$ is a tower in PLo(I) and $c \in$ PLo(I), then the set $T^c \colonequals \{(A_ic,a_i^c) | i \in I \}$ is a tower and the map $\phi_c : T \longrightarrow T^c$ defined by $(O,g) \longmapsto (Oc, g^c)$ is an isomorphism of ordered sets. A similar result holds for stacks.
\item \mylabel{conj slopes} If $g,c \in PLo(I)$, then $g^c$ has the same leading and trailing slopes on each of its orbitals as $g$ has on its corresponding orbitals.  
\item \mylabel{moveover}  Let $G \leq PLo(I)$, $O$ be an orbital of $G$, and $x,y \in O$ with $x < y$.  Then $\exists g \in G$ such that $xg > y$.
\end{enumerate} 
\end{lem}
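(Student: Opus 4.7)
The plan for this five-part lemma is to dispatch the combinatorial items (1)--(3) by unwinding definitions, handle the slope statement (4) by a one-sided chain rule, and prove the ``move-over'' statement (5) by a supremum argument. For (1), the order on $\mathcal{SO}(G)$ is the lexicographic product of containment of orbitals with the \emph{trivial} order on signatures, so any two distinct elements $(A,f),(B,g)$ of a chain in $\mathcal{SO}(G)$ must satisfy $A\neq B$, else they would be incomparable; the projection $(A,f)\mapsto A$ is therefore an order isomorphism from the tower $T$ onto its underlying stack $\mathcal{O}(T)$. For (2), the identity $(x)g^{c}=((xc^{-1})g)c$ shows that $x$ is moved by $g^c$ iff $xc^{-1}$ is moved by $g$, so $Supp(g^c)=Supp(g)\cdot c$; since $c$ is an increasing self-homeomorphism of $I$, it carries the connected components of $Supp(g)$ bijectively onto those of $Supp(g^c)$, yielding the orbital correspondence $A_i \leftrightarrow A_i c$. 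Part (3) then follows from (2) together with the observation that $c$, being order-preserving, also preserves strict containment of intervals, so $\phi_c$ is an order-preserving bijection whose inverse is $\phi_{c^{-1}}$; projecting to orbitals gives the stack version.

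For (4), fix an orbital $A=(p,q)$ of $g$; by (2) the corresponding orbital of $g^c$ is $Ac=(pc,qc)$. Choose $\epsilon>0$ small enough that $c$ is affine on $[p,p+\epsilon]$ with right-slope $t$ and $g$ is affine on $[p,p+\epsilon]$ with leading slope $s$, using that $p$ is fixed by $g$. Then $c^{-1}$ has right-slope $1/t$ at $pc$, and applying the chain rule to the composition $g^c=c^{-1}gc$ yields leading slope $(1/t)\cdot s\cdot t=s$ for $g^c$ on $Ac$. The trailing slope case is symmetric.

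For (5), the substantive part, write $O=(p,q)$ and first observe that every $g\in G$ fixes $p$ and $q$: these are either the endpoints $0$ and $1$ of $I$, which are automatically fixed in $PLo(I)$, or they lie in the closed complement of the open set $Supp(G)$ as boundary points of the component $O$. By order-preservation, every $g\in G$ thus maps $O$ to itself, so $xg\in O$ for all $g\in G$. Let $r=\sup\{xg:g\in G\}$, so $x\leq r\leq q$. Suppose for contradiction that $r<q$. Then $r\in O$, so some $h\in G$ has an orbital $A=(e,f)$ containing $r$, and after replacing $h$ by $h^{-1}$ if necessary we may assume $h$ moves points to the right on $A$. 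By the definition of supremum, choose $g\in G$ with $xg\in(e,r]$; then $xgh^n\in A$ converges monotonically upward to $f>r$, so $xgh^n>r$ for large $n$, contradicting the definition of $r$. Hence $r=q>y$, and so some $g\in G$ satisfies $xg>y$. The main obstacle in the whole lemma is this last supremum argument, and within it the only mildly delicate point is the $G$-invariance of the group orbital $O$; everything else is immediate from the definitions.
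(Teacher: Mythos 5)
Your proofs are correct. The paper states this lemma without proof, introducing it with ``The following facts are elementary,'' so there is no argument of the author's to compare against; your write-up simply supplies the standard justifications. The only item with real content is (5), and your supremum argument — using that the ends of a group orbital are fixed by every element so that $O$ is $G$-invariant, then pushing the supremum past any interior point by iterating a suitable one-bump conjugate-free element $h$ on an orbital containing it — is the usual and complete way to prove it; the remaining parts follow from the definitions and the one-sided chain rule exactly as you describe.
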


\begin{lem} \mylabel{multiply basics} Let $G$ be a group without transition chains and $a,b \in G$ with orbitals $O_a, O_b$, respectively.
 \begin{enumerate}[label={(\arabic*)},ref={\thecor~(\arabic*)}]
  \item \mylabel{end}  If $O_a$ and $O_b$ share an end, then $O_a = O_b$.  The contrapositive is also very useful: If $O_a \neq O_b$, then $a$ and $b$ do not share an end.
 \item \mylabel{fd} If $O_a$ is properly contained in $O_b$, then $O_a$ is in a fundamental domain of $(O_b,b)$.  Thus towers in a group without transition chains are always fundamental.
\item \mylabel{multiply} If $O_{a_1} \subset O_{a_2} \subset \cdots \subset O_{a_n}$ is a proper chain of orbitals of $a_1, a_2, \cdots, a_n$, respectively, then $O_{a_n}$ is an orbital of the products $a_1a_2 \cdots a_n$ and $a_na_{n-1}\cdots a_1$.
%\item \mylabel{moveover} If $a$ has at least one orbital $O_a$ properly contained in $O_b$, then there exists a power $n \in \mathbb{N}$ such that $[a^{b^n}, a] = 1$ on $O_b$. 
 \end{enumerate}
 \end{lem}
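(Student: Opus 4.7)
The plan is to prove the three parts in order, with \textbf{(1)} the main obstacle; \textbf{(2)} and \textbf{(3)} will follow by short geometric arguments once (1) is in hand.

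For \textbf{(1)} I argue by contradiction. By Lemma~\ref{containment}, if $O_a$ and $O_b$ share an endpoint without being equal, we may assume WLOG $O_a = (p, q_a) \subsetneq O_b = (p, r)$ share the left endpoint $p$, and after possibly replacing $a$ or $b$ by its inverse, that both move right on their orbitals. Form $c = [a,b] = a^{-1}b^{-1}ab$. On a small interval $[p, p+\gamma]$, $a$ and $b$ are affine maps fixing $p$ and hence commute, so $c$ is the identity there. Similarly $c$ is the identity on some $[r-\gamma', r]$: either $a$ is identity near $r$ (so $c = b^{-1}b = \mathrm{id}$), or $a$ and $b$ are both affine fixing $r$ on a common neighborhood and hence commute. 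However $c(q_a) \neq q_a$: $a^{-1}$ fixes $q_a$; $b^{-1}$ sends $q_a$ to some $u \in (p, q_a) = O_a$ (since $b^{-1}$ preserves $O_b$ and moves left on it); $a$ then moves $u$ nontrivially; and $b$ cannot return to $q_a$ because $a(u) \neq u$. Hence $c$ has an orbital $O_c = (p_c, q_c)$ with $p < p_c < q_a < q_c < r$. Then $O_a$ and $O_c$ overlap on $(p_c, q_a)$ while $p < p_c$ and $q_a < q_c$ rule out containment in either direction---a transition chain contradicting the hypothesis. The shared right-endpoint case is handled analogously; in some configurations the transition chain must be exhibited by pairing $O_c$ with the conjugate orbital $O_{c^b}$ rather than with $O_a$.

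For \textbf{(2)}, suppose $O_a \subsetneq O_b$. By (1) the endpoints of $O_a = (p_a, q_a)$ lie strictly inside $O_b$; WLOG $b$ moves right. Observe that $O_a$ lies in a fundamental domain of $b$ iff $q_a \le p_a b$; if not, then $p_a < p_a b < q_a$, and the orbital $O_a \cdot b = (p_a b, q_a b)$ of $a^b$ overlaps $O_a$ on $(p_a b, q_a)$ with $p_a < p_a b$ and $q_a < q_a b$, producing a transition chain---contradiction.

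For \textbf{(3)}, induct on $n$. Let $g = a_1 \cdots a_{n-1}$ and $f = g a_n$; by induction $O_{a_{n-1}}$ is an orbital of $g$. Both endpoints of $O_{a_n}$ are fixed by $f$: they are fixed by $a_n$, and for $i < n$ no orbital of $a_i$ can contain an endpoint of $O_{a_n}$ in its interior (any orbital of $a_i$ meeting $O_{a_n}$ must, by Lemma~\ref{containment} together with disjointness of the orbitals of a single function, be contained in $O_{a_n}$, and then by (1) cannot share an endpoint with $O_{a_n}$). Moreover, each fundamental domain $F = [y, ya_n)$ of $a_n$ is invariant under $g$: by (2), every orbital of each $a_i$ lying in $O_{a_n}$ sits in a single fundamental domain of $a_n$, so $a_i$ acts within each such $F$ and fixes its boundary. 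Hence for $x \in F$, we have $xg \in F$, whereas $xf = xga_n \in a_n(F)$ is disjoint from $F$, so $xf \neq x$. Since the fundamental domains of $a_n$ partition $O_{a_n}$, $f$ moves every point of $O_{a_n}$, so $O_{a_n}$ is an orbital of $f$. The reverse product $a_n \cdots a_1$ is handled symmetrically. The genuine difficulty lies in (1): once that rigidity is in place, (2) and (3) are essentially forced by the transition-chain hypothesis.
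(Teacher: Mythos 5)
The paper offers no proof of this lemma (it is introduced under ``The following facts are elementary''), so there is no argument of the author's to compare against; I assess yours directly. Parts (1) and (2) are correct. In (1), the commutator $c=[a,b]$ being trivial near both ends of $O_b$ yet moving $q_a$, so that its orbital through $q_a$ crosses the right end of $O_a$ and forms a transition chain with $(O_a,a)$, is a sound argument; the shared-right-endpoint case does reduce to it (e.g.\ by conjugating the whole picture by $x\mapsto 1-x$), so your closing caveat about $O_{c^b}$ is unnecessary but harmless. Part (2) is the standard argument and is fine.

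Part (3), however, rests on a false intermediate claim: ``each fundamental domain $F=[y,ya_n)$ of $a_n$ is invariant under $g$.'' Part (2) only places each orbital $O'$ of $a_i$ inside $O_{a_n}$ in \emph{some} fundamental domain, not in the particular $F$ at hand; since fundamental domains overlap one another, $O'$ can straddle the endpoint $ya_n$ of $F$ while sitting comfortably inside a different interval $[x,xa_n)$, and then $a_i$ moves points of $F\cap O'$ out of $F$. Likewise, the collection of \emph{all} fundamental domains does not partition $O_{a_n}$; only the translates $[ya_n^k,ya_n^{k+1})$ of a fixed one do, and an orbital of some $a_i$ may straddle one of those partition points no matter how $y$ is chosen. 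The conclusion you need, namely $(x)ga_n\neq x$ for every $x\in O_{a_n}$, is true but requires a cage adapted to $x$ rather than a global one. One repair: by Lemma \ref{containment}, any two intersecting orbitals of the $a_i$ ($i<n$) inside $O_{a_n}$ are nested, so each connected component of $\bigl(\bigcup_{i<n}\mathrm{Supp}(a_i)\bigr)\cap O_{a_n}$ is a single maximal such orbital; that component is $g$-invariant and, by (2), lies in one fundamental domain $[z,za_n)$, so $x$ and $(x)g$ both lie in $[z,za_n)$ while $(x)ga_n\in[za_n,za_n^2)$. A slicker repair is to induct from the front: write $a_1\cdots a_n=a_1h$ with $h=a_2\cdots a_n$, so that $O_{a_n}$ is an orbital of $h$ by induction, every orbital of $a_1$ meeting $O_{a_n}$ is properly contained in it and hence lies in a fundamental domain of $(O_{a_n},h)$ by (2) applied to the pair $a_1,h$; now $x$ and $(x)a_1$ automatically share a fundamental domain because they share an orbital of the single function $a_1$. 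Your treatment of the endpoints of $O_{a_n}$ being fixed by the product is correct.
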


\mysection{Countability}\mylabel{sectioncountable}

In this section, we prove the following

\begin{thm}\mylabel{countable}
A subgroup of PLo(I) without transition chains is countable.
\end{thm}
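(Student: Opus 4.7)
The plan is to prove countability in three steps: reduce to a single group orbital, show that the set of all breakpoints of all elements is countable, and then deduce countability of $G$ from that. First, the group orbitals $\{U_i\}$ of $G$ are pairwise disjoint open subintervals of $(0,1)$, hence form an at most countable family. The endpoints of each $U_i$ are fixed by all of $G$, so each $g \in G$ restricts to an element of $PLo(\overline{U_i})$, and since $g$ has only finitely many orbitals, it acts nontrivially on only finitely many $U_i$. Thus the restriction map embeds $G$ into the direct sum $\bigoplus_i G|_{U_i}$, which is countable whenever each factor is. Since each restriction inherits the no-transition-chains property, we may assume $G$ has a unique group orbital, taken to be $(0,1)$.

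Second, the key step is to show $B_G := \bigcup_{g \in G} B_g$ is countable. Note that $B_G$ is $G$-invariant: if $b \in B_g$ and $h \in G$, then $(b)h^{-1} \in B_{hgh^{-1}} \subseteq B_G$. By Lemma~\ref{end}, distinct orbitals of $G$ have distinct right endpoints, and each right endpoint lies in $B_G \cup \{1\}$, so $|\mathcal{O}(G)| \leq |B_G|+1$. The natural approach is to establish countability of $\mathcal{O}(G)$ by recursion on the laminar structure: the maximal elements of $\mathcal{O}(G)$ form a pairwise disjoint family of open intervals, hence are countable, and one then proceeds inside each maximal orbital while carefully handling non-well-founded nesting. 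The PL structure is essential, since without it the no-transition-chains hypothesis alone permits uncountable laminar families (for instance $\{(0,x) : x \in (0,1)\}$); one must leverage the linear behavior near orbital boundaries together with closure of $G$ under composition to rule out such pathologies.

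Third, given $B_G$ countable, conclude $G$ is countable. Each $g \in G$ is a piecewise-linear homeomorphism determined by its values on $B_g \cup \{0,1\}$, and since $(B_g)g = B_{g^{-1}} \subseteq B_G$ while $g$ fixes $0$ and $1$, each $g$ corresponds to a finite order-preserving, endpoint-fixing partial function from $B_G \cup \{0,1\}$ to itself. The collection of all such partial functions has cardinality $\sum_n |B_G|^{2n} = \aleph_0$, so $G$ is countable. The main obstacle is the second step: the no-transition-chain condition alone only forces $\mathcal{O}(G)$ to be laminar, which in general need not be countable as a family of intervals in $\mathbb{R}$, so the heart of the argument is to extract enough rigidity from the PL hypothesis (finitely many breakpoints per element, explicit linear behavior near orbital boundaries) together with group closure to force $\mathcal{O}(G)$ and $B_G$ to be countable.
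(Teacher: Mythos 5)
Your outline correctly identifies where the difficulty lives, but it does not actually cross it: the entire content of the theorem is concentrated in your second step, and there you only gesture at ``recursion on the laminar structure'' and ``leveraging the linear behavior near orbital boundaries,'' while yourself noting that laminarity alone permits uncountable families like $\{(0,x) : x \in (0,1)\}$. That is precisely the point at which a proof is required, and none is given. For comparison, the paper supplies two concrete mechanisms. First (Lemma \ref{towers count}), a chain of orbitals is countable because if $B \subsetneq A$ with $(A,f) \in \mathcal{SO}(G)$, the no-transition-chains hypothesis forces $B$ into a fundamental domain of $f$ on $A$ (Lemma \ref{fd}), so $B$ misses the interval $(a,(x)f^{-1})$ at the left end of $A$; these intervals are pairwise disjoint and each contains a rational. (This is also exactly what kills $\{(0,x)\}$: those orbitals all share the left endpoint $0$, which Lemma \ref{end} forbids for distinct orbitals.) Second (Lemma \ref{orbitals count}), an arbitrary subfamily of $\mathcal{O}(G)$ is stratified by length into the bands $\left(\left(2/3\right)^n,\left(2/3\right)^{n-1}\right]$; within one band, two orbitals meeting a common third cannot be disjoint, since one would then have length at most half of the other's, contradicting the $2/3$ ratio, so after passing to a countable subcover everything reduces to counting chains. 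Neither mechanism appears in your proposal.

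There is a second, independent gap. Even granting countability of $\mathcal{O}(G)$, your route needs the set $B_G$ of all breakpoints to be countable, and that does not follow from it: breakpoints sit in the interiors of orbitals, and uncountably many distinct bumps could a priori share a single orbital. The paper avoids $B_G$ entirely: for each fixed orbital $O$ it counts the bumps supported on $O$ (Lemma \ref{signatures count}) by encoding a bump by its initial slope together with its bouncepoints and the slopes leaving them, then observing that bouncepoints are endpoints of orbitals of products $fg^{-1}$ (hence countable by Lemma \ref{orbitals count}) and that the admissible slopes are countable by the initial/terminal-slope rigidity of Lemma \ref{it counts}. Indeed, countability of all breakpoints is deduced in the paper only as a corollary \emph{after} $G$ is known to be countable. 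Your step 3 (reconstructing $g$ from the finite partial map $b \mapsto (b)g$) and your reduction to a single group orbital are both fine, but as written the two load-bearing claims of the proposal are left unproved. A minor further point: a right endpoint of an element orbital need not be a breakpoint (the slope can be $\neq 1$ across a fixed point separating two orbitals), so $|\mathcal{O}(G)| \leq |B_G|+1$ needs a different justification, although injectivity of the right-endpoint map does hold by Lemma \ref{end}.
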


 The theorem follows from the next 3 lemmas.  It is elementary to show that a transition chain generates a subgroup of infinite depth, hence is not solvable.  Thus Theorem \ref{countable} easily implies Theorem \ref{countableloc}.  Furthermore, the theorems are actually equivalent due to \ref{loc solv}. \\

Let $\mathcal{T}$ be the collection of all subgroups of PLo(I) without transition chains.  Let $G$ be a group in $\mathcal{T}$.

\begin{lem}\mylabel{towers count}
Towers and stacks in $G$ are countable.
\end{lem}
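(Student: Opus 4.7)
The plan is to show that any stack $S$ in $G$ injects into $\mathbb{Q}$; since a tower is order-isomorphic to its underlying stack, countability of towers will follow immediately.

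For each $A = (x_A, y_A) \in S$, set $\mathcal{B}(A) = \{B \in S : B \subsetneq A\}$ and fix some $f \in G$ with $A \in \mathcal{O}(f)$. By Lemma \ref{fd}, every $B = (x_B, y_B) \in \mathcal{B}(A)$ lies in some fundamental domain $[x, xf)$ of $(A,f)$ with $x \le x_B$; hence $y_B \le xf \le x_B f$, so $y_B \le x_B f$.

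The crux is to deduce that $\bigcup \mathcal{B}(A) \subsetneq A$. Suppose some $B \in \mathcal{B}(A)$ has $x_B$ close to $x_A$. Continuity of $f$ at the fixed point $x_A$ (where $x_A f = x_A$) then forces $x_B f$ close to $x_A$, and combined with $y_B \le x_B f$ this collapses $B$ toward the single point $x_A$. A symmetric argument shows that orbitals in $\mathcal{B}(A)$ whose right endpoint approaches $y_A$ collapse toward $y_A$. Since $S$ is totally ordered by inclusion, any two elements of $\mathcal{B}(A)$ must be nested; but an orbital shrunk near $x_A$ and one shrunk near $y_A$ would be disjoint, violating nestedness. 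Hence either $\inf_{B \in \mathcal{B}(A)} x_B > x_A$ or $\sup_{B \in \mathcal{B}(A)} y_B < y_A$, so at least one end of $A$ is left uncovered.

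Inside the uncovered gap, which is a nonempty open subinterval of $A$, pick a rational $q_A$. Then $q_A \in A$ but $q_A \notin B$ for every $B \in \mathcal{B}(A)$. For any $A \subsetneq A'$ in $S$ we have $A \in \mathcal{B}(A')$, so $q_{A'} \notin A$, while $q_A \in A$; hence $q_A \ne q_{A'}$. Thus $A \mapsto q_A$ is an injection $S \to \mathbb{Q}$, giving $|S| \le \aleph_0$, and the same bound transfers to towers.

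The main obstacle is the collapse argument establishing $\bigcup \mathcal{B}(A) \subsetneq A$, which must be carried out when $\mathcal{B}(A)$ has no maximum; this is where Lemma \ref{fd}, the piecewise-linear continuity of $f$ at its fixed endpoints, and the total ordering of $S$ must be combined. Once this geometric fact is in place, the rational-labeling injection is routine.
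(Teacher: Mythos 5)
Your proof is correct, and its overall skeleton matches the paper's: both arguments produce, for each orbital $A$ in the chain, a rational point of $A$ lying in no smaller orbital of the chain, and then read off an injection of the stack into $\mathbb{Q}$ (the paper phrases this as a family of pairwise disjoint intervals each containing a rational). Where you genuinely differ is in how the uncovered gap is produced. The paper fixes a single orbital $B$ in the downset of $A$, picks $x \in B$, and checks directly that $(x_A,\,xf^{-1})$ misses every smaller orbital: those inside $B$ because they lie in $B$, and those between $B$ and $A$ because a fundamental domain of $(A,f)$ cannot contain both $x$ and $xf^{-1}$. You instead run a limiting argument: continuity of $f$ at its fixed endpoints forces orbitals whose left (resp.\ right) endpoint approaches $x_A$ (resp.\ $y_A$) to collapse onto that endpoint, and the chain condition forbids both accumulations at once, so the smaller orbitals avoid a neighborhood of at least one end. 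Both routes rest on Lemma \ref{fd}, hence on the absence of transition chains; the paper's version is more explicit and hands you disjoint intervals outright, while yours is softer but equally valid and makes visible the fact that the smaller orbitals can accumulate at only one end of $A$. Two small points to tidy up: state at the outset that you replace $f$ by $f^{-1}$ when $f$ moves points left on $A$ (otherwise $[x,xf)$ is not a fundamental domain and the inequality $y_B \le x_B f$ comes out reversed), and dispose of the trivial case $\mathcal{B}(A)=\emptyset$, where any rational in $A$ serves as $q_A$.
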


\begin{proof}
Since every stack is in bijection with some tower by simply picking signatures for each of the orbitals, it is enough to show that the underlying stack of every tower is countable. Let $T$ be a tower in $G$ and $(A,f) \in T$.  Let $A=(a,b)$.  Elements of $T$ are in bijection with elements of $\mathcal{O}(T)$. We produce a collection of disjoint open intervals of $[0,1]$ which are in bijection with elements of $\mathcal{O}(T)$.  Since each of these intervals contains a rational, the set of all the intervals is countable.
Consider the downset of $A$ in $\mathcal{O}(T)$.  We claim there is a $c$ with $a< c < b$ such that $(a,c) \cap C  = \emptyset$ for every $C \in \downarrow A$. The interval $(a,c)$ is the one we seek.  The claimed property of $c$ will imply all these 
intervals are disjoint.

If $f$ moves points left on $A$, then replace $f$ with its inverse.  This does not hinder our argument, since $f$ and $f^{-1}$ have the same orbitals.  Let $(B,g)$ be an element of $\downarrow (A,f)$ in T.  Then $B$ separates $\downarrow A \text{ in } \mathcal{O}(T)$ into two pieces: $\uparrow B$ in $\downarrow A$ and $\downarrow B$ in $\downarrow A$.  Let $x \in B$ and $c = (x)f^{-1}$.  Then $(a,c) \cap B = \emptyset$ due to Lemma \ref{fd}.  Also $(a,c)\cap C = \emptyset$ for any $C \in \downarrow B$. Furthermore, since intervals in $\uparrow B$ contain $x$ and are contained in a fundamental domain of $(A,f)$, they do not contain $(x)f^{-1}$.  Thus intervals in $\uparrow B$ have left endpoints larger than $c$ and so $C \cap (a, c) = \emptyset$ for any $C \in \uparrow B$.  Therefore, $C \cap (a,c) = \emptyset$ for all $C \in \downarrow A$ and the proof is complete. 
\end{proof}

\begin{lem}\mylabel{orbitals count}
The set $\mathcal{O}(G)$ is countable.
\end{lem}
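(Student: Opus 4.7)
The plan is to reduce this statement to the previous lemma by partitioning $\mathcal{O}(G)$ into countably many stacks, indexed by rational points. The key structural input will be Lemma \ref{containment}: in a group without transition chains, any two intersecting orbitals are comparable under inclusion. This turns ``sharing a point'' into ``being in a chain'', which is exactly what I need to invoke Lemma \ref{towers count}.

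First I would observe that every $O \in \mathcal{O}(G)$ is a non-empty open subinterval of $[0,1]$ and hence contains some rational. This gives the decomposition
$$\mathcal{O}(G) \;=\; \bigcup_{q \in \mathbb{Q} \cap (0,1)} \mathcal{O}_q, \qquad \mathcal{O}_q := \{\, O \in \mathcal{O}(G) \,:\, q \in O \,\}.$$
Since the index set is countable, it suffices to show each $\mathcal{O}_q$ is countable.

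Next I would fix $q \in \mathbb{Q} \cap (0,1)$ and argue that $\mathcal{O}_q$ is a stack. Given $O_1, O_2 \in \mathcal{O}_q$, both contain $q$, so $O_1 \cap O_2 \neq \emptyset$; Lemma \ref{containment} then forces $O_1 \subseteq O_2$, $O_2 \subseteq O_1$, or $O_1 = O_2$. Thus $\mathcal{O}_q$ is totally ordered by inclusion, i.e. a stack in $G$, and Lemma \ref{towers count} yields $|\mathcal{O}_q| \leq \aleph_0$.

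Finally, $\mathcal{O}(G)$ is exhibited as a countable union of countable sets, and hence is countable. I do not anticipate a serious obstacle: the only nontrivial ingredients are Lemma \ref{containment} (which supplies the ``intersect implies nest'' dichotomy) and Lemma \ref{towers count} (already established), together with density of $\mathbb{Q}$ in $\mathbb{R}$. The main thing to be careful about is ensuring that the $\mathcal{O}_q$ really do cover $\mathcal{O}(G)$, which reduces to the trivial remark that orbitals are non-degenerate open intervals.
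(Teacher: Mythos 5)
Your proof is correct, and it takes a genuinely different — and more direct — route than the paper. The paper stratifies $\mathcal{O}(G)$ by orbital length into the countably many bands $I_n = \left(\left(2/3\right)^n, \left(2/3\right)^{n-1}\right]$, invokes second countability of $\mathbb{R}$ to extract a countable subcover of the union of the orbitals in a band, and then rules out two disjoint orbitals meeting a common one via the arithmetic contradiction $\tfrac{2}{3}m(O) \leq m(A) \leq \tfrac{1}{2}m(O)$; only after all that does it reduce to Lemma \ref{towers count}. You reach the same reduction in one step: the orbitals through a fixed rational $q$ pairwise intersect, hence by Lemma \ref{containment} form a chain, i.e.\ a stack, which Lemma \ref{towers count} makes countable, and countably many rationals cover all orbitals since each is a nondegenerate open interval. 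Both arguments rest on exactly the same two ingredients (no transition chains forces nesting; stacks are countable), but yours dispenses with the length partition and the Lindelöf/subcover machinery entirely, at no loss of generality. The one point worth stating explicitly in a final write-up is that the families $\mathcal{O}_q$ form a cover rather than a partition (each orbital lies in infinitely many of them), which is harmless for the countability conclusion.
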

\begin{proof}
Let $L$ be the set of all lengths of elements in $\mathcal{O} (G)$.  $L$ is some subset of $(0,1]$. 
Let
$m : \mathcal{O} (G) \rightarrow L$ be the usual measure on intervals, so $m$ maps each orbital to its length.  

For each positive integer $n$, let $I_n = \left(\left( 2/3 \right)^n, \left(2/3 \right)^{n-1}\right]$.  The set $\displaystyle C = \{I_n | n \in \mathbb{N} \}$ is a partition of the interval $(0,1]$.  Since C is countable, it is enough to show that $m$ maps countably many elements of $\mathcal{O}(G)$ into each element of $C$.  Consider an arbitrary element $I_n$ of $C$, let $R$ be the set of all elements of $\mathcal{O}(G)$ which $m$ maps into $I_n$, and let $K$ be the union of elements of $R$. 

Equip $\mathbb{R}$ with the topology generated by open intervals, and $K$ with the corresponding subspace topology.  Since $\mathbb{R}$ has a countable basis, so does $K$.  Therefore, the open cover $R$ of $K$ has a countable subcover $S$.  If every element of $S$ intersected only countably many elements of $R$, then $R$ would be countable.  Hence, we will show every element of $S$ intersects only countably many elements of $R$.  Let $O$ be an arbitrary element of $S$.
Let $U$ be the subset of $R$ whose elements intersect $O$.   Our aim now is to show $U$ is countable.  We do this by showing $U$ is a stack and therefore countable by the previous lemma.

We must show that every pair of distinct elements $A,B \in U$ are comparable.  Since $G$ has no transition chains, intersection of orbitals implies containment.  Thus we can divide $U - \{O\}$ into two pieces:  $\uparrow O$ and $\downarrow O$, 
or those properly containing $O$ and those properly contained in $O$, respectively. Assume toward a contradiction that $A$ and $B$ are disjoint.  Then they must be contained in $\downarrow O$.  Because they are contained in $O$, one of them will have length less one-half the length of $O$ by Lemma \ref{end}. Assume it is $A$. Since $m(A), m(O) \in I_n$, multiplying their lengths by 2/3 results in a number which is in $I_{n-1}$, hence not in $I_n$.  In particular, $\ds \frac{2}{3} m(O) \leq m(A)$.  Thus $\ds \frac{2}{3} m(O) \leq m(A) \leq \frac{1}{2} m(O)$, a contradiction because $m(A) > 0$.  Therefore $A$ and $B$ are not disjoint, so they must be comparable.
\end{proof}

The next lemma will require the following definition.

Define a {\bf bouncepoint} of a pair $f,g$ of PL functions to be a point $b$ where 1. $(b)f = (b)g$, 2. there is some open interval $(b,c)$ on which $(x)g \neq (x)f$, and 3. $b$ is a breakpoint of $f$ or $g$.  By a {\bf bouncepoint} $b$ of a single function $f$, we mean $b$ is a breakpoint of $f$ and there exists some function $g$ such that $b$ is a bouncepoint of the pair $f,g$.  %Bouncepoints of $f,g$ are right boundary points of maximal closed intervals where $f=g$ with the additional restriction that bouncepoints do not occur in the interiors of both an affine component for both $f$ and an affine component of $g$.  (By contrast, the situation where $b$ satisfies 1 and 2 of the definition of bouncepoint and  $b$ does occur in the interiors of affine components for both $f$ and $g$ will be referred to later as a {\bf corner}.) 

%Visually a bouncepoint is a point where one function bounces off the other as the x-values increase; it's possible that one crosses over the other, but there is also a sharp change in behavior.  If we call the previous notion of a bouncepoint a right bouncepoint, then we have the analogous notion of a left bouncepoint.  With this terminology, if two distinct functions $f,g$ have affine components such that the graphs of those components begin or end at the same point, then $f,g$ have a bouncepoint (either right or left).  If $[a,b]$ is a maximal closed interval on which two elements $f,g$ are equal, $a \neq b$, $a \neq 0$, and $b \neq 1$, then both $a$ and $b$ are bouncepoints for the pair $f,g$.  The following argument relies solely on right bouncepoints.

 We will also need the following Lemma which is a consequence of results from Section 3.3.2 of  paper \cite{alg}:  
 
\begin{lem}\mylabel{it counts}
Given an orbital $O$ of a group without transition chains, there are at most countably many possible initial and terminal slopes for elements with that orbital. 
\end{lem}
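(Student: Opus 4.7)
The plan is to count the set of initial slopes of elements of $H := \{f \in G : O \in \mathcal{O}(f)\}$ on the orbital $O = (a,b)$; terminal slopes at $b$ are handled by the symmetric argument. The first step is to set up the slope homomorphism: since each element of $PLo(I)$ has finitely many breakpoints and is therefore affine on a one-sided neighborhood of any fixed point, the right-derivative at $a$ defines a group homomorphism $\phi : G_a \to (\mathbb{R}^+, \cdot)$ from the stabilizer of $a$, by the chain rule. Every $f \in H$ lies in $G_a$, and its initial slope on $O$ is $\phi(f)$; hence the set of initial slopes is $\phi(H) \subseteq \phi(G_a)$.

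Next I would translate slope distinctness into orbital existence. For $f, g \in H$ with $\phi(f) \neq \phi(g)$, the product $fg^{-1}$ fixes both endpoints of $O$ and has slope $\phi(f)/\phi(g) \neq 1$ at $a$, so it has an orbital with left endpoint $a$; because it also fixes $b$, this orbital lies in $O$ (possibly equal to $O$ itself, if $fg^{-1}$ has no interior fixed point). Let $\mathcal{A} = \{A \in \mathcal{O}(G) : A \subseteq O \text{ and } a \text{ is the left endpoint of } A\}$. By the no-transition-chain hypothesis, any two members of $\mathcal{A}$ must be nested, so $\mathcal{A}$ is a stack, and it is countable as a subset of $\mathcal{O}(G)$ by Lemma \ref{orbitals count}.

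Finally I would invoke Section 3.3.2 of \cite{alg} to conclude that $\phi(G_a)$ is countable. Bleak's results there analyze the algebraic structure of slope groups at a fixed endpoint for groups without transition chains, and they apply in our setting because Theorem \ref{loc solv} identifies $\mathcal{T}$ with the class of locally solvable subgroups, so every finitely generated subgroup of $G$ is solvable. Combined with the countability of the stack $\mathcal{A}$, those results yield that the image $\phi(G_a)$ is a countable subgroup of $\mathbb{R}^+$. Since $\phi(H) \subseteq \phi(G_a)$, the set of initial slopes is countable, and the symmetric argument at the right endpoint $b$ handles the terminal slopes.

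The main obstacle is this last step: extracting from \cite{alg} the countability of $\phi(G_a)$. A self-contained attempt to descend the stack $\mathcal{A}$ by iterating the difference construction $(f,g) \mapsto fg^{-1}$ stalls when $fg^{-1}$ happens to have $O$ itself as its orbital, because then one has merely replaced the original family with a new one exhibiting the same pathology rather than descending to a strictly smaller orbital; moreover, $\mathcal{A}$ need not be well-founded under reverse inclusion. Bleak's structural analysis of solvable subgroups bridges this gap, and the technical work is to translate his solvable-group statements into a uniform countability bound valid for all of $G \in \mathcal{T}$.
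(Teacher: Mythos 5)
The paper never actually proves this lemma: it is asserted as a consequence of Section 3.3.2 of \cite{alg}, and the underlying mechanism is only identified later (in Section \ref{orbgeorelns}), namely that two elements sharing an orbital in a solvable subgroup are powers of a single element near the ends of that orbital (Lemma 3.12 of \cite{alg}; cf.\ the controller of Lemma \ref{control}). Your proposal ends up in the same place --- deferring the countability to \cite{alg} --- so in that respect it matches the paper's treatment. Two things should still be flagged. First, your intermediate reduction is vacuous: in a group without transition chains, an element orbital contained in $O$ that shares the left endpoint $a$ with $O$ must \emph{equal} $O$ by Lemma \ref{end}, so your set $\mathcal{A}$ is the singleton $\{O\}$; its countability and stack structure carry no information about slopes, and, as you yourself observe, $fg^{-1}$ always lands back on $O$, so the difference construction can never descend. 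Second, the appeal to \cite{alg} can and should be made concrete, at which point the lemma has a short self-contained proof: fix $f_0$ with orbital $O=(a,b)$ and initial slope $s_0$; for any $g$ with orbital $O$, the group $\langle f_0,g\rangle$ is finitely generated without transition chains, hence solvable by Theorem \ref{loc solv}, so near $a$ one has $f_0=c^p$ and $g=c^q$ for a common element $c$ and integers $p,q$ with $p\neq 0$ (a PL map cannot have leading slope $1$ on its own orbital, so $s_0\neq 1$ and $c$ is nontrivial near $a$). The chain rule then gives that the initial slope of $g$ is $s_0^{q/p}$, so all initial slopes lie in the countable set $\{s_0^{r} : r\in\mathbb{Q}\}$, and terminal slopes are handled symmetrically. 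Without this extraction (or an equivalent one), the assertion that $\phi(G_a)$ is ``a countable subgroup of $\mathbb{R}^+$'' is precisely the claim that needs proof, so your write-up leaves the essential step unestablished.
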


At last, we state the final lemma for our proof of Theorem \ref{countable}.

\begin{lem}\mylabel{signatures count}
Let $F(O)$ be the set of all bumps of functions of $G$ which have orbital $O$.  Then $F(O)$ is countable for any $O \in \mathcal{O}(G)$.
\end{lem}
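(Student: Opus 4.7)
The plan is to encode each $f \in F(O)$ by its \emph{profile}, the finite sequence $(s_0, b_1, s_1, \ldots, b_k, s_k)$ recording the slopes on consecutive affine pieces of $f|_O$ and its interior breakpoints on $O = (x,y)$.  Since the profile determines the bump uniquely, it suffices to prove the set of realized profiles is countable.  By Lemma~\ref{it counts}, the initial and terminal slopes $s_0, s_k$ lie in countable sets; the remaining intermediate data will be controlled by induction on the length of a \emph{realized prefix} $P = (s_0, b_1, \ldots, b_j, s_j)$.

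The inductive step rests on the following comparison.  Fix a realized prefix $P$ and a reference $f_0 \in F(O)$ extending $P$.  Any other $f \in F(O)$ extending $P$ agrees with $f_0$ on an initial interval $(x, c)$ with $c = \min(b_{j+1}^f, b_{j+1}^{f_0})$ and then diverges, so that $f f_0^{-1} \in G$ is the identity on $(x,c)$ and nontrivial just to the right of $c$.  Thus $c$ is the left endpoint of an orbital of $ff_0^{-1}$, lying in the countable set of orbital endpoints guaranteed by Lemma~\ref{orbitals count}.  A direct computation gives the initial slope of $ff_0^{-1}$ on that orbital as a ratio of slopes drawn from $f$ and $f_0$, which is countable by Lemma~\ref{it counts}.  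Splitting on whether $b_{j+1}^f$ is smaller than, equal to, or greater than $b_{j+1}^{f_0}$: the first two cases locate the pair $(b_{j+1}^f, s_{j+1}^f)$ inside a countable set immediately.  In the remaining case I would substitute a different reference $f_1$ extending $P$ with $b_{j+1}^{f_1} > b_{j+1}^f$ to reduce to the first case; this covers every $b_{j+1}^f$ except the single value $\sup\{b_{j+1}^g : g \text{ extends } P\}$, at which point comparing $f$ to another extender achieving the same $b_{j+1}$ again pins the slope down countably.

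Combining, the set of one-step extensions of each realized prefix is countable, hence by induction every set of realized length-$j$ prefixes is countable, and since each bump has a finite profile, $F(O)$ is a countable union of countable sets.  The main obstacle is the bookkeeping around the third case and the supremum value -- ensuring that a suitable secondary reference is always available, or else that the exceptional value is itself a single point -- after which the argument reduces cleanly to Lemmas~\ref{orbitals count} and~\ref{it counts}.
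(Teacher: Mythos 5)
Your argument is correct, but it is organized quite differently from the paper's. You encode a bump by its \emph{full} profile (every interior breakpoint and every slope), so injectivity of the encoding is automatic, and all of the work goes into showing that the set of realized profiles is countable; you do this by induction on prefixes, comparing each extender $f$ of a prefix to a reference extender and reading off the new breakpoint and slope from an orbital of $ff_0^{-1}$ via Lemmas~\ref{orbitals count} and~\ref{it counts}. The paper instead defines a \emph{lossy} encoding $\phi(f)=(s_0,b_1,s_1,\dots,b_m,s_m)$ recording only the initial slope and the \emph{bouncepoints} of $f$ with the slopes leaving them; there the work is in proving $\phi$ injective (the first point where two bumps with equal initial slope diverge is necessarily a bouncepoint, so corners need not be recorded), after which countability of the image is immediate, with no induction and no case analysis on a supremum. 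The two proofs rest on the same two lemmas and the same basic observation that a point of divergence of $f$ and $f_0$ is an orbital endpoint of $ff_0^{-1}$, but your route trades the paper's injectivity argument for inductive bookkeeping. Two points in your write-up should be made explicit for the argument to close: first, the countable set produced in your ``first case'' must be seen to be independent of the reference --- it is contained in $\{(a,\sigma s_j): A\in\mathcal{O}(G),\ a \text{ the left endpoint of } A,\ \sigma \text{ an initial slope realized on } A\}$ --- since otherwise the union over the $f$-dependent references $f_1$ in your third case could a priori be uncountable; second, when $b_{j+1}^f=b_{j+1}^{f_0}$ and the following slopes agree, $f$ and $f_0$ do not diverge at $c$, but then the extension data coincides with that of $f_0$ and contributes nothing new. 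With those remarks added, your proof is complete.
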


\begin{proof}
Let $B_n = \{f \in F(O) \, | \, f \text{ has exactly } n \text{ breakpoints}\}$.  Since $\cup^{\infty}_{i=1} B_i = F(O)$, it is enough to show that $B_n$ is countable for $n \in \mathbb{N}$. We define an injective map $\phi$ from the set $B_n$ to a countable set.  Let $f \in B_n$, $x_0$ the left endpoint of $O$, and $s_0$ the initial slope of $f$ leaving $x_0$ (i.e., the initial slope of $f$ on $O$). The function $f$ has $n$ breakpoints, so it has at most $n$ bouncepoints.  Suppose $f$ has $m$ bouncepoints.  Let the bouncepoints of $f$ be $b_1, b_2, . . . ,b_m$, and assume the order on the index set matches that of the points. Let $s_1, s_2, . . . ,s_m$ be the slopes of $f$ leaving $b_1, b_2, . . . , b_m$, respectively, i.e., $s_i$ is the slope of the affine component with left endpoint $b_i$ for $1 \leq i \leq m$. Define $\phi(f)$ to be the ordered set of information $\{s_0, b_1, s_1, b_2, s_2, . . . , b_m, s_m\}$.  

First, we argue $\phi$ is injective.  Assume $f,g \in B_n$ such that $f \neq g $.  If $f,g$ have different initial slopes, we are done, so assume they have the same initial slope.  Consider the maximal closed interval $[x_0, b]$ on which which $f = g$.  Since $f,g$ have the same initial slope, $b$ is a bouncepoint. Note that if $f,g$ don't have the same initial slope, there may not exist any bouncepoint for the pair.  Thus, it is essential that our map $\phi$ include initial slopes. Because $f(x) = g(x)$ for $x<b$, $\phi(f)$ and $\phi(g)$ are the same until the point $b$ appears in one or the other.  At that slot in the ordered sets $\phi(f), \phi(g)$, there are two possibilities: 1. $b$ is a breakpoint (hence a bouncepoint) of exactly one of $f$ or $g$; or 2. $b$ is a breakpoint of both the single functions $f$ and $g$, in which case the next slopes must differ.  In either case, $\phi(f) \neq \phi(g)$.

Now we argue Im($\phi$) is countable by showing that the choices for the $s_i$'s and the $b_i$'s are countable.  Observe that if $b$ is a bouncepoint of some pair $f,g$ in a group $G$ then $b$ is an endpoint of an orbital of $fg^{-1}$.  This is simply because $(b)f = (b)g$ and $(x)f \neq (x)g$ on some interval $(b,c)$.  Thus, by Lemma \ref{orbitals count}, the set $B$ of all possible bouncepoints of elements of $F(O)$ is countable.  Let $S$ be the set of all possible slopes leaving bouncepoints. If there were uncountably many possible slopes for $f$ emanating from some bouncepoint $b$, then applying the chain rule would result in uncountably many initial slopes emanating from $b$ for functions of the form $fg^{-1}$ where $b$ is a bouncepoint of $f,g$ and hence an orbital endpoint for $fg^{-1}$.  This is a contradiction to Lemma \ref{it counts}, so $S$ must also be countable.  Furthermore, all possible initial slopes $S_0$ are countable by Lemma \ref{it counts} (not all orbital endpoints are bouncepoints, so we must note this separately from the previous argument).  Let $S_0, S, B$ include the empty set as an element.  Then it's easy to see that Im$\phi$ injects into the ordered product $S_0 \x B \x S \x B \x S . . . \x B \x S$ where there are $n$ copies of $B \x S$. Therefore, Im($\phi$) is countable.
\end{proof}

\begin{proof}[Proof of Theorem \ref{countable}]
Define $G_n = \{g \in G \, | \, g \text{ has exactly } n \text{ orbitals}\}$.  Then, $\cup^{\infty}_{i=0} G_i = G$, so we need only show $G_n$ is countable.  $G_0$ is just the identity element, so it is countable.  An element of $G_n$ is determined by a choice of $n$ orbitals and a choice of one bump for each of those orbitals. Since each of these $2n$ choices are selected from countable sets by Lemmas \ref{orbitals count} and \ref{signatures count}, $G_n$ injects into a countable set, hence is countable. 
\end{proof} 

To state more consequences, we start with the following definition.

Define a {\bf corner} of a pair $f,g$ of PL functions to be a point $b$ where 1. $(b)f = (b)g$, 2. there is some open interval $(b,c)$ on which $(x)f \neq (x)g$, and 3. $b$ is in the interiors of affine components for both $f$ and $g$.  

\begin{cor}
 The set of all corners of elements of $G$ is countable.
\end{cor}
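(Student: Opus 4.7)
The plan is to reduce corners to endpoints of orbitals of elements of $G$, and then invoke Lemma \ref{orbitals count}. This is exactly the same reduction used inside the proof of Lemma \ref{signatures count} for bouncepoints, and the first two clauses in the definition of a corner — $(b)f=(b)g$ and $(x)f\neq(x)g$ on some interval $(b,c)$ — are identical to the first two clauses in the definition of a bouncepoint, so the same observation will apply.

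More precisely, suppose $b$ is a corner of a pair $f,g \in G$. Since $G$ is a group, $fg^{-1} \in G$. Using right actions, $(b)fg^{-1} = ((b)f)g^{-1} = ((b)g)g^{-1} = b$, so $b$ is fixed by $fg^{-1}$. On the other hand, for $x \in (b,c)$ we have $(x)f \neq (x)g$, and applying $g^{-1}$ this gives $(x)fg^{-1} \neq x$ throughout $(b,c)$. Hence points arbitrarily close to (and to the right of) $b$ lie in $\mathrm{Supp}(fg^{-1})$, while $b$ itself does not, so $b$ is the left endpoint of the orbital of $fg^{-1}$ that meets $(b,c)$. This orbital belongs to $\mathcal{O}(G)$.

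Thus the set of corners of pairs of elements of $G$ injects into the set of endpoints of elements of $\mathcal{O}(G)$, which has at most twice the cardinality of $\mathcal{O}(G)$. By Lemma \ref{orbitals count}, $\mathcal{O}(G)$ is countable, so the corner set is countable as well. There is no real obstacle here — conditions (1) and (2) of the corner definition already force $b$ to be an orbital endpoint of $fg^{-1}$, and condition (3) (that $b$ lies in the interior of an affine component of both $f$ and $g$) is not needed for the countability argument; it only makes corners a more restrictive notion than bouncepoints. The whole proof is essentially two lines, leveraging the orbital-endpoint observation already exploited inside the proof of Lemma \ref{signatures count}.
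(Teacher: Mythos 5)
Your proof is correct and follows essentially the same route as the paper: reduce a corner $b$ of the pair $f,g$ to an orbital endpoint of a difference element and invoke Lemma \ref{orbitals count}. (The paper writes the difference element as $g^{-1}f$ where your $fg^{-1}$ is the one that transparently fixes $b$ under the right-action convention, but the idea is identical.)
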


\begin{proof}
 This follows from Lemma \ref{orbitals count} in the same way that bouncepoints are shown to be countable: If $c$ is a corner of the pair $f,g$ then $g^{-1}f$ has orbital beginning at $c$.  Hence every corner corresponds to some orbital of $G$, the set of which is countable.  
\end{proof}

%We now define {\bf distinguishing points} which are points that are either bouncepoints or corners.  If $d$ is a distinguishing point of $f$, we may say $d$ {\bf distinguishes} $f$.  Under the conditions of \ref{signatures count}, the set of all points which distinguish $f$ from other functions in $F(O)$ is countable.  

Interestingly, the proof of \ref{signatures count} demonstrates that for groups with transition chains, corners are not important for distinguishing functions from one another on a single orbital, unlike the case for general subgroups of PLo(I).  However, corners may very well create interesting behavior as functions are multiplied together and create new orbitals.

\begin{cor}
The set of all breakpoints of elements in $G$ is countable. 
\end{cor}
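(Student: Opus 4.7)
The plan is to reduce this immediately to Theorem \ref{countable}. That theorem, proved just above, asserts that any subgroup of PLo(I) without transition chains is countable, so in particular $G$ itself is countable. By the very definition of PLo(I), every $f \in G$ has only finitely many breakpoints; writing $B_f$ for that finite set, the set of all breakpoints of elements of $G$ is
$$\bigcup_{f \in G} B_f,$$
a countable union of finite sets, and therefore countable. This is exactly the desired conclusion.

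There is essentially no obstacle at this stage: the heavy lifting was already done in Theorem \ref{countable} via Lemmas \ref{towers count}, \ref{orbitals count}, and \ref{signatures count}, and the corollary is a one-line bookkeeping consequence. The only thing to be careful about is citing the countability of $G$ (not merely of $\mathcal{O}(G)$ or of $F(O)$), which is exactly what Theorem \ref{countable} provides.

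For completeness, I note an alternative route that avoids appealing to the full strength of Theorem \ref{countable}: split the breakpoints of a given $f$ into endpoints of orbitals of $f$ and breakpoints lying in the interior of an orbital of $f$. The orbital endpoints are countable directly by Lemma \ref{orbitals count}. For an interior breakpoint $b$ of $f$, one would try to identify $b$ with an orbital endpoint of some product $fg^{-1}$, much as in the preceding corollary on corners, and then invoke Lemma \ref{orbitals count} again. However, this path is unnecessarily elaborate given that Theorem \ref{countable} is already available, so the natural proof is the one-line reduction described above.
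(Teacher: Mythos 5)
Your proof is correct and is essentially identical to the paper's: both cite the countability of $G$ from Theorem \ref{countable} and then take a countable union of the finite breakpoint sets of the individual elements. Nothing further is needed.
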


\begin{proof}
Each function in $G$ is completely determined by a finite ordered list $\{b_0, b_1, b_2, \cdots, b_n\}$ of breakpoints.  Since $G$ is countable, the union of all these lists is countable.  Thus the set of breakpoints of elements in $G$ is countable.
\end{proof}

\begin{cor}
Every uncountable subgroup of PLo(I) contains two elements which generate a non-solvable subgroup.
\end{cor}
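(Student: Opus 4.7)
The plan is to deduce this directly from Theorem \ref{countable} (and Theorem \ref{loc solv}) by locating a transition chain whose two signatures suffice as generators. Specifically, let $G \leq PLo(I)$ be uncountable. Since $G$ is uncountable, the contrapositive of Theorem \ref{countable} gives that $G$ is not in $\mathcal{T}$, so $G$ has a transition chain: there exist $f,g \in G$ and orbitals $A$ of $f$ and $B$ of $g$ with $A \cap B \neq \emptyset$, $A \not\subset B$, and $B \not\subset A$.

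The key observation is that the orbitals of $f$ and $g$ are intrinsic to the functions themselves and do not depend on the ambient group. Therefore $(A,f)$ and $(B,g)$ remain signed orbitals in the subgroup $H := \langle f,g \rangle$, and the pair is still a transition chain in $H$. By Theorem \ref{loc solv}, $H$ is not locally solvable. Since $H$ is finitely generated, ``locally solvable'' for $H$ coincides with ``solvable,'' so $H$ itself fails to be solvable. This produces the desired pair of elements.

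The proof has no real obstacle; the only point requiring a sentence of care is the transfer of the transition chain from $G$ down to $H$, which is immediate because orbitals of an element depend solely on the element as a function on $I$, not on the surrounding group. Alternatively, and without invoking Theorem \ref{loc solv}, one could cite the remark made just after Theorem \ref{countable} that a transition chain already generates a subgroup of infinite depth, and then apply Theorem \ref{thm geo} to conclude that $H$ is not solvable; either route closes the argument in a couple of lines.
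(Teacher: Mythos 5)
Your argument is correct and matches the paper's: the paper's proof is the one-line observation that an uncountable subgroup contains a transition chain by the contrapositive of Theorem \ref{countable}, with the step from transition chain to a non-solvable two-generated subgroup left to the earlier remark that a transition chain generates a subgroup of infinite depth. You simply make that last step explicit (via either Theorem \ref{loc solv} or Theorem \ref{thm geo}), which is the same route with the details filled in.
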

\begin{proof}
Every uncountable subgroup of PLo(I) contains a transition chain by the contrapositive of Theorem \ref{countable}.  
\end{proof}

\begin{cor}
An ordered wreath product of copies of $\mathbb{Z}$ as defined by P. Hall in \cite{hall} does not embed without transition chains in PLo(I) if the underlying ordered set is uncountable.
\end{cor}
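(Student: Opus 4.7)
The plan is to reduce this immediately to Theorem \ref{countable}. That theorem says any subgroup of $PLo(I)$ without transition chains is countable, so to rule out an embedding without transition chains it is enough to show that P.\ Hall's ordered wreath product of copies of $\mathbb{Z}$ over an uncountable ordered set $\Omega$ has uncountable cardinality.

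First I would recall the relevant piece of Hall's construction from \cite{hall}. For a group $A$ and an ordered set $\Omega$, Hall's ordered wreath product contains, for each $\omega \in \Omega$, a canonical embedded copy $A_\omega$ of $A$ (the ``$\omega$-th coordinate subgroup''), and distinct $\omega$'s yield subgroups whose nontrivial elements are pairwise distinct. With $A = \mathbb{Z}$, picking any single nonzero element $a_\omega \in A_\omega$ for each $\omega \in \Omega$ already produces $|\Omega|$ many distinct group elements. Hence the group has cardinality at least $|\Omega|$.

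Second, suppose for contradiction that this ordered wreath product embeds in $PLo(I)$ without transition chains. Then its image is a subgroup $G \leq PLo(I)$ with no transition chains, and an injective homomorphism from an uncountable group lands in an uncountable subgroup. But Theorem \ref{countable} forces $|G| \leq \aleph_0$, a contradiction. Therefore no such embedding exists.

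I do not expect a genuine obstacle here: the only mild point of care is verifying the cardinality claim for Hall's construction, which is immediate from the presence of one coordinate subgroup per index. Once that is noted, the corollary is simply the contrapositive of Theorem \ref{countable} applied to the obviously uncountable group.
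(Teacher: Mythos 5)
Your proof is correct and matches the paper's intent: the paper states this corollary without proof, treating it as an immediate consequence of Theorem \ref{countable} via exactly the cardinality observation you make (Hall's construction contains a distinct coordinate copy of $\mathbb{Z}$ for each index, so the group is uncountable when the ordered set is). Nothing further is needed.
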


\begin{cor}
If an abstract chain of locally solvable groups has uncountable union, then the chain does not embed in the subgroup lattice of PLo(I).  
\end{cor}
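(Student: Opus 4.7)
The plan is to argue by contradiction, reducing to Theorem \ref{countableloc}. Suppose a chain $\{G_\alpha\}$ of locally solvable groups whose union is uncountable embeds as a chain $\{H_\alpha\}$ in the subgroup lattice of PLo(I). Concretely, the embedding yields a compatible family of injections $\phi_\alpha : G_\alpha \hookrightarrow$ PLo(I) with $\phi_\beta|_{G_\alpha} = \phi_\alpha$ whenever $\alpha \leq \beta$, and we set $H_\alpha = \phi_\alpha(G_\alpha)$.

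First I would form $H = \bigcup_\alpha H_\alpha$. As the union of a nested chain of subgroups, $H$ is itself a subgroup of PLo(I). Second, $H$ inherits local solvability from the $H_\alpha$'s: any finite subset of $H$ lies in a single $H_\alpha$ by totality of the chain, so every finitely generated subgroup of $H$ sits inside the locally solvable group $H_\alpha$ and is therefore solvable. Third, the compatible $\phi_\alpha$'s amalgamate into an injection $\bigcup_\alpha G_\alpha \hookrightarrow H$, so $H$ is uncountable. This contradicts Theorem \ref{countableloc} and finishes the proof. Equivalently, one could invoke the preceding corollary to pick $a,b \in H$ generating a non-solvable subgroup, observe that $a$ and $b$ both lie in a single $H_\alpha$, and obtain the same contradiction from $H_\alpha$ being locally solvable.

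The main issue — essentially the only nontrivial one — is to pin down what ``embed in the subgroup lattice of PLo(I)'' should mean for a chain of groups. The natural reading is that one has a compatible family of group embeddings, equivalently an order-preserving injection of the chain (viewed as a poset) into the subgroup lattice together with group isomorphisms from each $G_\alpha$ onto its image that cohere across the inclusions of the chain. Once this compatibility is granted, the rest of the argument is formal: unions of chains of subgroups are subgroups, local solvability is preserved under such unions, and the amalgamated embedding transfers the uncountable cardinality of $\bigcup_\alpha G_\alpha$ into PLo(I), where it cannot live by our earlier countability result.
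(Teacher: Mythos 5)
Your proof is correct and matches the argument the paper intends: the paper leaves this corollary without a written proof, but its later remark in Section \ref{sectionconstruction} makes clear the intended reasoning is exactly yours --- the union of the chain would be an uncountable locally solvable subgroup of PLo(I), contradicting Theorem \ref{countableloc}. Your care about what ``embed in the subgroup lattice'' means is reasonable, and the argument goes through under either reading since a strictly increasing chain of subgroups isomorphic to the $G_\alpha$'s still has uncountable, locally solvable union.
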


\mysection{Presentations, Representations, and Normal Forms}

We work extensively with iterated wreath products of copies of $\mathbb{Z}$.  This section gives the machinery that we need.

We derive presentations of finitely iterated wreath products with $\mathbb{Z}$.  We apply these to show certain geometric representations of subgroups in PLo(I) are indeed wreath products.  These examples will guide results in section \ref{inducedmaps} and illustrate some of the difficulties in stating such results.  Relations in the presentations will be of central concern in section \ref{non-isomorphism}.  We also give a normal form for elements based on generators of the presentations we derive.

The {\bf restricted wreath product} of $G$ and $H$ denoted $G \wr H$ is the semi-direct product $(\bigoplus_{h \in H} G) \rtimes_{\phi} H$ where $H$ acts on the index set of the sum by right multiplication, permuting copies of $G$.  As we iterate taking wreath products, we do so on the right, collecting parentheses on the left.  For example, for each $n \in \mathbb{N}$, we develop facts about the groups $(\cdots(\mathbb{Z} \wr \mathbb{Z}) \wr \mathbb{Z}) \cdots \wr \mathbb{Z}$ where there are $n$ copies of $\mathbb{Z}$ and $n-1$ wreath products.  We denote this group by $\mathbb{Z} \wr_n \mathbb{Z}$ for brevity as well as clarity and remark $\mathbb{Z} \wr_n \mathbb{Z} \cong W_{n}$ defined in Section \ref{intro}.

\subsection{Presentations}

Since $G \wr H$ is built from a direct sum of copies of $G$ followed by a semi-direct product, we find a presentation for of $\bigoplus_{h \in \mathbb{H}}G$ given a presentation for $G$, and we find a presentation of $G \rtimes_\phi H$ given presentations for $G$ and $H$.  Then we iterate these processes to find a presentation $P_n$, for each $n \in \mathbb{N}$, of $\mathbb{Z} \wr_n \mathbb{Z}$.  The presentation we find for $P_n$ will have finitely many generators.

If $G = <X \, |\, R>$ and $H=<Y \, | \, S>$ are presentations for the groups $G$ and $H$, then a presentation of the semidirect product is
$$G \rtimes_\phi H = <X,Y \, | \, R,S, y^{-1}xy = x(\phi(y)) \, \, \forall \, x \in X \text{ and } y \in Y>.$$ 
A presentation of the countable direct sum is $$\bigoplus_{i \in \mathbb{Z}} G = < \cup_{i \in \mathbb{Z}} X_i \, | \, \cup_{i \in \mathbb{Z}} R_i; [x_i,x_j] = 1 \text{ for all } i \neq j \text{ with } x_i \in X_i \text{ and } x_j \in X_j>$$ where for each $i \in \mathbb{Z}$, $X_i$ is a distinct copy of $X$ and $R_i$ is the corresponding copy of the relations $R$ for the generators in $X_i$.

Using these simple facts, we prove the following lemma:

\begin{lem}
For each $n \in \mathbb{N}$, a presentation of $\mathbb{Z} \wr_n \mathbb{Z}$ is $$P_n = \left<f_1,f_2,\cdots,f_n \,\, | \,\, [f_i,f_j^{w(j+1,n)}] = 1 \text{ for all } i \leq j \text{ with } i,j \in \overline{n-1} \right>$$ where $\ds w(j+1,n) = f_{j+1}^{\alpha_{j+1}} \cdots f_{n-1}^{\alpha_{n-1}}f_n^{\alpha_n}$ with not all $\alpha_j = 0$.
\end{lem}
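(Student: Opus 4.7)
I would proceed by induction on $n$. The base case $n = 1$ is immediate: $\mathbb{Z} \wr_1 \mathbb{Z} \cong \mathbb{Z} = \langle f_1 \mid \, \rangle$, which agrees with $P_1$ since the relation index set $\overline{n-1} = \overline{0}$ is empty. For the inductive step, assume $P_{n-1}$ presents $\mathbb{Z} \wr_{n-1} \mathbb{Z}$ and write $\mathbb{Z} \wr_n \mathbb{Z} = (\mathbb{Z} \wr_{n-1} \mathbb{Z}) \wr \mathbb{Z}$. First I would apply the direct sum recipe to get a presentation of $\bigoplus_{k \in \mathbb{Z}}(\mathbb{Z} \wr_{n-1} \mathbb{Z})$ on generators $\{f_i^{(k)} : i \in \overline{n-1},\ k \in \mathbb{Z}\}$, with a copy of the $P_{n-1}$ relations for each $k$ together with $[f_i^{(k)}, f_j^{(l)}] = 1$ whenever $k \neq l$; then apply the semi-direct product recipe to wreath with $\langle f_n \rangle \cong \mathbb{Z}$, adjoining the action relations $f_n^{-1} f_i^{(k)} f_n = f_i^{(k+1)}$.

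Next I would use Tietze transformations to eliminate every generator $f_i^{(k)}$ with $k \neq 0$ by substituting $f_i^{(k)} = (f_i^{(0)})^{f_n^k}$ and renaming $f_i := f_i^{(0)}$. The intra-copy relations $[f_i^{(k)}, (f_j^{(k)})^{w_k(j+1,n-1)}] = 1$ all become $[f_i, f_j^{w(j+1,n-1)}]^{f_n^k} = 1$, i.e.\ conjugates of the $k = 0$ case, so only the single $P_{n-1}$ family $[f_i, f_j^{w(j+1,n-1)}] = 1$ (for $i \leq j$, $i,j \in \overline{n-2}$) is needed. The cross-copy relations $[f_i^{(k)}, f_j^{(l)}] = 1$ for $k \neq l$ become, after conjugation by $f_n^{-k}$, the family $[f_i, f_j^{f_n^m}] = 1$ for $i, j \in \overline{n-1}$ and $m \neq 0$.

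The key step is to show this reduced set is Tietze-equivalent to the compact family $[f_i, f_j^{w(j+1,n)}] = 1$ appearing in $P_n$. Writing $w(j+1,n) = u \cdot f_n^{\alpha_n}$ with $u = f_{j+1}^{\alpha_{j+1}} \cdots f_{n-1}^{\alpha_{n-1}}$, I would argue: if $\alpha_n = 0$ then nontriviality of $w$ forces $j \leq n-2$, placing the relator among the $P_{n-1}$ relations; if $\alpha_n \neq 0$, use the identity $(f_j^u)^{f_n^{\alpha_n}} = (f_j^{f_n^{\alpha_n}})^{u^{f_n^{\alpha_n}}}$ and note that $f_i$ commutes with $f_j^{f_n^{\alpha_n}}$ and with each letter $f_s^{f_n^{\alpha_n}}$ of $u^{f_n^{\alpha_n}}$ by the cross-copy relations, hence with the whole conjugate. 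The reverse inclusion is immediate: $P_{n-1}$ relators are $\alpha_n = 0$ cases of $P_n$, and $[f_i, f_j^{f_n^m}] = 1$ is a $P_n$ relator directly when $i \leq j$, while for $i > j$ the conjugation-invariance of commutators rewrites it as the $P_n$ relator $[f_j, f_i^{f_n^{-m}}] = 1$.

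The hard part will be exactly this last equivalence: translating the infinitely many cross-copy commutations into the uniform form involving arbitrary words $w(j+1,n)$. The algebraic trick that makes it work is the rewriting $(f_j^u)^{f_n^{\alpha_n}} = (f_j^{f_n^{\alpha_n}})^{u^{f_n^{\alpha_n}}}$, which breaks conjugation by a composite word into conjugation by its $f_n^{\alpha_n}$-shifted pieces, each of which commutes individually with $f_i$ via a single cross-copy relation.
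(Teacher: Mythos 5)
Your proposal is correct and follows essentially the same route as the paper: induction on $n$, the direct-sum and semidirect-product presentation recipes, Tietze elimination of the shifted generators $f_i^{(k)}$ via $f_i^{f_n^k}$, and a two-way derivation between the reduced relations and the compact family $[f_i, f_j^{w(j+1,n)}]=1$. Your rewriting $(f_j^{u})^{f_n^{\alpha_n}} = (f_j^{f_n^{\alpha_n}})^{u^{f_n^{\alpha_n}}}$ is the same mechanism the paper uses when it observes that $f_i$ commutes with any word in the $f_s^{f_n^{l}}$ for $l \neq 0$; your explicit handling of the $i > j$ cross-copy relations via $[f_j, f_i^{f_n^{-m}}]=1$ is a small point the paper glosses over, but it is not a different argument.
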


\begin{proof}
The proof is by induction on $n$.  For $n = 1$, $$P_1 = < f_1 \, | \,>$$ which is a presentation of $\mathbb{Z}$, so the $n=1$ case is true.

Assume the lemma is true for $n$.  We prove it for $n+1$.  The group under consideration is $\mathbb{Z} \wr_{n+1} \mathbb{Z} = (\mathbb{Z} \wr_n \mathbb{Z}) \wr \mathbb{Z} = [\bigoplus_{i \in \mathbb{Z}} (\mathbb{Z} \wr_n \mathbb{Z})] \rtimes \mathbb{Z}$.  Thus we need only use presentations for $\mathbb{Z} \wr_n \mathbb{Z}$ and $\mathbb{Z}$ and the facts discussed before the lemma.

The presentation $$P_n = \left<f_1,f_2,\cdots,f_n \,\, | \,\, [f_i,f_j^{w(j+1,n)}] = 1 \text{ for all } i \leq j \text{ with } i,j \in \overline{n-1} \right>$$ where $\ds w(j+1,n) = f_{j+1}^{\alpha_{j+1}} \cdots f_{n-1}^{\alpha_{n-1}}f_n^{\alpha_n}$ with not all $\alpha_j = 0$ yields  a presentation for the countable direct summand

$$\bigoplus_{k \in \mathbb{Z}} \mathbb{Z} \wr_n \mathbb{Z} = \left< f_{1k},f_{2k},\cdots,f_{nk} : k \in \mathbb{Z} \, \left|  \, \begin{aligned} &[f_{ik},f_{jk}^{w_k(j+1,n)}] = 1 \text{ for all } i \leq j \text{ with } i,j \in \overline{n-1}; \\ &[f_{ik},f_{jl}] = 1 \text{ for } i,j \in \{1,2, \cdots,n\} \text{ and } k \neq l  \end{aligned}  \right. \right>$$ where $\ds w_k(j+1,n) = f_{j+1(k)}^{\alpha_{j+1}} \cdots f_{n-1(k)}^{\alpha_{n-1}}f_{n(k)}^{\alpha_{n}}$ with not all $\alpha_{j} = 0$.

So a presentation of the semidirect product $\mathbb{Z} \wr_{n+1} \mathbb{Z}$ is
$$P = \left<f_{1k},f_{2k}, \cdots, f_{nk},f_{n+1} : k \in \mathbb{Z} \, \left| \, \begin{aligned} &[f_{ik},f_{jk}^{w_k(j+1,n)}] = 1 \text{ for all } i \leq j \text{ with } i,j \in \overline{n-1}; \\ &[f_{ik},f_{jl}] = 1 \text{ for } i,j \in \{1,2,\cdots,n\} \text{ and } k \neq l; \\ &f_{ik}^{f_{n+1}} = f_{i(k+1)} \text{ for } i \in \{1,2, \cdots, n\}\end{aligned} \right. \right>$$ 

where $w_k(j+1,n) = f_{j+1(k)}^{\alpha_{j+1}} \cdots f_{n-1(k)}^{\alpha_{n-1}}f_{n(k)}^{\alpha_{n}}$ with not all $\alpha_{j} = 0$. \\ 

Using the relations $f_{ik} = f_{i0}^{f_{n+1}^k}$ for $k \in \mathbb{Z}$, apply Tietze transformations to get a finitely generated presentation by removing all generators $f_{ik}$ for $k \neq 0$, replacing $f_{ik}$ with $f_{i0}^{f_{n+1}^k}$ in the relations, and removing the relations $f_{ik}^{f_{n+1}} = f_{i(k+1)} \text{ for } i \in \{1,2, \cdots, n\}, k \in \mathbb{Z}$.  Rename $f_{i0}$ as $f_i$ for $i \in \{1,2,\cdots,n\}$.  The result is

$$P' = \left<f_1,f_2, \cdots, f_n,f_{n+1}  \, \left| \, \begin{aligned} &[f_i^{f_{n+1}^k},f_j^{f_{n+1}^kw_k(j+1,n)}] = 1 \text{ for all } i \leq j \text{ with } i,j \in \overline{n-1} \text{ and } k \in \mathbb{Z}; \\ &[f_i^{f_{n+1}^k},f_j^{f_{n+1}^l}] = 1 \text{ for } i,j \in \{1,2,\cdots,n\} \text{ and } k, l \in \mathbb{Z} \text{ such that } k \neq l\end{aligned}\right. \right>$$ 

where $w_k(j+1,n) = (f_{j+1}^{\alpha_{j+1}} \cdots f_{n-1}^{\alpha_{n-1}}f_n^{\alpha_n})^{f_{n+1}^k}$ with not all $\alpha_{j} = 0$.   \\

Note that $f_{n+1}^k w_k(j+1,n) = w(j+1,n)f_{n+1}^k$.  We use this to simplify the first line of relations in $P'$.  We also conjugate the relations in the second line by $f_{n+1}^{-k}$.  The result is:

$$P'' = \left<f_1,f_2, \cdots, f_n,f_{n+1}  \, \left| \, \begin{aligned} &[f_i,f_j^{w(j+1,n)}] = 1 \text{ for all } i \leq j \text{ with } i,j \in \overline{n-1} \text{ and } k \in \mathbb{Z}; \\ &[f_i,f_j^{f_{n+1}^{l-k}}] = 1 \text{ for } i,j \in \{1,2,\cdots,n\} \text{ and } k, l \in \mathbb{Z} \text{ such that } k \neq l\end{aligned}\right. \right>$$ 

Recall the presentation 
$$P_{n+1} = \left<f_1, f_2, \cdots, f_n, f_{n+1} \, | \, [f_i,f_j^{w(j+1,n+1)}] = 1 \text{ for all } i \leq j \text{ with } i,j \in \overline{n} \right>.$$

We wish to show the presentations $P''$ and $P_{n+1}$ are equivalent.  They have the same generators, so we show the sets of relations are equivalent.

Assume the relations of $P_{n+1}$ are true.  When $\alpha_{n+1} = 0$, the word $w(j+1,n+1) = w(j+1,n)$.  Letting $\alpha_{n+1} = 0$ in the relations $[f_i,f_j^{w(j+1,n+1)}]=1$, we get $[f_i, f_j^{w(j+1,n)}] = 1$ where $i,j \in \overline{n-1}$ since $j < m$ where $m$ is the largest subscript of a generator to a non-zero power appearing in $w(j+1,n+1)$.  Taking $\alpha_{j+1}=\alpha_{j+2}= \cdots = \alpha_n = 0$ instead, we get $[f_i,f_j^{f_{n+1}^{\alpha_{n+1}}}] = 1$ where $\alpha_{n+1} \neq 0$ and $i,j \in \overline{n}$. 

Assume the relations in $P''$ are true.  Then since $[f_i, f_j^{w(j+1,n)}]=1$ for all $i \leq j$ with $i,j \in \overline{n-1}$, we have $[f_i,w_j^{w(j+1,n)}]=1$ where $w_j$ is any word in $<f_j,f_{j+1},\cdots f_{n-1}>$.  Also, taking $k=0$, we have $[f_i,f_j^{f_{n+1}^l}]=1$ for any $l \neq 0$.  Therefore, $[f_i,w_j^{f_{n+1}^l}]=1$ where $w_j$ is any word in $<f_j,f_{j+1},\cdots,f_{n}>, l \neq 0$, and $i,j \in \overline{n}$.  Letting $w_j = f_j^{w(j+1,n)}$ where $j \in \overline{n}$, we get $[f_i, (f_j^{w(j+1,n)})^{f_{n+1}^l}] = [f_i, f_j^{w(j+1,n)f_{n+1}^l}] = [f_i, f_j^{w(j+1,n+1)}] = 1$ where we let $\alpha_{n+1} = l \neq 0$.  For the case when $l = 0$, note that $i,j \in \overline{n-1}$ because $j<m$ where $m$ is the largest subscript of a generator to a non-zero power appearing in $w(j+1,n+1) = w(j+1,n)$.  Thus $[f_i,f_j^{w(j+1,n+1)}] = 1$ is still true thanks to the relations $[f_i,f_j^{w(j+1,n)}] = 1$ for all $i \leq j$ with $i,j \in \overline{n-1}$.
\end{proof}

\subsection{Representations} \mylabel{representations}

Now we will use the presentations $P_n$, $n \in \mathbb{Z}$ to prove certain distinct geometric representations of wreath products are isomorphic.  In section \ref{inducedmaps}, we develop facts about maps induced on towers by injective homomorphisms, and these examples will help illustrate limitations of such results as well as existence of certain kinds of towers in a classification.  The names under the diagrams are based on names used in that classification and will assist with referring to the pictures later.

The following lemma and corollaries are used to prove maps defined between representations are homomorphisms.

\begin{lem}\mylabel{wreath}
Let $f$ be a function in PLo(I).
If $H$ is a subgroup of PLo(I) with a single orbital $C$ and if $C$ is contained in a fundamental domain of $f$, then $<H,f> \cong H \wr \mathbb{Z}$.
\end{lem}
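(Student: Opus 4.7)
The plan is to identify the ``base group'' of the wreath product inside $\langle H, f \rangle$ as the subgroup $N$ generated by all the conjugates $H^{f^k}$, show that $f$ acts on $N$ by shifting these conjugates, and verify the internal semidirect product structure. First I would observe that, since $C$ lies in a fundamental domain $[x, xf)$ of $f$, the point $x$ lives in some orbital $A$ of $f$, and the translates $Cf^k$ for $k \in \mathbb{Z}$ are pairwise disjoint subsets of $A$. For each $k$, set $H_k := H^{f^k}$; by Lemma \ref{basic conj orbitals} applied orbital-by-orbital to elements of $H$, the group $H_k$ has single orbital (hence support) equal to $Cf^k$.

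Next I would exploit the disjointness of supports in two ways. First, if $j \neq k$, every element of $H_j$ fixes $Cf^k$ pointwise and vice versa, so $H_j$ and $H_k$ commute elementwise. Thus $N := \langle H_k : k \in \mathbb{Z} \rangle$ is generated by a pairwise-commuting family of subgroups. Second, any product $h_{k_1} h_{k_2} \cdots h_{k_n}$ with $h_{k_i} \in H_{k_i}$ and the $k_i$ distinct equals the identity only if each $h_{k_i}$ is trivial, because restricting such a product to the disjoint set $Cf^{k_i}$ recovers $h_{k_i}$ there. This gives the internal direct sum decomposition $N \cong \bigoplus_{k \in \mathbb{Z}} H_k \cong \bigoplus_{k \in \mathbb{Z}} H$.

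Then I would check that conjugation by $f$ sends $H_k$ to $H_{k+1}$, so $f$ normalizes $N$ and acts on it by the index shift, exactly as in the wreath product. Finally I would verify $N \cap \langle f \rangle = \{1\}$: every element of $N$ preserves each set $Cf^j$ setwise (since each $H_k$ acts inside its own support $Cf^k$), whereas for $k \neq 0$ the power $f^k$ maps any point of $C$ into $Cf^k$, which is disjoint from $C$. Combined with $\langle H, f \rangle = N \langle f \rangle$ (clear since $H \subset N$), this yields $\langle H, f \rangle = N \rtimes \langle f \rangle \cong (\bigoplus_{\mathbb{Z}} H) \rtimes \mathbb{Z} = H \wr \mathbb{Z}$.

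I expect the main obstacle to be being careful about step 4, namely that the internally generated subgroup is genuinely a direct sum rather than merely a quotient of one; the disjointness of the supports $Cf^k$ inside $A$ is the crucial geometric input and must be used explicitly to rule out nontrivial relations among the $H_k$. The verification that $N \cap \langle f \rangle = 1$ is the other point requiring a little care, but it reduces cleanly to the fact that elements of $N$ preserve $C$ setwise while nonzero powers of $f$ translate $C$ off itself.
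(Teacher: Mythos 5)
Your proposal is correct and follows essentially the same route as the paper's proof: disjointness of the translates $Cf^k$ gives commuting conjugates $H^{f^k}$, an internal direct sum $\bigoplus_{k\in\mathbb{Z}} H$ normalized and shifted by $f$, trivial intersection with $\langle f\rangle$, and hence the semidirect product structure of $H \wr \mathbb{Z}$. Your version simply spells out details (the direct-sum verification via restriction to disjoint supports, and the check that $N \cap \langle f\rangle = 1$) that the paper's terser argument leaves implicit.
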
 

\begin{proof}
For the first part, since $C$ is in a fundamental domain of $f$, we have $Cf^i \cap Cf^j = \emptyset$ for $i \neq j$.  Thus $H$ commutes with $H^{f^i}$ for all $i \in \mathbb{Z}$.  Thus the group $<f>$ acts by conjugation on $H$ to produce the group $B = \bigoplus_{i \in \mathbb{Z}}H$.  This subgroup $B$ is normal in $<H,f>$, $B \cap <f> = \{1\}$, and $<f> \cong \mathbb{Z}$.  Hence $<H,f> \,\,\, \cong \,\,\, B \rtimes <f> \,\,\, \cong \,\,\, H \wr \mathbb{Z}$. 
\end{proof}

Recall, if $T$ is a tower in $G$, then the {\bf type} of the tower T denoted type(T) is the order type of the ordered set of right endpoints of the orbitals in $\mathcal{O}(T)$.  Furthermore, if $T$ is a tower of type $n$, we call refer to $T$ as an {\bf n-tower}.

\begin{cor}\mylabel{stdwr}
A pure fundamental $n$-tower of elements of PLo(I) generates a group isomorphic to $\mathbb{Z} \wr_n \mathbb{Z}$.
\end{cor}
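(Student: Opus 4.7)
The plan is to prove this by induction on $n$, using Lemma \ref{wreath} as the inductive engine. Let $T = \{(A_1, f_1), (A_2, f_2), \ldots, (A_n, f_n)\}$ be a pure fundamental $n$-tower, where we have indexed so that $A_1 \subset A_2 \subset \cdots \subset A_n$ (using that $T$ is order isomorphic to its underlying stack).

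For the base case $n=1$, the single signature $f_1$ is a non-identity element of PLo(I), hence has infinite order, so $\langle f_1 \rangle \cong \mathbb{Z} = \mathbb{Z} \wr_1 \mathbb{Z}$. For the inductive step, assume the result for $n-1$. Apply the inductive hypothesis to the subtower $T' = \{(A_1, f_1), \ldots, (A_{n-1}, f_{n-1})\}$, which is again pure and fundamental (being a subset of a pure fundamental set), to conclude $H \colonequals \langle f_1, \ldots, f_{n-1}\rangle \cong \mathbb{Z} \wr_{n-1} \mathbb{Z}$.

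Next I need to verify the two hypotheses of Lemma \ref{wreath} for the pair $(H, f_n)$. First, $H$ has $A_{n-1}$ as a single orbital: purity gives $\mathrm{Supp}(f_i) = A_i$ for each $i$, and since $A_i \subseteq A_{n-1}$ for $i \leq n-1$, we have $\mathrm{Supp}(H) \subseteq A_{n-1}$, while $f_{n-1} \in H$ forces $\mathrm{Supp}(H) \supseteq A_{n-1}$. As $A_{n-1}$ is an open interval, it is a single group orbital of $H$. Second, the fundamental property of $T$, applied to the pair $(A_{n-1}, f_{n-1}) \subset (A_n, f_n)$, gives that $A_{n-1}$ lies in a fundamental domain of $(A_n, f_n)$.

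Having verified the hypotheses, Lemma \ref{wreath} yields $\langle H, f_n \rangle \cong H \wr \mathbb{Z} \cong (\mathbb{Z} \wr_{n-1} \mathbb{Z}) \wr \mathbb{Z} = \mathbb{Z} \wr_n \mathbb{Z}$, and since $\langle H, f_n \rangle = \langle f_1, \ldots, f_n\rangle$ is precisely the group generated by the tower, the induction is complete. There is no real obstacle here beyond correctly identifying the orbital of $H$ and invoking the fundamental property at the top step; the corollary is essentially Lemma \ref{wreath} iterated against the nested-fundamental-domain structure encoded by a pure fundamental tower.
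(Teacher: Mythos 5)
Your proof is correct and takes essentially the same approach as the paper, which simply states that the corollary ``follows from repeated applications of the prior lemma to signatures of the tower''; your induction on $n$ with Lemma \ref{wreath} as the engine is exactly that argument, with the verification of the single-orbital and fundamental-domain hypotheses spelled out.
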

\begin{proof}
This follows from repeated applications of the prior lemma to signatures of the tower.
\end{proof}

\begin{cor}\mylabel{wreathrelns}  Let $(A,f),(B,g),(C,h)$ be pure signed orbitals of PLo(I).
 \begin{enumerate}[label={(\arabic*)},ref={\thecor~(\arabic*)}]
\item If $A$ is contained in a fundamental domain of $(B,g)$, then the relations $[f^{g^i},f^{g^j}]=1 \text{ where } i,j \in \mathbb{Z}$ hold in the group $<f,g>$.
\item If $A$ is in a fundamental domain of $(B,g)$ and $B$ is in a fundamental domain of $(C,h)$, then the relations $[(f^{h^i})^{(g^n)^{h^i}}, (f^{h^j})^{(g^m)^{h^j}}] = [(f^{h^i})^{(g^n)^{h^i}}, g^{h^k}] =[g^{h^i}, g^{h^j}] = 1 \text{ where } i,j,k,n,m \in \mathbb{Z}, i \neq k$ hold in the group $<f,g,h>$.
\end{enumerate}
\end{cor}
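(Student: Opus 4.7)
The plan is to prove both parts by the same principle: compute the supports (orbitals) of the functions being commuted, then use the fundamental-domain hypotheses to show those supports are disjoint. Two elements of $PLo(I)$ with disjoint supports commute, so each claimed commutator collapses to the identity.

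For part (1), Lemma \ref{basic conj orbitals} gives that $f^{g^i}$ has orbital $A g^i$. Because $A$ is contained in some fundamental domain $[x, xg)$ of $(B,g)$, the translates $Ag^i$ lie in the pairwise disjoint half-open intervals $[xg^i, xg^{i+1})$, so $Ag^i \cap Ag^j = \emptyset$ whenever $i \neq j$. Hence the supports of $f^{g^i}$ and $f^{g^j}$ are disjoint, forcing $[f^{g^i}, f^{g^j}] = 1$; the case $i = j$ is trivial.

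For part (2), my key preliminary observation is the identity $(f^{h^i})^{(g^n)^{h^i}} = (f^{g^n})^{h^i}$, which follows from a one-line unwrapping of the nested conjugations (all the $h^{\pm i}$'s in the middle cancel). By Lemma \ref{basic conj orbitals} this element has orbital $A g^n h^i$, and $g^{h^k}$ has orbital $B h^k$. The third relation $[g^{h^i}, g^{h^j}] = 1$ is then immediate from part (1) applied to the signed orbitals $(B,g)$ and $(C,h)$ using the second fundamental-domain hypothesis. For the second relation, since $A \subset B$ we have $A g^n h^i \subset B h^i$; the hypothesis that $B$ sits in a fundamental domain of $(C,h)$ forces $B h^i \cap B h^k = \emptyset$ when $i \neq k$, so the two supports are disjoint.

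For the first relation, I would split on whether $i = j$: when $i \neq j$, the orbitals $Ag^n h^i$ and $Ag^m h^j$ are contained in the disjoint translates $Bh^i$ and $Bh^j$; when $i = j$ but $n \neq m$, stripping off the common conjugation by $h^i$ reduces the question to disjointness of $Ag^n$ and $Ag^m$, which follows from part (1)'s disjoint-translate argument applied to $A$ inside a fundamental domain of $(B,g)$; the case $i = j$ and $n = m$ is trivially true. I expect no genuine obstacle: the only care required is in tracking orbitals through the nested conjugations, and the simplification $(f^{h^i})^{(g^n)^{h^i}} = (f^{g^n})^{h^i}$ is what keeps the bookkeeping clean and reduces everything uniformly to the disjoint-fundamental-domain principle.
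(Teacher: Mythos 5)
Your proof is correct, but it takes a different route from the paper. The paper disposes of this corollary in two lines: by Lemma \ref{wreath} the hypotheses force $\langle f,g,h\rangle \cong \langle f\rangle \wr \langle g\rangle \wr \langle h\rangle$, and the stated relations are then read off as consequences of the presentations $P_2$ and $P_3$ derived earlier. You instead verify the relations directly on the functions: compute each conjugate's support via Lemma \ref{basic conj orbitals}, use the fundamental-domain hypotheses to see that the relevant supports land in pairwise disjoint translates $[xg^i,xg^{i+1})$ (resp.\ $Bh^i$), and conclude each commutator is trivial because homeomorphisms with disjoint supports commute. The normalization $(f^{h^i})^{(g^n)^{h^i}} = (f^{g^n})^{h^i}$ and your case split on $i=j$ versus $i\neq j$ (and $n\neq m$) cover all instances, including the trivially true diagonal case. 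Your argument is more elementary and self-contained --- it bypasses the presentation machinery entirely and makes the geometric reason for the relations explicit, which is in fact the same disjointness principle hiding inside the proof of Lemma \ref{wreath}. What the paper's route buys in exchange is generality: once the group is identified with $\mathbb{Z}\wr_3\mathbb{Z}$, \emph{every} relation of that presentation is available, not just the particular commutators listed here.
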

\begin{proof}
The group $<f,g,h> \cong <f> \wr <g> \wr <h>$.  Identifying generators of these groups with the appropriate generators in the presentations $P_2$ and $P_3$, we get the desired relations.
\end{proof}

Since many subgroups of PLo(I) have multiple orbitals, it is often useful to project a group $G$ onto one of its orbitals $A$ by taking the action of elements of $G$ to be trivial outside of $A$.  We denote this group by ${\bf G_A}$, and we remark that the map $\phi: G \longrightarrow G_A$ is a homomorphism of groups. Thus if $G$ satisfies some relation $R$, $G_A$ satisfies the corresponding relation $\phi(R)$.  However, it is possible that $G_A$ has more relations than $G$. We can also project $G$ to a union of its orbitals.

\begin{cor}\mylabel{towerrelns} Let $(A,f),(B,g),(C,h)$ be signed orbitals of a group $G \in \mathcal{T}$.
 \begin{enumerate}
\item If $T = \{(A,f),(B,g)\}$ is the a 2-tower and $f$ has no other orbitals in $B$, then the relations $[f^{g^i},f^{g^j}]=1 \text{ where } i,j \in \mathbb{Z}$ are true on the orbital $B$.
\item If $T = \{(A,f),(B,g),(C,h)\}$ is a 3-tower and $f,g$ have no other orbitals in $C$, then the elements $f,g,h$ satisfy the relations $[(f^{h^i})^{(g^n)^{h^i}}, (f^{h^j})^{(g^m)^{h^j}}] = [(f^{h^i})^{(g^n)^{h^i}}, g^{h^k}] =[g^{h^i}, g^{h^j}] = 1 \text{ where } i,j,k,n,m \in \mathbb{Z}, i \neq k$ on the orbital $C$.
 \end{enumerate}
\end{cor}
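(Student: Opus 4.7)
I would reduce each part to the pure case already proved in Corollary \ref{wreathrelns} by projecting the tower signatures to the top orbital. The extra hypothesis that the lower signatures have no other orbitals meeting the top orbital is exactly what is needed to make this projection a group homomorphism, after which the pure case applies essentially verbatim.

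\textbf{Part (1).} First I would observe that $B$ is invariant under $\langle f,g\rangle$: $g$ preserves $B$ because $B$ is an orbital of $g$, and $f$ preserves $B$ because, by hypothesis, $A$ is the only orbital of $f$ inside $B$, forcing $f$ to fix $B\setminus A$ pointwise. Hence the projection $\rho\colon\langle f,g\rangle\to PLo(I)$ that acts as the original element on $B$ and trivially outside $B$ is a group homomorphism. Under $\rho$, the tower $\{(A,f),(B,g)\}$ becomes the pure 2-tower $\{(A,\rho(f)),(B,\rho(g))\}$, which is fundamental because $G$ has no transition chains (Lemma \ref{fd}). Corollary \ref{wreathrelns}(1) then yields $[\rho(f)^{\rho(g)^i},\rho(f)^{\rho(g)^j}]=1$, and since $\rho$ is a homomorphism this pulls back to say that $[f^{g^i},f^{g^j}]$ is the identity at every point of $B$, which is the desired conclusion.

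\textbf{Part (2).} The same strategy works: $h$ preserves $C$ because $C$ is its orbital, and $f,g$ preserve $C$ because $A$ and $B$ are their only orbitals inside $C$. Projection to $C$ is then a homomorphism sending the original 3-tower to the pure fundamental 3-tower $\{(A,\rho(f)),(B,\rho(g)),(C,\rho(h))\}$. Applying Corollary \ref{wreathrelns}(2) to this pure tower supplies the three listed commutator relations among the projected generators, and pulling back through $\rho$ yields the asserted relations holding on $C$.

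\textbf{Main obstacle.} The only subtle point is confirming that projection to $B$ (respectively $C$) is actually a group homomorphism, which rests on the top orbital being invariant under the subgroup generated by the tower signatures. This is exactly what the ``no other orbitals'' hypothesis supplies; without it, a lower signature could carry points of the top orbital out of it and projection would fail to respect composition. Once invariance is in hand, the reduction to the pure case is immediate.
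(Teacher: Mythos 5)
Your proof is correct and follows essentially the same route as the paper, which also reduces to Corollary \ref{wreathrelns} by applying it to the projections $\langle f,g\rangle_B$ and $\langle f,g,h\rangle_C$ and invoking Lemma \ref{fd} for fundamentality; your write-up simply makes explicit the invariance argument that justifies the projection being a homomorphism.
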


\begin{proof}
 Follows from the definition of a tower, Lemma \ref{fd}, and applying the previous corollary to the projections $<f,g>_B$ for part 1 and $<f,g,h>_C$ for part 2.
\end{proof}

The previous lemma and corollaries show that in some cases, containment of orbitals of generators is enough to show subgroups are isomorphic to wreath products.  We use this to illustrate examples of some geometric representation of $\mathbb{Z} \wr_3 \mathbb{Z}$. 

For the remainder of this subsection, the groups discussed have no transition chains or, equivalently, are locally solvable.  Recall the collection of all such subgroups in PLo(I) is denoted $\mathcal{T}$.  

By Lemma \ref{wreath}, any group $G$ in $\mathcal{T}$ with poset of orbitals of generators isomorphic to the one in Figure \ref{w3} is isomorphic to $\mathbb{Z} \wr_3 \mathbb{Z}$.  The order of subgroups in the wreath also follows from here; it's $<f_1> \wr <f_2> \wr <f_3>$. 

\begin{figure}[H]\caption{Standard}\mylabel{w3}
\figw
\end{figure}

  Let $G$ be a group in $\mathcal{T}$ with two group orbitals and let $G$ be generated by 3 elements $g_1, g_2$, and $g_3$.  Assume that the generators form a 3-tower $M_1$ on one group orbital and that the order on the generators' subscripts matches the order in the tower. Assume that $g_1$ and $g_3$ form a maximal 2-tower $M_2$ on the other group orbital. Then containment of element orbitals is represented in Figure \ref{split}.

\begin{figure}[H]\caption{Split}\mylabel{split}\centering
\figsplit
\end{figure}

Define the map $\phi: W_3 \longrightarrow G$, $\phi(f_i) = g_i$.  We show that $\phi$ is a homomorphism by showing that for each relation $R$ in $W_3$, the corresponding relation $\phi(R)$ holds in $G$.

By the second part of Corollary \ref{wreathrelns}, $f_1,f_2$, and $f_3$ satisfy the relations $[(f_1^{f_3^i})^{(f_2^n)^{f_3^i}}, (f_1^{f_3^j})^{(f_2^m)^{f_3^j}}] = [(f_1^{f_3^i})^{(f_2^n)^{f_3^i}}, f_2^{f_3^k}] =[f_2^{f_3^i}, f_2^{f_3^j}] = 1 \text{ where } i,j,k,n,m \in \mathbb{Z}, i \neq k$.  We wish to show that when $f$ is replaced with $g$ in these relations, we get the relations that are true in $<g_1,g_2,g_3>$.  By the second part of Corollary \ref{towerrelns}, $g_1,g_2,g_3$ satisfy the relations  $[(g_1^{g_3^i})^{(g_2^n)^{g_3^i}}, (g_1^{g_3^j})^{(g_2^m)^{g_3^j}}] = [(g_1^{g_3^i})^{(g_2^n)^{g_3^i}}, g_2^{g_3^k}] =[g_2^{g_3^i}, g_2^{g_3^j}] = 1 \text{ where } i,j,k,n,m \in \mathbb{Z}, i \neq k$ on the first group orbital.  On the second group orbital, $g_2 = id$.  Using this and the first part of Corollary \ref{wreathrelns}, we conclude all the same relations which hold on the first group orbital also hold on the second group orbital.  Thus $\phi$ is a homomorphism.

The map $\phi$ is injective because $g_1,g_2,g_3$ act geometrically on the first group orbital of $<g_1,g_2,g_3>$ in the same way that $f_1,f_2,f_3$ act on the group orbital of $<f_1,f_2,f_3>$.  It is also surjective, hence an isomorphism.

Now let $G$ be a group generated by elements whose orbitals satisfy any one of Figures \ref{full}, \ref{top}, \ref{free}, or \ref{freecollapse}.  It can be similarly observed that the map $\phi$ corresponding to each of these groups is an isomorphism.

\begin{figure}[H]\caption{Full}\mylabel{full}\centering
   \begin{subfigure}{0.45\textwidth}
     \includegraphics[width=\linewidth]{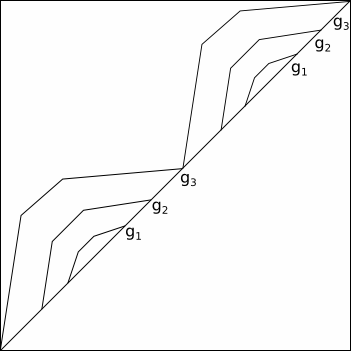}
     \caption{Generators which have a second maximal tower of the same type}\mylabel{plw3_full}
   \end{subfigure}
\hspace{.3cm}
   \begin{subfigure}{0.45\textwidth}
     \includegraphics[width=\linewidth]{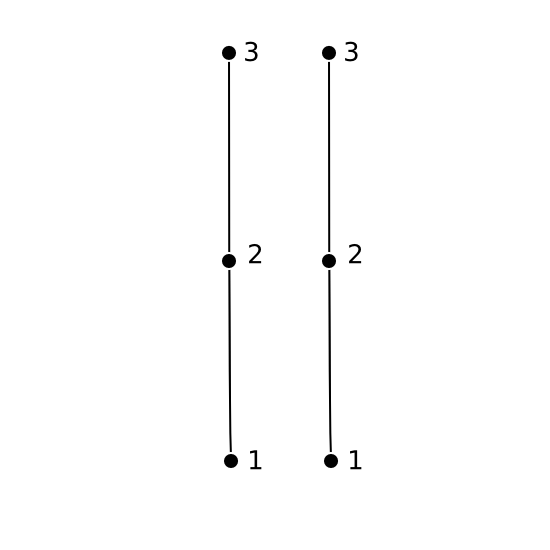}
     \caption{The associated Hasse diagram of orbitals of generators}\mylabel{hw3_full}
   \end{subfigure}
\end{figure}

\begin{figure}[H]\caption{Top}\mylabel{top}\centering
   \begin{subfigure}{0.45\textwidth}
     \includegraphics[width=\linewidth]{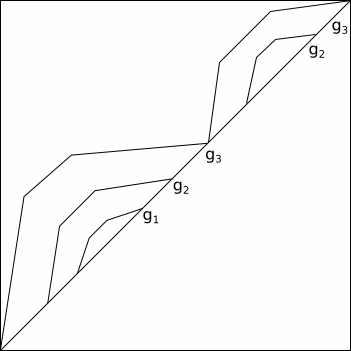}
     \caption{Second maximal tower is at the top when overlayed with the first}\mylabel{plw3_top}
   \end{subfigure}
\hspace{.3cm}
   \begin{subfigure}{0.45\textwidth}
     \includegraphics[width=\linewidth]{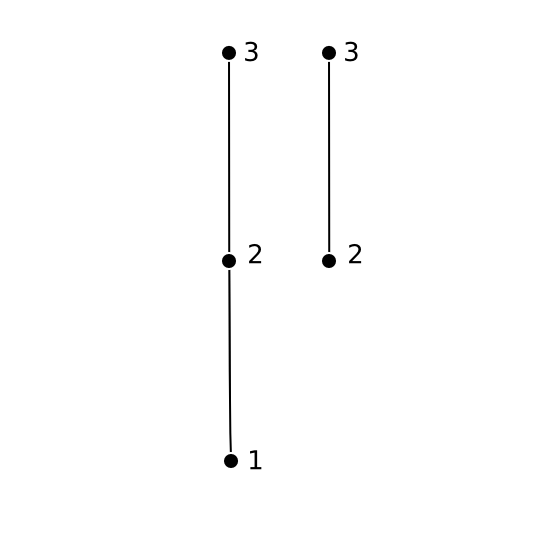}
     \caption{The associated Hasse diagram of orbitals of generators}\mylabel{hw3_top}
   \end{subfigure}
\end{figure}

\begin{figure}[H]\caption{Free}\mylabel{free}\centering
   \begin{subfigure}{0.45\textwidth}
     \includegraphics[width=\linewidth]{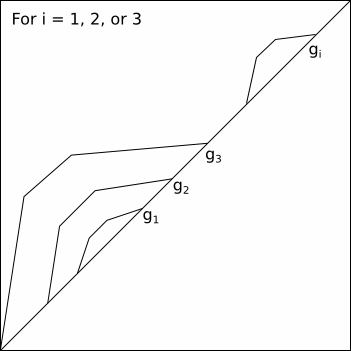}
     \caption{Second maximal tower has only one bump}\mylabel{plw3_free}
   \end{subfigure}
\hspace{.3cm}
   \begin{subfigure}{0.45\textwidth}
     \includegraphics[width=\linewidth]{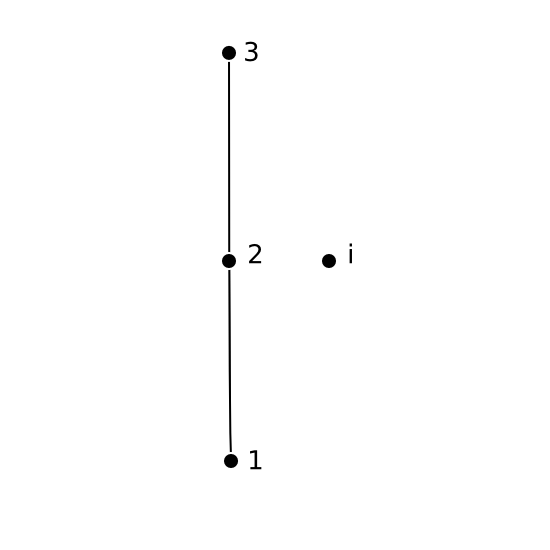}
     \caption{The associated Hasse diagram of orbitals of generators}\mylabel{hw3_free}
   \end{subfigure}
\end{figure}

\begin{figure}[H]\caption{Free Collapse}\mylabel{freecollapse}\centering
   \begin{subfigure}{0.45\textwidth}
     \includegraphics[width=\linewidth]{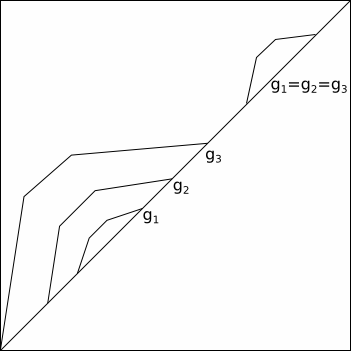}
     \caption{Second maximal tower has one bump, and some elements have been identified}\mylabel{plw3_freecollapse}
   \end{subfigure}
\hspace{.3cm}
   \begin{subfigure}{0.45\textwidth}
     \includegraphics[width=\linewidth]{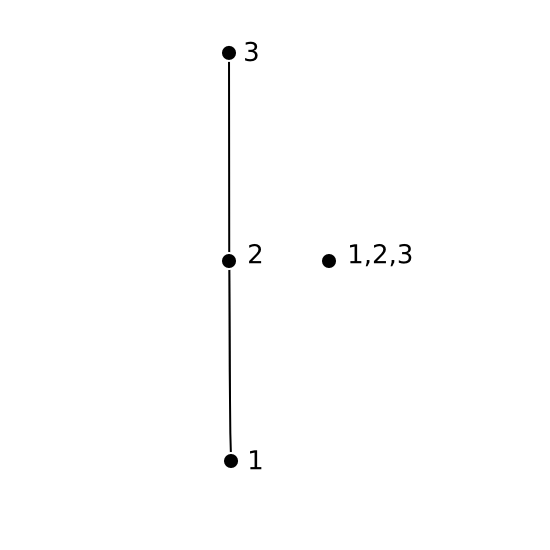}
     \caption{The associated Hasse diagram of orbitals of generators}\mylabel{hw3_freecollapse}
   \end{subfigure}
\end{figure}

These isomorphisms illustrate some of the different geometric representations of $W_3$ that can appear.  They are useful to keep in mind as examples, and we will refer to these and the figures later. \\ \\

Any representation of $W_n$ which is generated by a pure tower, we will refer to as a {\bf standard representation of $W_n$}.

\subsection{Normal Forms}

We now describe a normal form for elements of $\mathbb{Z} \wr_n \mathbb{Z}$ using special subwords we define called syllables.  The normal form is an arrangement of syllables which is determined by relationships between orbitals of generators in a standard representation of $\mathbb{Z} \wr_n \mathbb{Z}$.  To each element, we also associate a unique diagram which represents this arrangement.

Consider a generator $f_i$ in $P_n$. The {\bf algebraic sum} with respect to $f_i$ of $w$ is denoted $\ds \sum_{f_i}w$ and is defined as the sum of all the exponents of appearances of $f_i$ in the word $w$. This notion is well defined on equivalence classes of words if and only if the algebraic sum with respect to $f_i$ of each relator in $P$ is zero.    Thus algebraic sums are well-defined on elements of $P_n$ for each $n \in \mathbb{N}$ since the relations are commutators. 

Note that distinct equivalence classes of words may have the same algebraic sum.  For example, using the presentation $P_2$ $$\ds \mathbb{Z} \wr \mathbb{Z} = <f_1, f_2\, |\, [f_1,f_1^{f_2^{\alpha_2}}]=1 \text{ where } \alpha_2 \in \mathbb{Z}>$$ and a geometric representation of this group generated by a pure 2-tower, we have $f_1 \neq f_2^{-1}f_1f_2$.  However, $\ds \sum_{f_1} f_1 = \sum_{f_1}  f_2^{-1}f_1f_2 = 1$ and $\ds \sum_{f_2} f_1 = \sum_{f_2}  f_2^{-1}f_1f_2 = 0$.  Therefore, given a word $w$, even its spectrum of sums for all elements of the generating set $S$ doesn't distinguish its equivalence class from equivalence classes of other words.  Thus, with respect to the presentation $P_n$, $n \in \mathbb{N}$, we get a well-defined non-injective map $\ds \sum: \,G  \longrightarrow \Pi_{i=1}^n \mathbb{Z}$ where $G = S^* / \sim$ and $n$ is the size of $S$.  In spite of this obstacle, we can distinguish words in $P_n$ from each other.  The process will use algebraic sums in an essential way, but is more complex than considering only algebraic sums.  \\

\begin{lem}\mylabel{normal form}
Each word $w \in P_n$ with $w \neq 1$ has an inductively defined normal form given by
$$w = \ds w_1^{f_n^{k_1}}w_2^{f_n^{k_2}} \cdots w_j^{f_n^{k_j}}f_n^{p_n}$$
where $k_1 < k_2 < \cdots < k_j$, $\ds p_n = \sum_{f_n} w$, each $w_i^{f_n^{k_i}} \neq 1$ and each $w_i$ is in normal form in $P_{n-1}$. If $w = 1$, normal form is the empty word.
\end{lem}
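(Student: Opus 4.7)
The plan is to induct on $n$, matching the inductive construction of the presentations $P_n$. The base case $n=1$ is immediate: $P_1 \cong \mathbb{Z}$, so any $w \neq 1$ equals $f_1^{p_1}$ with $p_1 = \sum_{f_1} w$, and this is the form asserted with an empty syllable prefix. For the inductive step, I would exploit the fact (established in the construction of $P_n$) that $\mathbb{Z} \wr_n \mathbb{Z} = B \rtimes \langle f_n \rangle$, where the base group $B$ decomposes as $\bigoplus_{k \in \mathbb{Z}} B_k$ with each summand $B_k = \langle f_1^{f_n^k}, \ldots, f_{n-1}^{f_n^k}\rangle$ a faithful copy of $\mathbb{Z} \wr_{n-1} \mathbb{Z}$.

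Given $w \neq 1$ written as a word in $f_1, \ldots, f_n$, first I would use the conjugation rules $f_i^{f_n^k}$ encoded in the presentation to shuttle every occurrence of $f_n^{\pm 1}$ to the right end of the word. Each such move replaces an adjacent $f_n^{\pm 1} f_i$ by $f_i^{f_n^{\mp 1}} f_n^{\pm 1}$; after finitely many moves we obtain
$$w \;=\; b \cdot f_n^{p_n}, \qquad p_n \;=\; \sum_{f_n} w,$$
with $b \in B$. Since $\sum_{f_n}$ is invariant on equivalence classes (each relator is a commutator), the exponent $p_n$ is intrinsic to $w$.

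Next I would use the direct-sum structure of $B$. Collecting and commuting the resulting factors within $B$ (which is legitimate because elements of distinct summands $B_k, B_l$ commute by the defining relations), we write $b = u_{k_1} u_{k_2} \cdots u_{k_j}$ uniquely with $k_1 < k_2 < \cdots < k_j$ and each $u_{k_i} \in B_{k_i} \setminus \{1\}$. Because $B_{k_i}$ is precisely the set of elements of the form $w_i^{f_n^{k_i}}$ with $w_i \in \langle f_1, \ldots, f_{n-1}\rangle \cong P_{n-1}$, each syllable takes the required form $w_i^{f_n^{k_i}}$ for a unique $w_i \neq 1$ in $P_{n-1}$. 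Invoking the inductive hypothesis to put each $w_i$ into normal form in $P_{n-1}$ completes the existence argument, and uniqueness follows by peeling off the three layers in reverse: $p_n$ is the $f_n$-sum invariant, the indices $\{k_i\}$ and syllables $u_{k_i}$ are determined by the direct-sum decomposition of $b$, and the inner $w_i$'s are determined by inductive uniqueness in $P_{n-1}$.

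The main obstacle, I expect, is the shuttle step: I must justify that pushing $f_n$'s to the right really lands $b$ inside $B = \bigoplus_k B_k$ and does not inadvertently change the equivalence class. This amounts to checking that the intermediate presentation derived earlier, where generators $f_{ik}$ of $B_k$ satisfy $f_{ik}^{f_n} = f_{i(k+1)}$, genuinely identifies $f_i^{f_n^k}$ with the generator of the $k$-th summand; once this is in hand, the bookkeeping of indices $k_i$ in the final product is routine. A secondary subtlety is to record why $u_{k_i} \neq 1$ forces $w_i \neq 1$, which follows from the faithfulness of the embedding $P_{n-1} \hookrightarrow B_{k_i}$ given by $w_i \mapsto w_i^{f_n^{k_i}}$.
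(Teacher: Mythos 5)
Your proposal is correct and follows essentially the same route as the paper: induct on $n$, slide all occurrences of $f_n$ to the right to land in the base group times $f_n^{p_n}$, use commutativity of distinct summands $B_k$ to sort and merge the conjugated pieces by increasing exponent, recurse on each $w_i$, and derive uniqueness from the direct-sum decomposition together with the semidirect-product splitting. The paper merely phrases the same argument more combinatorially (explicit commutator moves on ``pre-fragments'') where you phrase it structurally via $B = \bigoplus_k B_k$; the shuttle step you flag as the main obstacle is exactly the Tietze-transformation identification $f_{ik} = f_i^{f_n^k}$ already established in the derivation of $P_n$.
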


We call $f_n^{p_n}$ the {\bf suffix} of $w$, $p_n$ the {\bf power} of the suffix, each $w_i^{f_n^{k_i}}$ a {\bf prefix}, each $f_n^{k_i}$ a {\bf conjugator}, and each $w_i$ and $w$ itself a {\bf fragment} of $w$. 

As we repeat this process, we consider fragments of fragments, fragments of fragments of fragments, etc, and the same for prefixes, suffixes, etc.  We still refer to each of these simply as prefixes, suffixes, fragments, conjugators, etc., of $w$ and affix a number to them which tracks the point in the inductive process at which they arise.  If $w$ is in normal form, the {\bf layer} of a subword $s$ is the maximum subscript of any generator that appears in $s$.   For example, suffixes and conjugators are powers of generators, so the layer of these is simply the subscript of that generator. The layer of a prefix is the subscript of its conjugator.  If $s$ has layer equal to $l$, we say $s$ is an $l$-fragment or $l$-prefix, etc.     

The {\bf level} of any subword $s$ is max$\{i \, | \, \sum_{f_i} s \neq 0\}$. Note the layer could be distinct from the level.   For example, consider $w=((f_1)^{f_2}f_2^0)^{f_3^0}f_3^0$ in $P_3$.  The level of the fragment $f_1^{f_2}$ is 1, but the layer is $2$.

The {\bf dominant element}  of a subword $s$ of $w$ is $f_k$ where $k =$ level$(s)$.   Syllables of $w$ are special subwords that are either prefixes or fragments with additional properties.  Namely, an $l$-prefix $p$ is a {\bf syllable} if level$(p)$ = $l-1$ and level$(s)=0$ for all suffixes $s$ of fragments containing $p$.  An $l$-fragment $f$ is a {\bf syllable} if level$(f) = l$ and level$(s) = 0$ for all suffixes $s$ of prefixes AND fragments containing $f$.  

For example, if $f_n^{p_n} \neq 1$, $w$ is a {\bf one-syllable} word.  If $f_n^{p_n}=1$, each $w_i^{f_n^{k_i}}$ contains a ${\bf syllable}$ of $w$.

\begin{proof}
We prove this by induction on the presentation subscript.  Suppose $w$ is a nontrivial word in $P_1 = <f_1 \, | \, [f_1,f_1^{w(,)}]=1 >$ where $w(,)$ is the empty word.  Then $w$ is a string of the letters $f_1$ and $f_1^{-1}$.  Its normal form is $f_1^{p_1}$ where $\ds p_1 = \sum_{f_1} w$.  This is unique since $\ds \sum_{f_1}$ is well defined on equivalence classes of words.

Assume inductively that the proposition holds for all $k \in \mathbb{N}$ with $k < n$.  Let $w$ be a nontrivial reduced word in $$P_n = \left<f_1,f_2,\cdots,f_n \,\, | \,\, [f_i,f_j^{w(j+1,n)}] = 1 \text{ for all } i \leq j \text{ with } i,j \in \overline{n-1} \right>$$ where $\ds w(j+1,n) = f_{j+1}^{\alpha_{j+1}} \cdots f_{n-1}^{\alpha_{n-1}}f_n^{\alpha_n}$ with not all $\alpha_j = 0$.  The following process terminates after a finite number of steps.

To put $w$ in normal form, we start with a standard combinatorial group theoretic technique of sliding $f_n$'s to the right.  We first locate all the instances of $f_n$ in the word $w$ and combine adjacent ones into a single power of $f_n$ to get $w = f_n^{a_1}s_1f_n^{a_2}s_2 \cdots f_n^{a_m}s_mf_n^{a_{m+1}}$ where the $s_i$ are reduced words in $\{f_1,f_2, \cdots, f_{n-1}\}^*$ and each $a_i \neq 0$ for all $i$ except possibly $a_1$ and $a_{m+1}$.   Then we move $f_n^{a_1}$ to the right like so:

 $$[f_n^{{a_1}^{-1}},s_1^{-1}]s_1f_n^{a_1}f_n^{a_2}s_2 \cdots f_n^{a_m}s_mf_n^{a_{m+1}}=s_1^{f_1^{-a_1}}f_n^{a_1}f_n^{a_2}s_2 \cdots f_n^{a_m}s_mf_n^{a_{m+1}}$$

 We combine adjacent powers of $f_n$ again.  Now move $f_n^{a_1+a_2}$ to the right.  Repeat the last two steps until there is a single power of $f_n$ all to the way to the right with conjugates of the $s_i$'s by powers of $f_n$ to the left of it, that is, 
$$\ds s_1^{f_n^{-a_1}}s_2^{f_n^{-(a_1+a_2)}} \cdots s_m^{f_n^{-\sum_{i=1}^m a_i}}f_n^{\sum_{i=1}^{m+1} a_i}.$$ 
Cancel each $s_i$ which is trivial for $1 \leq i \leq m$, as well its conjugator and renumber.  We call each $s_i \neq 1$ a {\bf pre-fragment} of $w$.  At this point, the powers of $f_n$ on the pre-fragments may not be in increasing order from left to right.  To put them in nondecreasing order, observe that the pre-fragments with distinct conjugators commute.  If there are both positive and negative powers of $f_n$, we may end up with multiple pre-fragments conjugated by the same power of $f_n$.  Thus, after putting the powers in nondecreasing order, we combine adjacent pre-fragments which are conjugated by the same power of $f_n$ into a single subword conjugated by that same power of $f_n$.  We obtain an expression like:

$$w = \ds w_1^{f_n^{k_1}}w_2^{f_n^{k_2}} \cdots w_l^{f_n^{k_l}}f_n^{p_n}$$

where $k_1 < k_2 < \cdots < k_l$, $\ds p_n = \sum_{f_n} w = \sum_{i=1}^{m+1} a_i$ and each $w_i$ is in $\{f_1,f_2,\cdots,f_{n-1}\}^*$.  

Cancel each $w_i$ which is trivial for $1 \leq i \leq l$, as well its conjugate and renumber.   By induction, each $w_i$ has a normal form in $P_{n-1}$.  Putting each $w_i$ in normal form, we complete the process of putting $w$ in normal form and obtain

$$w = \ds w_1^{f_n^{k_1}}w_2^{f_n^{k_2}} \cdots w_j^{f_n^{k_j}}f_n^{p_n}$$

where $k_1 < k_2 < \cdots < k_j$, $\ds p_n = \sum_{f_n} w$, each $w_i^{f_n^{k_i}} \neq 1$ and each $w_i$ is in normal form in $P_{n-1}$.

Each remaining $w_i^{f_n^{k_i}}$ is a {\bf prefix} of $w$. 

To prove uniqueness, observe that the form of each $w_i$ for $1\leq i \leq j$ is unique by induction.  The word $\ds w_1^{f_n^{k_1}}w_2^{f_n^{k_2}} \cdots w_j^{f_n^{k_j}}$ lies in $\ds \bigoplus_{i=1}^{\infty} \mathbb{Z} \wr_{n-1} \mathbb{Z}$.  It is in normal form in the presentation of $\ds \bigoplus_{k \in \mathbb{Z}} \mathbb{Z} \wr_{n-1} \mathbb{Z}$ below:
 $$\left< f_1^{f_{n}^k},f_2^{f_{n}^k},\cdots,f_{n-1}^{f_{n}^k} : k \in \mathbb{Z} \, \left| \, \begin{aligned} &[f_i^{f_{n-1}^k},f_j^{f_{n-1}^k w_k(j+1,n-1)}] = 1 \text{ for all } i \leq j \text{ with } i,j \in \overline{n-1}; \\ &[f_i^{f_{n-1}^k},f_j^{f_{n-1}^l}] = 1 \text{ for } i,j \in \{1,2, \cdots,n\} \text{ and } k \neq l  \end{aligned}  \right. \right>$$ 
 where $\ds w_k(j+1,n-1) = (f_{j+1}^{\alpha_{j+1}} \cdots f_{n-1}^{\alpha_{n-1}}f_{n-1}^{\alpha_{n-1}})^{f_n^k}$ with not all $\alpha_{j} = 0$
thanks to the increasing exponents $k_1 < k_2 < \cdots < k_j$ of $f_n$ and since each $w_i \neq 1$.  Hence the subword $w_1^{f_n^{k_1}}w_2^{f_n^{k_2}} \cdots w_j^{f_n^{k_j}}$ is unique.  In a semi-direct product $A \rtimes B$ each word can be written in the form $ab$ where $a,b$ is in the generating set for $A,B$, respectively.  The form $w = \ds w_1^{f_n^{k_1}}w_2^{f_n^{k_2}} \cdots w_j^{f_n^{k_j}}f_n^{p_n}$ is one such example in the semi-direct product $<f_1, f_2, \cdots, f_{n-1}> \wr <f_n>$.  Since the form of the subwords  $w_1^{f_n^{k_1}}w_2^{f_n^{k_2}} \cdots w_j^{f_n^{k_j}}$ and $f_n^{p_n}$ are unique and since these subwords do not commute unless one is trivial, the form of their product is unique.
\end{proof}

Though unnecessary for the previous proof, it is insightful to consider in more detail how further steps of the inductive process work and establish more terminology and notation.   For example, to put each $w_i$ in normal form, we apply the same process but with $w_i$ playing the role of $w$ and $f_{n-1}$ playing the role of $f_n$. We obtain

$$w_i = \ds w_{i,1}^{f_{n-1}^{k_{i,1}}}w_{i,2}^{f_{n-1}^{k_{i,2}}} \cdots w_{i,j_i}^{f_{n-1}^{k_{i,j_i}}}f_{n-1}^{p_{i,n-1}}$$

 where $w_{i,j} \in \{f_1,f_2,\cdots,f_{n-2}\}^*$, $k_{i,1}<k_{i,2}< \cdots < k_{i,j_i}$,\\
 and $\ds p_{i,n-1} = \sum_{f_{n-1}}w_i$.  
 
Each $w_{i, l}$ and $w_i$ itself is called a {\bf fragment} of $w_i$ or an $(n-2)$-{\bf fragment}, $(n-1)$-{\bf fragment} of $w$ to emphasize the layer in $w$. We refer to each $w_{i,l}^{f_{n-1}^{k_{i,l}}}$ as a {\bf prefix} and to $f_{n-1}^{p_{i,n-1}}$ as the {\bf suffix} of $w_i$.  They are $(n-1)$-prefixes and $(n-1)$-suffixes of $w$, respectively.  If the suffix $f_{n-1}^{p_{i,n-1}} \neq 1$, $w_i$ is a {\bf one-syllable } word.   Otherwise, each prefix of $w_i$ contains a {\bf syllable} of $w_i$.  

We repeat the inductive process on the prefixes of $w_i$, each time appending a new number to the ordered list in the subscript to track what prefix we are in at the current layer.  Therefore $n+1-k$ indicates the layer of the prefixes and suffixes arising in step $k$ of the inductive process.  By our notation, the number $k$ also equals the number of numbers in the subscript of the fragments and powers of the conjugators and the suffix.

Now that we have shown existence of normal forms in $P_n$, we define the {\bf level} of a group element $g \in G$ where $G$ has presentation $P_n$ to be the level of its normal form.  Similarly, we use terminology such as {\bf prefixes}, {\bf suffixes}, {\bf fragments},and  {\bf syllables} of $g$ to refer prefixes, suffixes, fragments, and syllables of the normal form of $g$.  Later we will write level$_G(g)$ to indicate the group is $G$ when working with levels in multiple groups.

\subsection{Diagrams Corresponding to Normal Forms}
Let $M_n$ be the rooted $\mathbb{Z}$-ary tree of height $n$ with a distinguished 0-edge below each vertex and adjacent vertices labeled in increasing order from left to right with the elements of the integers. We call $M_n$ the {\bf mother tree}.  We describe diagrams which correspond to normal forms of words in $P_n$ and are subdiagrams of $M_n$ with additional labels. Let $F$ be the collection of all finite sub rooted tree diagrams of $M_n$, and let $W$ be all possible trees of $F$ resulting from labeling leaves of elements of $F$ with elements of $\mathbb{Z} - \{0\}$ and nonterminal vertices by elements of $\mathbb{Z}$.

\begin{lem}\mylabel{tree diagram}
There is a one-to-one correspondence between elements of $W$ and normal forms of words in $P_n$.
\end{lem}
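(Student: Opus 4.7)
My plan is to establish the bijection by induction on $n$, mirroring the recursive definition of the normal form from Lemma \ref{normal form}. The base case $n=1$ is essentially tautological: elements of $P_1 \cong \mathbb{Z}$ are $f_1^{p}$, and the labeled subtrees of $M_1$ in $W$ are either the empty tree or consist of the single root vertex labeled by some nonzero integer $p$, so the correspondence $f_1^p \leftrightarrow$ (root labeled $p$), $1 \leftrightarrow$ (empty tree) is immediate.

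For the inductive step, assuming the bijection for $P_{n-1}$ and $M_{n-1}$, I will send a word $w \in P_n$ in its normal form $w = w_1^{f_n^{k_1}} w_2^{f_n^{k_2}} \cdots w_j^{f_n^{k_j}} f_n^{p_n}$ to the tree whose root carries the label $p_n$ and which has, for each $i \in \{1,\ldots,j\}$, a child attached along the $k_i$-edge below the root, with the subtree hanging below that child being the tree assigned to $w_i \in P_{n-1}$ by the inductive bijection. The trivial word is sent to the empty tree. In the reverse direction, I will read the label of the root of a labeled subtree as the suffix exponent $p_n$, enumerate its children by increasing edge label to recover the ordered list of conjugators $k_1 < \cdots < k_j$, and recursively reconstruct each $w_i$ from the subtree hanging below each child.

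The only non-routine verification — and what I expect to be the main obstacle — is reconciling the two sets of structural constraints: $W$ requires leaves to carry labels in $\mathbb{Z} \setminus \{0\}$ while internal vertices may carry any integer label, whereas the normal form requires each fragment $w_i$ that appears to be nontrivial and forbids repeated conjugators. I will verify that leaves of a subtree correspond exactly to fragments with no nontrivial sub-prefix, so such a fragment reduces to a bare power of a generator which must be nonzero to avoid deletion during the normal form reduction; internal vertices correspond to fragments with at least one nontrivial prefix, which remain nontrivial regardless of whether their own suffix exponent is zero; and the distinctness of conjugators in the normal form matches the fact that each edge position at a vertex of $M_n$ can be used at most once. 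With these correspondences in place, well-definedness of the two maps and the fact that they are mutually inverse follow immediately from the uniqueness of the normal form established in Lemma \ref{normal form} together with the inductive hypothesis.
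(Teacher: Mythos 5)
Your proposal is correct and constructs exactly the same bijection as the paper (root label $=$ suffix power, edge labels $=$ conjugator powers, subtrees $=$ fragment diagrams), merely organizing as an explicit induction on $n$ what the paper presents as a global construction followed by a recursive read-off procedure. The structural checks you single out---leaves carry nonzero labels because prefix-free fragments are bare powers, internal vertices may be labeled zero, and strictly increasing conjugator exponents match the once-per-edge-label constraint in $M_n$---are precisely the points the paper's argument relies on.
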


\begin{proof}
Given a word $w = \ds w_1^{f_n^{k_1}}w_2^{f_n^{k_2}} \cdots w_j^{f_n^{k_j}}f_n^{p_n}$ in normal form in $P_n$, we obtain a tree in $W$ in the following way: The vertices correspond to suffixes of $w$ and are labeled with the power of the corresponding suffix.  The edges of the graph correspond to conjugators and are labeled with the power of the corresponding conjugator.  Let $s_A$ be the suffix corresponding to a vertex $A$.  There is a directed edge from $A$ to $B$ if layer($s_A$) = layer($s_B$) + 1. Now let depth($A$) = the length of the path from the root of the tree to $A$ and height($A$) = n - depth($A$). Then layer($s_A$) = height($A$). Since a rooted tree is naturally graded by the height function on the vertices, it is graded by the layer function on the labels of the vertices.  Similarly, we define depth($E$) for an edge $E$ to be the depth of its terminal vertex and height($E$) = n - depth($E$). To make our diagrams easier to read, we omit labels of zero and we also circle edge labels. 

For example, given the word 
$$g =\ds \bigg[\left((f_2^{f_3^0}f_3^0)^{f_4^{-2}}(f_2^{f_3^0}f_3^0)^{f_4^2}\right)^{f_5^0}f_5^0\bigg]^{f_6^0}\bigg[\left((f_2^{f_3^0}f_3^0)^{f_4^0}(f_2^{f_3^0}f_3^0)^{f_4}f_4\right)^{f_5^0}f_5^0\bigg]^{f_6^2}f_6$$ which is in normal form in $P_6$, the corresponding tree diagram is Figure \ref{tree_ex1}.

\begin{figure}\centering
     \includegraphics[width=.5\linewidth]{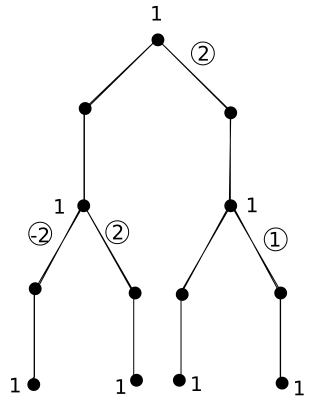}
     \caption{Tree diagram for the normal form of $g$}\mylabel{tree_ex1}
\end{figure}

 We say that a vertex $v$ is {\bf above} another vertex $u$ if height($v$) $>$ height($u$).  The vertex $v$ is {\bf above} an edge $E$ if height($v$) $\geq$ height($E$).  Similarly, $E$ is above another edge $F$ if height($E$) $>$ height($F$).  The edge $E$ is above a vertex $v$ if height($E$) $>$ height($v$).  We say an edge or vertex is {\bf below} another if the opposite inequalities hold for the height.  Note that these inequalities also imply corresponding inequalities for depth.  Furthermore, when we refer to {\bf distance} between vertices or between edges, we are invoking the usual metric on trees (length of the unique path between the vertices or between the edges).

There is also an obvious {\bf word diagram} associated to the word $w$ which is isomorphic as a labeled graph to the tree defined previously. Instead of labeling with the powers of suffixes and conjugators, we instead label with the entire suffixes and conjugators. Since a tree diagram can be obtained from its associated word diagram by forgetting only information which is captured by the depth function of the tree (namely subscripts of conjugators and suffixes), there is a unique tree diagram associated to each word diagram and vice versa.  Furthermore, given a tree diagram in $W$, one can construct its associated word diagram by using the height of vertices and edges to affix subscripts to generators, and the normal form is easily obtained from the word diagram.  

More specifically, given  a tree $T$ and its associated word diagram $W$, the word $w$ in normal form corresponding to $W$ is obtained by this process:  The label of the root is the suffix of $w$.  An edge attached to the root and the maximal subtree below it correspond to a prefix, so we call this a {\bf prefix subdiagram} or {\bf n-prefix subdiagram} to emphasize the layer.   In general, the diagram hanging below a vertex of height $k$ (not including the vertex itself)  is called a {\bf k-prefix subdiagram}.  Similarly, the subtree hanging at a vertex of height $k$ (including the vertex itself) is called a {\bf fragment subdiagram} or {\bf k-fragment subdiagram}.

After writing the suffix of $w$, start with the rightmost of the prefix subdiagrams and write parentheses to the left of the suffix with blank space inside (for writing down a fragment).  Conjugate the parentheses by the label of the edge adjacent to the root.  Do this for each prefix diagram, writing right to left so that the prefix subdiagrams are in the same order as the prefixes (necessary for the word to be in normal form).  At this point, the blanks in the parentheses each correspond to rooted subtrees of the word diagram.  Repeat the same process for each of these blanks. Since the layer decreases with each step, this process terminates.  Furthermore the word is in the proper form up to fragments, and repeating the process results in fragments which are in normal form.  Thus the word $w$ is in normal form.
 
 Given two distinct tree diagrams $T_1$ and $T_2$, the words $w_1$ and $w_2$ obtained from the trees using the prior process are in normal form.  Hence, they are unique.  Conversely, given two distinct words $w_1$ and $w_2$ in normal form their associated tree diagrams $T_1$ and $T_2$ are unique or else the previous process would result in the same normal form for $w_1$ and $w_2$.
\end{proof}

\subsection{Geometric Interpretations of Normal Forms}

In this section, we connect normal forms and tree diagrams with the standard geometric representation of $W_n$.  Throughout, we assume that $G$ is a group generated by a pure fundamental tower $T = \{(A_1, f_1) \cdots (A_n, f_n)\}$, i.e., we are considering a standard representation of $W_n$ in PLo(I). 

\begin{lem}
\begin{enumerate}[label={(\arabic*)},ref={\thecor~(\arabic*)}]
\item\mylabel{fragbump} A product of the form $$w_1^{f_i^{k_1}}w_2^{f_i^{k_2}} \cdots w_j^{f_i^{k_j}}f_i^{k}$$ with $k\neq 0$ is a one-bump function with orbital $A_k$. 
\item\mylabel{prefixbump} A product of the form $$(w_1^{f_i^{k_1}}w_2^{f_i^{k_2}} \cdots w_j^{f_i^{k_j}}f_i^{k})^{w(i+1, n)}$$ where $k \neq 0$, $\alpha_i \in \mathbb{Z}$, and $w(i+1, n) = f_{i+1}^{\alpha_{i+1}} \cdots f_n^{\alpha_n}$,  is a one-bump function with orbital $A_kw(i+1, n)$.
%\item\mylabel{mother} The unlabeled mother tree is the Hasse diagram of the poset of all elements orbitals of $g$.  The set of all element orbitals of $G$ equals $$\{Ag \, | \, g \in G \text{ and } A \in \mathcal{O}(T)\}.$$ 
\end{enumerate}
\end{lem}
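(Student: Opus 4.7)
The plan is to handle part (1) directly using purity and fundamentality of the generating tower, and then deduce part (2) by invoking Lemma \ref{basic conj orbitals}. Throughout I will read the conclusion of (1) as saying the orbital is $A_i$ (the orbital of the dominant generator $f_i$), since this is what the standard representation forces.

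For part (1), the first step is to locate the support of the product. Each $w_\ell$ is a word in $f_1,\ldots,f_{i-1}$, and since the tower is pure with $A_1\subset\cdots\subset A_{i-1}\subset A_i$, all these generators have support inside $A_{i-1}$; hence so does $w_\ell$, and therefore $w_\ell^{f_i^{k_\ell}}$ has support inside the translate $A_{i-1}f_i^{k_\ell}$. Purity together with Lemma \ref{fd} forces $A_{i-1}$ to lie in a single fundamental domain of $(A_i,f_i)$, so the translates $A_{i-1}f_i^{k_\ell}$ for the distinct values $k_1<\cdots<k_j$ are pairwise disjoint open subintervals of $A_i$. The suffix $f_i^k$ also has support in $A_i$, so the whole product $h = w_1^{f_i^{k_1}}\cdots w_j^{f_i^{k_j}}f_i^k$ is trivial off $A_i$.

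Next I would show $h$ has no fixed points inside $A_i$. Let $x\in A_i$. If $x$ lies in none of the translates $A_{i-1}f_i^{k_\ell}$, then every prefix factor fixes $x$, so $xh = xf_i^k \neq x$, since $k\neq 0$ and $f_i^k$ has no fixed points in $A_i$. If $x$ lies in some $A_{i-1}f_i^{k_m}$, then by disjointness every other prefix factor fixes $x$, and $xw_m^{f_i^{k_m}}$ stays in $A_{i-1}f_i^{k_m}$ (since $w_m^{f_i^{k_m}}$ is the identity outside this interval); applying $f_i^k$ sends the result into $A_{i-1}f_i^{k_m+k}$, which is disjoint from $A_{i-1}f_i^{k_m}$ because $k\neq 0$. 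So again $xh\neq x$. Thus the support of $h$ is exactly the open interval $A_i$, and $h$ is a one-bump function with orbital $A_i$.

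Part (2) then follows at once: by part (1) the inner expression is a one-bump function with orbital $A_i$, and conjugation by $w(i+1,n) = f_{i+1}^{\alpha_{i+1}}\cdots f_n^{\alpha_n}$ carries this orbital to $A_i\,w(i+1,n)$ via Lemma \ref{basic conj orbitals}. The only real subtlety is in part (1), namely ruling out the possibility that the perturbations $w_\ell^{f_i^{k_\ell}}$ conspire with $f_i^k$ to pin down a point of $A_i$; this is precisely what disjointness of the translated fundamental domains prevents, which is why both purity and fundamentality of the generating tower are used in an essential way.
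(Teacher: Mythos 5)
Your proof is correct. The paper disposes of part (1) in a single line by citing Lemma \ref{multiply}: the prefixes $w_\ell^{f_i^{k_\ell}}$ have support in translates of $A_{i-1}$ properly contained in $A_i$, the suffix $f_i^k$ has orbital $A_i$, and \ref{multiply} then yields that $A_i$ is an orbital of the product; part (2) is likewise a citation of \ref{multiply} together with \ref{basic conj orbitals}. You instead verify the conclusion pointwise: off the union of the pairwise disjoint translates $A_{i-1}f_i^{k_\ell}$ the product acts as $f_i^k$, and on a given translate the corresponding prefix keeps the point inside that translate while $f_i^k$ then carries it to a disjoint translate, so no point of $A_i$ is fixed. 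This is in effect a self-contained proof of the relevant instance of \ref{multiply}, and it has the merit of making explicit a step the one-line citation leaves implicit, namely that the support of the whole product is contained in $A_i$, which is what upgrades ``$A_i$ is an orbital of the product'' to ``the product is a one-bump function with orbital $A_i$.'' Your reading of the stated orbital as $A_i$ rather than the misprinted $A_k$ matches the paper's intent, and your deduction of part (2) by conjugating with $w(i+1,n)$ via \ref{basic conj orbitals} is exactly the paper's argument.
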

\begin{proof}
\ref{fragbump} This is a simple application of \ref{multiply} to a fragment of a word in $P_n$ in a standard representation of $W_n$.

\ref{prefixbump} Apply \ref{multiply} and \ref{basic conj orbitals}.

%\ref{allbumps}Let $w$ be the normal form of $g$.  Let $A$ be the set of all subwords of $w$  of the types in \ref{fragbump} and \ref{prefixbump}.  Order the elements of $A$ by containment, and let $B$ be the set of all maximal elements of $A$.  We claim $B$ is the set of all bumps of $w$.  To see this, let $b \in B$.  Since $b$ is maximal in $A$, no prefixes or fragments in $A$ properly contain $b$.  Hence prefixes and fragments of $w$ that contain $b$ are not of the types in $A$.  The distinction between elements of $A$ and other fragments and prefixes of $w$ is that the highest layer suffix is non-trivial for elements in $A$.  Thus any prefix or fragment of $w$ not in $A$ and containing $b$ is equivalent to the trivial word to the right of $b$. In other words, let $S$ be the tree subdiagram corresponding to $b$ inside of the tree $T$ for $w$ and let $p$ be the path from $S$ to the root of $T$.  Then all vertices of $p$ above $S$ have labels of zero, so $b$ can be reduced.

%\ref{mother} An element $g\in G$ can be written as a word $w = \ds w_1^{f_n^{k_1}}w_2^{f_n^{k_2}} \cdots w_j^{f_n^{k_j}}f_n^{p_n}$ in normal form.  This expression is naturally partitioned by its syllables.  By \ref{fragbump} and \ref{prefixbump}, syllables are bumps of $g$ and their orbitals are explicitly either $A_k$ for some $k \in \{1, \cdots, n\}$ or $A_kc$ for some $c \in G$.  By \ref{basic conj towers}
\end{proof}

\begin{lem} Given a word $w = \ds w_1^{f_n^{k_1}}w_2^{f_n^{k_2}} \cdots w_j^{f_n^{k_j}}f_n^{p_n}$ in normal form, there is a one-to-one correspondence between bumps of $w$ and maximal non-zero vertices in the tree diagram of $w$. 
\end{lem}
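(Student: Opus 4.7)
The plan is to induct on $n$, the depth of the presentation $P_n$, using the recursive structure of the normal form together with the fundamental nature of the generating tower.

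The base case $n = 1$ is immediate: a nontrivial word is $f_1^{p_1}$ with $p_1 \neq 0$, which is one bump on $A_1$, and its tree diagram is a single vertex labeled $p_1$, i.e. one maximal nonzero vertex. For the inductive step, assume the result for words in $P_{n-1}$ and let $w = w_1^{f_n^{k_1}} \cdots w_j^{f_n^{k_j}} f_n^{p_n}$ be in normal form in $P_n$. I would split into two cases depending on whether the root label $p_n$ is zero.

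In the case $p_n \neq 0$, I would argue that $w$ is a one-bump function with orbital $A_n$, which matches the single maximal nonzero vertex (the root) of the tree. Concretely, each $w_i$ lies in $\langle f_1, \ldots, f_{n-1}\rangle$, so its orbitals are contained in $A_{n-1}$; by Lemma~\ref{fd} applied to the fundamental tower, $A_{n-1} \subsetneq A_n$ lies in a fundamental domain of $(A_n, f_n)$, hence each $A_{n-1} f_n^{k_i}$ does too. Thus the chain of orbitals $A_{n-1} f_n^{k_i} \subsetneq A_n$ together with Lemma~\ref{multiply} shows $A_n$ is an orbital of $w$; since every generator is supported in $A_n$, no other orbitals exist, so $w$ has exactly one bump. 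The subtrees below the root contribute nothing maximal because the root is nonzero and sits above every other vertex.

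In the case $p_n = 0$, the prefixes $w_i^{f_n^{k_i}}$ have pairwise disjoint supports because the distinct translates $A_{n-1} f_n^{k_i}$ lie in distinct fundamental domains of $f_n$; therefore the bumps of $w$ are the disjoint union of the bumps of the prefixes. By Lemma~\ref{basic conj orbitals}, the bumps of $w_i^{f_n^{k_i}}$ are in bijection with the bumps of $w_i$, and by the inductive hypothesis the bumps of $w_i$ correspond to maximal nonzero vertices of the tree diagram of $w_i \in P_{n-1}$. Since each such subtree is exactly the fragment subdiagram hanging below the edge labeled $k_i$ at the root, and since the root itself is zero, these maximal nonzero vertices in the subtrees are precisely the maximal nonzero vertices of the tree diagram of $w$. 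Taking disjoint unions completes the bijection.

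The main obstacle to keep clean is the bookkeeping for the $p_n \neq 0$ case: one must verify both that $A_n$ is an orbital of $w$ (apply Lemma~\ref{multiply} to the chain coming from a single prefix and the suffix) and that $w$ has no orbital outside $A_n$ (use that all generators are supported in $A_n$, so the support of $w$ is too). Everything else reduces to careful parsing of the tree/prefix correspondence established in Lemma~\ref{tree diagram} and to noting that the heights in the tree exactly encode the layers used in the normal form recursion.
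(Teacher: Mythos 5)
Your proof is correct, but it is organized differently from the paper's. The paper argues directly on the tree: for each root-to-leaf path it locates the first non-zero vertex $v$, shows via Lemma \ref{prefixbump} that the fragment hanging at $v$, conjugated by the edge labels above $v$, is a one-bump function whose support is disjoint from everything branching off above $v$, and then runs a separate converse argument assigning a vertex to each given bump. You instead induct on $n$ and split on whether the root label $p_n$ vanishes: if $p_n\neq 0$ you re-derive the content of Lemma \ref{fragbump} (one bump, one maximal non-zero vertex, namely the root), and if $p_n=0$ you use disjointness of the supports of the prefixes to reduce to the inductive hypothesis on each $w_i\in P_{n-1}$, matching the fragment subdiagrams below the root. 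The induction buys you a genuinely two-sided bijection for free --- you never have to argue the converse direction separately --- while the paper's path-tracing version is more explicit about \emph{which} vertex corresponds to a given bump without unwinding a recursion. Two small points to tighten: in the $p_n\neq 0$ case, Lemma \ref{multiply} should be applied with the \emph{entire} prefix product $w_1^{f_n^{k_1}}\cdots w_j^{f_n^{k_j}}$ (which has some orbital properly inside $A_n$) as the first factor and $f_n^{p_n}$ as the second, rather than ``a single prefix and the suffix,'' so that the conclusion is about $w$ itself; and in the $p_n=0$ case you should note that a shared endpoint of two translates $A_{n-1}f_n^{k_i}$ is a fixed point of $w$, so orbitals of distinct prefixes cannot coalesce into a single orbital of $w$.
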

\begin{proof}
%Recall the generators in a standard representation form a pure $n$-tower $T = \{(A_1, f_1), (A_2, f_2), \cdots (A_n, f_n)\}$ written in order so $f_n$ is the top bump, $f_{n-1}$ its immediate successor, etc.  Each $w_i^{f_n^{k_i}}$ for $1 \leq i \leq j$ lies in the base group $\ds \bigoplus_{i \in \mathbb{Z}}(\mathbb{Z} \wr_{n-1} \mathbb{Z})$ generated by the elements $f_1, f_2, \cdots, f_{n-1}$ conjugated around by $f_n$.  If the algebraic sum $p_n = 0$, then $w$ lies entirely in the base group, so is a product of bumps lying strictly inside the orbital $A_n$ of $f_n$.  Because each $k$-prefix has a unique conjugator associated with it, the supports of distinct $k$-prefixes are disjoint.  If $p_n \neq 0$, that is, if $\sum_{f_n} w \neq 0$, then $w$ is a one-bump function with orbital $A_n$.  

Using the tree $T$ associated to $w$, we find the bumps of $w$ in the following way.  Consider all paths from the root of the tree to leaves.  Let $p$ be one such path, and let $v$ be the first vertex along $p$ with a non-zero label.  The fragment subdiagram (possibly the whole tree $T$) hanging at $v$ corresponds to a bump of $w$.  To see this note that the vertices on $p$ above $v$ represent the trivial element.  The tree hanging at $v$ represents a fragment $f$ of $w$ with non-trivial suffix.  Furthermore, all suffixes corresponding to vertices above $v$ on $p$ are trivial.  Let the nontrivial conjugators corresponding to edges above $v$ on $p$ be $f_i^{k_i}, f_{i+1}^{k_{i+1}}, \cdots , f_n^{k_n}$.  Then $\ds f^{f_i^{k_i} f_{i+1}^{k_{i+1}} \cdots f_n^{k_n}}$ is a word representing a one-bump function by \ref{prefixbump}. Furthermore, $\ds f^{f_i^{k_i} f_{i+1}^{k_{i+1}} \cdots f_n^{k_n}}$ is a bump of $w$ itself because paths that diverge from $p$ above the vertex $v$ correspond to subwords of $w$ conjugated by a distinct power of at least one of $f_i, f_{i+1}, \cdots, f_n$ and thus have support disjoint from $\ds f^{f_i^{k_i} f_{i+1}^{k_{i+1}} \cdots f_n^{k_n}}$.

Thus the fragment hanging at $v$ corresponds to a bump of $w$. This process always results in a bump of $w$ because each path from the root to a leaf of $T$ cannot have all vertices labeled zero.  To see this, let $p$ be such a path.  If the prefix subdiagram hanging from the root had all 0-labeled vertices, then the prefix would be trivial and thus $w$ would not be in normal form (a contradiction).

Conversely, given a bump $b$ of $w$, there is some fragment $$f = w_1^{f_i^{k_1}}w_2^{f_i^{k_2}} \cdots w_j^{f_i^{k_j}}f_i^{k}$$ of $w$ whose graph contains the graph of $b$ in the plane.  Let $f$ be the fragment of minimal layer with this property, $v$ be the vertex in the tree diagram of $w$ corresponding to the suffix $f_i^{k}$, and $p$ the path from $v$ to the root. Note that the fragment $f$ is unique since distinct fragments in the same layer have disjoint supports.  If $f_j^l$ is a suffix corresponding to a non-zero vertex $u$ above $v$, by \ref{multiply} the product $\ds f' $ corresponding to the fragment subdiagram hanging at $u$ and thus the entire word $w$ would have orbital containing the orbital of $b$, a contradiction.  Therefore all vertices above $v$ on the path $p$ from $v$ to the root are labeled with zero. If $k \neq 0$, $f$ is a one-bump function by \ref{fragbump} and $v$ is the maximal non-zero vertex we associate to $b$.  If $k =0$, then since distinct $i$-prefixes have disjoint supports, there exists some $i$-prefix $r$ whose graph contains the graph of $b$.  If the $(i-1)$-fragment $s$ of $r$ had trivial suffix $t$, then $f$ would not be the fragment of minimal layer whose graph contains the graph of $b$. Thus $s$ does not contain the graph of $b$, so $t \neq 1$.  Thus the vertex $u'$ corresponding to $t$ has non-zero label, and we associate this vertex to $b$ when $k = 0$.
\end{proof}

In any tree diagram, we call a maximal non-zero vertex $v$ a {\bf bump vertex} and the fragment subdiagram hanging at $v$ a {\bf bump diagram}.

In the following corollary, we say stacks $S_1$ and $S_2$ are {\bf conjugate} in $G$  to mean there are towers $T_1$ and $T_2$ associated to $S_1$ and $S_2$ such that $\mathcal{O}(T_1^c) = \mathcal{O}(T_2)$ for some $c \in G$.  Equivalently, there is a $c \in G$ such that $S_1c=S_2$.

\begin{cor}\mylabel{levelorb}
\begin{enumerate}[label={(\arabic*)},ref={\thecor~(\arabic*)}]
\item \mylabel{allbumps}If $b$ is a single bump of an element $g \in G$, normal form of $b$ can be simplified to $$b = \ds (w_1^{f_i^{k_1}}w_2^{f_i^{k_2}} \cdots w_j^{f_i^{k_j}}f_i^{k})^{w(i+1, n)}$$
where $1 \leq i \leq n$, $k_1 < k_2 < \cdots < k_j$, $\ds k = \sum_{f_i} w$, $k \neq 0$, each $w_i^{f_n^{k_i}} \neq 1$, each $w_i$ is in normal form in $P_{i-1}$, and $w(i+1, n) = f_{i+1}^{\alpha_{i+1}} \cdots f_n^{\alpha_n}$.  In particular, each bump $b$ is also an element of $G$.
\item Given a word $w = \ds w_1^{f_n^{k_1}}w_2^{f_n^{k_2}} \cdots w_j^{f_n^{k_j}}f_n^{p_n}$ in normal form in $P_n$ geometrically represented as a graph of a function in a standard representation of $W_n$, there is a one-to-one correspondence between syllables of $w$ and bump vertices of $w$. 
\item \mylabel{mother} The unlabeled mother tree $M_n$ is the Hasse diagram for the poset $\mathcal{O}(G)$.
\item  \mylabel{max iso} All maximal stacks are conjugate in $G$ and therefore order isomorphic. Since maximal stacks are isomorphic, maximal towers are, too.
\end{enumerate}
\end{cor}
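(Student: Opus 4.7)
The plan is to derive parts (1) and (2) as direct corollaries of the preceding bump-vertex lemma, then build on them to get parts (3) and (4). For part (1), that lemma provides a bijection between bumps of $g$ and maximal nonzero vertices $v$ of its tree diagram. Given such a $v$, the fragment subdiagram hanging at $v$ is itself a tree diagram with nonzero root label; by Lemma~\ref{tree diagram} it encodes a normal-form word $w_1^{f_i^{k_1}} \cdots w_j^{f_i^{k_j}} f_i^k$ in $P_i$ with $k \neq 0$. The unique path from $v$ up to the root of the full diagram passes through zero-labeled vertices whose edge labels read off as $f_{i+1}^{\alpha_{i+1}}, \ldots, f_n^{\alpha_n}$. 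Conjugating the fragment by $w(i+1, n) = f_{i+1}^{\alpha_{i+1}} \cdots f_n^{\alpha_n}$ reconstructs the bump $b$ as an explicit word in the generators, which simultaneously yields the claimed normal form and shows $b \in G$. Part (2) is then definition-chasing: both the syllable conditions and the ``maximal nonzero vertex'' condition encode a maximal subobject whose intrinsic level is nontrivial and whose containing environment has only trivial suffixes, so they translate to one another under Lemma~\ref{tree diagram}.

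For part (3), I would first use part (1) to enumerate $\mathcal{O}(G)$: every element orbital is the orbital of some bump, and by Lemma~\ref{prefixbump} the bump orbitals are precisely the sets $A_i \cdot w(i+1, n)$ for $1 \le i \le n$ and $(\alpha_{i+1}, \ldots, \alpha_n) \in \mathbb{Z}^{n-i}$. The pure fundamental-tower structure then pins down the covering relations: $A_i w \subset A_{i+1} w$ always holds, and for fixed $w(i+2, n)$ the orbitals $A_i \cdot f_{i+1}^k w(i+2, n)$ for $k \in \mathbb{Z}$ are exactly the $\mathbb{Z}$-many orbitals at height $i$ nested inside $A_{i+1} \cdot w(i+2, n)$, one per fundamental domain of $f_{i+1}$. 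This $\mathbb{Z}$-ary branching graded by layer matches $M_n$ exactly, with the distinguished $0$-edges corresponding to $\alpha_{i+1} = 0$.

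For part (4), by (3) the unique maximum of $\mathcal{O}(G)$ is $A_n$, so every maximal stack contains $A_n$, and the covering structure of $M_n$ forces the stack to contain exactly one orbital at each height $1, \ldots, n$. The inclusion $A_i w_i \subset A_{i+1} w_{i+1}$ combined with the single-fundamental-domain placement of $A_i$ inside $A_{i+1}$ forces $w_{i+1}$ to equal the tail of $w_i$ beyond $f_{i+1}$, so a maximal stack is determined by a single word $w_1 = f_2^{\alpha_2} \cdots f_n^{\alpha_n}$, with $w_i$ the tail of $w_1$ starting at $f_{i+1}^{\alpha_{i+1}}$. To conjugate a maximal stack $S$ with parameter $w_1$ onto $S'$ with parameter $w_1'$, I would take $c = w_1^{-1} w_1'$ and verify $A_i w_i c = A_i w_i'$ for each $i$, using that $A_j$ is setwise fixed by $f_k$ whenever $k \le j$ (since $f_k$ is supported in $A_k \subseteq A_j$); this lets both $w_i w_1^{-1}$ and the leading factors of $w_1'$ act trivially on $A_i$. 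Order isomorphism then follows from Lemma~\ref{basic conj towers}.

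The main obstacle is the bookkeeping in (4): checking that one conjugator carries every level of the stack simultaneously requires identifying exactly which generators act setwise trivially on which $A_j$, and exploiting the fact that the parameter tails nest consistently down the stack. Parts (1), (2), and the Hasse-diagram identification in (3) are essentially unpacking structure already established by the preceding bump-vertex lemma and the fundamental-tower geometry.
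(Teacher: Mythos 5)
Your argument is correct, and for parts (1)--(3) it follows the route the paper intends: these statements are read off from the bump-vertex lemma, the tree-diagram correspondence of Lemma \ref{tree diagram}, and the fundamental-domain geometry of a standard representation, exactly as you unpack them. The genuine divergence is in part (4). The paper's own argument for \ref{max iso} (present only as a suppressed draft lemma, supported by an auxiliary fact that conjugates of fundamental towers are fundamental) is an induction on the height of the generating tower: it first shows every maximal tower contains $A_n$ together with one of the orbitals $A_{n-1}a_n^{p}$, conjugates by the appropriate power of $a_n$ to align the top two levels, then recurses inside the subgroup $G_{n-1}^{a_n^{p}}$ and composes the conjugators produced at each stage. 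You instead parametrize a maximal stack by a single word $w_1 = f_2^{\alpha_2}\cdots f_n^{\alpha_n}$, using the forced nesting of the tails $w_i$, and exhibit the conjugator $c = w_1^{-1}w_1'$ in one step; the verification $A_iw_ic = A_iw_i'$ goes through because $w_iw_1^{-1}$ telescopes to $f_i^{-\alpha_i}\cdots f_2^{-\alpha_2}$, whose factors stabilize $A_i$ setwise, and the leading factors of $w_1'$ stabilize $A_i$ for the same reason. Your version is more economical and names the conjugating element explicitly, which is what later applications of \ref{max iso} (for instance in Lemma \ref{level iso}) actually require; the paper's induction is the same computation unwound one level at a time, and both conclude the order isomorphism from \ref{basic conj towers}.
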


 Let $O$ be an element orbital of $g$ in the solvable group $G$.  The signed orbital $(O,g)$ is contained in some maximal tower $M$ of $G$.  Let $\phi$ be the ordered set isomorphism from $M$ to $\overline{n}$. Define the {\bf level} of the element orbital $O$ in $G$, denoted level$(O)$ to be $\phi(O, g)$.  This notion is well defined in standard representations of $W_n$ via Lemma \ref{max iso}.  It can also be defined in solvable groups in general thanks to results in \cite{alg}.  Namely, each solvable subgroup embeds in its split group, the split group has the same element orbitals, apply the previous lemma to the projection of the split group to its group orbital containing $O$, then use that the split group is represented by a disjoint union of standard representations.

\begin{lem}
Given two words $w_1$ and $w_2$ in normal form in $P_n$, their associated trees $T_1$ and $T_2$ can be used to determine containments of orbitals associated to the words.
\end{lem}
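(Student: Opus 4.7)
The plan is to read off orbital containments by combining three prior results: the bijection in the preceding lemma between bumps of $w$ and bump vertices of $T_w$, the formula in Lemma \ref{prefixbump} for the orbital associated to a single bump, and Corollary \ref{mother}, which identifies the unlabeled mother tree $M_n$ with the Hasse diagram of $\mathcal{O}(G)$.

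First I would set up a canonical label-respecting embedding $\iota\colon T_w \hookrightarrow M_n$ for every word $w$ in normal form. The root of $T_w$ is sent to the root of $M_n$; an edge of $T_w$ at height $j$ labeled $\alpha_j$ is sent to the edge of $M_n$ descending to the $\alpha_j$-th child of its parent, with the convention that omitted (zero) edge-labels are absorbed by the distinguished $0$-edges of $M_n$ and that omitted (zero) vertex-labels correspond to vertices traversed but not terminating a bump path. For a bump vertex $v$ of $T_w$ at height $i$, with edge labels $\alpha_{i+1}, \ldots, \alpha_n$ read off the unique path from the root of $T_w$ down to $v$, Lemma \ref{prefixbump} and the preceding lemma together say that the bump of $w$ corresponding to $v$ has orbital $A_i f_{i+1}^{\alpha_{i+1}} \cdots f_n^{\alpha_n}$. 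By Corollary \ref{mother}, this is precisely the orbital labeling the vertex $\iota(v) \in M_n$.

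Next I would translate the containment question into $M_n$. Since $M_n$ is the Hasse diagram of the poset $\mathcal{O}(G)$, for any two orbitals $O, O' \in \mathcal{O}(G)$ we have $O \subseteq O'$ exactly when the $M_n$-vertex representing $O'$ lies on the unique path from the root of $M_n$ to the $M_n$-vertex representing $O$, with proper containment corresponding to strictly greater height. To decide whether a particular orbital of $w_1$ is contained in a particular orbital of $w_2$, one therefore identifies the responsible bump vertices $v_1 \in T_1$ and $v_2 \in T_2$, forms the images $\iota(v_1), \iota(v_2) \in M_n$, and applies the path-and-height criterion. Ranging over all pairs of bump vertices produces every possible orbital containment between $w_1$ and $w_2$, and conversely every such containment is detected in this way because every orbital of $w_\ell$ comes from a bump vertex of $T_\ell$.

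The main obstacle I expect is bookkeeping around the suppressed zero labels: because tree diagrams omit $0$-labeled vertices and $0$-labeled edges, one must be precise about how a truncated path in $T_w$ is completed to the corresponding path in $M_n$ via the distinguished $0$-edges, and then verify that $\iota$ is consistent with both the orbital formula of Lemma \ref{prefixbump} and the identification of Corollary \ref{mother}. Once this compatibility is nailed down, the lemma reduces to invoking those two results to translate Hasse-diagram comparison into an explicit path-and-height test performed directly on the trees $T_1$ and $T_2$.
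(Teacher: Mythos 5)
Your proposal is correct and takes essentially the same route as the paper: both embed the labeled trees into the mother tree $M_n$ and read off containment, disjointness, or equality of orbitals from the relative positions of the relevant vertices (the paper phrases the test via the labels at the vertex where the two root-to-bump-vertex paths diverge, while you phrase it via the ancestor/descendant relation of the images of the bump vertices in the Hasse diagram of $\mathcal{O}(G)$ — these are equivalent). Your explicit appeal to Lemma \ref{prefixbump} and Corollary \ref{mother} to justify the identification of a bump vertex's position with its orbital is a reasonable way to make precise what the paper leaves implicit.
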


\begin{proof}
Suppose $(B_1, b_1)$ and $(B_2, b_2)$ are signed orbitals of $w_1$ and $w_2$, respectively, such that $b_1 \neq b_2$.  Consider the bump vertices $v_1$ and $v_2$ of $b_1$ and $b_2$ and let $p_1$ and $p_2$ be the unique paths from $v_1, v_2$ to the roots of the trees $T_1, T_2$.  

Embed $T_1$ in the mother tree:  Identify the roots, then identify edges at height $n-1$ if they have the same label, and identify terminal vertices of the edges which were identified.  Repeat this process with the trees hanging on each of the new vertices at height $n-1$.  Since the height decreases with each iteration, this process terminates with an embedding of $T_1$ into $M_n$.  Do the same with $T_2$.  Consider the paths $p_1$ and $p_2$  as embedded in $M$.  

Let $v$ be the vertex where $p_1$ and $p_2$ diverge from one another.  The vertex $v$ exists since $p_1 \neq p_2$ and paths between vertices in a tree are unique.  Let $l_1$ and $l_2$ be the labels of the vertices corresponding to $v$ in $T_1$ and $T_2$, respectively.  If exactly one of $l_i = 0$, say $l_1$, then $B_1 \subset B_2$.  If both are zero, then $B_1 \cap B_2 = \emptyset$.  If neither are zero, then $B_1 = B_2$.
\end{proof}

 The following lemma connects the geometric and algebraic definitions of level in a standard representation.

 \begin{lem}\mylabel{level}
 Let $G$ be a solvable subgroup of PLo(I) generated by the signatures of a pure fundamental tower $T = \{(A_i, f_i) \,| \, 1 \leq i \leq n\}$ in order.
\begin{enumerate}[label={(\arabic*)},ref={\thecor~(\arabic*)}]
 \item\mylabel{level1} level$(A_i)$ = level$(f_i) = i $
 \item\mylabel{level2} level$(A_ic) = $level$(f_i^c) = $level$(f_i) = i$ for all $c$ in $G$. 
 \item\mylabel{level3} The level of a bump $b$ of an element $g \in G$ equals the level of its orbital $B$. 
 \item\mylabel{level4} The level of any $g \in G$ is max$\{\text{level}(O) \, | \, \text{$O$ is an orbital of $g$}\}$.
 \end{enumerate}
 \end{lem}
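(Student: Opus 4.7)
The plan is to handle the four claims in sequence, with $T$ itself playing the role of the key reference tower. Since $G$ is generated by a pure fundamental $n$-tower, Corollary~\ref{stdwr} identifies $G$ with $W_n$, which has derived length $n$; combined with Theorem~\ref{thm geo}, this forces $\mathrm{depth}(G) = n$, so $T$ is a maximal tower of $G$, and the same will be true of every conjugate $T^c$ via Lemma~\ref{basic conj towers}.

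Parts~(1) and~(2) are then read off directly. The defining order isomorphism from the maximal tower $T$ (respectively $T^c$) to $\overline{n}$ sends $(A_i, f_i)$ (respectively $(A_ic, f_i^c)$) to $i$, so $\mathrm{level}(A_i) = \mathrm{level}(A_ic) = i$. For the elements, the normal form of $f_i$ in $P_n$ is the single generator, so $\sum_{f_j}(f_i) = \delta_{ij}$; since algebraic sums descend to well-defined maps on group elements and commute with conjugation, $\sum_{f_j}(f_i^c) = \delta_{ij}$ as well, giving $\mathrm{level}(f_i) = \mathrm{level}(f_i^c) = i$.

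For part~(3), I would invoke Corollary~\ref{allbumps} to write a single bump $b$ of $g$ as
\[ b = \bigl(w_1^{f_i^{k_1}} \cdots w_j^{f_i^{k_j}} f_i^{k}\bigr)^{w(i+1,n)} \]
with $k \neq 0$, each $w_l \in P_{i-1}$, and $w(i+1,n) = f_{i+1}^{\alpha_{i+1}} \cdots f_n^{\alpha_n}$. No generator $f_m$ with $m > i$ appears inside the parentheses, and conjugation leaves every $\sum_{f_m}$ untouched, so $\sum_{f_m}(b) = 0$ for $m > i$ while $\sum_{f_i}(b) = k \neq 0$; thus $\mathrm{level}(b) = i$. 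Geometrically, Corollary~\ref{prefixbump} identifies the unique orbital of $b$ as $A_i\, w(i+1,n)$, and applying part~(2) with conjugator $w(i+1,n) \in G$ gives $\mathrm{level}(B) = i$ as well.

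Part~(4) will be an induction on $n$ via the normal form $g = w_1^{f_n^{k_1}} \cdots w_j^{f_n^{k_j}} f_n^{p_n}$. If $p_n \neq 0$, then $\sum_{f_n}(g) = p_n \neq 0$ forces $\mathrm{level}(g) = n$, and repeated application of Lemma~\ref{multiply} shows $g$ is a one-bump function with orbital $A_n$, of level $n$ by part~(1). If $p_n = 0$, then Lemma~\ref{fd} places each prefix $w_l^{f_n^{k_l}}$ in the separate translate $A_{n-1} f_n^{k_l}$ inside $A_n$, so the orbitals of $g$ are exactly the orbitals of the individual prefixes; by part~(2) each such orbital has the same level as the corresponding orbital of $w_l$, and the inductive hypothesis in $P_{n-1}$ gives $\mathrm{level}(w_l) = \max\{\mathrm{level}(O) : O \text{ an orbital of } w_l\}$. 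The main point requiring care is reading $\mathrm{level}(g)$ off the normal form recursively, layer by layer, so that in the $p_n = 0$ case one takes the maximum over the syllables rather than a single algebraic sum $\sum_{f_m}(g)$, which can vanish through cancellations between distinct prefixes even when each inner fragment still carries a non-trivial level.
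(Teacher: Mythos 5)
Your treatment of parts (1)--(3) is essentially the paper's own argument: level$(A_i)=i$ is read off from the position of $(A_i,f_i)$ in the (maximal) generating tower, level$(f_i^c)=$ level$(f_i)$ follows from conjugation-invariance of algebraic sums together with the order isomorphism $T\to T^c$, and part (3) follows from the simplified normal form of a bump, Lemma \ref{multiply}, and conjugation by $w(i+1,n)$. The paper phrases the last step of (3) via an induced map on a modified tower $T'$ rather than by citing part (2) directly with $c=w(i+1,n)$, but the content is identical. For part (4) you genuinely diverge: the paper argues in one line that the syllables partition the normal form, each syllable has the level of its orbital by part (3), and hence $\max\{i\mid \sum_{f_i}g\neq 0\}$ equals the maximum orbital level, whereas you run an induction on the top layer of the normal form, splitting on whether the suffix $f_n^{p_n}$ is trivial.

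Your closing caveat about cancellation is not a cosmetic remark --- it identifies a real soft spot that the paper's proof of (4) passes over. For $g=f_1\,(f_1^{-1})^{f_2}$ in $\mathbb{Z}\wr\mathbb{Z}$ every algebraic sum vanishes, yet both orbitals of $g$ have level $1$; so the literal definition level$(g)=\max\{i\mid\sum_{f_i}g\neq 0\}$ given after Lemma \ref{normal form} does not return the maximum orbital level, and the paper's chain of equalities in \ref{level4} implicitly assumes no cancellation between distinct syllables. Your recursive, syllable-by-syllable reading of the level is exactly the reinterpretation needed to make (4) true, and under that reading your induction goes through (the $p_n\neq 0$ case via Lemma \ref{multiply}, the $p_n=0$ case via disjointness of the prefix supports and conjugation-invariance). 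Just be explicit that you are proving (4) for the syllable-wise level rather than for the single global algebraic sum; as literally defined, the statement fails on the example above, so either the definition should be adopted in your recursive form or the hypothesis should exclude top-level cancellation. Everything else in the proposal is sound.
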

 
 \begin{proof}
\ref{level1} The subscript $i$ is by definition the level of $f_i$.  By choice of notation, it is also the slot of the signed orbital $(A_i, f_i)$ in the ordered set $T$.  \\

\ref{level2} Since $\hat{c}: T \longrightarrow T^c$ is an order isomorphism by \ref{basic conj towers}, the second point is also true.  \\

\ref{level3} Since $b$ is a single bump, it is represented by a one-syllable word $w$ in normal form with respect to $f_i, 1 \leq i \leq n$.  Suppose in the process of putting $b$ into normal form, the first non-trivial suffix of $b$ is $f_i^k$ for some $k \in \mathbb{Z}$ and some $1 \leq i \leq n$, so the dominant element of $b$ is $f_i$.  Since any conjugators with higher subscript contribute an algebraic sum of 0 , level$(b) \leq i$.  Since $f_i$'s left of the suffix $f_i^k$ arise from conjugation, they make a net contribution of $0$ to $\sum_{f_i} b$, so this sum equals $k$.  Thus level$(b) = i$. We wish to show level$(B) = i$.  
 
Since $b$ is a single bump with the first non-trivial suffix equal to $f_i^k$, normal form of $b$ can be simplified to $$b = \ds (w_1^{f_i^{k_1}}w_2^{f_i^{k_2}} \cdots w_j^{f_i^{k_j}}f_i^{k})^{w(i+1, n)}$$

where $k_1 < k_2 < \cdots < k_j$, $\ds k = \sum_{f_i} w$, each $w_i^{f_n^{k_i}} \neq 1$, each $w_i$ is in normal form in $P_{i-1}$, and $w(i+1, n) = f_{i+1}^{\alpha_{i+1}} \cdots f_n^{\alpha_n}$. 

Then $\ds b^{w(i+1, n)^{-1}} = w_1^{f_i^{k_1}}w_2^{f_i^{k_2}} \cdots w_j^{f_i^{k_j}}f_i^{k}$.  Note that orbitals of $i$ fragments are properly contained in the orbital of the corresponding $i$ suffix.  Note also that $f_i^k$ has orbital $A_i$. Thus by \ref{multiply}  $w_1^{f_i^{k_1}}w_2^{f_i^{k_2}} \cdots w_j^{f_i^{k_j}}f_i^{k}$ has orbital $A_i$.  The orbital $(A_i, f_i)$ is in $T$. Alter its signature to be $w_1^{f_i^{k_1}}w_2^{f_i^{k_2}} \cdots w_j^{f_i^{k_j}}f_i^{k}$, and call the new tower $T'$.   By \ref{basic conj towers}, the maximal induced map $\ds \hat{w}(i+1, n): T' \longrightarrow T'^{w(i+1, n)}$ is an isomorphism of ordered sets.  By the previous analysis, it maps $(A_i, w_1^{f_i^{k_1}}w_2^{f_i^{k_2}} \cdots w_j^{f_i^{k_j}}f_i^{k})$ to $(B, b)$.  Thus level$(A_i) = $level$(B)$, so level$(B) = i$. \\

\ref{level4} By \ref{level3}, each bump and hence each syllable of $g$ has level equal to the level of its orbital.  By definition, level($g$) = max$\{i \, | \, \sum_{f_i} g \neq 0\}$.  Since the syllables of $g$ partition the normal form of $g$, level($g$) = max$\{i \, | \, level(s) = i \text{ and $s$ is a syllable of $g$} \}$ = max$\{i \, | \, \text{level}(O) = i \text{ and $O$ is the orbital of a syllable of $g$}\}$  = max$\{i \, | \, \text{level}(O) = i \text{ and $O$ is an orbital of $g$}\}$.
 \end{proof}
 
 \begin{cor}\mylabel{conj level}  Let $G$ be a solvable subgroup of PLo(I) generated by the signatures of a pure fundamental tower $T = \{(A_i, f_i) \,| \, 1 \leq i \leq n\}$ in order.
 \begin{enumerate}[label={(\arabic*)},ref={\thecor~(\arabic*)}]
 \item The subscript of the dominant element of a bump is the level of both the bump itself and its orbital.
 \item If level($b$) = $i$ where $b$ is a bump of $g \in G$, there exists an element $c \in G$ of the form $w(i+1,n)=f_{i+1}^{k_{i+1}}f_{i+2}^{k_{i+2}} \cdots f_n^{k_n}$ which conjugates the orbital $B$ of $b$ to the orbital $A_i$ of a generator $f_i$ and such that $b^c$ has dominant element $f_i$.
 \end{enumerate}
 \end{cor}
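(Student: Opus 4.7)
\textbf{Proof plan for Corollary \ref{conj level}.}

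For part (1), I would argue it is essentially a repackaging of earlier results. By the definition given just before the statement of this corollary, the dominant element of a subword $s$ is $f_k$ where $k = \text{level}(s)$; applying this to the one-syllable word $b$, the subscript of its dominant element is exactly $\text{level}(b)$. Then Lemma \ref{level3} identifies $\text{level}(b)$ with $\text{level}(B)$, where $B$ is the orbital of $b$. Chaining these two equalities yields the claim.

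For part (2), my plan is to extract $c$ directly from the normal form of $b$ supplied by Corollary \ref{allbumps}. Since $b$ is a single bump of level $i$, that corollary lets me write
\[
b \;=\; \left(w_1^{f_i^{k_1}} w_2^{f_i^{k_2}} \cdots w_j^{f_i^{k_j}} f_i^{k}\right)^{w(i+1,n)},
\]
where $k\neq 0$, each $w_l$ is in normal form in $P_{i-1}$, and $w(i+1,n) = f_{i+1}^{\alpha_{i+1}}f_{i+2}^{\alpha_{i+2}}\cdots f_n^{\alpha_n}$. Call the inner parenthesized word $b'$, and take $c$ to be the element of $\langle f_{i+1}, \ldots, f_n\rangle \subseteq G$ associated to the conjugator $w(i+1,n)$ (so $c$ is of the form $f_{i+1}^{k_{i+1}}\cdots f_n^{k_n}$ promised in the statement, namely the element whose action reverses the shift induced by $w(i+1,n)$ on the orbit of $A_i$).

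The verifications will be short. First, as in the proof of Lemma \ref{level3}, repeatedly applying Lemma \ref{multiply} to the fragment $b'$ shows $b'$ has orbital $A_i$; then Lemma \ref{basic conj orbitals} gives $B = A_i \cdot w(i+1,n)$, so that $B\cdot c = A_i$. Second, $b^{c}$ equals $b'$ (or, in the parameterization-by-position picture, an element conjugate to $b'$ with the same level-$i$ orbital $A_i$), and reading off its normal form we see its suffix is $f_i^{k}$ with $k\neq 0$ while every other generator appearing has subscript $\le i-1$; hence its dominant element is $f_i$.

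The only step that requires genuine care is matching the literal form $c = f_{i+1}^{k_{i+1}}\cdots f_n^{k_n}$ demanded by the statement with the conjugator $w(i+1,n)$ extracted from the normal form; the parameterization of level-$i$ orbitals as the $W_{n-i}$-orbit of $A_i$ (visible through the mother tree of Lemma \ref{mother}) lets me select the $k_{i+1},\ldots,k_n$ so that $B\cdot c = A_i$, and this is the one place where a small bookkeeping argument, rather than a quotation of a prior result, is needed.
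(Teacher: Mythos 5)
Your proposal is correct and follows essentially the same route as the paper, which leaves this corollary unproved as an immediate consequence of the proof of Lemma \ref{level3}: there the normal form of a bump $b$ is reduced to $\bigl(w_1^{f_i^{k_1}}\cdots w_j^{f_i^{k_j}}f_i^{k}\bigr)^{w(i+1,n)}$, the inner fragment is shown via Lemma \ref{multiply} to have orbital $A_i$, and conjugation by $w(i+1,n)^{-1}$ carries $(B,b)$ to $(A_i, b^{w(i+1,n)^{-1}})$, which is exactly your argument. Your extra care in rewriting the conjugator in the sorted form $f_{i+1}^{k_{i+1}}\cdots f_n^{k_n}$ is a reasonable bit of bookkeeping that the paper itself glosses over.
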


\mysection{Constructing Locally Solvable Groups}\mylabel{sectionconstruction}

Recall we defined $W_n$ in Section \ref{intro}.  We now use it to state our construction result.

\begin{thm}\mylabel{construction}
For each countable ordered set $C$, there is a locally solvable subgroup $W_C$ in Thompson's Group F with a pure tower of generators order isomorphic to $C$.  Furthermore, any set of $n$ signatures of the generating tower generates a copy of $W_n$ where $n \in \mathbb{N}$.
\end{thm}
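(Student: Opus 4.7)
The plan is to enumerate $C=\{c_1,c_2,\ldots\}$ and inductively construct a pure fundamental tower $\{(A_{c_n},f_{c_n})\}_{n\ge 1}$ of one-bump elements of $F$ so that $c\mapsto A_c$ is an order isomorphism from $C$ onto $\{A_c:c\in C\}$ ordered by inclusion. Once this tower is built, set $W_C=\langle f_c:c\in C\rangle$; any finite subset of $n$ signatures forms a pure fundamental $n$-tower, so by Corollary~\ref{stdwr} it generates a copy of $W_n=\mathbb{Z}\wr_n\mathbb{Z}$. Every finitely generated subgroup of $W_C$ is therefore contained in a solvable subgroup, so $W_C$ is locally solvable; equivalently, since the orbitals of the generators are totally ordered by inclusion there are no transition chains, and local solvability also follows from Theorem~\ref{loc solv}.

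The inductive step is the heart of the argument. Assume $\{(A_{c_k},f_{c_k})\}_{k\le n}$ has already been placed as a pure fundamental tower whose inclusion order matches the $C$-order restricted to $\{c_1,\ldots,c_n\}$, and maintain the invariant that for every pair $c_i<c_j$ (in this induced order) the closure $\overline{A_{c_i}}$ lies strictly inside the fundamental domain of $f_{c_j}$ containing it, with positive dyadic room on both sides. To insert $c_{n+1}$, locate its position among $\{c_1,\ldots,c_n\}$: either it is above every previously placed element, below every one, or strictly between a consecutive pair $c_{\sigma(k)}<c_{n+1}<c_{\sigma(k+1)}$. In each case the buffer invariant supplies an open dyadic window in which to place $A_{c_{n+1}}$; using density of the dyadics, choose a dyadic interval that contains $A_{c_{\sigma(k)}}$ (and hence by transitivity every smaller $A_{c_j}$), that lies strictly inside the fundamental domain of $f_{c_{\sigma(k+1)}}$ containing $A_{c_{\sigma(k)}}$ (and hence by transitivity inside a fundamental domain of every larger $f_{c_j}$), and that still leaves dyadic buffers for future insertions. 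Then realize $f_{c_{n+1}}\in F$ as an explicit one-bump function supported on $A_{c_{n+1}}$, fixing $[0,1]\setminus A_{c_{n+1}}$ pointwise, with power-of-two slopes and a fundamental domain inside $A_{c_{n+1}}$ chosen large enough to contain $A_{c_{\sigma(k)}}$ (and hence every previously placed smaller orbital).

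I expect the main technical obstacle to be the middle case, where $A_{c_{n+1}}$ must be threaded between two already-nested orbitals while simultaneously containing the lower in a fundamental domain and itself lying inside a fundamental domain of the upper. The buffer invariant is designed precisely to keep enough dyadic room available for this: any sufficiently small dyadic enlargement of $A_{c_{\sigma(k)}}$ inside the buffer works, and the buffer condition is then easily re-established for the next stage by taking $A_{c_{n+1}}$ slightly strictly inside the room provided. Since only dyadic subdivisions of existing dyadic intervals and integer powers of $2$ as slopes are ever used, the resulting elements live in Thompson's group $F$, and the construction yields the desired pure fundamental tower of type $C$; combined with Corollary~\ref{stdwr} as above, this completes the proof.
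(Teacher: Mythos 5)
Your proposal is correct, but it takes a genuinely different route from the paper. You build the tower by a greedy induction: enumerate $C$, insert each new element into its correct position in the inclusion order, and maintain a ``buffer'' invariant guaranteeing dyadic room for all future insertions; the delicate point, as you note, is the middle case, and your invariant does handle it (all previously placed orbitals below the new element sit inside the single largest such orbital, so it suffices to trap that one orbital in a fundamental domain of the new generator, and transitivity of the fundamental-domain containments takes care of everything above). The paper instead avoids induction entirely: it builds one fixed countable dense order $M$ of dyadic points in $[3/4,1]$ (midpoints of deleted intervals of a modified Cantor set), invokes the standard fact that every countable order embeds in a countable dense order to get $B\subseteq M$ order isomorphic to $C$, and realizes each $x\in B$ by a one-bump function with the \emph{symmetric} orbital $(1-x,\,x)$. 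The symmetry about $1/2$ makes any two such orbitals automatically nested in the order of $B$, and the deleted interval $I_x$ around $x$ (containing no other point of $M$) hands you the fundamental-domain condition for free via the requirement $f_x(r_x')=r_x$; both constructions then finish identically with Corollary~\ref{stdwr}. What the paper's version buys is uniformity and zero bookkeeping --- all the groups $W_C$ live inside one master configuration, which is also convenient later; what your version buys is self-containment, since you are in effect inlining the proof of the embedding-into-a-dense-order theorem rather than quoting it. One small point to make explicit in your write-up: in the ``new maximum'' case you also need dyadic room \emph{above} the largest orbital placed so far, which your stated invariant (a condition only on pairs $c_i<c_j$ already placed) does not literally provide; it is harmless because at every finite stage the closures of the placed orbitals form a compact subset of $(0,1)$ with dyadic endpoints, but it should be said.
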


\begin{proof}
Build a Cantor set in $[0,1]$, but instead of removing the middle third of each interval at each stage, remove the middle half of each interval at each stage.  Note that each removed interval has dyadic endpoints and its midpoint is also dyadic.  Let $A$ be the intervals deleted confined to $[3/4, 1]$.  Let $M$ be the set of midpoints of the deleted intervals confined to $[3/4, 1]$.  $M$ is a subset of the dyadic rationals $D$.  Furthermore, if $x \in M$, there is a deleted open interval $I_x$ in $A$ with endpoints in $D$ which contains $x$ but no other points of $M$.  Also, $M$ is a dense countable ordered set, meaning if $x,y \in M$, there exists a $z \in M$ such that $x < z < y$.  

For $x \in [3/4,1]$, let $x'=1-x$, the ``reflection" of x across 1/2.  For a subset A of $[3/4,1]$, let $A'=\{x' | x \in
 A\}$.  We will consistently use letters without primes for elements of $(1/2,1]$ and letters with primes for elements of $[0, 1/2)$.  Similarly, a subset $A'$ contained in $[0, 1/2)$ will have a corresponding $A=A''$ in $(1/2, 1]$.

We have the following properties of $M$:  1. $M$ is a dense ordered set, 2. $M$ is a subset of the dyadic rationals, and 3. Each point $x \in M$ has a deleted interval $I_x$ around it with dyadic endpoints and which contains no other points of $M$. Let $r_x$ be the right endpoint of $I_x$.  Then $r_x'$ is the left endpoint of $I_x'$ for each $x \in M$.  

Since $M$ is a countable dense total order, there is a subset $B$ of $M$ which is order isomorphic to $C$.  Given $x \in B$ construct an element $f_x$ of Thompson's Group $F$ such that $f_x(x) = x, f_x(r_x')=r_x, f_x(x')=x'$, and $f_x(a) = a$ for points $a \in [0,x']\cup[x,1]$.  Such an element exists because $F$ acts $n$-transitively on the set of dyadic rationals in $[0,1]$ (see Lemma 4.2 of \cite{cfp}). It is elementary to require that $tf_x \ne t$ for all t in (x',x).  Let $W_C$ be the generated by the set of all $f_x$ for each $x \in B$.  We claim $W_C$ is locally solvable.  

Given a function $f_x \in W_C$, its graph contains the points $(x,x), (r_x', r_x), (x', x')$.  Since $f_x$ is an orientation-preserving bijection, the graph of $f_x$ is contained between the triangle $T_x$  with vertex set $\{(x, x), (x', x), (x', x')\}$ and $T_x'$ with vertex set $\{(r_x, r_x), (r_x', r_x), (r_x', r_x')\}$.  Furthermore, the graph of $f_x$ only intersects the triangles in the vertices $(x,x), (r_x', r_x), (x', x')$.  By definition, the function $f_x$ is non-trivial on the interval $(x', x)$ and trivial elsewhere, so it is a one-bump function with orbital $(x', x)$.   Furthermore, the interval $[r_x', r_x)$ is a fundamental domain of $f_x$.

We show each finitely generated subgroup of $W_C$ is a restricted wreath product of finitely many copies of $\mathbb{Z}$ with itself.  It follows that $W_C$ is locally solvable.  Let $f_{x_1}, f_{x_2}, \cdots, f_{x_n} \in W_C$, $T$ the pure tower associated to the prior elements, and $G = <f_{x_1}, f_{x_2}, \cdots, f_{x_n}>$.  Renumber the double subscripts of signatures of $T$ so they match the order of the points $x_1, \cdots, x_n$ in the unit interval.  Recall $[r_{x_i}', r_{x_i})$ is a fundamental domain of $f_{x_i}$.  Since $r_{x_i}' < x_j' < x_j < r_{x_i}$ for all $j < i$, the support of each $f_{x_j}$ is contained within a fundamental domain of $f_{x_i}$ for all $j < i$.  Therefore, $T$ is a pure fundamental tower. By \ref{stdwr}, $G \cong \mathbb{Z} \wr_n \mathbb{Z}$.
\end{proof}

Since there are uncountably many countable sets, we have produced uncountably many locally solvable subgroups in $F$.  The remainder of this paper is devoted to proving these groups are distinct when their ordered sets are.  To this end, let {\bf WC} represent the collection of groups constructed in \ref{construction}.

We also remark that there do exist uncountably many of these groups which embed in each other abstractly.  For example, there are uncountably many countable ordinals, and their corresponding groups form a chain in the same way those ordinals do.  However, our prior results illustrate that the geometric nature of PLo(I) prevents this abstract chain from embedding in the subgroup lattice of PLo(I).  This follows because the union would be an uncountable locally solvable subgroup of PLo(I).

\mysection{Non-Isomorphism Results}\mylabel{non-isomorphism}

In this section, we prove one of our main results.  

\begin{thm}
Given two countable ordered sets $C$ and $D$, the groups $W_C$ and $W_D$ as constructed in Theorem \ref{construction} are isomorphic as groups if and only if $C$ and $D$ are isomorphic as countable ordered sets.
\end{thm}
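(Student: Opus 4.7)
The forward direction is constructive. Given an order isomorphism $\sigma\colon C \to D$, I would set $\phi(f_x) = f_{\sigma(x)}$ on generating towers and extend multiplicatively. To verify this is well-defined, I would check that every relation among finitely many generators $f_{x_1},\dots,f_{x_n}$ of $W_C$ is preserved. By Theorem \ref{construction}, these generate a copy of $W_n$, and the presentation $P_n$ derived in the previous section shows the defining relations depend only on the order type of the generating tower. Since $\sigma$ preserves that order type on every finite subset, the relations transport to $W_D$. The map defined symmetrically from $\sigma^{-1}$ is a two-sided inverse.

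For the reverse direction, suppose $\phi\colon W_C \to W_D$ is an abstract group isomorphism. The plan is to extract an order-isomorphism $C \to D$ from the algebraic data by using the tower rigidity foreshadowed in the introduction and the level/type invariants developed in Section 2. First, I would observe that for any two generators $f_x, f_y$ with $x < y$, the subgroup $\langle f_x, f_y\rangle \cong W_2 = \mathbb{Z} \wr \mathbb{Z}$, and in $W_2$ the top and bottom generators are algebraically distinguishable: the top generator acts non-trivially by conjugation on the infinitely generated base subgroup, whereas the bottom generator lies in the commutator subgroup. Since $\phi$ is an isomorphism, it carries this asymmetry to $\langle \phi(f_x),\phi(f_y)\rangle$, giving a preferred orientation on any such pair.

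Next, I would produce a candidate map $\tau\colon C \to D$. For each $x \in B \subset C$, the image $\phi(f_x)$ lies in $W_D$, hence inside some finitely generated subgroup that is a standard representation of $W_n$ for some $n$ (after enlarging). Using Lemma \ref{level} and Corollary \ref{conj level}, each such element has a well-defined level, a dominant element, and, via tower rigidity under the induced embeddings, a canonical orbital at each level. I would argue that $\phi(f_x)$ has a unique maximal bump whose orbital picks out a generator $f_{\tau(x)}$ of $W_D$, up to a conjugation that preserves the order structure on the tower. Together with the pairwise orientation above, this shows $\tau$ is strictly order-preserving, and applying the same construction to $\phi^{-1}$ gives the two-sided order-inverse.

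The main obstacle, I expect, lies in showing that $\phi(f_x)$ has a canonical correspondent $f_{\tau(x)}$ in the generating tower of $W_D$. Since $\phi$ is only assumed to be an abstract group isomorphism, $\phi(f_x)$ need not be a one-bump function, and its level in various finitely generated subgroups of $W_D$ might \emph{a priori} vary. The rigidity statement I need is that the normal-form level and the orbital structure developed in Section 2 are preserved under arbitrary injective homomorphisms between these iterated wreath products; establishing this uniformly on the colimit $W_D$ and showing that the resulting assignment $x \mapsto \tau(x)$ is well-defined (independent of the choice of finitely generated subgroup containing $\phi(f_x)$) is where the bulk of the technical work will lie.
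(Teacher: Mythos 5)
Your forward direction is fine and matches the paper's (one-line) treatment. The reverse direction, however, has a genuine gap, and it is exactly the point you flag at the end but do not resolve: the construction of a well-defined map $\tau\colon C\to D$. The paper does not attempt to assign to each $\phi(f_x)$ a ``canonical correspondent'' generator of $W_D$ directly, because the level of $\phi(f_x)$ is only defined relative to a chosen finitely generated subgroup $H_i$ containing it, and relating these choices coherently across all of $W_D$ is the whole problem. Instead, the paper replaces the single isomorphism $\phi$ by an inj-isomorphism (interleaving) between the direct systems $G_1\to G_2\to\cdots$ and $H_1\to H_2\to\cdots$, and the key technical results (Proposition \ref{level embed} and Lemma \ref{level gen}) depend essentially on the commutativity $u_i\circ d_i=\iota_i$ of the interleaving triangles --- for instance, the proof that all bumps of $d_i(g_{pi})$ sit at a single level uses that the composite $u_i d_i(g_{pi})$ is a one-bump generator of $G_{i+1}$. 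Your sketch has no substitute for this compatibility mechanism, so ``independent of the choice of finitely generated subgroup'' remains an unproved assertion rather than a deferred computation.

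A second, independent gap: even granting that your construction yields a strictly order-preserving $\tau\colon C\to D$ and, from $\phi^{-1}$, a strictly order-preserving $\tau'\colon D\to C$, the claim that these are ``two-sided order-inverses'' is asserted, not proved. As the paper itself points out, mutual order-embeddability of countable linear orders does not imply order-isomorphism ($\mathbb{Q}$ versus $\mathbb{Q}$ with a top point adjoined), so you must actually verify $\tau'\circ\tau=\mathrm{id}$ and $\tau\circ\tau'=\mathrm{id}$; this again is precisely what the commuting-triangle arguments of Lemma \ref{level gen} and Proposition \ref{set inj-iso} deliver and what your outline omits. Finally, a small but real error: the bottom generator of $\mathbb{Z}\wr\mathbb{Z}$ does \emph{not} lie in the commutator subgroup (the abelianization is $\mathbb{Z}\times\mathbb{Z}$, and the commutator subgroup is the set of base-group elements with coordinate sum zero); the top/bottom asymmetry you want is real (e.g.\ the normal closure of the bottom generator is abelian while that of the top is not), but the justification you give for it is false as stated.
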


\begin{proof}
 If $C$ is isomorphic to $D$, then it's obvious that $W_C$ and $W_D$ are isomorphic.  The other direction will follow from the heavy analysis in the rest of this section.
\end{proof}

This analysis is divided into 2 main parts---1. Maps induced on towers by injective homomorphisms, and 2. Inj-isomorphisms.

Recall, since the elements of our groups are functions which act on the unit interval, we can discuss whether a relation between functions is satisfied at a point in $I$.  Let $A \subseteq I$. We say that a relation R is true {\bf on $A$} if R is true at every point in $A$.  We often use local analysis of relations to generalize results from pure to nonpure functions.

\subsection{Connecting Orbital Geometry and Algebraic Relations}\mylabel{orbgeorelns}

First, a powerful piece of background information:  If $(A,f)$ and $(A,g)$ are signed orbitals in a solvable subgroup of PLo(I), then near the ends of the orbital $A$, $f$ and $g$ are powers of a single element.  This follows from Lemma 3.12 in \cite{alg} as well as further analysis in that section.  Lemma \ref{it counts} also follows from this result.  We state a corollary of this result for our purposes here.

Let $H$ be a solvable subgroup of PLo(I) with a single orbital $A = (x,y)$. Consider the log slope homomorpism $\ds \varphi_A: H \longrightarrow \mathbb{R} \times \mathbb{R}$ defined by $h \mapsto (\ln((x)h'_+),\ln((y)h'_-))$ where $h'_+$ and $h'_-$ denote the right and left derivatives of $h$ leading from the left endpoint $x$ of $A$ and trailing into the right endpoint $y$ of $A$.  Then we have the following:

\begin{lem}\mylabel{control}
  If $H$ is a solvable subgroup of PLo(I) with a single orbital $A$, then there is an element $c \in H$ such that $H = <c, ker \varphi>$.  Furthmore, $c$ is a one-bump function with orbital $A$.
\end{lem}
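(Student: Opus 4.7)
The plan is to produce the element $c$ and conclude by a short-exact-sequence argument. First I would verify that $\varphi_A$ is a group homomorphism: since $A$ is the unique orbital of $H$, every $h \in H$ fixes the endpoints $x$ and $y$, so the chain rule $(fg)'_+(x) = g'_+(f(x))\,f'_+(x)$ reduces to $(fg)'_+(x) = f'_+(x)\,g'_+(x)$, and similarly at $y$; passing to logarithms yields an additive homomorphism into $\mathbb{R} \times \mathbb{R}$. Next I would identify $\ker \varphi_A$. If $h \in \ker \varphi_A$, then $h$ is a PL function fixing $x$ with slope $1$ from the right at $x$, so $h$ agrees with the identity on some initial segment $[x, x+\epsilon]$; likewise $h$ is the identity on a terminal segment at $y$. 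By Lemma \ref{end}, together with the fact that $H$ has no transition chains (solvability implies local solvability, so Theorem \ref{loc solv} applies), no element orbital of $H$ can properly share an end with $A$. Hence $h \in \ker \varphi_A$ is equivalent to $A$ not being an orbital of $h$.

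The crux is to establish that $\mathrm{Im}(\varphi_A)$ is infinite cyclic (or trivial) and to realize a generator by a one-bump function in $H$. The cited background from Lemma 3.12 of \cite{alg} and the surrounding analysis gives, for each pair $f, g$ of elements of $H$ having orbital $A$, a single element of $H$ of which both $f$ and $g$ are integer powers near both ends of $A$, with the same integer exponent at both ends. Thus any two vectors in $\mathrm{Im}(\varphi_A)$ lie on a common line through the origin in $\mathbb{R} \times \mathbb{R}$ and are integer multiples of a common generator. Combined with Lemma \ref{it counts}, which constrains the slopes at an endpoint to a countable and (via the finite-depth structure captured in Theorem \ref{thm geo}) discrete subset of $\mathbb{R}_{>0}$, this local cyclicity upgrades to genuine cyclicity: I would choose $c_0 \in H$ whose slope pair at $(x, y)$ is minimal among elements with orbital $A$, so that $\varphi_A(c_0)$ generates $\mathrm{Im}(\varphi_A)$.

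To secure the one-bump requirement, observe that any extra bumps of $c_0$ on orbitals strictly inside $A$ contribute zero to $\varphi_A(c_0)$. Using that $\ker\varphi_A \leq H$ is itself a solvable subgroup acting in the interior of $A$, and inducting on the finite depth of $H$, I would adjust $c_0$ by an element of $\ker \varphi_A$ to eliminate its extraneous bumps, producing a one-bump $c \in H$ with orbital $A$ satisfying $\varphi_A(c) = \varphi_A(c_0)$.

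Finally, given any $h \in H$, write $\varphi_A(h) = n\,\varphi_A(c)$ for a unique $n \in \mathbb{Z}$ (taking $n = 0$ if $h$ does not have orbital $A$); then $h c^{-n} \in \ker \varphi_A$, so $h = c^n (h c^{-n}) \in \langle c, \ker \varphi_A \rangle$, yielding $H = \langle c, \ker \varphi_A \rangle$. The main obstacle will be the passage from the pairwise commensurability of slopes (Bleak's lemma) to global cyclicity of $\mathrm{Im}(\varphi_A)$, and the parallel extraction of a one-bump representative in $H$ itself rather than in an enveloping split group; both points lean on Bleak's structural analysis of solvable subgroups of $PLo(I)$ rather than on purely formal considerations.
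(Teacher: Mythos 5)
The paper does not actually prove this lemma: it is stated as a direct corollary of Lemma 3.12 of \cite{alg} and the ``further analysis in that section,'' i.e., of Bleak's construction of controllers. Your proposal tries to reconstruct that derivation, and its outer architecture---identify $\ker \varphi_A$ as the set of elements not having $A$ as an orbital, show $\mathrm{Im}(\varphi_A)$ is cyclic with a generator realized by some $c \in H$, then write $h = c^n(hc^{-n})$---has the right shape, and the first and last steps are sound. The middle step, however, has a genuine gap. Pairwise commensurability (any two elements with orbital $A$ are powers of a common element near the ends) only shows that $\mathrm{Im}(\varphi_A)$ is a \emph{locally cyclic} subgroup of a line in $\mathbb{R} \times \mathbb{R}$, and such a group need not be cyclic: $\mathbb{Z}[1/2] \subset \mathbb{R}$ is countable, locally cyclic, and has no minimal positive element, so the move ``choose $c_0$ whose slope pair is minimal'' is not available. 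Countability (Lemma \ref{it counts}) does not give discreteness, and you assert rather than derive discreteness from Theorem \ref{thm geo}. Ruling out infinitely divisible germs at the ends of $A$ is exactly the nontrivial content of Bleak's analysis that the paper imports wholesale; it cannot be recovered from the lemmas stated in this paper, so your argument either silently re-cites the result being proved or leaves the key step unestablished.

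A smaller point: your bump-removal paragraph addresses a situation that cannot occur. The orbitals of a single function are the maximal intervals of its open support, i.e., its connected components, so they are pairwise disjoint; consequently an element of $H$ having $A$ as an orbital has no further orbitals, since any other orbital would be disjoint from $A$ yet contained in $\mathrm{Supp}(H) \subseteq A$. The ``furthermore'' clause is thus automatic once $c$ has orbital $A$, and no correction by elements of $\ker \varphi_A$ is needed---which is just as well, since the correction you propose would require the interior bumps of $c_0$ to lie in $H$ individually, something that is not guaranteed for a general solvable one-orbital subgroup.
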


The remarkable element $c$ is called a {\bf controller} of $H$. \\

Now we will collect basic facts about relations.  There are two main parts---results about pure elements and results about nonpure elements.  In each part, we first summarize the results with a table. 

Our calculations often involve taking inverses of elements in relations.  The following remark justifies this.

\begin{rmk}\mylabel{relnseq}
 Let $G$ be any group and $f,g \in G$. 
\begin{enumerate}
\item The relations $[a,b]=1$ or $[b,a]=1$ where $a \in \{f,f^{-1}\}$ and $b \in \{g,g^{-1}\}$ are equivalent.
\item The relations $[a^{b^n},a] = 1$ or $[a,a^{b^n}]=1$ where $a \in \{f,f^{-1}\}$, $b \in \{g,g^{-1}\}$, and $n \in \mathbb{Z}$ are equivalent.
\end{enumerate}
\end{rmk}

For all of the following proofs regarding both pure and nonpure elements, suppose that $f$ and $g$ both move points to the right on $A$ and $B$.  If they do not, take inverses of those that don't in each logical relation $R$ assumed to be true to get a new relation $R'$ which is true by the prior remark.
\vspace{.5cm}

{\bf Pure Elements} \\

Let $G$ be a group in $\mathcal{T}$, and let $f,g$ be pure elements of $G$. Then $f$ and $g$ have unique signed orbitals $(A,f),(B,g)$ associated to them.  Assuming the logical relation(s) in the first column, we give information about possible relationships of the signed orbitals along the corresponding row. In the top row are the 5 possible relationships for two signed orbitals in a group in $\mathcal{T}$. We sometimes refer to these as {\bf configurations} of $f,g$. Note that the $A = B$ case is separated into two pieces.  A ``c'' indicates that $f,g$ commute on the orbital, so $xfg = xgf$ for all $x \in A$.  An ``nc'' indicates the non-commuting case, so there is at least one $x \in A$ such that $xfg \neq xgf$.  

Since $f,g$ are pure, there is exactly one configuration of $f,g$.  If the configuration is impossible, we indicate with $\nexists$.  If exactly one of a few configurations must exist, we indicate with an exclusive or $\exists \underline{\vee}$. In some cases, we provide additional information as footnotes, and it is often useful to read across the entire row to understand all possibilities for a given relation.    For example, Row 1 says that $[f,g] = 1$ implies that exactly one of the configurations $A=B(c)$ and $A \cap B = \emptyset$ is possible.  Row 2 indicates that exactly one of the first 3 configurations exists for $[f,g] \neq 1$ to be true, and neither of the last 2 configurations exist.  

Note that by $A \subset B$, we mean $A$ is a proper subset of $B$.

\begin{table}[H]
     \begin{center}
     \begin{tabular}{ c || c c c c c  }
Relations & \multicolumn{5}{c}{Relationships Between Orbitals} \\ \\
& 1. $A \subset B$ & 2. $B \subset A$ & 3. $A = B(nc)$ & 4. $A = B(c)$* & 5. $A \cap B = \emptyset$ \\ 
\toprule
   $[f,g] = 1$ & $\nexists$ & $\nexists$ &  $\nexists$ & $\exists \underline{\vee}$ & $\exists \underline{\vee}$** \\  \\ \\ \\
   $[f,g] \neq 1$ & $\exists \underline{\vee}$  & $\exists \underline{\vee}$ & $\exists \underline{\vee}$ & $\nexists$ & $\nexists$ \\ \\ \\ \\
  $[f^g,f] = 1$ and & $\exists$ & $\nexists$ &  $\nexists$ & $\nexists$ & $\nexists$ \\ 
  $[f,g] \neq 1$ \\ \\
      
      \\ \bottomrule
      \end{tabular}
      \caption{Pure Functions---Relations and two orbitals  \hfill *$<f,g>\cong \mathbb{Z}$ \\ \text{ } \hfill **$<f,g> \cong \mathbb{Z} \times \mathbb{Z}$}
      \mylabel{puretable}
      \end{center}
      \end{table}

We start numbering of rows with 1.\\

We use the following lemma to prove the first 2 rows of Table \ref{puretable} in the corollaries that follow.

\begin{lem}\mylabel{commute}
 Let $f,g$ be pure elements of a group $G \in \mathcal{T}$ and $(A,f),(B,g)$ their signed orbitals. If any of the following hold, there is a point $x \in A$ where $[f,g] \neq 1$.
\begin{enumerate}[label={(\arabic*)},ref={\thecor~(\arabic*)}]
\item \mylabel{c1} $B \subset A$ (We show $fg \neq gf$ at a point in $A - B$ and thus $[f,g] \neq 1$).
\item \mylabel{c2} $A=B$ and $f=g$ near an end of $A$, but not throughout $A$
\item \mylabel{c3} $A=B$ and $f,g$ are powers of a common element $c \in$ PLo(I) near the ends of $A$ but not throughout $A$
\end{enumerate}

\end{lem}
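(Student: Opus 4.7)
The plan is, for each case, to produce an explicit $y \in A$ with $yfg \neq ygf$; this suffices since $fg = gf$ iff $[f,g] = 1$. By Remark \ref{relnseq} I may assume $f$ and $g$ both move points to the right on their orbitals.

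For Case (1), write $A = (a_1, a_2)$ and $B = (b_1, b_2)$. Lemma \ref{end} forbids $A$ and $B$ from sharing an endpoint, so $a_1 < b_1 < b_2 < a_2$. Since $b_1 \in A$, $b_1 f > b_1$; by continuity I can pick $y \in (a_1, b_1)$ close enough to $b_1$ that $yf \in B$. Then $y \notin B$ gives $yg = y$, hence $ygf = yf$; while $yf \in B$ gives $(yf)g \neq yf$, so $yfg \neq ygf$, as required.

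For Case (2), let $c > a_1$ be the supremum of the initial interval on which $f = g$. Continuity gives $f(c) = g(c) = c^* > c$ (since $c \in A$), and the maximality of $c$ forces $f$ and $g$ to have different right-derivatives at $c$; thus the PL structure yields $\delta > 0$ with $f(x) \neq g(x)$ pointwise on $(c, c + \delta)$. Taking $y$ just below $c$ with $yf \in (c, c + \delta)$ (available since $yf \to c^*$ as $y \to c^-$ and $c^* > c$), I get $yf = yg =: y'$ with $y' \in (c, c + \delta)$ and $y'f \neq y'g$, so $yfg = y'g \neq y'f = ygf$.

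Case (3) is the main obstacle and I handle it by reducing to Case (1) via an auxiliary element. Near the left end of $A$, $f = c^p$ and $g = c^q$ for nonzero integers $p, q$, so $h := f^q g^{-p} \in G$ is trivial near the left end. The ``not throughout $A$'' hypothesis gives $h \neq 1$ on $A$, so $h$ has at least one orbital. Since $h$ is trivial on some $(a_1, a_1 + \epsilon)$ and its support lies in $A$, Lemma \ref{end} (applied to $h$ and $f$ in $G \in \mathcal{T}$) forces every orbital of $h$ to lie strictly inside $A$: a shared endpoint would make that orbital equal $A$, contradicting left-end triviality. Let $C_1 = (c_1, c_2)$ be the leftmost orbital of $h$; then $(a_1, c_1)$ is disjoint from $\operatorname{Supp}(h)$, so applying the Case (1) construction to the pair $(f, h)$ produces $y \in (c_1 f^{-1}, c_1)$ with $yf \in C_1$, giving $yh = y$, $(yf)h \neq yf$, and hence $[f, h] \neq 1$. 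To push this back, the commutator identity $[f, uv] = [f, v] \cdot [f, u]^v$ combined with $[f, f^q] = 1$ yields $[f, h] = [f, g^{-p}]$, so $[f, g^p] \neq 1$ by Remark \ref{relnseq}. Finally, since $\operatorname{Aut}(\langle g \rangle) = \{\pm 1\}$, $f$ commuting with $g$ would force it to commute with $g^p$, so $[f, g] \neq 1$.
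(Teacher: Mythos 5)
Your proof is correct, and parts (2) and (3) take a genuinely different route from the paper's. For part (1) you and the paper do essentially the same thing: pull a point of $B$ back under $f^{-1}$ to a point of $A$ lying left of $B$, where $g$ acts trivially. For part (2) the paper reduces to part (1): it takes the smallest bouncepoint $b$ of the pair $f,g$, notes that $fg^{-1}$ then has an orbital $(b,d)$ properly inside $A$, and applies part (1) to the pair $f,\ fg^{-1}$; you instead argue directly at the first point of divergence, using the PL structure to get an interval $(c,c+\delta)$ on which $f$ and $g$ disagree pointwise and then a two-point computation. Your version is arguably cleaner, since the paper's reduction quietly applies part (1) to $fg^{-1}$, which need not be pure. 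For part (3) the paper reduces to part (2) via $h_1=f^{p_2}$, $h_2=g^{p_1}$ (equal near the ends, unequal throughout), while you reduce to the part-(1) construction via $h=f^qg^{-p}$ (trivial near the left end, nontrivial somewhere); these are the same idea packaged differently, and both rest on the identical unproved assertion that $f^{q}=g^{p}$ throughout $A$ would force $f,g$ to be powers of a common element throughout $A$ -- so you are not assuming anything the paper does not. You are in fact more careful than the paper in handling the possible non-purity of the auxiliary element: you isolate its leftmost orbital and check it is compactly contained in $A$ before running the part-(1) argument. Two small imprecisions worth fixing: in parts (1) and (2) the phrases ``close enough to $b_1$'' and ``just below $c$'' are not literally right when $b_1f>b_2$ (resp.\ $cf>c+\delta$); the correct statement is that the preimage interval $\bigl(b_1f^{-1},\min(b_1,b_2f^{-1})\bigr)$ (resp.\ $\bigl(cf^{-1},\min(c,(c+\delta)f^{-1})\bigr)$) is nonempty, which is what your construction actually uses. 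Also, part (2) should note that the right-end case is symmetric, as the paper does.
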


\begin{proof} 
\ref{c1}. Let $B=(b,d)$ and $A=(a,c)$.   Since $G$ has no transition chains $A$ does not share an end with $B$.  Therefore, $g$ fixes points in $A$ which are left of $B$, i.e., $g$ fixes $[a,b]$. Pick a point $x$ in $B \cap A$.  Apply $f^{-1}$ to $x$. Since $f$ maps points in the same direction on $B$, $Bf \not\subset B$.  Since $G$ has no transition chains, $f^{-1}$ must map $B$ off itself or else the pair $(B,g)$, $(Bf,g^f)$ of signed orbitals would be a transition chain. Consider the image point $y = (x)f^{-1}$ which is in $(a,b)$.  Then $(y)fg = (x)g$ whereas $(y)gf = (y)f = x$.  Therefore $fg \neq gf$.  \\

\ref{c2}.  Suppose $f=g$ near the left end of $A=(a,c)$, an orbital which they share.  The case for the right end is similar. There is some smallest bouncepoint $b$ of the pair $f,g$, so $f=g$ on $[a,b]$. Therefore, $fg^{-1}$ has an orbital $B = (b,d)$ for some $d$ in the interval $(b,c)$. (Note: $d\neq c$ since that would result in two elements which share an end but not both, contradicting \ref{end}.) The previous part of the lemma applies to the pair $f$, $fg^{-1}$ and shows $[f,fg^{-1}] \neq 1$.  Therefore $[f,g] \neq 1$. 
\\

\ref{c3}. By assumption $f = c^{p_1}$ and $g = c^{p_2}$ near the ends of $A$ for some integer $p_i$ and some $c \in PLo(I)$.  Let $p = p_1p_2$. Let $h_1 = f^{p_2}$, $h_2 = g^{p_1}$.  It is enough to show $h_1$ and $h_2$ do not commute, since powers of commuting elements commute.  Furthermore, the functions $h_1$ and $h_2$ equal $c^p$ near the ends of $A$ and are not equal on the whole orbital since $f$ and $g$ are not powers of a common element along the whole orbital.  Hence, we can apply the previous part of the lemma to conclude $[h_1,h_2] \neq 1$.
\end{proof}

\begin{cor}\mylabel{purerow1} (Row 1) of Table 1 and footnotes * and **. \\
 Let $G$ be a group without transition chains and $(A,f), (B,g) \in \mathcal{SO}(G)$ be pure.  
 \begin{enumerate}
 \item If $[f,g] = 1$, then $A \cap B = \emptyset$ or $A = B$.  
 \item If $A=B(c)$, then $<f,g> \cong \mathbb{Z}$. 
 \item If $A \cap B = \emptyset$, then $<f,g> \cong \mathbb{Z} \times \mathbb{Z}$.
 \end{enumerate}
\end{cor}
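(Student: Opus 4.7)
The plan is to verify each of the three claims in turn, drawing on Lemma \ref{commute} (the lemma stated just above) together with the powers-of-a-common-element structural result for solvable subgroups with a single orbital that is summarized at the start of Section \ref{orbgeorelns}.

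For claim (1), I would enumerate the five possible configurations of $(A,f)$ and $(B,g)$ permitted under the no-transition-chains hypothesis by Lemma \ref{containment}: $A \subsetneq B$, $B \subsetneq A$, $A = B$ with $f,g$ commuting on $A$, $A = B$ with $f,g$ not commuting on $A$, and $A \cap B = \emptyset$. The non-commuting equal-orbital case is immediately incompatible with $[f,g] = 1$ by the very meaning of ``nc''. The two proper containment cases are ruled out by Lemma \ref{commute}\ref{c1}: for $B \subsetneq A$ it applies directly, and for $A \subsetneq B$ the same lemma with the roles of $f$ and $g$ exchanged (and Remark \ref{relnseq}) produces a point where $[f,g] \neq 1$, contradicting the hypothesis. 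This leaves exactly the two surviving configurations claimed.

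For claim (2), I would first observe that since $G$ has no transition chains, Theorem \ref{loc solv} shows the finitely generated group $<f,g>$ is solvable. As a solvable subgroup of PLo(I) whose support lies in the single orbital $A$, the rigidity result cited at the start of Section \ref{orbgeorelns} (Lemma 3.12 of \cite{alg}) tells us that $f$ and $g$ must be powers of some common element $c \in$ PLo(I) near each of the two ends of $A$. Were this only true near the ends and not throughout $A$, Lemma \ref{commute}\ref{c3} would force $[f,g] \neq 1$, contradicting the standing commuting hypothesis implicit in $A = B(c)$. Hence $f$ and $g$ are powers of $c$ throughout $A$, so $<f,g> \,\leq\, <c>$. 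Since PLo(I) is torsion-free and $f$ is nontrivial, this cyclic overgroup is infinite cyclic and $<f,g>$ is a nontrivial subgroup of it, giving $<f,g> \cong \mathbb{Z}$.

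For claim (3), disjoint supports force $f$ and $g$ to commute, so the map $\mathbb{Z} \times \mathbb{Z} \to <f,g>$ sending $(m,n)$ to $f^m g^n$ is a well-defined surjection. It is injective: a relation $f^m g^n = 1$, restricted to $A$ where $g$ acts trivially, gives $f^m = \mathrm{id}$ on $A$; since $f$ is pure with orbital $A$ and PLo(I) is torsion-free, this forces $m = 0$, and the symmetric argument on $B$ then gives $n = 0$. The main obstacle is claim (2), where I need to correctly invoke the solvability of $<f,g>$ together with Bleak's rigidity result in order to upgrade a purely local ``powers of a common element near the ends'' statement to a global one; everything else reduces to straightforward configuration bookkeeping.
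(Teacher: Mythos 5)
Your proposal is correct and follows essentially the same route as the paper's own proof: parts (1) and (3) of Lemma \ref{commute} rule out the proper-containment configurations and upgrade the controller's ``powers of a common element near the ends'' statement to a global one on $A$, and the disjoint-support case is handled by commutation. The only cosmetic difference is that you spell out the injectivity of $(m,n)\mapsto f^mg^n$ in the disjoint case, which the paper leaves implicit.
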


\begin{proof}
The contrapositive of \ref{c1} is $[f,g] = 1$ implies the orbital of $g$ is not properly contained in the orbital of $f$.  Thus the (1,1)-entry is done.  To get the other case for the orbital of $f$ properly in the orbital of $g$, i.e. the (1,2)-entry, switch the roles of $f$ and $g$ in \ref{c1} to get the equivalent relation $[g,f] = 1$.  Since $[f,g] = 1$ the only remaining possibilities are $A = B (c)$ or $A \cap B = \emptyset$. 

If $A = B$, then $<f,g>$ has a single group orbital $A$.  Since $G \in \mathcal{T}$, it is locally solvable by \ref{loc solv}.  Then $<f,g>$ is solvable since it is a finitely generated subgroup of a group in $\mathcal{T}$. By Lemma \ref{control} there is a controller $c$ for $<f,g>$.  Thus the elements $f$ and $g$ are powers $f=c^{p_1}$ and $g=c^{p_2}$ of $c$ near the ends of $A$. If this is not true on all of $A$, then \ref{c3} shows $[f,g] \neq 1$ in $<f,g>$, a contradiction.  Hence $f=c^{p_1}$ and $g=c^{p_2}$ on all of $A$ for some $p_1,p_2 \in \mathbb{Z}$.  Therefore, $<f,g> \cong \mathbb{Z}$.  

If $A \cap B = \emptyset$, then $f$ and $g$ are non-trivial elements with disjoints supports.   Thus $<f,g> \cong \mathbb{Z} \times \mathbb{Z}$.
\end{proof}

\begin{cor} (Row 2) of Table 1. \\
 Let $G \in \mathcal{T}$ and $f,g \in G$ be such that $[f,g] \neq 1$.  Then $\mathcal{SO}\{f,g\}$ contains a 2-tower or $f$ and $g$ share an orbital $A$ such that $<f,g>_A \ncong \mathbb{Z}$.
\end{cor}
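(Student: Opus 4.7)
The plan is to argue by contraposition: assume that $\mathcal{SO}\{f,g\}$ contains no 2-tower and that on every orbital $A$ shared by $f$ and $g$ we have $\langle f,g\rangle_A \cong \mathbb{Z}$, and then deduce that $f$ and $g$ commute everywhere on $I$, contradicting $[f,g]\neq 1$.

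First I would analyze how the orbitals of $f$ and $g$ sit relative to one another. Let $A$ be any orbital of $f$. If $A$ meets some orbital $B$ of $g$, then Lemma \ref{containment} (since $G\in\mathcal{T}$) forces $A\subseteq B$, $B\subseteq A$, or $A=B$. The "no 2-tower" hypothesis rules out proper containment in either direction, so $A=B$. This also shows such a $B$ is unique. Consequently, each orbital $A$ of $f$ is either disjoint from the entire support of $g$ (equivalently, $g$ fixes $A$ pointwise) or coincides exactly with an orbital of $g$. The symmetric statement for orbitals of $g$ follows identically.

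Next I would check $xfg=xgf$ pointwise by cases on $x\in I$. If $x$ lies outside the supports of both $f$ and $g$, commutation is immediate. If $x$ lies in an orbital $A$ of $f$ but not in the support of $g$, then by the previous paragraph $A$ must be disjoint from the support of $g$ (otherwise $A$ equals an orbital of $g$ and $g$ moves every point of that orbital by continuity, contradicting $x\notin\mathrm{Supp}(g)$); since $f$ preserves $A$, both $xf$ and $x$ are fixed by $g$, giving $xfg=xf=xgf$. The case $x\in\mathrm{Supp}(g)\setminus\mathrm{Supp}(f)$ is symmetric. Finally, if $x$ lies in some orbital $A$ of $f$ and some orbital $B$ of $g$, then $A=B$ by the case analysis above; by hypothesis $\langle f,g\rangle_A\cong\mathbb{Z}$, so the bumps of $f$ and $g$ on $A$ are powers of a common controller (invoking Lemma \ref{control}), hence commute on $A$; since both $f$ and $g$ preserve $A$, we get $xfg=xgf$.

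Combining the cases gives $fg=gf$, contradicting $[f,g]\neq 1$, which completes the argument. The main obstacle I anticipate is really just the bookkeeping in the fourth case: one must be careful to invoke the projection $\langle f,g\rangle_A$ correctly and to ensure that the commutativity on $A$ established via the cyclic projection does indeed lift to pointwise commutativity of $f$ and $g$ at points of $A$, which it does precisely because $A$ is an invariant set for both elements. Everything else is a direct consequence of Lemma \ref{containment}, the contrapositive of the "no 2-tower" assumption, and the basic orbital dynamics already collected in Lemma \ref{multiply basics}.
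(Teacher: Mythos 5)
Your proof is correct. It is the contrapositive of the paper's argument: the paper uses purity to note that $f$ and $g$ have exactly one configuration of orbitals among the five possibilities, and then strikes out the two commuting configurations ($A=B(c)$ and $A\cap B=\emptyset$), whereas you assume neither a 2-tower nor a noncommuting shared orbital exists and verify $xfg=xgf$ pointwise. Two differences are worth noting. First, you never use purity, so your case analysis over all orbitals of $f$ and $g$ actually establishes Row 2 of Table \ref{nonpuretable} (the nonpure version) as well; indeed your argument is essentially the one the paper gives later for that row. Second, you bypass Lemma \ref{commute} entirely; the paper cites it, but the operative content of its proof is the same elimination of configurations 4 and 5 that you carry out directly via Lemma \ref{containment}. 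One small simplification: once $\langle f,g\rangle_A\cong\mathbb{Z}$, commutation of $f$ and $g$ on the invariant set $A$ follows just from $\mathbb{Z}$ being abelian, so the appeal to the controller of Lemma \ref{control} is not needed.
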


\begin{proof}
Assume $[f,g] \neq 1$.  From Lemma \ref{commute}, we conclude $A \subset B$, $B \subset A$, or $A=B(nc)$ are possible configurations for $A$ and $B$.  Furthermore, since $f$ and $g$ are pure, $A=B(c)$ or $A \cap B = \emptyset$ implies $[f,g] = 1$.  Thus, 1,2,3 are possible while 4,5 are not.
\end{proof}

Remark: The last two proofs result in an equivalence: $[f,g] = 1$ if and only if for all pairs of orbitals $A,B$ of $f,g$ either 4. $A \cap B = \emptyset$ or 5. $A = B$ and $<f,g> \cong \mathbb{Z}$.

\begin{lem}\mylabel{relns} (Rows 3) of Table 1.\\
 Let $G \in \mathcal{T}$ and $f,g \in G$ be pure.  If $[f^g,f] = 1$ and $[f,g] \neq 1$ then $A \subset B$.
\end{lem}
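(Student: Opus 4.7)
The plan is to apply the trichotomy established by Row~2 of Table~\ref{puretable}: since $f,g$ are pure and $[f,g] \neq 1$, exactly one of $A \subset B$, $B \subset A$, or ``$A = B$ noncommuting'' holds. I will derive a contradiction to $[f^g,f] = 1$ in the latter two cases, leaving $A \subset B$ as the only possibility. Throughout, observe that $f^g$ is pure (conjugation preserves the number of bumps) and has orbital $Ag$ by \ref{basic conj orbitals}.

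Suppose first that $B \subset A$. By \ref{end}, $B$ shares no endpoint with $A$, so $g$ is the identity on a one-sided neighborhood of each endpoint of $A$. A short continuity argument (using $xf \to a$ as $x \to a$ at each endpoint $a$ of $A$) then shows that $f^g = g^{-1}fg$ agrees with $f$ on small intervals at each end of $A$. Also $Ag = A$, since $g$ fixes $A \setminus B$ pointwise and preserves $B$. Yet $f^g \neq f$ globally, since $f^g = f$ would give $[f,g]=1$. Hence the pure pair $f, f^g$ share the orbital $A$ and agree near an end but not throughout, so by \ref{c2} we obtain $[f,f^g] \neq 1$, contradicting the hypothesis.

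Suppose instead that $A = B$ and $f,g$ do not commute. Then $\langle f,g\rangle$ is a solvable subgroup of PLo(I) (by Theorem \ref{loc solv}) with the single orbital $A$, so Lemma \ref{control} yields a controller $c$ for $\langle f,g\rangle$, and in particular $f$ and $g$ are integer powers of $c$ near each endpoint of $A$. Hence $f,g$ commute near the endpoints of $A$, so $f^g = f$ on a neighborhood of each endpoint. Since $g$ fixes the endpoints of its own orbital $A=B$, we have $Ag = A$, making $f, f^g$ a pair of pure elements with common orbital $A$; and $f^g \neq f$ globally because $[f,g] \neq 1$. Applying \ref{c2} once more gives $[f,f^g] \neq 1$, again a contradiction. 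The main obstacle is precisely this common-orbital case, which cannot be settled from the no-transition-chains hypothesis alone and relies crucially on Lemma \ref{control}'s structural input that any two elements sharing an orbital in a solvable subgroup of PLo(I) agree, up to taking powers of a controller, near both endpoints.
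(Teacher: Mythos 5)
Your proof is correct and follows essentially the same route as the paper: invoke the Row~2 trichotomy, then in each of the two bad cases show that $f$ and $f^g$ are pure elements sharing the orbital $A$ that agree near an end but not throughout, and derive the contradiction $[f^g,f]\neq 1$ from \ref{c2}. The only (harmless) divergence is in how you justify ``$f^g=f$ near the ends'': you argue it directly from $g$ being the identity near the ends of $A$ in the $B\subset A$ case and from the controller of Lemma \ref{control} in the $A=B$ case, whereas the paper cites \ref{conj slopes} together with the background fact that elements sharing an orbital are powers of a common element near its ends.
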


\begin{proof}
By Row 2 of Table 1, since $[f,g] \neq 1$, either $A \subset B$, $B \subset A$ or $A = B(nc)$.  If $B \subset A$ or $A = B$ with $<f,g> \ncong \mathbb{Z}$, we get contraditions as follows.  

Assume that $B \subset A$.  Then we can apply \ref{c1} to conclude that $fg \neq gf$ on $A$, that is $f^g \neq f$ on the orbital $A$. By Lemma \ref{basic conj orbitals}, $f^g$ has orbital $A$.  Thus $f^g$ and $f$ share the orbital $A$.  Since they are conjugates, Lemma \ref{conj slopes} shows they have the same leading and trailing slopes on $A$.  Thus $f^g = f$ near the ends of $A$ but not throughout $A$. Now we can use Lemma \ref{c2} to get $[f^g,f] \neq 1$, a contradiction.    
                                                                                                                                                                                                                                                                                                                                                                                                                                                                                                                                                                                                                                                                                                                                                                                                                                                                                    
Now consider the case when $A = B(nc)$.  Then $<f,g> \ncong \mathbb{Z}$ since $[f,g] \neq 1$. Since $f^g$ and $f$ share the orbital $A$ and are conjugates of one another, they are equal near the ends of $A$.  However, $f^g \neq f$ throughout $A$ since $[f,g] \neq 1$ on $A$.  Again, we can use Lemma \ref{c2} to get $[f^g,f] \neq 1$, a contradiction.
\end{proof}
\vspace{.5cm}

{\bf Nonpure Elements} \\

Table 2 summarizes the same kind of information as Table 1, but for nonpure elements.  In this case there is more subtlety because multiple configurations may exist for the same pair of functions.  Let $(A,f),(B,g)$ be nonpure signed orbitals of a group $G$ in $\mathcal{T}$. If one of the indicated configurations must exist based on the relation(s) for $f,g$, we indicate with $\exists$. If at least one must exist in a set of configurations, we indicate with ``$\exists$ or'' for each element of the set.  If a configuration violates the relation(s), we indicate with $\nexists$.  If it is possible, but not necessary, we indicate with $\checkmark$.   In some cases, we provide additional information in footnotes, and it is often useful to read across the entire row to understand the indicated statement.  For example, Row 2 can be interpreted as: If $[f,g] \neq 1$, then there exists a configuration like 1 or there exists a configuration like 2 or there exists a configuration like 3 while 4 and 5 are possible but 
not necessary. 

\begin{table}[H]
     \begin{center}
     \begin{tabular}{ c || c c c c c  }
        Relations & \multicolumn{5}{c}{Relationships Between Orbitals} \\ \\
& 1. $A \subset B$ & 2. $B \subset A$ & 3. $A = B(nc)$ & 4. $A = B(c)$* & 5. $A \cap B = \emptyset$\\
\toprule
   $[f,g] = 1$ & $\nexists$ & $\nexists$ & $\nexists$ & \checkmark & \checkmark** \\  \\ \\  \\
   $[f,g] \neq 1$ & $\exists \vee$  & $\exists \vee$ & $\exists \vee$ & \checkmark & \checkmark \\ \\ \\ \\
  $[f^{g^n},f] = 1$ and & $\exists$ & $\nexists$ & $\nexists$ & \checkmark & \checkmark \\ 
  $[f,g] \neq 1$ & \\ 
  $\text{ for some } n \in \mathbb{Z}-\{0\}$ & \\
      
      \\ \bottomrule
      \end{tabular}
      \caption{Nonpure Functions---Relations and two orbitals  \hfill *$<f,g>_A\cong \mathbb{Z}$ \\ \text{ } \hfill **$<f,g>_{A \cup B} \cong \mathbb{Z} \times \mathbb{Z}$ or $\mathbb{Z}$}
      \mylabel{nonpuretable}
      \end{center}
      \end{table}

We use the following lemma to prove Rows 1-2 of Table \ref{nonpuretable} in the following 2 corollaries.

\begin{lem}\mylabel{npcommute}
 Let $f,g$ be elements of a group $G \in \mathcal{T}$.  If any of the following hold, then $[f,g] \neq 1$.
\begin{enumerate}[label={(\arabic*)},ref={\thecor~(\arabic*)}]
\item \mylabel{npc1} An orbital of $g$ is properly contained in an orbital of $f$, or vice versa. (In this case, we show $fg \neq gf$ at a point in the larger orbital.)
\item \mylabel{npc2} $f=g$ near an end of a shared orbital $A$, but not throughout the orbital $A$. 
\item \mylabel{npc3} Each of $f$ and $g$ is some power of a common element $c \in$ PLo(I) near the ends of a shared orbital $A$ but not throughout $A$.
\end{enumerate}

\end{lem}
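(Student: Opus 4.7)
The plan is to prove each part by adapting the corresponding argument from Lemma \ref{commute} to the nonpure setting. The extra difficulty here is that $f$ and $g$ may have additional orbitals beyond the focal ones $A$ and $B$, so I cannot simply assume that $g$ is trivial outside $B$ or $f$ outside $A$. I will work around this by exploiting the no-transition-chains hypothesis and by iterating with $f$ to push things into a region where the other function is guaranteed to be trivial.

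For \ref{npc1}, assume without loss of generality that $B \subsetneq A$, and suppose for contradiction that $fg = gf$. By Lemma \ref{fd} the orbital $B$ lies in a fundamental domain of $(A,f)$, so the iterates $B, Bf, Bf^2, \ldots$ are pairwise disjoint intervals in $A$ accumulating at the right endpoint of $A$. Commutativity forces each $Bf^n$ to be $g$-invariant since $(Bf^n)g = (Bg)f^n = Bf^n$. Because $g$ has only finitely many orbitals and $|Bf^n| \to 0$ with $Bf^n$ accumulating at the right end of $A$, there is some $N$ for which $Bf^N$ meets no orbital of $g$; thus $g$ fixes $Bf^N$ pointwise. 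Pick any $x \in B$ with $xg \neq x$ (which exists since $B$ is an orbital of $g$). Then $xf^N$ and $xgf^N$ both lie in $Bf^N$, and commutativity gives $xf^N = (xf^N)g = (xg)f^N$. Injectivity of $f^N$ then forces $xg = x$, a contradiction.

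For \ref{npc2}, suppose $f = g$ on some interval $(a, a+\epsilon) \subset A$ where $A = (a,c)$, but $f \neq g$ somewhere in $A$. Let $b$ be the smallest bouncepoint of the pair $(f,g)$, so $a < b \leq c$ and $h = fg^{-1} \in G$ is the identity on $(a,b)$ and non-identity just to the right of $b$. Hence $h$ has an orbital $B = (b,d)$ with $d \leq c$. If $d = c$, then $B$ and $A$ share the right endpoint, but $B \neq A$ since $b > a$; by \ref{end} applied to $f, h \in G$ this is impossible. Therefore $B \subsetneq A$, and part \ref{npc1} applied to $f$ and $h$ yields $[f,h] \neq 1$. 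A short calculation gives $[f,fg^{-1}] = f^{-1} g f g^{-1} = [f, g^{-1}]$, which is trivial if and only if $[f,g]$ is. The symmetric case where equality holds near the right end of $A$ is identical.

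For \ref{npc3}, I imitate the pure-case reduction. Set $h_1 = f^{p_2}$ and $h_2 = g^{p_1}$; note $p_1, p_2 \neq 0$ because $f$ and $g$ act nontrivially near the ends of $A$ (else $A$ would not be one of their orbitals). Near each end of $A$ both $h_1$ and $h_2$ equal $c^{p_1 p_2}$, so $h_1$ and $h_2$ agree near the ends of the shared orbital $A$. They cannot agree throughout $A$: if $f^{p_2} = g^{p_1}$ held on all of $A$, then using the structure result of \cite{alg} (the controller $c$ of Lemma \ref{control} for $\langle f,g\rangle_A$) would force $f$ and $g$ to be powers of a common element throughout $A$, contradicting the hypothesis. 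So part \ref{npc2} applies to $h_1, h_2$, giving $[h_1,h_2] \neq 1$, hence $[f,g] \neq 1$ since powers of commuting elements commute. The main obstacle to watch out for is the last step of this paragraph: ensuring that agreement of $h_1$ and $h_2$ throughout $A$ genuinely forces the ``power of a common element'' conclusion, which is where the cited solvable-subgroup structure theory does the real work.
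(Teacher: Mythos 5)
Your proposal is correct. Parts (2) and (3) follow the paper's route essentially verbatim: reduce (2) to (1) via the difference element $fg^{-1}$ and its orbital $(b,d)$ with $d<c$ ruled in by Lemma \ref{end}, and reduce (3) to (2) via $h_1=f^{p_2}$, $h_2=g^{p_1}$; your explicit flagging of the step ``$h_1=h_2$ throughout $A$ forces $f,g$ to be common powers throughout $A$'' is, if anything, more careful than the paper, which asserts it without comment. The genuine divergence is in part (1). The paper keeps the direct pure-case argument: it takes $B$ to be the \emph{leftmost} orbital of $g$ inside $A$, pulls a point $x\in B$ back by $f^{-1}$ into the interval between the left end of $A$ and $B$ (where $g$ acts trivially), and exhibits $y=(x)f^{-1}$ with $(y)fg\neq(y)gf$. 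You instead argue by contradiction, pushing $B$ \emph{rightward}: commutativity makes each translate $Bf^{n}$ a $g$-invariant set, the translates are pairwise disjoint by Lemma \ref{fd} and accumulate at the right end of $A$, and since the finitely many $g$-orbitals meeting $A$ are properly contained in $A$ and cannot share its right end, some $Bf^{N}$ lies entirely outside $\mathrm{Supp}(g)$, forcing $xg=x$ for $x\in B$. Both arguments rest on the same two pillars (fundamental domains from Lemma \ref{fd} and no shared ends from Lemma \ref{end}); the paper's version is shorter and produces an explicit witness point for $fg\neq gf$ (which it advertises in the parenthetical of the statement), while yours trades that for a cleaner handling of the extra orbitals of $g$ inside $A$ -- you never need to single out the leftmost one, only to escape all of them eventually.
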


\begin{proof}
Given an orbital of a function, we assume the function move points to the right on the orbital whenever necessary.  Else we can take inverses and prove an equivalent relation. \\

\ref{npc1}. Let $(A,f),(C,g)$ be signed orbitals in $G$. Suppose $C \subset A$.  There may be multiple orbitals of $g$ inside of $A$, since $f,g$ aren't necessarily pure.  Apply the proof of \ref{c1}, but take $B$ to be the orbital of $g$ that is furthest left inside of $A$.  The rest of the proof is exactly the same.  If $C$ contains $A$ instead, then simply switch the roles of $f$ and $g$ in the proof and prove the equivalent relation $[g,f] \neq 1$.  \\

\ref{npc2}.  Same proof as \ref{c2}, but apply \ref{npc1} in place of \ref{c1}.

\ref{npc3}. Same proof as \ref{c3}, but apply \ref{npc2} in place of \ref{c2}.
\end{proof}

\begin{cor} (Row 1) of Table \ref{nonpuretable} and footnotes * and **. \\
 Let $G$ be a group without transition chains. If $(A,f), (B,g) \in \mathcal{SO}(G)$ then
\begin{enumerate} 
\item If $[f,g] = 1$, then $A \cap B = \emptyset$ or $A = B(c)$.
\item $A=B(c)$ implies $<f,g>_A \cong \mathbb{Z}$
\item $[f,g] = 1$ and $A\cap B = \emptyset$ implies $<f,g>_{A \cup B} \cong \mathbb{Z} \times \mathbb{Z}$ or $\mathbb{Z}$
 \end{enumerate}
\end{cor}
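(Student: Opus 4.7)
The plan is to handle the three parts of the corollary in sequence, each leveraging Lemma~\ref{npcommute} together with standard facts about groups in $\mathcal{T}$.

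For part (1), I would argue by contrapositive on the no-transition-chain structure. If $A\cap B\neq\emptyset$ and $A\neq B$, then Lemma~\ref{containment} forces one of $A\subsetneq B$ or $B\subsetneq A$; in either case \ref{npc1} produces a point where $fg\neq gf$, contradicting $[f,g]=1$ globally. If instead $A=B$ but the configuration is noncommuting, there is by definition some $x\in A$ with $(x)fg\neq (x)gf$, again contradicting $[f,g]=1$. So the only surviving options are $A\cap B=\emptyset$ and $A=B(c)$.

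For part (2), I would project to $A$. The group $H:=\langle f,g\rangle_A$ is a finitely generated subgroup of $PLo(I)$ with the single orbital $A$; since $G\in\mathcal T$, $H$ is locally solvable by Theorem~\ref{loc solv}, hence solvable. Lemma~\ref{control} then supplies a controller $c\in H$ with the property that every element of $H$ agrees with some power of $c$ near each end of $A$. In particular $f|_A$ and $g|_A$ are powers of $c$ near the ends of $A$. Because the hypothesis $A=B(c)$ means $f|_A$ and $g|_A$ commute throughout $A$, Lemma~\ref{npc3} (contrapositive) forces $f|_A$ and $g|_A$ to be powers of $c$ on all of $A$. Thus $H\subseteq\langle c\rangle$, and since at least one of the generators is nontrivial, $H\cong\mathbb Z$.

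For part (3), the first step is to rule out any other orbitals of $g$ sitting inside $A$ (and symmetrically for $f$ inside $B$). If $g$ had an orbital $B'\subsetneq A$, then $(A,f)$ and $(B',g)$ would be a pair of signed orbitals with $[f,g]=1$ but $B'\subsetneq A$, contradicting part (1) applied to this new pair. Hence every orbital of $g$ is disjoint from $A$ and every orbital of $f$ is disjoint from $B$. Consequently the projections $f|_{A\cup B}$ and $g|_{A\cup B}$ are one-bump functions with supports $A$ and $B$ respectively, which are disjoint. Commuting elements with disjoint supports generate a direct product of their cyclic subgroups, giving $\langle f,g\rangle_{A\cup B}\cong\langle f|_A\rangle\times\langle g|_B\rangle\cong\mathbb Z\times\mathbb Z$ (the footnote's $\mathbb Z$ option is covered by degenerate cases where a projected generator collapses, e.g.\ if $f$ or $g$ already acts trivially on $A\cup B$, though this does not occur under our nontriviality assumptions).

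The main obstacle is part (2): one must carefully distinguish what is true on all of $A$ from what is merely true near its ends. The bridge between the two is the controller lemma combined with \ref{npc3}, which upgrades local agreement with powers of $c$ to global agreement once commutativity on $A$ is known. Without this upgrade, a priori the kernel of the log-slope map $\varphi_A$ could contribute nontrivially to $H$ and spoil cyclicity; the content of the argument is to show that the commutativity hypothesis together with the PLo(I) geometry rules this out.
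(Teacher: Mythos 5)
Parts (1) and (2) are correct and follow essentially the paper's route: part (1) is the contrapositive of \ref{npc1} plus the definition of the noncommuting configuration, and part (2) is the controller argument (solvability of the one-orbital projection, agreement with powers of $c$ near the ends, then the contrapositive of \ref{npc3} to upgrade to all of $A$), which is exactly what the paper does by citing the proof of Corollary \ref{purerow1}.

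Part (3) has a genuine gap. From the fact that no orbital $B'$ of $g$ satisfies $B'\subsetneq A$ or $A\subsetneq B'$, you conclude that \emph{every} orbital of $g$ is disjoint from $A$. That does not follow: part (1) explicitly permits the configuration $B'=A$ with $f,g$ commuting there, so $g$ may well have $A$ itself as an orbital (and $f$ may have $B$ as an orbital). In that case $f|_{A\cup B}$ and $g|_{A\cup B}$ are not one-bump functions with disjoint supports, and your direct-product argument collapses. This is not a degenerate or excludable case --- it is precisely the situation responsible for the $\mathbb{Z}$ alternative in footnote **: for instance $f$ and $g$ could both act on both of $A$ and $B$ as powers of a common element $c$ with orbitals $A$ and $B$, giving $\langle f,g\rangle_{A\cup B}\cong\mathbb{Z}$; your closing remark that the $\mathbb{Z}$ option ``does not occur under our nontriviality assumptions'' is therefore wrong. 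The paper avoids the issue entirely: having shown that any orbital of $f$ or $g$ meeting $B$ or $A$ must equal it, it observes that $\langle f,g\rangle_{A\cup B}$ is a $2$-generated, torsion-free abelian group, hence isomorphic to $\mathbb{Z}$ or $\mathbb{Z}\times\mathbb{Z}$ by the structure theorem. Replacing your disjoint-support argument with that observation repairs the proof.
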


\begin{proof}
Assume $[f,g] = 1$. The contrapositive of \ref{npc1} is $[f,g] = 1$ implies an orbital of $g$ is not properly contained in an orbital of $f$ and vice versa, so entries (1,1) and (1,2) of Table \ref{nonpuretable} are true. The configurations which remain are for each orbital $A$ of $f$ and $B$ of $g$ we have $A = B (c)$, $A=B (nc)$, or $A \cap B = \emptyset$.  Clearly, $A=B(nc)$ is impossible since $[f,g]=1$ implies $[f,g]=1$ on all orbitals of $<f,g>$.  Hence either $A\cap B = \emptyset$ or $A=B(c)$.

If $A = B(c)$, then $[f,g] =1$ on $A$.  Apply the proof of Corollary \ref{purerow1} to get $<f,g>_A \cong \mathbb{Z}$. 

If $A \cap B = \emptyset$, then the (1,1), (1,2), and (1,3) entries imply orbitals of $f$ and $g$ that intersect $B$ and $A$, respectively, are equal to $B$ and $A$, respectively. Thus $A$ and $B$ are group orbitals of $<f,g>$. Furthermore, $f,g$ commute by assumption.  Thus the group $<f,g>_{A \cup B}$ is 2-generated, abelian, and torsion free.  By the fundamental theorem of finitely generated abelian groups, it is isomorphic to $\mathbb{Z}$ or $\mathbb{Z} \times \mathbb{Z}$.  The rank depends on how $f$ and $g$ each act simultaneously on the orbitals $A$ and $B$.  
\end{proof}

\begin{cor} (Row 2) of Table \ref{nonpuretable}. \\
 Let $G \in \mathcal{T}$ and $f,g \in G$ be such that $[f,g] \neq 1$.  Then $\mathcal{SO}\{f,g\}$ contains a 2-tower or $f$ and $g$ share an orbital $A$ such that $<f,g>_A \ncong \mathbb{Z}$.  It is also possible that configurations 4 or 5 appear in $\mathcal{SO}\{f,g\}$.
\end{cor}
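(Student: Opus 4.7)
The plan is to argue by contrapositive. Suppose that $\mathcal{SO}\{f,g\}$ contains no 2-tower and that for every shared orbital $A$ of $f$ and $g$ one has $<f,g>_A \cong \mathbb{Z}$; I will deduce $[f,g] = 1$, contradicting the hypothesis.

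Under these assumptions I would first show that every pair $(A,f),(B,g) \in \mathcal{SO}\{f,g\}$ must sit in configuration 4 or configuration 5. Since $G \in \mathcal{T}$ has no transition chains, Lemma \ref{containment} gives that $A \cap B \neq \emptyset$ forces $A \subset B$, $B \subset A$, or $A = B$. The ``no 2-tower'' hypothesis rules out the two proper containments, so the only possibilities are $A \cap B = \emptyset$ (configuration 5) or $A = B$. In the latter case the hypothesis $<f,g>_A \cong \mathbb{Z}$ is abelian, so $f$ and $g$ commute on $A$, putting us in configuration 4.

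Next I would deduce $[f,g] = 1$ globally by a pointwise argument. Given $x \in I$, either $x$ lies in a shared orbital on which $f$ and $g$ commute (configuration 4 case), or $x$ lies in some orbital of exactly one of $f,g$ that is disjoint from every orbital of the other (so the second function fixes both $x$ and the image under the first, and the commutator acts trivially at $x$), or $x$ is fixed by both $f$ and $g$. The ``no 2-tower'' hypothesis combined with Lemma \ref{containment} ensures this is an exhaustive list: no orbital of $f$ can properly contain or sit properly inside an orbital of $g$, so each $x$ falls cleanly into one of the above regimes. In every case $(x)fg = (x)gf$, so $[f,g] = 1$, contradicting the hypothesis. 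Therefore at least one pair of orbitals must realize configuration 1, 2, or 3. Configurations 1 and 2 give a properly nested pair of signed orbitals, i.e.\ a 2-tower in $\mathcal{SO}\{f,g\}$. Configuration 3 yields a shared orbital $A = B$ on which $f$ and $g$ fail to commute, so $<f,g>_A$ is non-abelian and hence not isomorphic to $\mathbb{Z}$. The final remark that configurations 4 and 5 may still appear is immediate: the dichotomy asserted in the statement is an existence claim about a single pair of orbitals, and nothing in the argument prevents other pairs in $\mathcal{SO}\{f,g\}$ from being equal-and-commuting or disjoint.

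The step I expect to require the most care is the global commutation deduction, namely verifying that the case split on the location of an arbitrary $x \in I$ is genuinely exhaustive once proper nesting is forbidden. The potential pitfall is an $x$ that lies in an orbital of $f$ meeting several orbitals of $g$ without equality; ruling this out is precisely where the ``no 2-tower'' plus no-transition-chain hypotheses combine, via Lemma \ref{containment}, to force any intersecting pair of orbitals of $f$ and $g$ to coincide. Everything else amounts to invoking the contrapositive of Lemma \ref{npcommute} and Row 1 of Table \ref{nonpuretable}, which have already been established.
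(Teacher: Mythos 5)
Your proposal is correct and follows essentially the same route as the paper: a contrapositive argument showing that if every pair of orbitals of $f$ and $g$ sits in configuration 4 or 5 then $[f,g]=1$. The only difference is that you spell out the pointwise verification of global commutation, which the paper asserts without detail (and records afterward as an equivalence with Row 1); your case analysis there is sound.
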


\begin{proof}
For each pair of orbitals $A,B$ of $f,g$, respectively, either $A \subset B$, $B \subset A$, $A = B$, or $A \cap B = \emptyset$.  If for all pairs $A,B$, we had either $A \cap B = \emptyset$ or  $A = B$ with $<f,g>_A \cong \mathbb{Z}$, then $[f,g] = 1$.  Therefore, we must have that there exist orbitals $A,B$ of $f,g$ such that $A \subset B$, $B \subset A$, or $A = B$ with $<f,g>_A \ncong \mathbb{Z}$.

It is possible that some pairs $A,B$ satisfy $A=B(c)$ or $A \cap B = \emptyset$. The negative relation $[f,g]\neq 1$ is already satisfied thanks to the previous paragraph.  However, it is not true that $A \cap B = \emptyset$ implies $<f,g>_{A\cup B} \cong \mathbb{Z} \times \mathbb{Z}$, because there could be other orbitals of $f$ or $g$ that intersect $B$ or $A$, respectively.
\end{proof}

Remark: The last two proofs result in an equivalence: $[f,g] = 1$ if and only if for all pairs of orbitals $A,B$ of $f,g$ either 4. $A \cap B = \emptyset$ or 5. $A = B$ and $<f,g>_A \cong \mathbb{Z}$.

\begin{lem}\mylabel{relns} (Rows 3) of Table \ref{nonpuretable}.\\
 Let $G \in \mathcal{T}$ and $f,g \in G$.  If $[f^{g^n},f] = 1$ for some $n \in \mathbb{Z}-\{0\}$ and $[f,g] \neq 1$ then
\begin{enumerate}[label={(\arabic*)},ref={\thecor~(\arabic*)}]
 \item There exist orbitals $A$ of $f$ and $B$ of $g$ with $A \subset B$.\mylabel{np41}
 \item No orbital of $g$ is properly contained in an orbital of $f$. \mylabel{np42}
 \item\mylabel{np43} If $A = B$ where $A$ is an orbital of $f$ and $B$ is an orbital of $g$, then $<f,g>_A \cong \mathbb{Z}$.
\item It is possible that $A \cap B = \emptyset$ for $(A,f),(B,g) \in \mathcal{SO}(G)$.\mylabel{np44}
\end{enumerate}
\end{lem}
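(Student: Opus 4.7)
The plan is to reduce each of the four claims to the pure-case analyses already developed — Lemma \ref{commute}, Corollary \ref{purerow1}, and the pure analog of the present lemma — with (2) as the main technical hurdle.

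I would prove (2) by contradiction. Suppose some orbital $B$ of $g$ is properly contained in an orbital $A$ of $f$. Since distinct orbitals of $g$ are disjoint, no orbital of $g$ can contain $A$ (it would meet $B$); combined with no-transition-chains, every orbital of $g$ meeting $A$ lies strictly in $A$, and by \ref{end} is bounded away from the endpoints of $A$. Replacing $B$ with the leftmost orbital of $g$ in $A$ and writing $A=(a,c)$, $B=(b,d)$, we may assume $g$ fixes $(a,b]$; in particular $Ag^n=A$, so by \ref{basic conj orbitals} and \ref{conj slopes} the bump $(f^{g^n})|_A$ is a pure function on $A$ with the same leading and trailing slopes as $f|_A$, and since $g^n$ is the identity on a neighborhood of each endpoint of $A$, the two bumps agree near those endpoints. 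After inverting $f$ if necessary so that $f$ moves points right on $A$, Lemma \ref{fd} places $B$ in a fundamental domain $[z,zf)$ of $f|_A$, so $Bf^{-1}\subset [zf^{-1},z)\subset (a,b]$. Picking $x\in B$ and setting $y=xf^{-1}$ gives a point fixed by $g^n$ with $xg^n\neq x$, whence
\[
yfg^n \;=\; xg^n \;\neq\; x \;=\; yf \;=\; yg^n f,
\]
so $f^{g^n}$ and $f$ disagree at $y$ while agreeing near the endpoints of $A$. Applying \ref{c2} to the pure bumps $f|_A$ and $(f^{g^n})|_A$ yields $[f^{g^n},f]\neq 1$, contradicting the hypothesis.

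For (3), suppose $A=B$ is a common orbital of $f$ and $g$, and let $f_A,g_A$ denote the pure restrictions; the hypothesis gives $[f_A^{g_A^n},f_A]=1$. If $[f_A,g_A]\neq 1$, the pure analog of the present lemma — whose proof adapts verbatim with $g^n$ in place of $g$ because $g$ and $g^n$ share the single orbital in question — would force the orbital of $f_A$ to be properly contained in that of $g_A$, absurd since both equal $A$. Hence $[f_A,g_A]=1$, and Corollary \ref{purerow1} together with Lemma \ref{control} gives $\langle f,g\rangle_A=\langle f_A,g_A\rangle\cong\mathbb{Z}$.

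Claim (1) then follows by elimination. Since $[f,g]\neq 1$, it cannot happen that every pair of orbitals of $f$ and $g$ falls in configuration 4 or 5 of Table \ref{nonpuretable} — otherwise $f$ and $g$ would commute on every orbital of $\langle f,g\rangle$, forcing $[f,g]=1$. Claim (2) rules out configuration 2, and claim (3) converts every configuration-3 pair into a configuration-4 pair, so at least one pair must fall in configuration 1, which is (1). Claim (4) is merely an existence remark: nothing in the hypotheses constrains orbitals of $f$ or $g$ that are disjoint from every orbital of the other generator, so such configurations can coexist with the rest — an explicit witness is obtained from any pair realizing (1) by adjoining to $f$ or $g$ an additional bump supported far from the original orbitals.
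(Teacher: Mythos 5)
Your proposal is correct and follows essentially the same route as the paper: rule out configurations 2 and 3(nc) by showing $f^{g^n}$ and $f$ would agree near the ends of the relevant orbital but not throughout (so Lemma \ref{npc2}/\ref{c2} forces $[f^{g^n},f]\neq 1$), then obtain (1) by elimination and (4) by an explicit example. The only cosmetic differences are that you inline the witness-point construction of Lemma \ref{npc1} and observe directly that $g^n$ is the identity near the ends of $A$ (where the paper cites conjugate slopes and Lemma \ref{z}), and you reduce (3) to the pure-case lemma rather than rerunning the nonpure argument.
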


\begin{proof}
By Row 2 of Table \ref{nonpuretable}, since $[f,g] \neq 1$, there exist orbitals $A$ of $f$ and $B$ of $g$ such that $A \subset B$, $B \subset A$ or $A = B$.  If $B \subset A$ or if $A = B(nc)$ (i.e., $A = B$ with $<f,g>_A \ncong \mathbb{Z}$), we get contradictions as follows.  

Assume that $B \subset A$.  Then we can apply \ref{npc1} to conclude that $fg \neq gf$ on $A$, that is $f^g \neq f$ on the orbital $A$.  (This does not follow from $[f,g] \neq 1$ directly.  Now that $f,g$ are nonpure, we must use that $[f,g]\neq 1$ {\bf on A}.) By Lemma \ref{z}, $[f, g^n] \neq 1$.  By Lemma \ref{basic conj orbitals}, $f^{g^n}$ has orbital $A$.  Thus $f^{g^n}$ and $f$ share the orbital $A$.  Since they are conjugates, Lemma \ref{conj slopes} shows they have the same leading and trailing slopes on $A$.  Thus $f^{g^n} = f$ near the ends of $A$ but not throughout $A$. Now we can use Lemma \ref{npc2} to get $[f^{g^n},f] \neq 1$, a contradiction.    
                                                                                                                                                                                                                                                                                                                                                                                                                                                                                                                                                                                                                                                                                                                                                                                                                                                                                    
Now assume $A=B$ and assume toward a contradiction that $A = B(nc)$.  Since $f^{g^n}$ and $f$ share the orbital $A$ and are conjugates of one another, they are equal near the ends of $A$.  However, $f^{g^n} \neq f$ throughout $A$ since $[f,g] \neq 1$ on $A$ implies $[f,g^n] \neq 1$ on $A$ by Lemma \ref{z}.  Again, apply Lemma \ref{npc2} to get $[f^{g^n},f] \neq 1$, a contradiction.

So far we have shown $A \subset B$ or $A = B$ with $<f,g>_A \cong \mathbb{Z}$.  To prove \ref{np41}, observe that $[f,g] \neq 1$ necessitates a case like $A \subset B$ or else $f$ and $g$ would commute.

It is possible that $A=B(c)$ or $A \cap B = \emptyset$ for some signed orbitals $(A,f),(B,g)$ in $G$.  For example, if we let $f=g_1$ and $g=g_3$ in Figures \ref{split} and \ref{freecollapse}, then $[f,g] \neq 1$ and $[f^{g^n},f]=1$. In Figure \ref{split}, there is a configuration like $A\cap B = \emptyset$. In Figure \ref{freecollapse}, there is one like $A=B(c)$.
\end{proof}

\subsection{Induced Maps Between Towers}\mylabel{inducedmaps}

The previous results will be used to analyze injective homomorphisms from $\mathbb{Z} \wr_m \mathbb{Z}$ to $\mathbb{Z} \wr_n \mathbb{Z}$.  The study will
concentrate on the induced maps from towers in the domain to towers in the range.  The results that follow start very restricted and build to more general conclusions.

Analysis of maps induced on towers by homomorphisms will play a crucial role in distinguishing examples of groups from one another. Essentially, we show that towers carry group relations, and this has consequences for the induced maps which we now define.  The notation is a bit cumbersome, so think of induced maps as taking chains or parts of chains of posets to one another. An injective homomorphism $\phi: G \longrightarrow H$ between solvable groups, in particular, will carry information about the posets $\mathcal{O}(G)$ and $\mathcal{SO}(G)$ to the posets $\mathcal{O}(H)$ and $\mathcal{SO}(H)$.  This information will be very useful in distinguishing groups from each other.

Recall the ordering on the poset $\mathcal{SO}(G)$ is inherited from $\mathcal{O}(G)$ by making new chains for distinct elements with the same orbitals.  Let $G_1$ be the Hasse diagram of $\mathcal{O}(G)$ and $G_2$ be the Hasse diagram of $\mathcal{SO}(G)$.  Then $G_1$ is a simplified version of $G_2$ where repetitive information is removed. Given an element orbital $A$, there are infinitely many nodes in $G_2$ with orbital $A$.  If we identify all such nodes, we get the diagram $G_1$. Furthermore, since a maximal chain in $G_2$ is given by an ordered set $I$ of orbitals $\{O_i  \, | \, i \in I\}$ where $i < j$ in $I$ implies $O_i \subset O_j$, we have not lost any essential information in this identification.  Hence, it is often useful to envision the poset $\mathcal{O}(G)$, though many of our calculations necessitate using $\mathcal{SO}(G)$ because the functions play an important role in our calculations. 

 Let $\phi : G \longrightarrow H$ be a homormorphism between subgroups of PLo(I).  For each pair $(T,S)$ where $T$ is a tower in $G$ and $S$ is a tower in $\phi(\mathcal{S}(T))$, there is an {\bf induced map} $\ds \hat{\phi}_{(T,S)} : T \longrightarrow S \cup \{\emptyset\}$ which takes a pair $(B,g) \in T$ to either 1. the pair $(C, \phi(g)) \in S$ where C is the orbital of $\phi(g)$ in $S$; or 2. $\emptyset$ if $\phi(g)$ is not a signature appearing in $S$.  This map is well defined by the definition of a tower.  Note that the existence of $\hat{\phi}$ also implies the existence of an obvious unique induced map between the associated stacks.  However, since infinitely many signatures share the same orbitals, an induced map of stacks corresponds to infinitely many induced maps on towers.

Consider the following figures from Section \ref{representations}:

\begin{figure}[H]
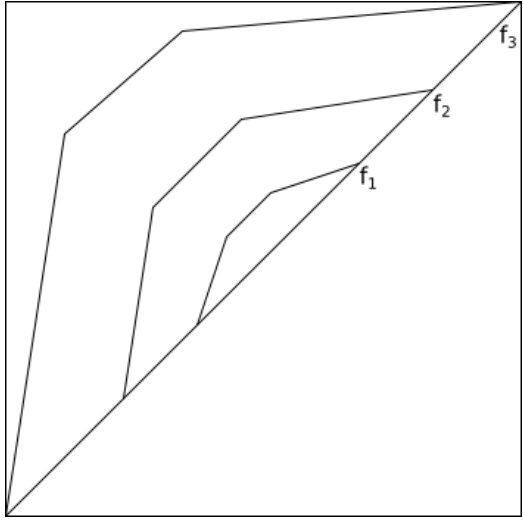
\caption{Standard}\mylabel{w32}
\figw
\end{figure}

\begin{figure}[H]\caption{Split}\mylabel{split2}
\figsplit
\end{figure}

 and the isomorphism $\phi: W_3 \longrightarrow G$ defined by $\phi(f_i) = g_i$ where $W_3$ is represented by the group in Figure \ref{w32} and $G$ is the group generated by the elements in Figure \ref{split2}.  Let the maximal tower in Figure \ref{w32} be $T$, the left tower in Figure \ref{split2} be $M_1$, and the right one $M_2$.    The induced map $\p{T}{M_2}$ illustrates the necessity of the dummy image element $\emptyset$.  The maps $\p{T}{M_1}$ and $\p{T}{M_2}$ illustrate dependence on the codomain tower and the maps $\hat{\phi^{-1}}_{(M_1,T)}$ and $\hat{\phi^{-1}}_{(M_1,T)}$ illustrate dependence on the choice of domain tower.
 
Occasionally, we omit the subscript $(T,S)$ of $\p{T}{S}$ once the induced map is clear.  We also omit $\{\emptyset\}$ or $\emptyset$ in expressions involving induced maps. Since $S$ is a subtower of $\phi(\mathcal{S}(T))$ and since a signature can only appear once in any given tower, card($T$) $\geq$ card($S$).   Finally, if $T$ is a pure tower and $S$ is any tower in $\phi(T)$, the preimage $\hat{\phi}^{-1}(S)$ is a subtower of $T$.  
 
{\bf Important}: The notation $\phi(T)$ is used to denote $\mathcal{SO}(\phi(\mathcal{S}(T)))$, that is, the set of all signed orbitals of images of signatures in $T$.  Note that $\phi(T)$ is a poset, but it may not form a tower.

An induced map $\ds \p{T}{M}$ is {\bf maximal} if $M$ is maximal in $\phi(T)$.  The example $\p{T}{M_2}$ from is a maximal induced map.  A non-maximal induced map can be obtained by mapping $g_1$ or $g_3$ to $\emptyset$.  

We can always extend an induced map $\p{T}{S}$ to a maximal induced map $\p{T}{M}$ by taking a maximal tower $M$ in $\phi(T)$ which contains $S$.  Then we define $\p{T}{M}$ by the same process we performed to define $\p{T}{S}$. 

The nature of the poset $\phi(T)$ is a question we will consider in detail when $\phi$ is injective and the groups are solvable.  \\ \\

The results which follow build up a basic understanding of induced maps between solvable groups.  Eventually we will show there is some rigidity of towers under injective homormorphisms and that all possible maximal induced maps can be well understood if the domain is a nice enough tower.  First, we give some background and make helpful observations about group relations of groups in $\mathcal{T}$.  Group relations restrict how orbitals relate, and this has some very important consequences for induced maps between towers. \\

For the proof of the following, recall if $T$ is a tower in $G$, then the {\bf type} of the tower T denoted type(T) is the order type of the ordered set of right endpoints of the orbitals in $\mathcal{O}(T)$.  Furthermore, if $T$ is a tower of type $n$, we call refer to $T$ as an {\bf n-tower}.

\begin{prop}\mylabel{puretowers}
Let $G, H$ be solvable subgroups of $PLo(I)$ and $\phi:G \longrightarrow H$ an injective homomorphism.  If $T$ is a pure tower of $G$ such that $\phi(T)$ is pure, then
\begin{enumerate}
 \item The set $\phi(T)$ is also a tower. 
\item The induced map $\p{T}{\phi(T)}$ is an isomorphism of ordered sets. 
\end{enumerate}
\end{prop}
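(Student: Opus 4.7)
The plan is to reduce everything to the two-element case. Fix two distinct signed orbitals $(A,f), (B,g) \in T$; since $T$ is a chain of orbitals we may assume $A \subsetneq B$. I will show that the (unique) orbital $B_f$ of $\phi(f)$ is properly contained in the (unique) orbital $B_g$ of $\phi(g)$. Once this containment is established for every such pair, both conclusions follow simultaneously: the orbitals of the signatures appearing in $\phi(T)$ form a chain, so $\phi(T)$ is a tower; and the induced assignment $(A_s,s)\mapsto (B_s,\phi(s))$ strictly preserves order, is injective (because $\phi$ is), and is surjective onto $\phi(T)$ by construction, hence is an isomorphism of ordered sets.

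For the two-element step, I would run the algebra-to-geometry dictionary developed in Section \ref{orbgeorelns} in both directions. Since $G$ is solvable we have $G \in \mathcal{T}$, and towers in $\mathcal{T}$ are fundamental by Lemma \ref{fd}, so $A$ sits in a fundamental domain of $(B,g)$. Because $f$ and $g$ are pure with $A \subsetneq B$, Corollary \ref{wreathrelns}(1) produces the relation $[f^{g},f] = 1$ in $G$, while Lemma \ref{commute}(\ref{c1}) guarantees $[f,g] \neq 1$.

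Next I would push these two facts through $\phi$. The equation $[f^{g},f] = 1$ survives because $\phi$ is a homomorphism, and the inequality $[f,g] \neq 1$ survives because $\phi$ is injective, so we obtain $[\phi(f)^{\phi(g)},\phi(f)] = 1$ and $[\phi(f),\phi(g)] \neq 1$ in $H$. Since $H$ is solvable it too lies in $\mathcal{T}$, and the purity hypothesis on $\phi(T)$ says that $\phi(f)$ and $\phi(g)$ are one-bump functions with unique orbitals $B_f$ and $B_g$. Lemma \ref{relns} (the pure case, Row 3 of Table \ref{puretable}) then forces $B_f \subsetneq B_g$, which is exactly what is needed.

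The main thing to be careful about is bookkeeping rather than any conceptual obstacle: one must verify that purity of $\phi(T)$ really makes the correspondence $(A_s,s)\mapsto (B_s,\phi(s))$ single-valued, and that in any tower distinct elements automatically carry distinct orbitals (a single signature with two distinct orbitals would have incomparable orbitals and therefore could not occupy two slots of a chain). With those trivialities in place, the argument collapses to the single commutator calculation above together with the relevant entry of Table \ref{puretable}.
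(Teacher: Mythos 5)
Your argument is correct, and its engine is the same as the paper's: for a nested pair $(A,f)\subsetneq(B,g)$ of pure signed orbitals you extract $[f^{g},f]=1$ and $[f,g]\neq 1$, transport these through the injective homomorphism, and invoke Row 3 of Table \ref{puretable} to force proper containment of the image orbitals. Where you differ is in the globalization. The paper proves the statement by induction on the height of $T$, handling $n=2$ exactly as you do and then splicing together the images of the subtowers $U=\{(A_{n-1},f_{n-1}),(A_n,f_n)\}$ and $V=T\setminus\{(A_n,f_n)\}$. You instead observe that, because $\phi(T)$ is assumed pure, each $\phi(s)$ has a \emph{unique} orbital, so the pairwise containments you establish automatically cohere into a single chain with no induction needed; the order isomorphism then falls out of strict monotonicity plus injectivity of $\phi$. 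This is a legitimate simplification, and it is worth noting that it leans essentially on the purity hypothesis for the image: in the later, more general Proposition \ref{puretower}, where $\phi(T)$ need not be pure and one must \emph{choose} compatible orbitals for the images, the pairwise shortcut no longer suffices and something like the paper's inductive bookkeeping becomes necessary. Your side remarks (that a signature cannot occupy two slots of a tower, since distinct orbitals of one function are disjoint hence incomparable, and that purity makes the induced assignment single-valued) correctly plug the only small holes in the reduction.
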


\begin{proof}
 The proof is by induction on the tower.  The tower type of any tower in $G$ or $H$ is a natural number by Theorem \ref{thm geo}.  If $T$ is a tower of type 1, then the result is obvious.  Suppose $T = \{(A_1, f_1),(A_2, f_2), \dots ,(A_n, f_n)\}$ is a tower of type $n$ where $i < j$ implies $A_i \subset A_j$.  Let $g_i = \phi( f_i )$, $1 \leq i \leq n$.  Existence of a tower of type $n$ in the image subgroup $<g_1, g_2, \cdots, g_n>$ is guaranteed by Theorem \ref{thm geo}. We still need to show that the images of the signatures of $T$ form an $n$-tower and that $\phi$ respects the ordering on the towers. Note that since $T$ has $n$ elements, $\phi(T)$ is pure, and $\phi$ is injective, we know $\phi(T)$ also contains $n$ elements.

We prove the $n=2$ case separately because the general case uses 2-towers.  Use the same notation as that in the first paragraph.  That is, let $T = \{(A_1, f_1),(A_2, f_2)\}$ be any pure tower of height 2 in $G$ such that $A_1 \subset A_2$, and let $g_i = \phi (f_i)$. Since $\phi$ is an isomorphism on its image, relations in $<f_1,f_2>$ hold if and only if they hold in $<g_1,g_2>$. By Lemma \ref{c1}, $[f_1,f_2] \neq 1$. Since $f_2$ conjugates the orbital $A_1$ of $f_1$ off itself, we also know $[f_1^{f_2}, f_1] = 1$.  Thus $[g_1,g_2] \neq 1$ and $[g_1^{g_2},g_1] = 1$.  By row 3 of table 1, there exists an orbital $B_1$ of $g_1$ contained in an orbital $B_2$ of $g_2$.  Since $\phi(T)$ is pure, these are the unique orbitals of $g_1$ and $g_2$.  Thus $\phi(T) = \{(B_1,g_1),(B_2,g_2)\}$ is a 2-tower, and $\p{T}{\phi(T)}$ is an isomorphism of ordered sets.
 
 Assume inductively that the result holds for towers of type $k$ where $k < n$.  Let $T$ be a pure $n$-tower $T = \{(A_1, f_1),(A_2, f_2), \dots ,(A_n, f_n)\}$ such that $\phi(T)$ is a pure. Again let $\phi(f_i) = g_i$ and let $\phi(T) = \{(B_1,g_1), \cdots, (B_n,g_n)\}$---not necessarily in order from smallest to largest. Let $P$ be the set of all towers in $\phi(T)$.   Each tower in $P$ is pure because $\phi(T)$ is pure. Consider the $2-$ and $(n-1)-$ pure subtowers $U = \{(A_{n-1},f_{n-1}),(A_n,f_n)\}$ and $V = \{(A_1, f_1),(A_2, f_2), \dots ,(A_{n-1},f_{n-1})\}$ of $T$.  They are pure because they are subtowers of the pure tower $T$.  Furthermore, $\phi(U)$ and $\phi(V)$ are subsets of the pure poset $\phi(T)$, hence are pure.  By the previous proof for 2-towers and by the inductive hypthotheses, $\phi(U)$ and $\phi(V)$ are towers. Also, the induced maps $\ds \hat{\phi}_{(U,\phi(U))}$ and $\hat{\phi}_{(V,\phi(V))}$ are isomorphisms of ordered sets. Notice, these are just restrictions of the induced map $\hat{\phi}_{(T,\phi(T))}$ to certain subtowers of $T$. Furthermore, 
$U \cup V = T$ and $U \cap V = (A_{n-1},f_{n-1})$.  Since the orbital of $\hat{\phi}_{(U,\phi(U))}(A_n,f_n)$ properly contains the orbital of $\hat{\phi}_{(U,\phi(U))}(A_{n-1},f_{n-1})$, it will also properly contain all the other orbitals in the image of $\hat{\phi}_{(V,\phi(V))}$ thanks to $\phi(T)$ being a pure poset.  Thus $\phi(T)$ is a tower and the map $\hat{\phi}_{(T,\phi(T))}$ is an isomorphism of ordered sets. 
\end{proof}

The following expands the statements of Proposition \ref{puretowers} to a new Proposition where only the domain tower is required to be pure.

\begin{prop}\mylabel{puretower}
 Let $\phi: G \longrightarrow H$ be an injective homomorphism between solvable subgroups of PLo(I) and let $T$ be a pure $n$-tower in $G$.  
\begin{enumerate}
 \item There exists an $n$-tower in $\phi(T)$. 
\item Any induced map $\p{T}{U}$ is non-decreasing on elements not mapped to $\emptyset$.
\item There exists an induced map $\p{T}{S}$ which is an isomorphism of ordered sets.
\end{enumerate}
\end{prop}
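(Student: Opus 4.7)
The plan is to handle Part~(2) first via the local relation analysis already established, then derive Part~(3) quickly from Parts~(1) and~(2), and finally prove Part~(1) by induction on $n$. For Part~(2), given $(A_i, f_i), (A_j, f_j) \in T$ with $i < j$ (so $A_i \subsetneq A_j$) both mapped by $\hat{\phi}_{(T,U)}$ to nonempty elements $(C_i, g_i), (C_j, g_j) \in U$, I will use Lemma~\ref{fd} to place $A_i$ in a fundamental domain of $f_j$, so $A_i f_j \cap A_i = \emptyset$ gives $[f_i^{f_j}, f_i] = 1$, while Lemma~\ref{c1} yields $[f_i, f_j] \neq 1$. These relations transport through $\phi$ to the analogous relations for $g_i, g_j$, at which point Row~3 of Table~\ref{nonpuretable} forbids any orbital of $g_j$ from being properly contained in an orbital of $g_i$. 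Since $C_i \neq C_j$ (two signed orbitals with a shared orbital but distinct signatures cannot both sit in a chain of $\mathcal{SO}(G)$ under the lexicographic order) and $C_i, C_j$ are comparable, we conclude $C_i \subsetneq C_j$.

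Part~(3) falls out of (1) and (2): an $n$-tower $S \subset \phi(T)$ produced by (1) necessarily uses $n$ distinct signatures, which are forced to be exactly $g_1, \ldots, g_n$, so $\hat{\phi}_{(T,S)} \colon T \to S$ is a bijection between two $n$-chains, and is order-preserving by (2), hence an order isomorphism. For Part~(1), I will induct on $n$. The base cases $n = 1$ (any orbital of $g_1$ suffices, as $g_1 \neq 1$ by injectivity) and $n = 2$ (direct application of Row~3 of Table~\ref{nonpuretable}) are immediate. For $n \geq 3$, I apply the inductive hypothesis to the pure $(n-1)$-tower $T' = T \setminus \{(A_n, f_n)\}$ and the restricted injective homomorphism $\phi|_{<f_1, \ldots, f_{n-1}>}$ to obtain an $(n-1)$-tower $B_1 \subsetneq \cdots \subsetneq B_{n-1}$ in $\phi(T')$, and then seek an orbital $B_n$ of $g_n$ satisfying $B_{n-1} \subsetneq B_n$.

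The hard part will be this final extension step, because Row~3 of Table~\ref{nonpuretable} applied to $(f_{n-1}, f_n)$ only guarantees that \emph{some} orbital of $g_{n-1}$ sits inside \emph{some} orbital of $g_n$, not necessarily the specific $B_{n-1}$ produced inductively. To navigate this, I will strengthen the inductive hypothesis so that the top orbital of the produced $(n-1)$-tower may be chosen from any orbital of $g_{n-1}$ that admits an extension below, and then argue that any such extendable orbital $B_{n-1}$ must in fact be contained in an orbital of $g_n$. The argument will be by contradiction: if $B_{n-1}$ were disjoint from every orbital of $g_n$, then $g_n$ would act trivially on $B_{n-1}$, and combining this with the commutation relations $[g_i, g_j^{g_n^k}] = 1$ (for $i \leq j$ and $k \neq 0$) inherited from the presentation $P_n$ would overdetermine the behavior of $g_1, \ldots, g_{n-1}$ on a neighborhood of $B_{n-1}$, forcing the depth of the restricted action to drop below $n-1$ and contradicting the copy of $\mathbb{Z} \wr_{n-1} \mathbb{Z}$ that the inductive tower inside $B_{n-1}$ already exhibits.
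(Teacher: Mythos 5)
Your proposal follows essentially the same route as the paper's proof: Part~(2) from the pure relations $[f_i^{f_j},f_i]=1$ and $[f_i,f_j]\neq 1$ via Row~3 of Table~\ref{nonpuretable}, Part~(3) as a formal consequence of (1) and (2), and Part~(1) by induction with the extension step resolved by transporting a wreath-product commutator relation. The one place your sketch is incomplete is the case analysis in that extension step: the negation of ``some orbital of $g_n$ properly contains $B_{n-1}$'' is not only ``$B_{n-1}$ is disjoint from every orbital of $g_n$.'' Row~3 also leaves open the possibility that $B_{n-1}$ \emph{equals} an orbital of $g_n$, in which case $g_n$ does not act trivially on a neighborhood of $B_{n-1}$ and your ``overdetermination'' mechanism as stated does not apply (conjugation by $g_n^k$ can genuinely move the orbitals of $g_j$, $j<n-1$, around inside $B_{n-1}$). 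The paper's choice of relation, $[f_{n-1}^{f_n},f_{n-2}]=1$, handles both cases uniformly: whether $B_{n-1}$ is disjoint from $\mathrm{Supp}(g_n)$ or is a shared orbital on which $\langle g_{n-1},g_n\rangle_{B_{n-1}}\cong\mathbb{Z}$ (footnote to Row~3), one gets $g_{n-1}^{g_n}=g_{n-1}$ on $B_{n-1}$, whence $[g_{n-1}^{g_n},g_{n-2}]=[g_{n-1},g_{n-2}]\neq 1$ there by Lemma~\ref{npc1}, contradicting the transported relation. Finally, no strengthening of the inductive hypothesis is needed: this argument shows that \emph{every} $(n-1)$-tower in $\phi(T')$ has its top orbital properly contained in an orbital of $g_n$, which is exactly how the paper proceeds.
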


\begin{proof}  
We prove this by induction on the type of $T$.  If $T = \{(A_1,f_1)\}$ is a tower of type 1, then since $\phi$ is injective, $f_1$ maps to a non-trivial element of $H$.  Since $f_1$ is non-trivial, it has an orbital.  Thus, there is a 1-tower in $\phi(T)$, and the induced map is an isomorphism of order sets.  Since there are only 1-towers in $\phi(T)$, the second part holds trivially for a tower of type 1.

Let $T = \{(A_1,f_1),(A_2,f_2)\}$ be a tower of type 2 and $\phi(f_i) = g_i$.  Since $T$ is pure, $[f_1^{f_2},f_1] = 1$.  Also, $[f_1,f_2] \neq 1$  by Lemma \ref{c1}.  Thus $[g_1^{g_2},g_1] = 1$ and $[g_1,g_2] \neq 1$.  By row 3 of table 2, there is an orbital of $g_1$ properly contained in an orbital of $g_2$, and no orbital of $g_2$ is contained in an orbital of $g_1$.   Thus, there is a 2-tower $S$ in $\phi(T)$ such that $\p{T}{S}$ is an isomorphism of ordered sets.  Also, any induced map $\p{T}{U}$ is non-decreasing on elements not mapped to $\emptyset$ since no orbital of $g_2$ can be contained in an orbital of $g_1$.

If $U$ is any tower in $\phi(T)$, consider the induced map $\p{T}{U}$. To complete the second part of the proposition, it is enough to show that the maximal induced map $\p{T}{M}$ for any maximal tower $M$ of type $3$ or higher in $\phi(T)$ containing $U$ preserves order on elements not mapped to $\emptyset$.  Extend $\p{T}{U}$ to a maximal induced map $\p{T}{M}$ where $M = \{(B_{11},g_{11}),(B_{12},g_{12}), \cdots, (B_{1m},g_{1m})\}$ is maximal in $\phi(T)$ and contains $U$.  Now ignore elements mapped to $\emptyset$ by restricting the domain of $\p{T}{M}$ to the subtower $S = T - ({\p{T}{U}}^{-1}(\emptyset))$. For convenience, renumber the subscripts of signatures of $S$ consecutively to $S = \{(A_1, f_1),(A_2, f_2), \dots ,(A_k, f_k)\}$ where $0 \leq k \leq m$ and such that $i < j$ implies $A_i \subset A_j$.  Also, rename the $g_{jl}$ so that $\phi(f_i) = g_i$.  Then, whenever $i < j$, $[f_i, f_j] \neq 1$ and $[f_i^{f_j}, f_i] = 1$ because $S$ is pure.  Since $\phi$ is an isomorphism, these relations hold when $f$ is replaced by $g$.  Row 3 of Table 2 implies $B_i \subset B_j$ or $B_i = B_j$.  Hence $\p{S}{M}$ is non-decreasing, so $\p{T}{U}$ is non-decreasing on elements not mapped to $\emptyset$.

Let $T = \{(A_1, f_1),(A_2, f_2), \dots ,(A_n, f_n)\}$ in order of containment of orbitals and $n \geq 3$. Assume the proposition holds for all $k < n$. Let $\phi(f_i) = g_i$.  Consider the subtower $T' = \{(A_1, f_1),(A_2, f_2), \dots ,(A_{n-1}, f_{n-1})\}$ and note $n-1 \geq 2$.  By the inductive hypotheses, there is an $n-1$ tower in $\phi(T')$.  Let $V$ be an arbitrary $(n-1)$-tower in $\phi(T')$ and note that each signature $g_i, 1 \leq i \leq n-1$ appears exactly once in $V$.  Also by the inductive hypotheses, the map $\p{T'}{V}$ is nondecreasing on elements not mapped to $\emptyset$.  If $g_n$ has an orbital that contains the largest orbital of $V$ properly, then $\phi(T)$ contains an $n$-tower.  If $g_n$ does not have an orbital which properly contains the largest orbital of $V$, we get a contradiction of relations as follows. Let $i<j<k$ and consider the relations $[f_{i}^{f_k},f_{j}] = 1$.  These relations also hold when $f$ is replaced by $g$.  In particular, observe that $[f_{n-1}^{f_n},f_{n-2}] = 1$.  Since no orbital of $g_n$ contains the largest orbital of $V$, we have $g_{n-1}^{g_n} = g_{n-1}$ on $\mathcal{O}(V)$.  Therefore, $[g_{n-1}^{g_n}, g_{n-2}] = [g_{n-1},g_{n-2}] \neq 1$ on $\mathcal{O}(V)$ which implies the negative relation also holds in the group $H$, a contradiction. Thus $g_n$ has an orbital $B_n$ which contains the largest orbital of $V$ properly and the map $\p{T}{V\cup\{(B_n,g_n)\}}$ is an isomorphism of ordered sets.

\end{proof}

We need our results to be a bit more general later.  We define a pair of elements $f,g \in$ PLo(I) to be {\bf untwisted} if for every pair of signed orbitals $(A,f), (B,g) \in \mathcal{SO}\{f,g\}$ such that $[f,g] \neq 1$ at some point in $A \cap B$, the same configuration holds on ALL other orbitals of $f,g$ on which there is a point $x$ such that $(x)[f,g] \neq 1$.  We define a poset  $P$ of signed orbitals to be {\bf untwisted} if for every pair of elements $(A,f), (B,g) \in P$, the pair $f,g$ is untwisted.  More specifically, suppose $(C, f)$, $(D, g)$ are other signed orbitals that intersect and $[f, g] \neq 1$ at a point in $C \cap D$.  If $B \subset A$, then $D \subset C$.   If $A \subset B$, then $C \subset D$. If $A = B(nc)$, then $C = D (nc)$.   

Thus a tower T is {\bf untwisted} if for every pair of signed orbitals $(A,f),(B,g) \in T$ with $(A,f) < (B,g)$, the same relationship holds for ANY orbitals of $f$ and $g$ on which they do not commute.    The purpose of untwisted towers is if $(A, f) < (B, g)$ in an untwisted tower $T$, then the relations $[f, g] \neq 1, [f^{g^n}, f] = 1$ for some $n \in \mathbb{Z}^+$ hold in $<f,g>$.

With this new terminology, we have a Scholium which generalizes Proposition \ref{puretower} to induced maps with an untwisted domain tower.

\begin{sch}\mylabel{OrderUntwist}
 Let $\phi: G \longrightarrow H$ be an injective homomorphism between solvable subgroups of PLo(I) and let $T$ be a untwisted $n$-tower in $G$.  
\begin{enumerate}
 \item There exists an $n$-tower in $\phi(T)$. 
\item Any induced map $\p{T}{U}$ is non-decreasing on elements not mapped to $\emptyset$.
\item There exists an induced map $\p{T}{S}$ which is an isomorphism of ordered sets.
\end{enumerate}
\end{sch}

\begin{proof}
Consider a maximal induced map $\p{T}{M}$ where $T$ is untwisted.  The proof follows in the same way as proposition \ref{puretower} with untwisted replacing pure, relations $[f_i^{f_j^n}, f_i]$ for some $n \in \mathbb{Z}^+$ replacing relations of the form $[f_i^{f_j},f_i]$ for $j > i$, and Row 3 of Table 2 replacing use of Row 3 of Table 2.  %Suppose $\p{T}{U}(A_i, f_i) = (B_i, g_i)$ for $1 \leq i \leq 2$.  Then $[g_1, g_2] \neq 1, [g_1^{g_2}, g_1] = 1$ because $\phi$ is a homomorphism and $\phi(f_i) = g_i$.  By row 3 of table 2, we have $B_1 \subset B_2$, $B_1 = B_2 (c)$, or $B_1 \cap B_2 = \emptyset$.  Since $U$ is a tower, and neither $(B_1,g_1)$ nor $(B_2,g_2)$ is mapped to $\emptyset$ by assumption, the third case is impossible.  Thus $\p{T}{U}$ is non-decreasing.
\end{proof}

\begin{lem}\mylabel{injective}
Let $\phi: G \longrightarrow H$ be an injective homomorphism of solvable subgroups and $T$ an untwisted tower in $G$.  If $S$ is any tower in $\phi(T)$, then $\p{T}{S}$ is injective except possibly on elements that map to $\emptyset$ or the minimum of $S$.
\end{lem}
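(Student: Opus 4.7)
\noindent The plan is to derive the injectivity from two elementary observations: the injectivity of $\phi$ itself, and the fact that a tower, viewed as a chain in $\mathcal{SO}(G)$, cannot contain two signed orbitals sharing a single signature. First I unpack what failure of the conclusion would mean. Suppose, toward a contradiction, that distinct elements $(B_1,g_1),(B_2,g_2) \in T$ have a common non-empty image $(C,h) \in S$ under $\p{T}{S}$. By the definition of the induced map this forces $h = \phi(g_1) = \phi(g_2)$.

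From here I split into two short cases. If $g_1 \neq g_2$, then injectivity of $\phi$ gives $\phi(g_1) \neq \phi(g_2)$, contradicting the equality just obtained. If instead $g_1 = g_2$, then $(B_1,g_1) \neq (B_2,g_2)$ forces $B_1 \neq B_2$, so $B_1$ and $B_2$ are two distinct orbitals of the single function $g_1$; orbitals of a fixed element of PLo(I) are pairwise disjoint, so $B_1 \cap B_2 = \emptyset$. On the other hand $T$ is a chain in $\mathcal{SO}(G)$, so one of $B_1, B_2$ must be contained in the other. Since both orbitals are nonempty, this nesting forces $B_1 \cap B_2 \neq \emptyset$, the desired contradiction.

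The main obstacle I would face is really just careful attention to the layered definitions of signed orbital, tower, and induced map; once those are unwound the argument is immediate. In particular the untwisted hypothesis on $T$ and the non-decreasing conclusion of Scholium~\ref{OrderUntwist} are not needed here, though they do enter the earlier results on existence and order-preservation of induced maps. The exception for the minimum of $S$ in the statement appears to be conservative phrasing, guarding against a degeneracy at the bottom of the chain that the chain condition on $\mathcal{SO}(G)$ has in fact already ruled out.
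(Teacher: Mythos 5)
Your argument breaks down at its very first step, where you claim that a common image $(C,h)\in S$ of $(B_1,g_1)$ and $(B_2,g_2)$ forces $h=\phi(g_1)=\phi(g_2)$. That reads the definition of $\p{T}{S}$ too literally (the phrasing there is admittedly loose): as the map is actually used throughout the paper, $(B,g)$ is sent to the element of $S$ whose \emph{orbital} is an orbital of the function $\phi(g)$, and two distinct elements $\phi(g_1)\neq\phi(g_2)$ of $H$ can perfectly well share an orbital $C$ --- sharing an orbital is far weaker than being equal in PLo(I). So a common image only tells you that $\phi(g_1)$ and $\phi(g_2)$ share the orbital $C$, and neither of your two cases applies. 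That such collapsing genuinely occurs --- and that the exception for the minimum of $S$ is not ``conservative phrasing'' --- is shown by the free, top and split maps of Corollary \ref{maptypes} and by the explicit example following the lemma, where $\p{T}{M'}$ sends all three generators of the tower in Figure \ref{w3} to the single element of $M'$. If your reading were correct, the lemma would assert full injectivity away from $\emptyset$ and those examples would be impossible; the fact that your argument proves strictly more than the statement claims should have been a warning sign.

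The actual proof has real content and uses both hypotheses you set aside. One takes the minimum $(B_1,g_1)$ of $S$, a preimage $(A_1,f_1)$ of it, and two alleged preimages $(A_i,f_i)<(A_j,f_j)$ of a non-minimal $(B_i,g_i)$; since $\ph$ is non-decreasing off $\emptyset$ (Scholium \ref{OrderUntwist}), these three form an untwisted $3$-tower, whence $[f_1^{f_j^n},f_i]=1$ for some $n$ by \ref{moveover} and \ref{basic conj orbitals}. Applying $\phi$, the image relation must hold in $H$; but $\phi(f_j)$ has orbital $B_i\supsetneq B_1$, so conjugating by $\phi(f_j)^n$ keeps the orbital of $\phi(f_1)$ properly inside $B_i$, producing a $2$-tower with $(B_i,\phi(f_i))$ and hence a nontrivial commutator by Lemma \ref{npc1} --- a contradiction. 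The minimality of $(B_1,g_1)$ is exactly what supplies the third, strictly smaller element needed to build that $3$-tower, which is why the minimum of $S$ must be excepted from the conclusion.
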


\begin{proof}
Suppose $T$ is untwisted.  Let $(B_1, g_1)$ be the minimal element of $S$.  Suppose toward a contradiction that some non-minimal element $(B_i, g_i) \in S$ has two distinct elements $(A_i, f_i) < (A_j, f_j) \in T$ mapped to it under $\ph$.  Since $S \subset \phi(T)$, there exists some $(A_1, f_1) \in T$ such that $\ph(A_1, f_1) = (B_1, g_1)$.  Since $\ph$ is nondecreasing on elements not mapped to $\emptyset$ and $(B_1, g_1) < (B_i, g_i)$, we have $(A_1, f_1) < (A_i, f_i)$ and $(A_1, f_1) < (A_j, f_j)$.  Since every subtower of an untwisted tower is untwisted, $\{(A_1, f_1), (A_i, f_i), (A_j, f_j)\}$ is a untwisted 3-tower.  By \ref{moveover} and \ref{basic conj orbitals}, there exists a $n \in \mathbb{N}$ such that $[f_1^{f_j^n}, f_i] = 1$.  Thus the relation $[g_1^{g_i^n}, g_i]=1$ holds in $\phi(G)$.  However, the signed orbitals of the elements $g_1^{g_i^n}$ and $g_i$ form a 2-tower $\{(B_1, g_1), (B_i, g_i)\}^{g_i^n}$ by \ref{basic conj towers}.  Thus, $[g_1^{g_i^n}, g_i] \neq 1$ by Lemma \ref{npc1}.  This is a contradiction.  Therefore a unique element of $T$ maps to each non-minimal element of $S$.
\end{proof}

To see examples of maps induced on towers which map multiple elements to $\emptyset$ and to the minimum of $S$, refer to the homomorphisms defined in Section \ref{representations} using Figures \ref{w3}, \ref{free}, and \ref{freecollapse}. Set $i = 1$. Let $T$ be the tower of generators in Figure \ref{w3} and let $M$ and $M'$ be the maximal towers on the right of Figures \ref{free} and \ref{freecollapse}, respectively.  Then $\p{T}{M}$ maps the bumps of $f_2$ and $f_3$ to $\emptyset$ and $\p{T}{M'}$ maps all three bumps of $f_1, f_2, f_3$ to the single bump in $M'$.

\begin{cor}\mylabel{strictly increasing}
Let $\phi: G \longrightarrow H$ be an injective homomorphism of solvable subgroups and $T$ a pure (more generally, untwisted) tower in $G$.  If $S$ is any tower in $\phi(T)$, then $\p{T}{S}$ is strictly increasing except possibly on elements that map to $\emptyset$ or the minimum of $S$.
\end{cor}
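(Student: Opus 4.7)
The plan is to observe that this corollary follows immediately by combining the two results stated just before it. Scholium \ref{OrderUntwist} guarantees that any induced map $\p{T}{S}$ is non-decreasing on the elements of $T$ not sent to $\emptyset$, and Lemma \ref{injective} guarantees that $\p{T}{S}$ is injective on the elements of $T$ not sent to either $\emptyset$ or the minimum of $S$. Restricting to the common subset on which both conclusions apply, the map is both non-decreasing and injective, and a non-decreasing injective map between totally ordered sets is strictly increasing.

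More concretely, I would proceed as follows. Let $T' \subseteq T$ denote the set of elements of $T$ not sent by $\p{T}{S}$ to $\emptyset$ nor to the minimum element of $S$. Pick any two distinct elements $(A_i, f_i) < (A_j, f_j)$ in $T'$. By Scholium \ref{OrderUntwist}, $\p{T}{S}(A_i, f_i) \leq \p{T}{S}(A_j, f_j)$ in $S$. By Lemma \ref{injective}, since both images lie above the minimum of $S$ and come from distinct elements of $T$, the two images must be distinct. Combining these, $\p{T}{S}(A_i, f_i) < \p{T}{S}(A_j, f_j)$, establishing that $\p{T}{S}$ is strictly increasing on $T'$.

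There is essentially no hard step here; the corollary is a purely formal consequence of the two preceding results, recorded for convenient reference. The only thing worth noting is that it is harmless to allow the exceptional sets (preimages of $\emptyset$ and of the minimum of $S$) to coincide or overlap in the statement, since the conclusion only asserts strict increase on the complement of their union. All the real content lives in the monotonicity from Scholium \ref{OrderUntwist} and the near-injectivity from Lemma \ref{injective}, both of which were established using the relations $[f_i^{f_j^n}, f_i] = 1$ forced by the untwisted structure of $T$.
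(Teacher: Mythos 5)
Your proposal is correct and is exactly the intended argument: the paper omits a proof for this corollary precisely because it is the formal combination of Scholium \ref{OrderUntwist} (non-decreasing off $\emptyset$) and Lemma \ref{injective} (injective off $\emptyset$ and the minimum of $S$), which together give strict increase on the complement of the exceptional set. Nothing further is needed.
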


\begin{lem}\mylabel{untwisted image}
If $\phi: G \longrightarrow H$ is an injective homomorphism between solvable subgroups generated by pure towers $T$ and $S$, respectively.  Let $U$ be any untwisted tower in $G$, then $\phi(U)$ is untwisted.
\end{lem}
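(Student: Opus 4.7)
The plan is to verify the untwisted property of $\phi(U)$ one pair of signatures at a time. Given distinct signatures $\phi(u_1), \phi(u_2) \in \mathcal{S}(\phi(U))$, injectivity of $\phi$ gives distinct $u_1, u_2 \in \mathcal{S}(U)$, and since $U$ is a tower there are comparable signed orbitals $(A_1, u_1), (A_2, u_2) \in U$ with, say, $A_1 \subsetneq A_2$. The untwisted hypothesis on $U$ then pins down the canonical configuration of every noncommuting pair of orbitals of the pair $u_1, u_2$: it must be ``orbital of $u_1$ properly inside orbital of $u_2$.''

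To transport this configuration across $\phi$, I would identify two relations that hold in $G$. The first is $[u_1, u_2] \neq 1$, immediate from $A_1 \subsetneq A_2$ and Lemma \ref{npc1}. The second is $[u_1^{u_2^n}, u_1] = 1$ for some positive integer $n$. The untwisted hypothesis forces each orbital $A$ of $u_1$ into exactly one of three types relative to $u_2$: (I) disjoint from $\mathrm{supp}(u_2)$, (II) sharing an orbital with $u_2$ on which $u_1$ and $u_2$ commute, or (III) properly contained in some orbital $B$ of $u_2$. On types I and II, $u_2^n$ acts on the restriction of $u_1$ either by fixing it pointwise or by commutation, so $u_1^{u_2^n}$ agrees with $u_1$ on those orbitals. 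On type III, Lemma \ref{moveover} applied to $\langle u_2 \rangle$ acting on $B$ produces an $n_A$ large enough that $A u_2^{n_A}$ lies to the right of every other orbital of $u_1$ inside $B$; finiteness of the orbitals of $u_1$ (since $u_1 \in$ PLo(I)) lets me take $n$ as the maximum of finitely many such $n_A$. Decomposing $u_1$ and $u_1^{u_2^n}$ into their type-I, II, and III pieces, the type I and II pieces match while the type III piece of $u_1^{u_2^n}$ has support disjoint from $\mathrm{supp}(u_1)$; pairwise disjoint supports across pieces then yield $[u_1^{u_2^n}, u_1] = 1$.

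Both relations transfer via $\phi$ to $[\phi(u_1), \phi(u_2)] \neq 1$ and $[\phi(u_1)^{\phi(u_2)^n}, \phi(u_1)] = 1$ in the solvable, hence locally solvable, group $H$. Row 3 of Table \ref{nonpuretable} now applies to $\phi(u_1), \phi(u_2)$ and delivers: some orbital of $\phi(u_1)$ is properly inside an orbital of $\phi(u_2)$, no orbital of $\phi(u_2)$ is properly inside one of $\phi(u_1)$, and any shared orbital must be of the commuting type. Together these say that the only noncommuting configurations of pairs of orbitals of $\phi(u_1), \phi(u_2)$ are ``orbital of $\phi(u_1)$ properly inside orbital of $\phi(u_2)$,'' which is precisely the untwisted condition with the same canonical type. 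Since the pair was arbitrary, $\phi(U)$ is untwisted.

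The main obstacle is isolating $[u_1^{u_2^n}, u_1] = 1$ from the untwisted hypothesis: the three-type analysis together with finiteness of orbitals in PLo(I) is what makes this $n$ exist, while the rest of the argument is a direct appeal to Table \ref{nonpuretable}. The purity of $T$ and $S$ is used only to guarantee $G, H$ are solvable, so this strategy actually yields the sharper statement that an untwisted tower in any $G \in \mathcal{T}$ is mapped to an untwisted poset under any injective homomorphism into another group in $\mathcal{T}$.
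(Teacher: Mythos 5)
Your proposal is correct and follows essentially the same route as the paper: extract the relations $[u_1,u_2]\neq 1$ and $[u_1^{u_2^n},u_1]=1$ from the untwisted domain tower, push them through the injective homomorphism, and invoke Row 3 of Table \ref{nonpuretable} to force the image configuration. The only difference is that you supply a detailed three-type justification for the existence of the exponent $n$ with $[u_1^{u_2^n},u_1]=1$, a step the paper simply asserts as a stated consequence of the untwisted definition.
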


\begin{proof}
Supposed $U$ is an untwisted tower in $G$ and let $(A,f), (B,g)$ be elements of $U$ such that $(A, f) < (B, g)$ in $U$.  Then $[f^{g^n}, f] = 1$ for some $n \in \mathbb{Z}^+$.  Furthermore, since $f,g$ have orbitals forming a 2-tower, $[f,g] \neq 1$ by \ref{npc1}.   Since $\phi$ is an injective homomorphism, the relations $[\phi(f)^{\phi(g)^n}, \phi(f)] = 1$ and $[\phi(f), \phi(g)] \neq 1$ hold in $\phi(G)$.  By Row 3 of Table 2, given orbitals $C, D$ of $\phi(f), \phi(g)$, respectively, the only possible configurations are $C \subset D$, $C = D (c)$, and $C \cap D = \emptyset$.  The last 2 possibilities are ones in which $\phi(f)$ and $\phi(g)$ commute with respect to the orbitals, so the only noncommuting possibility is an orbital of $\phi(f)$ is properly contained in an orbital of $\phi(g)$.  Thus $\phi(T)$ is untwisted.
\end{proof}

\begin{lem}\mylabel{2towerexpand} (Expansion up of maps onto 2-towers)

Let $G, H$ be solvable subgroups of $PLo(I)$, $\phi:G \longrightarrow H$ an injective homomorphism, and $T$ be a pure tower of $G$.  If $S = \{(B_1,g_1), (B_2,g_2)\}$ is any 2-tower in $\phi(T)$, then the induced map from $\ph^{-1}(B_2, g_2) \cup \uparrow \ph^{-1}(B_2,g_2) \longrightarrow (B_2,g_2) \cup \uparrow S$  where $\uparrow$ is taken with respect to the posets $T$ and $\phi(T)$, respectively, is an isomorphism of ordered sets.
\end{lem}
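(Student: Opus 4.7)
The plan is to identify the unique preimage $(A^*, f^*) := \ph^{-1}(B_2, g_2)$ (single-valued by Lemma \ref{injective}, since $(B_2, g_2)$ is the maximum of $S$), set $U = \{(A^*, f^*)\} \cup \uparrow_T(A^*, f^*)$, and show that sending $(A^*, f^*) \mapsto (B_2, g_2)$ and, for each $(A, f) \in \uparrow_T(A^*, f^*)$, $(A, f) \mapsto (D_f, \phi(f))$, where $D_f$ is the unique orbital of $\phi(f)$ properly containing $B_2$, is an order isomorphism onto $(B_2, g_2) \cup \uparrow S$ that coincides with the induced map. Along the way the argument will establish that the codomain is itself a tower in $\phi(T)$.

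The main obstacle is the existence of $D_f$. Uniqueness is trivial since orbitals of a single function are disjoint. For existence, let $(A^{**}, h^{**}) := \ph^{-1}(B_1, g_1)$; the non-decreasing property (Proposition \ref{puretower}(2)) together with injectivity of $\phi$ places $(A^{**}, h^{**})$ strictly below $(A^*, f^*)$ in $T$, so $\{(A^{**}, h^{**}), (A^*, f^*), (A, f)\}$ is a pure 3-tower. Its relation $[h^{**}, (f^*)^f] = 1$ carries over to $H$ as $[g_1, g_2^{\phi(f)}] = 1$ globally. Suppose no orbital of $\phi(f)$ properly contains $B_2$. Row 3(2) of Table \ref{nonpuretable} applied to $(f^*, f)$ rules out any orbital of $\phi(f)$ properly inside $B_2$, so by no transition chains each orbital of $\phi(f)$ is either disjoint from $B_2$ or equal to $B_2$. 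In the disjoint case $\phi(f)$ is trivial on $B_2$; in the equal case Row 3(3) gives $\langle g_2, \phi(f) \rangle_{B_2} \cong \mathbb{Z}$, so on $B_2$ both $g_2$ and $\phi(f)$ are powers of a common element. Either way a direct computation shows $g_2^{\phi(f)} = g_2$ on $B_2$, so $[g_1, g_2^{\phi(f)}] = [g_1, g_2]$ there. But $[g_1, g_2] \neq 1$ on $B_2$ by Lemma \ref{npc1} (since $B_1 \subsetneq B_2$), contradicting the global relation.

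Order preservation is analogous. For $(A, f) < (A', f')$ above $(A^*, f^*)$, the orbitals $D_f$ and $D_{f'}$ both contain $B_2$ and hence are comparable. The case $D_{f'} \subsetneq D_f$ is forbidden by Row 3(2) of Table \ref{nonpuretable} applied to the pure 2-tower $\{(A, f), (A', f')\}$, and the case $D_f = D_{f'}$ is ruled out using the pure 3-tower $\{(A^*, f^*), (A, f), (A', f')\}$: the relation $[f^*, f^{f'}] = 1$ forces $[g_2, \phi(f)^{\phi(f')}] = 1$ globally, but under $D_f = D_{f'}$ we have $\phi(f)^{\phi(f')} = \phi(f)$ on $D_f$, while Lemma \ref{npc1} locates a point in $D_f$ where $[g_2, \phi(f)]$ is nontrivial, a contradiction. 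Surjectivity is dual: any $(C, \phi(e)) \in \uparrow S$ with $B_2 \subsetneq C$ has preimage element $(A_e, e) \in T$ which cannot lie strictly below $(A^*, f^*)$ (Row 3(2) applied to $\{(A_e, e), (A^*, f^*)\}$ would forbid the orbital $B_2$ of $g_2$ from sitting properly inside an orbital of $\phi(e)$) nor equal $(A^*, f^*)$ (orbitals of $g_2$ are pairwise disjoint), so $(A_e, e) \in \uparrow_T(A^*, f^*)$ and $C = D_e$ by uniqueness of $D_e$. The resulting bijective, order-preserving correspondence between chains realizes the induced map as an isomorphism of ordered sets; the hardest step is eliminating the equal-orbital configuration $D_f = D_{f'}$, which couples the pure 3-tower relations with the fine-grained dichotomy in Row 3 of Table \ref{nonpuretable}.
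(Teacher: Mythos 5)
Your proposal is correct and takes essentially the same route as the paper: the crux in both is to form the pure $3$-tower from (pre)images of $B_1$, $B_2$ and an element above $\ph^{-1}(B_2,g_2)$, push a wreath-product commutator relation through $\phi$, and contradict it via Lemma \ref{npc1} on $B_2$ if $\phi(f)$ had no orbital containing $B_2$. The only cosmetic differences are that you conjugate $g_2$ by $\phi(f)$ where the paper conjugates $g_1$, and you re-derive the injectivity and monotonicity that the paper simply imports from Lemma \ref{injective} and Corollary \ref{strictly increasing}.
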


\begin{proof}
Note the domain $\ph^{-1}(B_2, g_2) \cup \uparrow \ph^{-1}(B_2,g_2)$ is pure because it's a subtower of $T$.  By Lemma \ref{injective}, $\ph: (B_2, g_2) \cup \uparrow \ph^{-1}(B_2,g_2) \longrightarrow (B_2,g_2) \cup \uparrow S$ is injective on elements not mapped to $\emptyset$.  By Corollary \ref{strictly increasing}, it is strictly increasing on elements not mapped to $\emptyset$.    We still need to show the map does not send any elements of the domain to $\emptyset$, i.e., the map sends each element of $\ph^{-1}(B_2,g_2) \uparrow \ph^{-1}(B_2,g_2)$ to an element of $(B_2,g_2) \cup \uparrow S$.  Surjectivity follows easily.

Since it is the inverse image of $(B_2,g_2)$, the element $\ph^{-1}(B_2,g_2)$ does not map to $\emptyset$. Let $(A_i, f_i) \in \uparrow \ph^{-1}(B_2,g_2)$.  If $\ph(A_i, f_i) \in S$, we are done, so assume it is not.  Then there exist distinct signed orbitals $(A_1, f_1), (A_2, f_2) \in T$ which map to $S$, say, $\ph(A_i, f_i) = (B_i, g_i)$ for $1 \leq i \leq 2$.  Since $\ph$ is strictly increasing on elements not mapped to $\emptyset$, $(A_1, f_1) < (A_2, f_2)$.  Furthermore, since $(A_i, f_i) \in \uparrow \ph^{-1}(B_2,g_2)$, we have that $(A_i, f_i) > (A_2, f_2)$.  Thus $\{(A_1, f_1), (A_2, f_2), (A_i, f_i)\}$ is a pure 3-tower, so Corollary \ref{wreathrelns} shows $[f_1^{f_i}, f_2] = 1$.  Let $\phi(f_i) = g_i$.  If the orbital $B_2$ of $g_2$ is not contained in some orbital of $g_i$, then $g_1^{g_i} = g_1$ on $B_2$.  Therefore, $\phi[f_1^{f_i}, f_2] = [g_1^{g_i}, g_2] = [g_1, g_2] \neq 1$ by Lemma \ref{npc1}, a contradiction.  Thus there exists some orbital $B_i$ of $g_i$ containing $B_2$,  so $\ph(A_i, f_i) = (B_i, g_i)$.  Note also that $(B_i, g_i) \in (B_2,g_2) \cup \uparrow S$.  Thus $\ph$ is defined on all of $\uparrow \ph^{-1}(B_2,g_2)$.  
\end{proof}

\begin{cor}\mylabel{maptypes}Let $\phi: G \longrightarrow H$ be an injective homomorphism such that $G$ is solvable and generated by a pure tower.  Let $T = \{(A_1,f_1), \cdots, (A_n, f_n)\}$ be a pure nonempty tower in $G$ and let $S=\{(B_1,g_1), \cdots, (B_m, g_m)\}$ be a maximal nonempty tower in $\phi(T)$.  The induced map $\p{T}{S}$ falls into one of four categories.  The categories are mutually exclusive with the exception of 1 and 4 coinciding when $n=1$.
\begin{enumerate}
 \item Free---some nonempty subset $F = \{(A_{i_1},f_{i_1}), \cdots (A_{i_k},f_{i_k})\}$ of $T$ maps to a single signed orbital. That is, $m = 1$, $\ph(A_{i_j}, f_{i_j}) = (B_1, g_1)$ for all $1 \leq j \leq k$ and $\hat{\phi}(A_i, f_i) = \emptyset$ for all $(A_i, f_i) \in T - F$.
\item Top---some bottom portion of $T$ is mapped to $\emptyset$, some number that follow are mapped to $(B_1, g_1)$, and the rest are mapped isomorphically onto $S-(B_1,g_1)$.  That is, there exists $k, l$ with $1 < l < k < n$ such that  $\hat{\phi}(A_j,f_j) = \emptyset$ for $j \leq l$, $\ph(A_j, f_j) = (B_1, g_1)$ for $l \leq j \leq k$, and $\hat{\phi}(A_{k+i},f_{k+i}) = (B_i,f_i)$ for $2 \leq i \leq n-k = m$. 
\item Split---a combination of a free map and a top map.  In other words, there exist $j,k, l$ with $1 \leq j < k < l 
\leq n-1$ such that  $\ph(A_i,f_i) = \emptyset$ for $i < j$, $\ph(A_i,f_i) = (B_1,g_1)$ for $j \leq i < k$, $\ph(A_i,f_i) = \emptyset$ for $k \leq i < l$, and $\ph(A_{l+i-1},f_{l+i-1}) = (B_i,f_i)$ for $2 \leq i \leq m$.
\item Full---the tower $T$ itself maps order isomorphically onto $S$.  That is $n=m$ and $\ph(A_i,f_i) = (B_i,g_i)$ for $1 \leq i \leq n$.
\end{enumerate}
\end{cor}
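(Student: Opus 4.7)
The plan is to show the induced map $\ph := \p{T}{S}$ is completely determined by a small amount of combinatorial data, then read off the four categories. I exploit three facts: non-decreasingness of $\ph$ on the non-$\emptyset$ part of $T$ (Proposition~\ref{puretower}(2)); near-injectivity from Lemma~\ref{injective}; and the rigidity of the upper part of the tower provided by Lemma~\ref{2towerexpand}.

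First I would show that each $(B_i, g_i) \in S$ has a unique preimage $(A_{k_i}, f_{k_i}) \in T$ under $\ph$. Existence follows from $S \subseteq \phi(T)$; uniqueness follows because the signatures $f_1, \ldots, f_n$ of the pure tower $T$ are pairwise distinct (each one-bump function has a single orbital), combined with the injectivity of $\phi$. Proposition~\ref{puretower}(2) then forces $k_1 < k_2 < \cdots < k_m$. If $m = 1$, this is immediately the Free case: $(A_{k_1}, f_{k_1})$ maps to $(B_1, g_1)$ and all other elements of $T$ map to $\emptyset$.

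For $m \geq 2$, apply Lemma~\ref{2towerexpand} to the 2-subtower $\{(B_1, g_1), (B_2, g_2)\} \subseteq \phi(T)$: it produces an order isomorphism from $\{(A_j, f_j) : j \geq k_2\}$ onto $(B_2, g_2) \cup \uparrow \{(B_1, g_1), (B_2, g_2)\}$ inside $\phi(T)$. The latter set is therefore a chain containing $(B_2, g_2)$, and every element of $S$ at or above $(B_2, g_2)$ lies in this chain. Consequently every element of $(B_2, g_2) \cup \uparrow \{(B_1, g_1), (B_2, g_2)\}$ is comparable with every element of $S$, and maximality of $S$ in $\phi(T)$ then forces this set to be exactly $\{(B_2, g_2), \ldots, (B_m, g_m)\}$. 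Counting yields $k_i = n - m + i$ for $i \geq 2$ (in particular $k_m = n$), so the top $m - 1$ elements of $T$ match isomorphically with $\{(B_2, g_2), \ldots, (B_m, g_m)\}$.

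For indices $j < k_2$, the same uniqueness argument leaves only two possibilities for $(A_j, f_j)$: map to $(B_1, g_1)$ (possible only when $j = k_1$) or to $\emptyset$. The four categories then follow from the value of $k_1 \in \{1, \ldots, n - m + 1\}$: $n = m$ forces $k_1 = 1$ and gives Full; $n > m \geq 2$ with $k_1 = n - m + 1$ (preimage of $(B_1, g_1)$ adjacent to that of $(B_2, g_2)$) gives Top; $n > m \geq 2$ with $k_1 < n - m + 1$ (a middle gap of $\emptyset$-mappings) gives Split; and $m = 1$ gives Free. Full and Free coincide exactly when $n = m = 1$, and the remaining categories are mutually exclusive by their defining conditions on $n, m, k_1$. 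The main subtlety is the maximality argument in the third paragraph, which is what prevents elements of $T$ above $(A_{k_2}, f_{k_2})$ from escaping into parts of $\phi(T)$ outside $S$.
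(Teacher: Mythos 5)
There is a genuine gap, and it sits exactly at your first step. You claim every $(B_i,g_i)\in S$ has a \emph{unique} preimage in $T$, arguing that the signatures of $T$ are pairwise distinct and $\phi$ is injective. But the induced map $\p{T}{S}$ does not send $(A_i,f_i)$ to an element of $S$ by matching the \emph{function} $\phi(f_i)$ with a signature of $S$; it sends $(A_i,f_i)$ to the slot of $S$ whose orbital is an orbital of the (generally multi-bump) image $\phi(f_i)$. Two distinct images $\phi(f_i)\neq\phi(f_j)$ can both have $B_1$ as an orbital, and then both $(A_i,f_i)$ and $(A_j,f_j)$ land on $(B_1,g_1)$. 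This is not a pathology to be ruled out --- it is the whole content of the Free case with $k\geq 2$ and of the blocks $l\leq j\leq k$ (resp.\ $j\leq i<k$) in the Top (resp.\ Split) cases, and it is realized concretely by the map $\p{T}{M'}$ built from Figure \ref{freecollapse}, which sends all three generators to the single bump of $M'$. Lemma \ref{injective}, which you cite, is deliberately stated as ``injective except possibly on elements that map to $\emptyset$ or the \emph{minimum} of $S$''; your argument erases that exception. As a result your final classification (Top iff $k_1=n-m+1$, Split iff $k_1<n-m+1$, with $(B_1,g_1)$ always having a one-element preimage) describes a strictly smaller family of maps than the statement asserts, so the proof does not establish the corollary as written.

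The rest of your argument is sound and close to the paper's: the paper likewise picks one preimage of $(B_1,g_1)$ and one of $(B_2,g_2)$, invokes Lemma \ref{2towerexpand} together with maximality of $S$ to force the elements of $T$ above the preimage of $(B_2,g_2)$ to map order-isomorphically onto $S-\{(B_1,g_1)\}$, and then reads off the cases. The fix is only in the bottom segment: for indices below $k_2$, the possibilities are $\emptyset$ or $(B_1,g_1)$, and monotonicity (Proposition \ref{puretower}) forces the indices mapping to $(B_1,g_1)$ to form a (possibly multi-element) block sitting above any indices mapping to $\emptyset$ below it; the four categories are then distinguished by whether that block is absent, adjacent to $k_2$, or separated from $k_2$ by a gap of $\emptyset$'s, together with whether $m=1$.
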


We may also say a subtower of $T$ or $S$ is {\bf free}, {\bf top}, {\bf split}, or {\bf full} to indicate that it is the domain or image of such a maximal induced map.

\begin{proof}
Existence of each of the 4 kinds of induced maps is a simple consequence of the representations introduced in Section \ref{representations}.

That these are the only possible induced maps is a consequence of Lemma \ref{2towerexpand}.  Some element $(A_i,f_i) \in T$ maps to $(B_1,g_1)$ under $\ph$ because $S \subset \phi(T)$. If $|S| = 1$, then $\ph$ is free.

If $|S| > 1$ and $n = m$, then since each element of $S$ is in $\phi(T)$ and a signature cannot appear twice in a tower, $\ph$ is a full map.  If $n > m$, then there exists some subset $\{(A_{i_1},f_{i_1}), \cdots (A_{i_k},f_{i_k})\}$ of $T$ which maps to $\emptyset$.  Since $|S| > 1$, there exists an $(A_j,f_j) \in T, j \neq i$ which $\ph$ maps to $(B_2,g_2) \in S$.   Since $\ph$ is increasing on elements not mapped to $\emptyset$, $j > i$.  By Lemma \ref{2towerexpand} the induced map from $\{(A_i,f_i), (A_j, f_j)\}$ onto $\{(B_1,g_1),(B_2,g_2)\}$ extends.  Thus $\{(A_j,f_j), (A_{j+1},f_{j+1}), \cdots (A_n,f_n)\}$ maps order isomorphically onto $S-\{(B_1,g_1)\}$ since $S$ is maximal.  If $j = i+1$, then $\ph$ is a top map.  If $j > i+1$, then $\ph$ is a split map.  The proof is complete after recalling the $n < m$ case is impossible by the definition of induced maps.
\end{proof}

\begin{cor}Let $\phi: G \longrightarrow H$ be an injective homomorphism of solvable subgroups and $T$ a pure tower in $G$.  A maximal pure tower $S$ in $\phi(T)$ is of the same height as $T$ and order isomorphic to $T$ with order isomorphism induced by $\phi$ if and only if the bottom two bumps in $S$ form a two tower order isomorphic under $\phi$ to the bottom two bumps in $T$.
\end{cor}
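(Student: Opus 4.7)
The plan is to obtain this as a direct case-analysis consequence of the classification of maximal induced maps given in Corollary \ref{maptypes}. The forward (``only if'') direction is immediate: if $\hat{\phi}_{(T,S)}:T\to S$ is an order isomorphism of the full towers, then its restriction to the two smallest elements $(A_1,f_1)<(A_2,f_2)$ of $T$ is an order isomorphism onto the two smallest elements $(B_1,g_1)<(B_2,g_2)$ of $S$, which is exactly the statement that the bottom two bumps form a $\phi$-isomorphic $2$-tower.

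For the ``if'' direction, I would assume that $\hat{\phi}_{(T,S)}$ restricts to an order isomorphism on the bottom two bumps, so that $\hat{\phi}(A_1,f_1)=(B_1,g_1)$ and $\hat{\phi}(A_2,f_2)=(B_2,g_2)$. Since $T$ is pure in the solvable group $G$ and $S$ is a maximal (necessarily pure by hypothesis) tower in $\phi(T)$, Corollary \ref{maptypes} tells me that $\hat{\phi}_{(T,S)}$ is of exactly one of the four types Free, Top, Split, Full (the Free/Full overlap case $n=1$ is not in play, since the hypothesis requires $T$ to have at least two bumps). I will rule out the first three.

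The Free case is ruled out immediately, since it demands $|S|=1$, whereas our hypothesis produces $(B_1,g_1)<(B_2,g_2)$ in $S$. For Top, the defining condition includes $\hat{\phi}(A_j,f_j)=\emptyset$ for $j\leq l$ with $l>1$, so in particular $\hat{\phi}(A_1,f_1)=\emptyset$, directly contradicting $\hat{\phi}(A_1,f_1)=(B_1,g_1)$. For Split, I split on the value of the index $j$: if $j\geq 2$ then $\hat{\phi}(A_1,f_1)=\emptyset$, a contradiction as before; if $j=1$, then every $i$ with $1\leq i<k$ satisfies $\hat{\phi}(A_i,f_i)=(B_1,g_1)$, and since $k\geq 2$ this forces $\hat{\phi}(A_2,f_2)\in\{(B_1,g_1),\emptyset\}$, contradicting $\hat{\phi}(A_2,f_2)=(B_2,g_2)$.

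Having eliminated the three alternatives, the induced map must be Full, which is precisely the statement that $\hat{\phi}_{(T,S)}$ is an order isomorphism with $|T|=|S|$. The only real work is the careful bookkeeping against the precise indexing conventions in Corollary \ref{maptypes}; there is no new analytic or algebraic input beyond the classification already established, so I do not anticipate any genuine obstacle — the proof is essentially a verification that the hypothesis pins down the ``Full'' branch of the four-way dichotomy.
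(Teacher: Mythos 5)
Your argument is correct: the paper states this corollary without proof, placing it immediately after Corollary \ref{maptypes}, and your case elimination (Free fails since $|S|\geq 2$, Top and Split each force $\hat\phi(A_1,f_1)=\emptyset$ or $\hat\phi(A_2,f_2)\in\{(B_1,g_1),\emptyset\}$, leaving Full) is precisely the intended deduction from that classification. No gap.
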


\begin{lem}\mylabel{controllers}
Assume $G$ and $H$ are one-orbital solvable subgroups of PLo(I) with orbitals $A$ and $B$, respectively.  Let $c$ be a controller for $G$ and $\phi: G \longrightarrow H$ an isomorphism.  Then $\phi(c)$ is a controller for $H$.
\end{lem}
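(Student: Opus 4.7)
The plan is to verify that $\phi(c)$ satisfies both defining properties of a controller for $H$: (a) $\phi(c)$ is a one-bump function with orbital $B$, and (b) $H = <\phi(c), \ker \varphi_B>$.

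For (a), I would let $c' \in H$ be a controller of $H$ (obtained from Lemma \ref{control}), which has orbital $B$, and argue by contradiction that no orbital $D$ of $\phi(c)$ can satisfy $D \subsetneq B$. Suppose such a $D$ exists. Then $(D,\phi(c)),(B,c')$ is a $2$-tower in $H$, so Lemma \ref{npc1} gives $[\phi(c),c'] \neq 1$. Since $c'$ is one-bump with orbital $B$ and no orbital of $\phi(c)$ shares an end with $B$ (by Lemma \ref{end}), the finitely many orbitals of $\phi(c)$ lie in a compact sub-interval of $B$; Lemma \ref{moveover} together with Lemma \ref{basic conj orbitals} then produces an $n \neq 0$ for which the orbitals of $\phi(c)^{c'^n}$ are all disjoint from those of $\phi(c)$, yielding $[\phi(c)^{c'^n}, \phi(c)] = 1$. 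Pulling both relations back across the isomorphism gives $[c, \phi^{-1}(c')] \neq 1$ and $[c^{\phi^{-1}(c')^n}, c] = 1$ in $G$, so Lemma \ref{relns} (Row~3 of Table \ref{nonpuretable}) forces some orbital of $c$ to be properly contained in some orbital of $\phi^{-1}(c')$. But $c$ has only the orbital $A$ and every orbital of $\phi^{-1}(c')$ sits in $Supp(G)=A$, forcing $A \subsetneq A$---a contradiction. Hence every orbital of $\phi(c)$ equals $B$, and since orbitals of a single function are disjoint, $\phi(c)$ has the single orbital $B$. I note this argument only used that $c$ is a one-bump function with orbital equal to $G$'s group orbital, so the same argument shows that any such element is sent by $\phi$ to a one-bump function with orbital $B$; I will call this observation \emph{Step B}.

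For (b), applying $\phi$ to $G = <c, \ker \varphi_A>$ gives $H = <\phi(c), \phi(\ker \varphi_A)>$, so it suffices to show $\phi(\ker \varphi_A) \subseteq \ker \varphi_B$. The essential observation is the characterization: for $g \in G$, we have $g \in \ker \varphi_A$ if and only if $A$ is not an orbital of $g$. Indeed, if $A$ is an orbital of $g$, then by the result cited immediately before Lemma \ref{control}, $g$ is a nontrivial power of the controller germ near both endpoints of $A$, so the leading and trailing slopes of $g$ on $A$ both differ from $1$ and $\varphi_A(g) \neq 0$; conversely, if $A$ is not an orbital of $g$, every orbital of $g$ is strictly contained in $A$ and $g$ is the identity near both endpoints of $A$, giving $\varphi_A(g) = 0$. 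The analogous statement holds for $H$ and $B$. Now suppose $k \in \ker \varphi_A$ but $\phi(k) \notin \ker \varphi_B$; then $B$ is an orbital of $\phi(k)$, and since orbitals of a single function are disjoint and all orbitals of $\phi(k)$ lie in $B$, $\phi(k)$ is a one-bump function with orbital $B$. Applying \emph{Step B} to $\phi^{-1}: H \to G$ (with $\phi(k)$ playing the role of ``$c$'' and $c$ playing the role of ``$c'$'') forces $k = \phi^{-1}(\phi(k))$ to be a one-bump function with orbital $A$, contradicting $k \in \ker \varphi_A$. Hence $\phi(\ker \varphi_A) \subseteq \ker \varphi_B$, so $H = <\phi(c), \ker \varphi_B>$.

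The main obstacle I anticipate is verifying the two algebraic relations $[\phi(c), c'] \neq 1$ and $[\phi(c)^{c'^n}, \phi(c)] = 1$ both hold in step (a) when $\phi(c)$ may a priori have several orbitals, and then symmetrically reusing the argument with $\phi^{-1}$ in step (b). The geometric-to-algebraic characterization of $\ker \varphi_A$ as $\{g \in G : A \text{ is not an orbital of } g\}$ is the technical bridge that lets the abstract isomorphism interact with the geometrically defined kernels.
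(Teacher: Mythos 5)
Your proof is correct, and it takes a noticeably different route from the paper's, though both ultimately rest on the same engine: the pair of relations $[f,g]\neq 1$ and $[f^{g^n},f]=1$ forcing an orbital of $f$ properly inside an orbital of $g$ (Row 3 of Table \ref{nonpuretable}). The paper argues in the forward direction: it takes an arbitrary $h\in H$, writes its preimage in controller form $g=c^pk$ with $k\in\ker\varphi_A$, and pushes the relations $[k,c]\neq 1$, $[k^{c^n},k]=1$ through $\phi$ to conclude that the image $2$-towers built from orbitals of $\phi(k)$ and the orbital of $\phi(c)$ keep $\phi(c)$'s orbital on top, whence $\phi(c)$ has orbital $B$ and $\phi(k)\in\ker\varphi_B$. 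You instead pull relations back through $\phi^{-1}$: you first isolate the rigidity statement (your Step B) that an isomorphism of one-orbital solvable groups must send a one-bump element whose orbital is the full group orbital to a one-bump element with orbital $B$, using a controller of $H$ as the auxiliary conjugator, and then you reduce $\phi(\ker\varphi_A)\subseteq\ker\varphi_B$ to Step B applied to $\phi^{-1}$ via the characterization $\ker\varphi_A=\{g\in G \mid A \text{ is not an orbital of } g\}$. What your version buys is that it makes explicit two points the paper's proof leaves compressed --- why $\phi(c)$ cannot have any orbital properly inside $B$, and why membership in the log-slope kernel is equivalent to not having the group orbital as an element orbital --- and Step B is stated once and reused in both halves. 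One small stylistic remark: to see that an element with orbital $A$ has non-unit leading slope you do not need the controller-germ fact; a PL map fixing the left endpoint of $A$ with right derivative $1$ there would be the identity on its leading affine component, contradicting that $A$ lies in its support.
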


\begin{proof}
Let $c$ be a controller for $G$ on $A$  and recall that $c$ has orbital $A$, too.  If $c$ does not move points to the right, replace it with its inverse.   Recall that the controller-form of an element decomposes it into a controller part times an element in the kernel of a log slope homomorphism $\varphi$.  Let $h \in H$.  There is some element $g \in G$ such that $\phi(g) = h$.  Also, $g = c^pk$ for some $p \in \mathbb{Z}$ and some $k$ in the kernel of the log slope homomorphism on the ends of $A$.  Then, $\phi(g) = h = \phi(c)^p \phi(k)$. We need to show $\phi(c)$ has orbital $B$ and $\phi(k)$ is in the kernel of the log slope homomorphism on the ends of $B$. Let $D$ be the orbital of $k$ that is furthest left in $A$.  Every orbital of $k$ is contained in the orbital $A$ of $c$.  Thus any set $\{(-,k), (A,c)\}$ where ``-'' can be replaced by any orbital of $k$, is a 2-tower written in order.  Since $k$ has finitely many bumps, there exists an $n \in \mathbb{N}$ such that $D^{c^n}\cup\text{Supp}(k) = \emptyset$.  Therefore, $[k^{c^n}, k ] = 1$ and $[k, c] \neq 1$. Since the image must satisfy the same relations, by Table 2 Row 3 the image of each induced map from $\{(-,k), (A,c)\}$ is either a single bump or a 2-tower with the same order.  The image $<\phi(k),\phi(c)>$ has depth 2 because $<k,c>$ does.  Thus every 2-tower constructed in this way has $B$ as its largest orbital. Hence $\phi(k)$ is in the kernel of the log slope homomorphism on $B$, and $\phi(c)$ is indeed a controller.
\end{proof}

The next lemmas consider the cases that arise when we are working with elements that do not form a tower.

\begin{lem}\mylabel{nontower pure} Let $G$ and $H$ be subgroups of of PLo(I) with $G$ solvable. Let $(A,f), (B,g) \in G$ be pure. If $A = B (nc)$, $\phi: G \longrightarrow H$ is an injective homomorphism, and $\phi(f)$ and $\phi(g)$ are pure with orbitals $C$ and $D$ respectively, then $C = D (nc)$. 
 \end{lem}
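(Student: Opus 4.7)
The plan is to extract two algebraic relations from the hypotheses, transport them via $\phi$, and then read off the configuration of $C$ and $D$ from Table \ref{puretable} applied to the pure images. Since $A = B(nc)$ and both $f,g$ are pure, Row 1 of Table \ref{puretable} forces $[f,g] \neq 1$, hence $[\phi(f),\phi(g)] \neq 1$ in $H$. Then Row 2 restricts the configuration of the pure pair $(C,\phi(f)),(D,\phi(g))$ to exactly one of $C \subset D$, $D \subset C$, or $C = D(nc)$. It therefore suffices to rule out the two containment cases.

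To rule out $C \subset D$, I will establish the relation $[f^g, f] \neq 1$ in $<f,g>$. First observe that $f^g$ is pure and has orbital $A$: since $A$ is an orbital of $g$, we have $Ag = A$ setwise, and Lemma \ref{basic conj orbitals} gives that $f^g$ has orbital $Ag = A$. Moreover, Lemma \ref{conj slopes} shows $f^g$ has the same leading and trailing slopes on $A$ as $f$. Now suppose for contradiction that $[f^g,f] = 1$. Since $f$ and $f^g$ are pure and share the orbital $A$, Corollary \ref{purerow1} gives $<f, f^g> \cong \mathbb{Z}$, so both $f$ and $f^g$ are powers of a common generator whose leading slope $s$ at the left endpoint of $A$ is not $1$. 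Writing $f = h^a$ and $f^g = h^b$, the equality of leading slopes $s^a = s^b$ forces $a = b$, hence $f = f^g$, i.e.\ $[f,g] = 1$. This contradicts $A = B(nc)$, so $[f^g,f] \neq 1$ in $<f,g>$, and therefore $[\phi(f)^{\phi(g)}, \phi(f)] \neq 1$ in $H$. The contrapositive of Row 3 of Table \ref{puretable} (applied to the pure pair $\phi(f),\phi(g)$ which already satisfies $[\phi(f),\phi(g)] \neq 1$) now rules out the configuration $C \subset D$.

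By the symmetric argument with the roles of $f$ and $g$ exchanged, I obtain $[g^f, g] \neq 1$, hence $[\phi(g)^{\phi(f)}, \phi(g)] \neq 1$, which together with $[\phi(g),\phi(f)]\neq 1$ rules out $D \subset C$ by the same contrapositive. The only remaining configuration from Row 2 is $C = D(nc)$, which is the conclusion.

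I expect the cleanest part of the argument is transporting the relations through $\phi$ and applying the table; the subtle step is the slope-matching argument that shows $[f^g,f] = 1$ would collapse $f^g$ to $f$. The key point there is that $<f,f^g>$ being cyclic combined with identical initial slopes pins down the exponents uniquely, which is why the $(nc)$ hypothesis is visible algebraically and survives under $\phi$.
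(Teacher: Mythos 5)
Your proof is correct and follows essentially the same route as the paper's: establish $[f,g]\neq 1$ and $[f^g,f]\neq 1$ in $G$, transport them through the injective $\phi$, and read the configuration of $C$ and $D$ off Table \ref{puretable}. Two small remarks: your cyclic-subgroup/slope argument for $[f^g,f]\neq 1$ is an equivalent substitute for the paper's direct appeal to the ``equal near the ends but not throughout'' criterion, and you are in fact more explicit than the paper in supplying the symmetric relation $[g^f,g]\neq 1$ needed to exclude $D\subset C$; note only that the step excluding $C\subset D$ rests on the fundamental-domain fact that $C\subset D$ would force $[\phi(f)^{\phi(g)},\phi(f)]=1$ (Lemma \ref{fd} plus disjointness of supports), rather than literally on the contrapositive of Row 3, but that is exactly the fact your argument uses.
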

 
 \begin{proof}
Since $\phi$ is injective and $[f, g] \neq 1$, we have $[\phi(f), \phi(g)] \neq 1$.  By Table 1, $\phi(f)$ and $\phi(g)$ form a 2 tower or $C = D (nc)$. By assumption $[f,g] \neq 1$, and therefore $f^g \neq f$.  However, $f^g$ and $f$ are equal on the ends of $B$ but not throughout.  Thus, $[f^g, f] \neq 1$ by \ref{npc3}. Since $\phi$ is injective, it preserves this relation.  Thus $C$ is not properly contained in $D$ or vice versa.
\end{proof}
 
 \begin{lem}Let $G$ be a solvable subgroup of PLo(I) and $f, g \in G$.
 \begin{enumerate}[label={(\arabic*)},ref={\thecor~(\arabic*)}]
 \item\mylabel{z} Then, $[f,g^n] = 1$ if and only if $[f,g] = 1$.  
 \item\mylabel{nontower}  Suppose $\phi: G \longrightarrow H$ is an injective homomorphism, $A$ is an orbital of $f$, $B$ is an orbital of $g$, $A=B(nc)$, and $\mathcal{SO}\{\phi(f),\phi(g)\}$ is untwisted.  Then $C = D (nc)$ for all orbitals $C, D$ of $\phi(f), \phi(g)$ on which they do not commute.  Furthermore, there exists at least one such configuration.
 \end{enumerate}
 \end{lem}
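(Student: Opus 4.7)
The reverse implication of Part 1 is immediate, since $[f,g]=1$ forces $f$ to commute with every power of $g$. For the forward direction, the identity $(fgf^{-1})^n = fg^nf^{-1} = g^n$ (using $[f,g^n]=1$) exhibits $fgf^{-1}$ and $g$ as two $n$-th roots of the same element $g^n$. Solvable subgroups of $PLo(I)$ have unique $n$-th roots, a consequence of Bleak's structural classification: each finitely generated solvable subgroup embeds into a finite iterated wreath product $\mathbb{Z}\wr_k\mathbb{Z}$, and such wreath products satisfy unique roots by a short induction on the semidirect-product decomposition (the $\mathbb{Z}$-component of the $n$-th power forces the exponent, after which a telescoping-on-shifts argument in the finitely supported torsion-free abelian base forces the base part). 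Therefore $fgf^{-1}=g$, i.e.\ $[f,g]=1$.

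For Part 2, note first that $A=B(nc)$ forces $[f,g]\neq 1$, so injectivity of $\phi$ gives $[\phi(f),\phi(g)]\neq 1$, and Row 2 of Table \ref{nonpuretable} then produces at least one noncommuting orbital configuration for $\phi(f),\phi(g)$ of type $C\subset D$, $D\subset C$, or $C=D(nc)$. The untwisted hypothesis forces all noncommuting pairs to be of a single type, so I will rule out the first two types; this simultaneously yields both conclusions. Suppose for contradiction every noncommuting pair has the form $C\subset D$. Since there are only finitely many $\phi(g)$-orbitals $D_1,\ldots,D_r$ and finitely many $\phi(f)$-orbitals in each $D_\ell$, Lemma \ref{moveover} lets me choose $n\in\mathbb{Z}^+$ large enough that for every $\phi(g)$-orbital $D_\ell$ and every $\phi(f)$-orbital $C\subsetneq D_\ell$, the translate $C\phi(g)^n$ lands in a fundamental domain of $\phi(g)|_{D_\ell}$ strictly beyond every fundamental domain containing a $\phi(f)$-orbital.

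With this $n$ fixed, I verify that every pair of orbitals $(C_i,C_j\phi(g)^n)$ of $\phi(f)$ and $\phi(f)^{\phi(g)^n}$ is either disjoint or coincident with $\phi(f)^{\phi(g)^n}$ acting identically to $\phi(f)$ on the coincident orbital. The case analysis splits according to whether $C_i$ is disjoint from $\phi(g)$'s support (then $\phi(g)^n$ fixes $C_i$ pointwise and the pair reduces to two orbitals of $\phi(f)$, which are disjoint or equal since distinct orbitals of a single element are disjoint), whether $C_i$ coincides with some $D_\ell$ in the commuting $C=D(c)$ configuration (again $\phi(g)^n$ fixes $C_i$ setwise and conjugation acts as the identity by commutativity), or whether $C_i\subsetneq D_\ell$ (in which case the choice of $n$ places $C_j\phi(g)^n$ in a fundamental domain containing no $\phi(f)$-orbital, giving disjointness). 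Row 1 of Table \ref{nonpuretable} then yields $[\phi(f)^{\phi(g)^n},\phi(f)]=1$ in $H$; by injectivity of $\phi$, $[f^{g^n},f]=1$ in $G$. Combined with $[f,g]\neq 1$, Row 3 of Table \ref{nonpuretable} forces $<f,g>_A\cong\mathbb{Z}$, contradicting $A=B(nc)$. The symmetric argument (with the roles of $f$ and $g$ interchanged) rules out $D\subset C$, so every noncommuting configuration is of type $C=D(nc)$. The main obstacle is supplying a fully rigorous justification of unique roots for Part 1; once that is in hand, Part 2 reduces to the careful but mechanical orbital case analysis above.
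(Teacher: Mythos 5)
Your Part~2 is essentially the paper's argument: assume for contradiction that the noncommuting configurations are towers, use finiteness of the bump sets to manufacture an $n$ with $[\phi(f)^{\phi(g)^n},\phi(f)]=1$ globally in $H$ (untwistedness guaranteeing that nothing noncommuting happens outside the orbitals you control), pull the relation back through $\phi$, and contradict $A=B(nc)$ via Row~3 of Table~\ref{nonpuretable} (equivalently, $f^{g^n}$ and $f$ agree near the ends of $B$ but not throughout, so they cannot commute by Lemma \ref{npc2}). That part is sound.

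Part~1 is where there is a genuine gap. Your reduction to unique extraction of $n$-th roots is correct as far as it goes ($(fgf^{-1})^n=g^n$, so uniqueness of roots gives $fgf^{-1}=g$), but the justification you offer for unique roots does not hold up as stated. The claim that \emph{every} finitely generated solvable subgroup of PLo(I) embeds in some $\mathbb{Z}\wr_k\mathbb{Z}$ is not what Bleak's algebraic classification provides: his target groups are built from infinite direct sums at each stage, and extracting from that an embedding of an arbitrary finitely generated solvable subgroup into a finitely iterated wreath product $\mathbb{Z}\wr_k\mathbb{Z}$ is a nontrivial additional argument that you do not supply (your telescoping argument inside $\mathbb{Z}\wr_k\mathbb{Z}$ itself is fine, but it is the wrong group to be working in until that embedding is established). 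The clean ways to close the gap are either to invoke root-uniqueness for PLo(I) directly from Brin--Squier \cite{pcrc}, or to argue as the paper does, entirely orbital by orbital: since $[f,g^n]=1$, Row~1 of Table~\ref{nonpuretable} forces each orbital of $f$ to be disjoint from, or equal to, an orbital of $g^n$ (equivalently of $g$); on a shared orbital $A$ the group $\langle f,g^n\rangle_A$ is infinite cyclic, so by the controller structure $f$ and $g$ are both powers of a common one-bump element on $A$ and hence commute there, while on disjoint orbitals they commute trivially. This avoids unique roots altogether and is the route the paper takes.
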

 
 \begin{proof}
To prove (1), let $(A,f), (B,g)$ be signed orbitals in $G$ and assume $[f, g^n] = 1$.  By Table 2 Row 1, either $A \cap B = \emptyset$ or $A = B(c)$ (with respect to $f, g^n$).  Note that $g$ and $g^n$ have the same support.  If $A = B(c)$, then $[f, g^n] = 1$ implies $<f,g^n>_A \cong \mathbb{Z}$ by footnote * of Table 2.  Thus, $<f,g^n>_A = <c>$ for some $c \in <f,g^n>_A$.  Then, $<f,g>_A$ is a subgroup of $<\sqrt[n]{c}> \cong \mathbb{Z}$.  Since every subgroup of $\mathbb{Z}$ is isomorphic to $\mathbb{Z}$, we have shown that $<f,g>_A \cong \mathbb{Z}$ and thus $[f,g] = 1$ on $A$.  The same argument can be repeated on any pairs of orbitals of $f$ and $g$ which intersect, and $f$ and $g$ certainly commute where their supports are disjoint. Hence, $[f,g] = 1$ globally.
 
 It's obvious that $[f,g] = 1$ implies $[f, g^n] = 1$. 
 
To prove (2), note that since $A = B (nc)$, we have $[f,g] \neq 1$ by assumption.  Since $\phi$ is injective, $[\phi(f), \phi(g)] \neq 1$.  Consider Row 2 of Table 2 and possible non-commuting configurations.  We wish to show that there are no 2-towers in $P = \mathcal{SO}\{\phi(f), \phi(g)\}$.  Suppose $\{(C, \phi(f)), (D_1, \phi(g))\}$ forms a 2-tower in $\phi(G)$ and $C \subset D_1$.  The case where $D_1 \subset C$ is similar. Since $P$ is untwisted, the same relationship holds on all orbitals of $\phi(f), \phi(g)$ where they do not commute.  

There exists an $n_1 \in \mathbb{Z}^+$ such that $[\phi(f)^{\phi(g)^{n_1}}, \phi(f)] = 1$ on $D_1$ since $f$ has only finitely many bumps and thus there is some positive power that maps the support of $f$ in $D_1$ off itself.    For each orbital $D_1, D_2, \cdots D_k$ of $\phi(g)$ which properly contains an orbital of $\phi(f)$, let $n_1, n_2, \cdots, n_k$ be positive integers such that $\phi([f^{g^{n_i}}, f]) = 1$ on $D_i$ for $1 \leq i \leq k$. Let $n = n_1\cdots n_k$. Then $R:= \phi([f^{g^n}, f]) = 1$ on $D_i$ for $1 \leq i \leq k$.  Furthermore, because $P$ is untwisted $\phi([f, g]) = 1$ elsewhere and thus $R = 1$ globally in $H$.  By injectivity, $\phi^{-1}(R) = [f^{g^n},f] = 1$.  By assumption $[f,g] \neq 1$ on $B$, so by \ref{z}, $[f, g^n] \neq 1$ on $B$.  Thus, $f^{g^n} \neq f$.  However, $f^{g^n}$ has the same slopes as $f$ on the ends of $B$.  So $f^{g^n}$ and $f$ are equal on the ends of $B$ but not throughout.  Thus $[f^{g^n}, f] \neq 1$ on $B$ by \ref{npc3}, a contradiction.
 
 Since $\phi$ is injective and $A = B(nc)$, we have that $[\phi(f), \phi(g)] \neq 1$.  Thus, there exists a configuration on which the elements do not commute.  By the prior paragraph, there are no two towers, so there must exist orbitals $C, D$ of $\phi(f), \phi(g)$ such that $C = D(nc)$.
 \end{proof}

The following lemmas provide further geometric obstructions to isomorphisms.  The term ``level" will be used as defined for elements after Lemma \ref{normal form} and as defined for orbitals after Corollary \ref{levelorb}. \\

\begin{lem}\mylabel{level iso}
Let $\phi: G \longrightarrow H$ be an isomorphism between solvable groups where $G$ is generated by the signatures of the pure fundamental tower $T = \{(A_1, f_1), \cdots, (A_n, f_n)\}$ and $H$ is generated by the signatures of the pure  fundamental tower $S = \{(B_1,g_1), \cdots, (B_n, g_n)\}$.  For each $1 \leq i \leq n$ and for every bump $b$ of $\phi(f_i)$, level$_G(f_i)$ = level$_H(b)$  where the first level function is with respect to the generators of $G$ and the second is with respect to the generators of $H$. 
\end{lem}

\begin{proof}
By Proposition \ref{puretower}, there exists an induced map $\p{T}{M}$ which is an isomorphism of ordered sets.  Let $i \in \overline{n}$ and $(O_1, \phi(f_i)) = \p{T}{M}(A_i, f_i)$. Then there exists a bump $b_1$ of $\phi(f_i)$ with orbital $O_1$, and level($O_1$) = $i$ because $\p{T}{M}$ is an isomorphism of ordered sets.  By Lemma \ref{level3}, level$_H(b_1) = i$. If $f_i$ is a one-bump function, then we are done.  If not, we show that all bumps of $\phi(f_i)$ are at level $i$.  

Let $(O_2, \phi(f_i))$ be any other signed orbital of $\phi(f_i)$ and suppose $b_2$ is the bump of $\phi(f_i)$ with orbital $O_2$. Recall that every bump of a function in $H$ is also in $H$ thanks to \ref{allbumps}.  Suppose level($b_2$) = $j \neq i$. Assume $i < j$.  The proof for $j < i$ is similar.  By Lemma \ref{level3}, level($O_2$) = $j$.   The orbitals $O_1$ and $O_2$ are contained in maximal stacks $S_1$ and $S_2$ of $H$.  By \ref{max iso}, these stacks are conjugate, that is, there is a $c \in H$ such that $S_1c=S_2$.  Since $i < j$ and conjugation induces an order isomorphism on stacks, $O_1c \subset O_2$.  Because $H$ has no transition chains, $c$ has an orbital $C$ containing both $O_1$ and $O_2$.  Let $c' = c|_{C}$ and note that $O_1c' \subset O_2$.  Thus, $R:=[\phi(f_i)^{c'}, \phi(f_i)] \neq 1$ on $O_2$ due to the presence of the two-tower $U = \{(O_1c', \phi(f_i)^{c'}), (O_2, \phi(f_i))\}$.  Since $\phi$ is an isomorphism, $\phi^{-1}(R) = [f_i^{\phi^{-1}(c')}, f_i] \neq 1$ in $G$.  

Since conjugation induces an order isomorphism of stacks, the level of $A_i$ equals the level of $A_i\phi^{-1}(c')$.  Thus $A_i$ and $A_i\phi^{-1}(c')$ do not form a stack.  Since $G$ has no transitions chains, the only remaining option is that $f_i^{\phi^{-1}(c')}$ and $f_i$ share the orbital $A_i$ because they are pure and do not commute. The tower $U$ is untwisted because $c'$ is a one-bump function.  Thus $\phi^{-1}(U)$ is untwisted by Lemma \ref{untwisted image}.  By \ref{OrderUntwist}, there exists a $2$-tower in $\phi^{-1}(U)$, and we have reached a contradiction.  Therefore, $i = j$.

\end{proof}

\begin{cor}\mylabel{level iso cor}
Let $\phi: G \longrightarrow H$ be an isomorphism between solvable groups where $G$ is generated by the signatures of the pure fundamental tower $T = \{(A_1, f_1), \cdots, (A_n, f_n)\}$, and $H$ is generated by the signatures of the pure  fundamental tower $S = \{(B_1,g_1), \cdots, (B_n, g_n)\}$. Then 
 \begin{enumerate}[label={(\arabic*)},ref={\thecor~(\arabic*)}]
 \item level$_G(f_i)$ = level$_H(\phi(f_i))$. 
 \item level$_H(f_i)$ = level$_G(f_i^c)$ = level$_H(\phi(f_i^c))$ for all $c \in G$.  Furthermore, all bumps of $\phi(f_i^c)$ have the same level in $H$.
 \end{enumerate}
\end{cor}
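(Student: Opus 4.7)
The plan is to read this corollary as an immediate packaging of Lemma \ref{level iso} together with the basic level/conjugation facts already established in \ref{level2}, \ref{level4}, \ref{basic conj orbitals}, and \ref{basic conj towers}. Nothing new needs to be proved about the geometry of orbitals; the work was already done in Lemma \ref{level iso}, which says that every bump of $\phi(f_i)$ has level $i$ in $H$.

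For part (1), I would argue as follows. By Lemma \ref{level4}, $\mathrm{level}_H(\phi(f_i))$ equals the maximum of $\mathrm{level}_H(O)$ as $O$ ranges over the orbitals of $\phi(f_i)$. By Lemma \ref{level3}, for each bump $b$ of $\phi(f_i)$ with orbital $O$ we have $\mathrm{level}_H(b) = \mathrm{level}_H(O)$. Lemma \ref{level iso} then forces each such number to equal $\mathrm{level}_G(f_i) = i$, so the maximum is $i$ and $\mathrm{level}_H(\phi(f_i)) = \mathrm{level}_G(f_i)$.

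For part (2), I first read the statement as asserting the common value $i$ for $\mathrm{level}_G(f_i)$, $\mathrm{level}_G(f_i^c)$ and $\mathrm{level}_H(\phi(f_i^c))$ (treating the ``$\mathrm{level}_H(f_i)$'' as $\mathrm{level}_G(f_i)$; $f_i$ lives in $G$). The equality $\mathrm{level}_G(f_i^c) = \mathrm{level}_G(f_i)$ is immediate from \ref{level2}: by \ref{basic conj orbitals} the orbitals of $f_i^c$ are $\{A_j c\}$ where $\{A_j\}$ are the orbitals of $f_i$, and \ref{level2} gives $\mathrm{level}_G(A_j c) = \mathrm{level}_G(A_j) = i$ for every $j$; then \ref{level4} yields $\mathrm{level}_G(f_i^c) = i$. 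For the final equality, note $\phi(f_i^c) = \phi(f_i)^{\phi(c)}$, whose orbitals are $\{O \cdot \phi(c) \mid O \text{ an orbital of } \phi(f_i)\}$ by \ref{basic conj orbitals}. By Lemma \ref{level iso} (and \ref{level3}) every such $O$ has $\mathrm{level}_H(O) = i$, and by \ref{level2} applied in $H$ each $O\cdot \phi(c)$ also has level $i$. So all bumps of $\phi(f_i^c)$ have level $i$, which proves the ``furthermore'' clause, and by \ref{level4} $\mathrm{level}_H(\phi(f_i^c)) = i$.

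Since the only nontrivial input is Lemma \ref{level iso}, there is no real obstacle; the main care required is distinguishing the four level functions in play (on $G$, on $H$, on elements, on orbitals) and keeping track that Lemma \ref{level iso} controls \emph{every} bump, not just one, so that maxima over bumps collapse to $i$. The argument therefore amounts to stringing together \ref{level iso}, \ref{level2}, \ref{level3}, \ref{level4}, and \ref{basic conj orbitals} in that order.
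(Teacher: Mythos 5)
Your proof is correct and follows essentially the same route as the paper, which disposes of part (1) via the definition of level (i.e.\ Lemma \ref{level4} combined with Lemma \ref{level iso}) and part (2) via the fact that conjugation induces an order isomorphism on maximal towers; you have simply unpacked those two sentences into the explicit chain \ref{level iso}, \ref{level2}, \ref{level3}, \ref{level4}, \ref{basic conj orbitals}. Your reading of ``level$_H(f_i)$'' as the typo it is, and your observation that Lemma \ref{level iso} controls \emph{every} bump so the maximum in \ref{level4} collapses, are both the right points of care.
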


\begin{proof}
The first point follows because of the definition of level. The second follows because conjugation induces an order isomorphism on maximal towers.
\end{proof}

\subsection{Inj-isomorphisms and Direct Systems}

In this section, we complete the proof of Theorem \ref{main}.  That is, we show that if $C$ and $D$ are two countable ordered sets and $W_C$ and $W_D$ are groups as constructed in \ref{construction}, then $W_C \cong W_D$ if and only if $C$ and $D$ are order isomorphic.

 We think of each of the groups and their corresponding ordered sets as direct limits.  To this end, let $C$ be a countable ordered set and $W_C$ be the associated group constructed in \ref{construction}.  Number the elements of $C$ so $C = \{x_1, x_2, \cdots\}$.  Let $g_{x_i}$ be the element of $W_C$ associated to $x_i$, define $X_i = \{x_1, x_2, \cdots, x_i\}$, and define $G_i = <g_{x_1}, g_{x_2}, \cdots, g_{x_i}>$ for each $i \in \mathbb{Z}^+$.  Then, $C$ is the direct limit of the direct sequence $X_1 \longrightarrow X_2 \longrightarrow \cdots$ where the bonding maps are inclusions.  Similarly, $W_C$ is the direct limit of the direct sequence $G_1 \longrightarrow G_2 \longrightarrow \cdots$ where the bonding homomorphisms are also inclusions.  Note that in general the order of these subscripts will not reflect the total order on $C$.  

We will consider an isomorphism between 2 groups represented as direct limits, so we establish more notation and diagrams.  Let $D$ be a countable ordered set and $W_D$ be the associated group.  Number the elements of $D$ so $D = \{y_1, y_2, \cdots\}$.  Let $h_{y_i}$ be the element of $W_D$ associated to $y_i$, define $Y_i = \{y_1, y_2, \cdots, y_i\}$, and define $H_i = <h_{y_1}, h_{y_2}, \cdots, h_{y_n}>$ for each $i \in \mathbb{Z}^+$.  Then, $D$ is the direct limit of the direct sequence $Y_1 \longrightarrow Y_2 \longrightarrow \cdots$ where the bonding maps are inclusions.  Also, $W_C$ is the direct limit of the direct sequence $H_1 \longrightarrow H_2 \longrightarrow \cdots$ where the bonding homomorphisms are inclusions.  

The groups $W_C$ and $W_D$ are generated by pure towers, hence have no transition chains.  By the construction in \ref{construction}, the groups $G_i$ and $H_i$ are generated by pure finite towers $T_i = \{(A_{1i}, g_{1i}), \cdots, (A_{n_ii}, g_{n_ii})\}$ and $S_i =  \{(B_{1i}, h_{1i}), \cdots, (B_{m_ii}, h_{m_ii})\}$, respectively, for each $i \in \mathbb{Z}^+$.  Note we changed the subscripts here.  The first value of the subscript reflects the order in the generating tower, and the second subscript indicates the group (either $G_i$ or $H_i$) in which the generator resides.  We use this notation henceforth.

We will show that given a inj-isomorphism $P$ between the direct systems $D_1$ and $D_2$ for $W_C$ and $W_D$, we can construct a inj-isomorphism $P'$ between the direct systems for $C$ and $D$.  We always change the direct systems $D_1$ and $D_2$ to suit the inj-isomorphism by composing maps and renumbering subscripts so that the arrows in $P$ do not skip any subscripts.  Thus, given a commutative diagram

\begin{equation}\mylabel{system}
\begin{tabular}{>{$}c<{$} >{$}c<{$} >{$}c<{$} >{$}c<{$} >{$}c<{$} >{$}c<{$} >{$}c<{$}}
G_1 &		\longrightarrow & G_2  &\longrightarrow & G_3 & \longrightarrow & \cdots \\
\downarrow & 	\nearrow &	\downarrow & \nearrow & \downarrow & \nearrow & \cdots \\
H_1 & 		\longrightarrow &  H_2 & \longrightarrow & H_3 &  \longrightarrow & \cdots
\end{tabular}
\end{equation}
we will construct a commutative diagram

\begin{equation}\mylabel{systemset}
\begin{tabular}{>{$}c<{$} >{$}c<{$} >{$}c<{$} >{$}c<{$} >{$}c<{$} >{$}c<{$} >{$}c<{$}}
X_1 &		\longrightarrow & X_2  &\longrightarrow & X_3 & \longrightarrow & \cdots \\
\downarrow & 	\nearrow &	\downarrow & \nearrow & \downarrow & \nearrow & \cdots \\
Y_1 & 		\longrightarrow &  Y_2 & \longrightarrow & Y_3 &  \longrightarrow & \cdots
\end{tabular}
\end{equation}

In each diagram above, the middle row of arrows gives the maps which constitute the inj-isomorphism.  For a general square in each of the above diagrams, we name the maps as detailed below:

\begin{equation}\mylabel{square}
\begin{tikzcd}
G_i \arrow[r, red, "\iota_i"]  \arrow[d, blue, "d_i"'] & G_{i+1} \arrow[d, blue, "d_{i+1}"] \\ 
 H_i\arrow[ru, blue, "u_i"]  \arrow[r, red, "j_i"']
&H_{i+1}
 \end{tikzcd}
 \end{equation}

 \begin{equation}\mylabel{squareset}
\begin{tikzcd}
X_i \arrow[r, red, "\iota_i'"]  \arrow[d, blue, "d_i'"'] & X_{i+1} \arrow[d, blue, "d_{i+1}'"] \\ 
 Y_i\arrow[ru, blue, "u_i'"]  \arrow[r, red, "j_i'"']
&Y_{i+1}
 \end{tikzcd}
 \end{equation}

For each $i \in \mathbb{Z}^+$ , the maps $\iota_i, j_i$ and $\iota_i', j_i'$ are the bonding inclusions.  The maps $d_i, u_i$ are from a given inj-isomorphism $P$ between the direct sequences of the groups $W_C$ and $W_D$.  The maps $d_i'$ and $u_i'$ will be implied later as a part of a inj-isomorphism $P''$ that we construct on direct systems of ordered sets which are isomorphic to those in $P'$.  

The following fact about inj-isomorphisms is standard.

\begin{lem}
 Suppose $G$ and $H$ are direct limits of the direct systems $D_1$ and $D_2$, respectively.
 \begin{enumerate}[label={(\arabic*)},ref={\thecor~(\arabic*)}]
  \item If the groups in $D_1$ and $D_2$ are finitely generated, then $G$ and $H$ are isomorphic if and only if there exists a inj-isomorphism between $D_1$ and $D_2$.  An analogous result holds for direct systems of finite ordered sets.
\item If $D_1$ and $D_2$ are injective, meaning the maps in the systems are, then any inj-isomorphism $P$ between them consists entirely of injective maps.
 \end{enumerate}
\end{lem}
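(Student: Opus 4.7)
To prove (1), I would handle the two directions of the biconditional separately. The \emph{if} direction is immediate from the universal property of the direct limit: compatible up and down maps between the two systems induce mutually inverse maps on the colimits $G$ and $H$. For the \emph{only if} direction, given an isomorphism $\phi : G \to H$, I would build the inj-isomorphism inductively using finite generation. Let $X_1$ be a finite generating set of $G_1$; since $H = \bigcup_i H_i$ and $\phi(X_1)$ is finite, there is some index $n_1$ with $\phi(G_1) \subseteq H_{n_1}$, giving a down map $d_1 \colonequals \phi|_{G_1} : G_1 \to H_{n_1}$. Applying the same observation to $\phi^{-1}$ and a finite generating set of $H_{n_1}$ produces an index $n_2$ and an up map $u_{n_1} \colonequals \phi^{-1}|_{H_{n_1}} : H_{n_1} \to G_{n_2}$. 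Iterating and then relabeling the subsequences $(G_{n_{2i}})$ and $(H_{n_{2i+1}})$ as consecutive indices yields the desired zigzag.

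Commutativity of each triangle is automatic because every zigzag map is literally a restriction of $\phi$ or $\phi^{-1}$, so $u \circ d$ and $d \circ u$ recover the identity on their domain, which matches the bonding inclusion after relabeling. The argument for finite ordered sets is identical, with order-preserving maps replacing homomorphisms and the elements themselves playing the role of generators. I expect the main subtlety in (1) to be the renumbering step that aligns the zigzag with consecutive subscripts of both systems; this is routine but requires care, since a naive construction only produces squares that commute ``in the colimit,'' and one must pass to a subsequence where exact commutativity holds.

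For part (2), I would read injectivity directly off the two commuting triangles in a general square of $P$. With $\iota_i, j_i$ the bonding maps (injective by hypothesis) and $d_i, u_i$ the zigzag maps, the triangles assert $u_i \circ d_i = \iota_i$ and $d_{i+1} \circ u_i = j_i$. If $d_i(g) = 1$, then $\iota_i(g) = u_i(d_i(g)) = u_i(1) = 1$, and injectivity of $\iota_i$ forces $g = 1$; hence $d_i$ is injective. Symmetrically, $u_i(h) = 1$ implies $j_i(h) = d_{i+1}(u_i(h)) = 1$, and injectivity of $j_i$ gives $h = 1$, so $u_i$ is injective. This part is essentially formal and presents no serious obstacle.
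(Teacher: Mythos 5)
Your argument is correct and is exactly the standard zigzag construction this lemma refers to; the paper itself offers no proof, simply asserting the fact is standard. Both directions of (1) (universal property for one, finite generation plus passing to a subsequence for the other, with exact commutativity coming from the maps being restrictions of $\phi$ and $\phi^{-1}$) and the formal kernel argument in (2) are sound.
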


Before we develop more tools, note the statement that $W_C \cong W_D$ if and only if $C$ is order isomorphic to $D$ is much stronger than requiring mutual embedding of $C$ and $D$.  There is no Cantor-Schroeder-Bernstein theorem for ordered sets.  For example, the rationals $\mathbb{Q}$ and the rationals adjoined by a maximal element $\mathbb{Q} + *$ mutually embed in each other.  However, they are not order isomorphic since the latter contains a maximal element while the former does not.  For a characterization of countable ordered sets into bi-embeddability classes, see \cite{orderedsets}.   

In the following proposition, we include a subscript on the level to indicate the group in which the level is being measured.  

\begin{prop}\mylabel{level embed}
 Let $W_C, W_D \in WC$ such that $D_1, D_2$ are direct systems for $W_C,W_D$ and let $P$ be a inj-isomorphism between $D_1$ and $D_2$ as shown in diagram (\ref{system}) with a single square illustrated in (\ref{square}).  If $d_i: G_i \longrightarrow H_i$ is any map in $P$ and $g_{pi}$ is any signature in the generating tower $T_i$ of $G_i$, then level$_{H_i}(b_1)$ = level$_{H_i}(b_2)$ for all bumps $b_1, b_2$ of $d_i(g_{pi})$.   There is a parallel statement that applies to each $u_i$.
\end{prop}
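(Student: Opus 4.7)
The argument will adapt the proof of Lemma \ref{level iso} to the inj-isomorphism setting: the role previously played by the formal inverse $\phi^{-1}$ is taken by the upward map $u_i$, with the failure of surjectivity compensated by the commutativity $u_i\circ d_i=\iota_i$. I proceed by contradiction, supposing $h:=d_i(g_{pi})$ has bumps $b_1,b_2$ with orbitals $O_1,O_2$ and distinct levels $i_1<i_2$ in $H_i$. Using that all maximal stacks in the solvable group $H_i$ are conjugate (Corollary \ref{max iso}), I pick $c\in H_i$ with $O_1c\subsetneq O_2$; since $H_i$ has no transition chains, $c$ has a unique orbital $C$ containing $O_1\cup O_2$, giving the one-bump restriction $c'=c|_C$. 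Then $(O_1c',h^{c'})$ and $(O_2,h)$ form a 2-tower $U$ in $H_i$, so $[h^{c'},h]\neq 1$ on $O_2$ by \ref{npc1}, and the one-bump nature of $c'$ makes $U$ untwisted exactly as in Lemma \ref{level iso}.

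The key step is to transport $U$ into $G_{i+1}$ via $u_i$. Since $u_i$ is injective, Scholium \ref{OrderUntwist} applied to the untwisted tower $U$ produces a 2-tower inside $u_i(U)\subseteq G_{i+1}$. But commutativity of the inj-isomorphism gives $u_i(h)=\iota_i(g_{pi})=g_{pi}$ and $u_i(h^{c'})=g_{pi}^{u_i(c')}$, both of which are pure one-bump functions in $G_{i+1}$ because $g_{pi}$ is pure. In $G_{i+1}$, conjugation induces an order isomorphism on maximal stacks by \ref{basic conj towers}, so $g_{pi}$ and $g_{pi}^{u_i(c')}$ sit at the same level; their orbitals therefore cannot form a 2-tower, contradicting the previous sentence.

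The parallel claim for $u_i$ is obtained by the symmetric argument, using the triangle $d_{i+1}\circ u_i=j_i$ in place of $u_i\circ d_i=\iota_i$ and exploiting purity of $h_{pi}$ inside $H_{i+1}$. The main technical obstacle throughout is the verification that $U$ is untwisted in $H_i$: since $h=d_i(g_{pi})$ may have many bumps, one must rule out that some non-commuting pair of orbitals of $h^{c'}$ and $h$ reverses the containment pattern of $U$, and this is precisely where the choice of $c$ coming from a conjugacy of maximal stacks, together with restricting to the single orbital $C$ of $c$, is essential.
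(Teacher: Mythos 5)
Your overall strategy matches the paper's up to the key step, but there is a genuine gap exactly where you flag "the main technical obstacle": you never actually establish that $U=\{(O_1c',h^{c'}),(O_2,h)\}$ is untwisted, and this is the hypothesis that licenses your application of Scholium \ref{OrderUntwist}. The difficulty is not cosmetic. Untwistedness of $U$ requires that \emph{every} non-commuting pair of orbitals of $h^{c'}$ and $h$ realize the configuration ``orbital of $h^{c'}$ properly contained in orbital of $h$.'' But $h=d_i(g_{pi})$ may have many bumps, and at this stage of the argument you do not yet know their levels agree --- that is precisely the statement being proved. So a priori $c'$ could carry some orbital $O$ of $h$ inside $C$ onto an interval $Oc'$ that \emph{properly contains} a lower-level orbital $O'$ of $h$; by \ref{npc1} this is a non-commuting configuration with the containment reversed, and $U$ fails to be untwisted. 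The fact that $c$ conjugates one maximal stack to another controls the levels of the $Oc'$ (they equal the levels of the $O$), but that does not exclude the reversed containment so long as $h$ is permitted to have orbitals at distinct levels. Appealing to ``exactly as in Lemma \ref{level iso}'' does not close this, since the one-line justification there (``$U$ is untwisted because $c'$ is a one-bump function'') faces the same issue.

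The paper's proof routes around this entirely: it uses $U$ only to conclude $R=[h^{c'},h]\neq 1$ (which needs nothing but \ref{npc1}), and then applies Scholium \ref{OrderUntwist} to the \emph{different} $2$-tower $V=\{(O_2,h),(C,c')\}$. Untwistedness of $V$ is immediate, because $c'$ has the single orbital $C$ and, by the absence of transition chains together with the disjointness of the orbitals of $h$, every orbital of $h$ meeting $C$ is properly contained in $C$. The resulting $2$-tower in $u_i(V)$ places the one-bump generator $u_i(h)=\iota_i(g_{pi})$ strictly below an orbital of $u_i(c')$, so by \ref{fd} the conjugate $u_i(h)^{u_i(c')}$ has support disjoint from that of $u_i(h)$, forcing $u_i(R)=1$ and contradicting the injectivity of $u_i$. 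If you want to keep your version of the endgame (same-level conjugates cannot form a $2$-tower), you must either supply a genuine proof that $U$ is untwisted or, more simply, switch the tower fed to \ref{OrderUntwist} from $U$ to $V$ as the paper does.
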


\begin{proof}
This proof has similarities with the proof of \ref{level iso}.  However, it requires a bit more subtlety with the mappings and we will also utilize commutativity of the left triangle in (\ref{square}).  The notation will be as developed prior to the proposition.

By Proposition \ref{puretower}, there exists a maximal induced map $\hat{d_i}_{(T_i, M)}$ which is an isomorphism of ordered sets.  Let $(O_1, \phi(g_{pi})) = \hat{d_i}_{(T_i, M)}(A_{pi}, g_{pi})$ where $p \in \overline{n_i}$. Then there is a bump $b_1$ of $d_1(g_{pi})$ which has orbital $O_1$ and level$_{d_i(G_i)}(b_1) = p$ because $\hat{d_i}_{(T_i, M)}$ is an isomorphism of ordered sets and level$_{G_i}(g_{pi})$ = $p$.  However, $H_i$ could be a larger group than $d_i(G_i)$, so level$_{H_i}(b_1)$ could be some other number, say $j$. If $d_i(g_{pi})$ is a one-bump function, then we are done.  If not, we show that all bumps of $d_i(g_{pi})$ are at level $j$.  

Let $(O_2, d_i(g_{pi}))$ be any other signed orbital of $d_i(g_{pi})$ and let $b_2$ be the bump of $g_{pi}$ which has orbital $O_2$.   Recall that every bump of a function in $H_i$ is also in $H_i$ thanks to \ref{allbumps}.  Suppose level$_{H_i}(b_2) = k \neq j$ and $j < k$.  The proof when $k < j$ is similar. By \ref{level3}, level$_{H_i}(O_2) = k$.  The orbitals $O_1$ and $O_2$ are contained in maximal stacks $M_1$ and $M_2$ of $H_i$.  By \ref{max iso}, these stacks are conjugate, that is, there is a $c \in H$ such that $M_1c=M_2$.  Since $j < k$ and conjugation induces an order isomorphism on stacks, $O_1c \subset O_2$.  Because $H_i$ has no transition chains, $c$ has an orbital $C$ containing both $O_1$ and $O_2$.  Let $c' = c|_{C}$ and note that $O_1c' \subset O_2$.  Thus, $R:=[(d_i(g_{pi}))^{c'}, d_i(g_{pi})] \neq 1$ on $O_2$ due to the presence of the two-tower $U = \{(O_1c', (d_i(g_{pi}))^{c'}), (O_2, d_i(g_{pi})\}$.  Note that unlike the proof of \ref{level iso} where the conjugator is in the image of the map $\phi$, the conjugator $c'$ is not necessarily in the image of $d_i$.  However, since $u_i$ is an injective homomorphism, $u_i(R) \neq 1$ in $G_{i+1}$.

Consider the 2-tower $V := \{(O_2, d_i(g_{pi})), (C,c')\}$.  The image $u_i(V)$ contains a 2-tower $W$ thanks to \ref{OrderUntwist}.  By assumption, $\iota_i(g_{pi}) = g_{l, i+1}$ for some signature $g_{l, i+1}$ in the generating tower of $G_{i+1}$, and by commutativity $u_i(d_i(g_{pi})) = \iota_i(g_{pi}) = g_{l, i+1}$.  Since $g_{l, i+1}$ is a one-bump function and order is preserved by $\hat{u_i}_{(V,W)}$, $W$ is the unique 2-tower in $u_i(V)$ and $W = \{(A_{l, i+1}, g_{l, i+1}), (C', c'')\}$ where $c'' = u_i(c')$ and $C'$ is the orbital of $c''$ containing $A_{l, i+1}$.  Therefore, $u_i(R) = [g_{l, i+1}^{c''}, g_{l, i+1}] = 1$, a contradiction.
\end{proof}

\begin{cor}
Every maximal induced map $\hat{d_i}_{(T_i, -)}$ induces the same map on levels.  Furthermore, for all $c \in G$ the maximal induced map $\hat{d_i}_{(T_i^c,-)}$ induces the same map on levels as $\hat{d_i}_{(T_i, -)}$. A parallel statement applies to each $u_i$.
\end{cor}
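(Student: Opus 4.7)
The plan is to read off both assertions directly from Proposition~\ref{level embed} together with~\ref{basic conj towers} and~\ref{level2}. The proposition guarantees that all bumps of $d_i(g_{pi})$ share a common level in $H_i$; call this common value $\lambda(p)$. Any maximal induced map $\hat{d_i}_{(T_i, M)}$ necessarily sends a nonempty-image generator $(A_{pi}, g_{pi})$ to some signed orbital $(O, d_i(g_{pi})) \in M$ whose underlying orbital $O$ is a bump of $d_i(g_{pi})$. Hence $\mathrm{level}_{H_i}(O) = \lambda(p)$ regardless of the choice of $O$ and therefore regardless of $M$, which settles the first claim.

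For the comparison with $\hat{d_i}_{(T_i^c, -)}$, I would first invoke~\ref{basic conj towers} to get the order isomorphism $\hat{c} \colon T_i \to T_i^c$ sending $(A_{pi},g_{pi})$ to $(A_{pi}c, g_{pi}^c)$, so that position $p$ in $T_i^c$ corresponds to the signature $g_{pi}^c$, whose level in $G_i$ is still $p$ by~\ref{level2}. Under $\hat{d_i}_{(T_i^c, -)}$ this element is sent to a bump of $d_i(g_{pi}^c) = d_i(g_{pi})^{d_i(c)}$, and every such bump has the form $O \cdot d_i(c)$ for some bump $O$ of $d_i(g_{pi})$. Applying~\ref{basic conj towers} and~\ref{level2} once more, this time to the conjugator $d_i(c) \in H_i$, shows that conjugation by $d_i(c)$ is level-preserving in $H_i$, so $\mathrm{level}_{H_i}(O \cdot d_i(c)) = \mathrm{level}_{H_i}(O) = \lambda(p)$. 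Thus the maps on levels induced by $\hat{d_i}_{(T_i^c, -)}$ and $\hat{d_i}_{(T_i, -)}$ coincide.

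The parallel statement for each $u_i$ is obtained by repeating the argument verbatim with $(G_i, T_i, d_i)$ replaced by $(H_i, S_i, u_i)$: the hypotheses of Proposition~\ref{level embed} apply unchanged since $u_i$ is an injective homomorphism between solvable subgroups of PLo(I) each generated by a pure fundamental tower, and the conjugation step goes through because $u_i(c) \in G_{i+1}$ so that~\ref{level2} may be invoked in the target. The main (and essentially only) obstacle is clerical: one must carefully track the ambient group in which each level is computed and confirm that the conjugator used in each application of~\ref{basic conj towers}/\ref{level2} lives in the correct group. No new geometric input is required beyond what is already packaged in the preceding proposition.
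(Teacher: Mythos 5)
Your argument is correct and is exactly the deduction the paper intends (the corollary is stated without proof as an immediate consequence of Proposition~\ref{level embed}): the common level $\lambda(p)$ of all bumps of $d_i(g_{pi})$ makes the level of the image independent of the choice of maximal tower, and conjugation invariance of levels handles $T_i^c$. Your restriction to generators with nonempty image is the right reading of ``the same map on levels,'' since maximal induced maps can send some generators to $\emptyset$.
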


In the following, we assume each level is measured in the ambient group of the direct system in which the element lives.  Therefore, we omit subscripts of the level to simplify notation.

\begin{lem}\mylabel{level gen}
Given the same setup as Proposition \ref{level embed}, refer to diagram (\ref{square}).  If level$(d_i(g_{ki}))$ = $l$ and level$(\iota_i(g_{ki}))$ = $m$, then level$(u_i(h_{li}))$ = $m$.  (Similarly, if level$(u_i(h_{ki}))$ = $l$ and level$(j_i(h_{ki}))$ = $m$, then level$(d_{i+1}(h_{li}))$ = $m$.)
\end{lem}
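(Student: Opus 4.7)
The plan is to derive a clean formula $\operatorname{level}(u_i(h)) = \lambda(\operatorname{level}(h))$ for every $h \in H_i$, where $\lambda : \{1, \ldots, n_i\} \to \{1, \ldots, n_{i+1}\}$ is defined by $\lambda(p) = \operatorname{level}(u_i(h_{pi}))$; this is well-defined because Proposition \ref{level embed} forces all bumps of $u_i(h_{pi})$ to share a common level in $G_{i+1}$. Once established, applying the formula with $h = d_i(g_{ki})$ (which has level $l$ by hypothesis) gives $\operatorname{level}(u_i(d_i(g_{ki}))) = \lambda(l)$; commutativity of diagram (\ref{square}) gives $u_i \circ d_i = \iota_i$, and the hypothesis $\operatorname{level}(\iota_i(g_{ki})) = m$ then yields $\lambda(l) = m$, which is exactly $\operatorname{level}(u_i(h_{li})) = m$.

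First I would verify that $\lambda$ is strictly increasing. For $p < q$, purity and fundamentality of $S_i$ give $[h_{pi}, h_{qi}] \neq 1$ together with $[h_{pi}^{h_{qi}}, h_{pi}] = 1$; both survive the injective homomorphism $u_i$. Row 3 of Table \ref{nonpuretable} then forces some orbital of $u_i(h_{pi})$ to sit properly inside some orbital of $u_i(h_{qi})$, and by the intrinsic definition of orbital level (position in a maximal stack, made unambiguous by Corollary \ref{max iso}) proper containment forces $\lambda(p) < \lambda(q)$.

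Next I would observe that $G_{i+1}$ is a standard representation of $W_{n_{i+1}}$ by Corollary \ref{stdwr}, so its defining relations in the presentation $P_{n_{i+1}}$ are all commutators and thus have zero algebraic sum. Consequently each algebraic-sum functional $\sum_{f_r}$ descends to a well-defined group homomorphism $G_{i+1} \to \mathbb{Z}$. Composing with $u_i$ gives a homomorphism $\sigma_r := \sum_{f_r} \circ u_i : H_i \to \mathbb{Z}$ which, because $\mathbb{Z}$ is abelian, factors through the abelianization of $H_i$, yielding
$$\sigma_r(h) = \sum_{p=1}^{n_i} \Big(\textstyle \sum_{h_{pi}}(h)\Big)\cdot \sigma_r(h_{pi}) \quad \text{for all } h \in H_i.$$

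With the formula in hand, fix $h \in H_i$ of level $q$, so $\sum_{h_{qi}}(h) \neq 0$ while $\sum_{h_{pi}}(h) = 0$ for $p > q$. For every $p$, the factor $\sigma_r(h_{pi})$ vanishes whenever $r > \lambda(p)$. Setting $r = \lambda(q)$ annihilates every term except the $p = q$ one (for $p < q$ it dies via $\lambda(p) < \lambda(q)$, for $p > q$ via a vanishing coefficient), leaving $\sigma_{\lambda(q)}(h) = \sum_{h_{qi}}(h)\cdot \sigma_{\lambda(q)}(h_{qi}) \neq 0$; hence $\operatorname{level}(u_i(h)) \geq \lambda(q)$. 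For $r > \lambda(q)$, only $p \leq q$ terms survive but $\lambda(p) \leq \lambda(q) < r$ forces each factor $\sigma_r(h_{pi})$ to be zero, so $\operatorname{level}(u_i(h)) \leq \lambda(q)$. Equality follows. The only real obstacle I foresee is the bookkeeping around well-definedness of $\sum_{f_r}$ as a homomorphism on group elements (rather than on words), and that is settled by the commutator nature of the relations of $W_{n_{i+1}}$. The parenthetical symmetric statement follows the same recipe verbatim, using $d_{i+1} \circ u_i = j_i$ in place of $u_i \circ d_i = \iota_i$.
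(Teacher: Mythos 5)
Your proposal is correct, and it takes a genuinely different route from the paper. The paper argues geometrically: it conjugates $h_{li}$ by an element $c'$ (modified, if necessary, by $h_{l-1,i}$ to break an accidental commutation) so that $h_{li}^{c'}$ shares an orbital with $d_i(g_{ki})$ on which they fail to commute, then uses the untwisted machinery (Lemma \ref{nontower}) to force the images under $u_i$ to share an orbital with the one-bump generator $g_{m,i+1}=u_id_i(g_{ki})$, and has to treat $l=1$ as a separate case via normal forms. You instead prove the stronger statement $\mathrm{level}(u_i(h))=\lambda(\mathrm{level}_{H_i}(h))$ for \emph{every} $h\in H_i$ by purely algebraic means: since all relators of $P_{n}$ are commutators, each $\sum_{f_r}$ is a homomorphism to $\mathbb{Z}$, so $\sigma_r=\sum_{f_r}\circ u_i$ factors through $H_i^{\mathrm{ab}}\cong\mathbb{Z}^{n_i}$, and the resulting linear formula, combined with strict monotonicity of $\lambda$ (which you correctly extract from Row 3 of Table \ref{nonpuretable} together with Proposition \ref{level embed}, the latter being essential to identify the level of the specific nested orbital with $\lambda(p)$), collapses to the single term $p=q$. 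The two ingredients you rely on --- Lemma \ref{level4} identifying the algebraic and geometric notions of level, and the fact that $G_{i+1}$ and $H_i$ carry the presentations $P_{n}$ --- are both established in the paper, so the argument goes through. What your approach buys is uniformity (no separate $l=1$ case, no delicate modification of conjugators) and a cleaner global formula; what the paper's approach buys is consistency with the orbital-dynamics toolkit used throughout Section \ref{inducedmaps}, and it is worth noting that the paper's own handling of the $l=1$ case is essentially a special instance of your abelianization computation.
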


\begin{proof}
 Assume level$(d_i(g_{ki}))$ = $l$ and level$(\iota_i(g_{ki}))$ = $m$. By Lemma \ref{level4}, level$(d_i(g_{ki}))$ = max$\{\text{level}(O) \, | \, \text{$O$ is an orbital of $d_i(g_{ki})$}\}$.  Thus there exists an orbital $B$ of $d_i(g_{ki})$ which has level $l$ in $H_i$.  By Proposition \ref{level embed}, all orbitals of $d_i(g_{ki})$ have level $l$ in $H_i$.  Let $b$ be the bump of $d_i(g_{ki})$ with orbital $B$, and note level$(b) = l$.  Furthermore, our direct systems were constructed to map generating tower signatures to generating tower signatures.  Since level$(\iota_i(g_{ki}))$ = $m$, we must have $\iota_i(g_{ki}) = g_{m, i+1}$.  By commutativity, $u_i(d_i(g_{ki})) = \iota_i(g_{ki}) = g_{m, i+1}$.

The element $h_{li}$ is a one-bump generator at level $l$ in $H_i$.  Since maximal stacks are conjugate in $H_i$, there is a $c \in H_i$ such that $h_{li}^c$ shares the orbital $B$ with $d_i(g_{ki})$.  Either $[h_{li}^c, d_i(g_{ki})] = 1$ or not.  Since $h_{li}^c$ is a one-bump function, this statement is equivalent to a relative one: Either $[h_{li}^c, d_i(g_{ki})] = 1$ on $B$ or not.  

Suppose $l \neq 1$. If $[h_{li}^{c}, d_i(g_{ki})] = 1$, we argue we can alter the conjugator $c$ to some $c'$ to guarantee that $[h_{li}^{c'}, d_i(g_{ki})] \neq 1$ on $B$.   Let $c' = (h_{l-1, i})c$. By \ref{multiply}, $h^{c'}$ also has orbital $B$.  Since $H_i$ has no transition chains, both $c$ and $c'$ have some orbital $C$ containing the orbital $B_{li}$ of $h_{li}$ and the orbital $B$ of $d_i(g_{ki})$. 

We now argue $[h_{li}^{c'}, d_i(g_{ki})] \neq 1$.  If not, both $[h_{li}^c, d_i(g_{ki})] = [ h_{li}^{c'}, d_i(g_{ki})] = 1$ on $B$.  Footnote * of Table 2 implies the groups $K_1:= <h_{li}^c, d_i(g_{ki})>_B$ and $K_2 := <h_{li}^{c'}, d_i(g_{ki})>_B$ are each isomorphic to $\mathbb{Z}$.  Then for some $q_1, q_2 \in \mathbb{Z}$, we have $d_i(g_{ki}) = k_1^{q_1} = k_2^{q_2}$ where $k_1$ is the generator of $K_1$ and $k_2$ is the generator of $K_2$.  Therefore, $k_2^{q_2} \in K_1$, so $[k_2^{q_2}, k_1] = 1$.  By \ref{z}, $[k_2, k_1] = 1$ which implies $[h_{li}^c, h_{li}^{c'}] = 1$.  However, $[h_{li}^c, h_{li}^{c'}] = [h_{li}^c, (h_{li}^{h_{l-1,i}})^c] = [h_{li}, (h_{li}^{h_{l-1,i}})]^c = 1 \Leftrightarrow [h_{li}, h_{li}^{h_{l-1,i}}] = 1$.  Since $h_{li}$ and $h_{li}^{h_{l-1,i}}$ are conjugates, they have the same leading and trailing slopes on corresponding orbitals.  Since $Bh_{l-1,i} = B$, the slopes of $h_{li}$ and $h_{li}^{h_{l-1,i}}$ are the same on the end of $B$.  However, $h_{li} \neq h_{li}^{h_{l-1, i}}$ in $H_i$ since their normal forms are distinct.  Thus, they are equal on the ends of $B$ but not throughout, so they do not commute.  Therefore, $[h_{li}^c, h_{li}^{c'}] \neq 1$, a contradiction.  

The element $u_id_i(g_{ki}) = \iota_i(g_{ki})$, hence is a one-bump function due to how the direct systems of groups were constructed.  Therefore, $\mathcal{SO}(u_i(\{d_i(g_{ki}), h_{li}^{c'}\}))$ is untwisted.  By \ref{nontower}, there exist orbitals $O_1, O_2$ of $u_id_i(g_{ki}), u_i(h_{li}^{c'})$ such that $O_1 = O_2 (nc)$.  Thus $m$ = level$(O_1)$ = level$(O_2)$.  Conjugating by $u_i(c'^{-1})$ yields $u_i(h_{li})$.  Since conjugation preserves order, level$(O_2)$ = $m$.  By \ref{level embed}, all bumps of $u_i(h_{li})$ have level $m$, hence level$u_i(h_{li}))$ = $m$.

For the $l=1$ case, note all orbitals of $d_i(g_{ki})$ have level 1 in $H_i$ due to \ref{level embed}.  Thus all orbitals of $d_i(g_{ki})$ are minimal in $\mathcal{O}(H_i)$.  We conclude $k=1$ because otherwise $d_i(g_{k-1, i})$ would exist and would have an orbital contained in an orbital of $d_i(g_{ki})$.  Therefore, $g_{ki} = g_{1i}$ and $h_{li} = h_{1i}$.  Applying normal form, we obtain, $d_i(g_{ki}) = d_i(g_{1i}) = w_1^{c_1}w_2^{c_2} \cdots w_r^{c_r}$ where for each $1 \leq j \leq r$, $w_j \in <h_{1i}>$ and $c_j \in <h_{2i}, h_{3i}, \cdots, h_{m_i}>$.  Then, $u_id_i(g_{1i}) = w_1'^{c_1'}w_2'^{c_2'} \cdots w_r'^{c_r'}$ where for each $1 \leq j \leq r$, $w_j' \in <u_i(h_{1i})>$ and $c_j' = u_i(c_j)$.  Therefore, level$u_id_i(g_{1i}))$ = level$(u_i(h_{1i}))$.  By commutativity, $u_id_i(g_{1i}) = \iota_i(g_{1i}) = g_{m, i+1}$.  Thus, level$u_i(h_{1i}))$ = $m$.
 \end{proof}

\begin{prop}\mylabel{set inj-iso}
 Under the assumptions of \ref{level embed}, if two direct systems of groups are inj-isomorphic via $P$, then $P$ induces a inj-isomorphism $P'$ of the direct systems of ordered sets of generators of each group. 
\end{prop}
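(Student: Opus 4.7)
The plan is to define $P'$ by pushing each homomorphism $d_i$ and $u_i$ down to the level of generators via the level function. Under the construction in Theorem \ref{construction}, the finite ordered set $X_i$ is canonically identified with the pure generating tower $T_i$ of $G_i$ (ordered by tower position, equivalently by level), and likewise $Y_i$ with $S_i$. With this dictionary in place, I would define $d_i'(x)$ for $x\in X_i$ to be the point of $Y_i$ sitting at level $\text{level}_{H_i}(d_i(g_x))$, and $u_i'$ symmetrically. This is well defined and takes values in $\{1,\dots,i\}$ because Proposition \ref{level embed} guarantees every bump of $d_i(g_x)$ has the same level in $H_i$, and Theorem \ref{thm geo} forces $H_i$ to have depth exactly $i$.

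Next I would verify that $d_i'$ is injective and order preserving in one stroke. Given $x<x'$ in $X_i$ corresponding to generators $g_{ki},g_{k'i}$ with $k<k'$, these form a pure $2$-tower in $G_i$. Proposition \ref{puretower} produces an induced map on this $2$-tower that is an isomorphism of ordered sets, so some bump of $d_i(g_{ki})$ has orbital properly contained in some bump of $d_i(g_{k'i})$. Combining \ref{level3} with Proposition \ref{level embed}, this forces a strict inequality $\text{level}_{H_i}(d_i(g_{ki}))<\text{level}_{H_i}(d_i(g_{k'i}))$, which gives both order preservation and injectivity at once. The same argument with $d_i$ replaced by $u_i$ handles $u_i'$.

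Then I would check the two commuting triangles $u_i'\circ d_i'=\iota_i'$ and $d_{i+1}'\circ u_i'=j_i'$. These are essentially the content of Lemma \ref{level gen}: if $x\in X_i$ corresponds to $g_{ki}$ and we set $l=\text{level}_{H_i}(d_i(g_{ki}))$ and $m=\text{level}_{G_{i+1}}(\iota_i(g_{ki}))$, then the lemma asserts $\text{level}_{G_{i+1}}(u_i(h_{li}))=m$. Under the dictionary this reads exactly as $u_i'(d_i'(x))=\iota_i'(x)$, since the right side is $x$ regarded inside $X_{i+1}$ at tower position $m$. The second triangle is handled identically with the roles of $C$ and $D$ interchanged.

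The main obstacle is conceptual rather than technical: one must establish and faithfully maintain a clean bookkeeping dictionary between each finite ordered set $X_i$ and the tower-position set $\{1,\dots,i\}$ for the generators of $G_i$, keeping track of how the bonding inclusions $\iota_i$ and $j_i$ reshuffle tower positions when a new generator is inserted. Once that dictionary is in place, no further dynamics or relation chasing is required, because all the geometric and algebraic content has already been packaged into Propositions \ref{level embed} and \ref{level gen}; the verification of well-definedness, injectivity, order preservation and commutativity collapses into routine translation through the level function.
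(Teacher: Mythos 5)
Your proposal is correct and follows essentially the same route as the paper: both define the induced maps on the finite level sets via $k \mapsto \mathrm{level}(d_i(g_{ki}))$, use Proposition \ref{level embed} for well-definedness, derive strict order preservation from the induced-map machinery applied to the pure generating towers, and obtain commutativity of the triangles directly from Lemma \ref{level gen}. The only cosmetic difference is that the paper invokes Scholium \ref{OrderUntwist} where you invoke Proposition \ref{puretower}, which is equivalent here since the generating towers are pure.
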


\begin{proof}
We refer to diagrams (\ref{system}), (\ref{systemset}), (\ref{square}), and (\ref{squareset}) and the setup of the direct systems described there. In particular, we focus on a general square

\begin{center}
\begin{tikzcd}
X_i \arrow[r, red, "\iota_i'"]  \arrow[d, blue, "d_i'"'] & X_{i+1} \arrow[d, blue, "d_{i+1}'"] \\ 
 Y_i\arrow[ru, blue, "u_i'"]  \arrow[r, red, "j_i'"']
&Y_{i+1}
 \end{tikzcd}
 \end{center}
 
Recall $\iota_i'$ and $j_i'$ were defined for each $i \in \mathbb{Z}^+$ and what remained to complete the inj-isomorphism of ordered sets was defining order preserving maps $u_i'$ and $d_i'$, and showing the resulting triangles commute for each $i$.
 
For each $i \in \mathbb{Z}^+$, we define a few sets and several maps.  Let $L_i$ be the set of consecutive natural numbers starting with $1$ that is order isomorphic to the set of points $X_i \subset [0,1]$.  Let $\phi_i: L_i\longrightarrow X_i$ be the corresponding isomorphism.  Similarly, let $L_i'$ be the set of consecutive natural numbers starting with $1$ that is isomorphic to $Y_i$, and let $\psi_i: L_i' \longrightarrow Y_i$ be the corresponding isomorphism. Note the sets $L_i$ and $L_i'$ are the set of levels of orbitals in $G_i$ and $H_i$.  

Define the maps
\begin{align*}
\iota_i'': L_i \longrightarrow L_{i+1} \text{ by } \iota_i''(k) &= \phi_{i+1}^{-1}\iota_i'\phi_i(k),\\
j_i'': L_i' \longrightarrow L_{i+1}' \text{ by } j_i''(k) &= \psi_{i+1}^{-1}j_i'\psi_i(k), \\
 d_i'': L_i \longrightarrow L_i' \text{ by } d_i''(k) &= \text{level}(d_i(g_{ki})), \text{ and }\\
u_i'': L_i' \longrightarrow L_{i+1} \text{ by } u_i''(k) &= \text{level}(u_i(h_{ki})).
\end{align*} 

Note that $\iota_i''$ and $j_i''$ are not necessarily inclusions.  Let $P''$ be the collection of sets and maps just defined.  We claim $P''$ is a inj-isomorphism of ordered sets.  For each $i \in \mathbb{Z}^+$, we show a square

\begin{center}
\begin{tikzcd}
L_i \arrow[r, red, "\iota_i''"]  \arrow[d, blue, "d_i''"'] & L_{i+1} \arrow[d, blue, "d_{i+1}''"] \\ 
 L_i'\arrow[ru, blue, "u_i''"]  \arrow[r, red, "j_i''"']
&L_{i+1}'
 \end{tikzcd}
 \end{center}
 in $P''$ is commutative. Note, $\iota_i(g_{ki}) = u_id_i(g_{ki})$ implies level$(\iota_i(g_{ki}))$ = level$(u_id_i(g_{ki}))$.  We wish to show $\iota_i'' = u_i''d_i''$.  But, $u_i''d_i''(k) = u_i''($level$(d_i(g_{ki})))$ = level$u_i(h_{\text{level}(d_i(g_{ki})), i}$) = level($\iota_i(g_{ki})$)  by Lemma \ref{level gen}.  Since level($\iota_i(g_{ki})$) = $\iota_i''(k)$, our proof of commutativity is complete for the first triangle.  The proof for the other triangle is similar.  
 
If $m < n$ in $L_i$, then $g_{mi}$ has orbital properly contained in the orbital of $g_{ni}$ in the group $G_i$.  Since $d_i$ is an injective homomorphism, Scholium \ref{OrderUntwist} implies there exists an orbital $O_m$ of $d_i(g_{mi})$ properly contained in an orbital $O_n$ of $d_i(g_{ni})$. Thus, level$(O_m) < $ level$(O_n)$.  Proposition \ref{level embed} shows that for each generator $g_{pi}$ in the pure generating tower of $G_i$, the bumps and hence orbitals of $d_i(g_{pi})$ are all at the same level.  Hence the maximum level of an orbital of $d_i(g_{pi})$ is equal to the level of any of its orbitals.   Therefore $d_i''(m) = $ level$(O_m) < $ level$(O_n) = d_i''(n)$, so $d_i''$ preserves order.  The same kind of argument applies to any map in $P''$.
 
The maps $u_i'$ and $d_i'$ in (\ref{squareset}) are easily obtained by composing with the appropriate isomorphisms between the sets $X_i, Y_i$ and level sets $L_i, L_i'$.
\end{proof}

The next corollary gives the last part of Theorem \ref{main}.

\begin{cor}
Groups in $WC$ are isomorphic if and only if their generating towers are isomorphic as ordered sets.
\end{cor}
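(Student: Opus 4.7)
The plan is to harvest the groundwork built in this section and apply the standard direct-limit characterization of isomorphism for finitely generated groups and for finite ordered sets. The reverse implication is immediate from the construction: if $C$ and $D$ are order isomorphic countable ordered sets, then the order isomorphism induces a bijection between the pure generating towers of $W_C$ and $W_D$, and since Theorem \ref{construction} identifies the subgroup generated by any $n$ signatures of a pure generating tower with the standard representation of $W_n$ (hence presented by $P_n$), the bijection extends uniquely to a group isomorphism $W_C \to W_D$.

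For the forward direction, I would begin with a group isomorphism $\Phi: W_C \to W_D$ and realize the two groups as direct limits $W_C = \lim G_i$ and $W_D = \lim H_i$ of the finitely generated subgroups described at the start of the subsection, with bonding homomorphisms the inclusions $\iota_i, j_i$ corresponding to adjoining a new generator. Applying the standard fact that an isomorphism between direct limits of finitely generated groups is realized by an inj-isomorphism of the systems (after reindexing so that the zigzag's arrows skip no subscripts), I extract a commutative zigzag as in diagram (\ref{system}) of injective homomorphisms $d_i: G_i \to H_i$ and $u_i: H_i \to G_{i+1}$. Proposition \ref{set inj-iso} then produces the corresponding inj-isomorphism $P'$ of the direct systems $X_1 \hookrightarrow X_2 \hookrightarrow \cdots$ and $Y_1 \hookrightarrow Y_2 \hookrightarrow \cdots$ of finite generating sets, whose arrows are defined by the level assignments $g_{ki} \mapsto \text{level}(d_i(g_{ki}))$ and $h_{ki} \mapsto \text{level}(u_i(h_{ki}))$. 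Passing to direct limits in the category of ordered sets yields an order isomorphism $C \cong D$, which is precisely the order isomorphism between the generating towers of $W_C$ and $W_D$.

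The main obstacle was already dispatched in Proposition \ref{set inj-iso}, where the nontrivial verifications were that this level assignment is well defined (all bumps of $d_i(g_{ki})$ lie at a common level in $H_i$, by Proposition \ref{level embed}), order preserving (since $d_i$ carries the pure, hence untwisted, generating tower of $G_i$ into $H_i$ with the same containment order, by Scholium \ref{OrderUntwist}), and commutes with the bonding inclusions on each triangle of the zigzag (by Lemma \ref{level gen}, which leverages commutativity of the original diagram together with the fact that images of generators under $\iota_i$ and $j_i$ are one-bump functions). Given that preparatory work, the present corollary is a direct packaging statement: it simply transports the inj-isomorphism from the group-theoretic side of the direct systems to the order-theoretic side, and takes limits.
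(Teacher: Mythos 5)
Your proposal is correct and follows the paper's intended route exactly: the reverse direction is the easy observation from the construction, and the forward direction packages the standard inj-isomorphism fact for direct limits of finitely generated groups together with Proposition \ref{set inj-iso} and a passage to limits of the ordered-set systems. The paper leaves this corollary without an explicit proof precisely because it is this direct assembly of the preceding results, so nothing further is needed.
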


\newpage


\begin{thebibliography}{1}
\singlespacing
\bibitem{belk} Belk, James; {\it Thompson's Group F}, Dissertation presented at Cornell University, arXiv preprint: \url{https://arxiv.org/abs/0708.3609}, (2004).
\bibitem{geo} Bleak, Collin; {\it A geometric classification of some solvable groups of homeomorphisms}, J. Lond. Math. Soc. {\bf (2) 78} (2008), no. 2, 352--372.
\bibitem{alg} Bleak, Collin; {\it An algebraic classification of some solvable groups of homeomorphisms}, J. Algebra {\bf 319} (2008), no. 4, 1368--1397. 
\bibitem{min}Bleak, Collin; {\it A minimal non-solvable group of homeomorphisms}, Groups Geom. Dyn. {\bf 3} (2009), no. 1, 1--€"37. 
\bibitem{loc solv}Bleak, Collin; Brough, Tara; Hermiller, Susan; {\it Determining Solubility for Finitely Generated Groups of PL Homeomorphisms}, arXiv preprint: \url{https://arxiv.org/abs/1507.06908} (2015).
\bibitem{markedgroups} Bleak, Collin; Brin, Matthew G.; Kassabov, Martin; Moore, Justin Tatch; Zaremsky, Matthew C.B.; {\it Groups of fast homeomorphisms of the interval and the ping-pong argument}, arXiv preprint: \url{https://arxiv.org/abs/1701.08321}, (2017). 
\bibitem{nf2} Brin, Matthew G.; Squier, Craig; {\it Groups of Piecewise Linear Homeomorphisms of the Real Line},  Invent. Math. {\bf 79} (1985), no. 3, 485--498. 
\bibitem{pcrc}Brin, Matthew G.; Squier, C.; {\it Presentations, conjugacy, roots, and centralizers in groups of piecewise linear homeomorphisms of the real line}, Comm. in Algebra {\bf 29} (2001), 4557--4596.
\bibitem{ubiquity}Brin, Matthew G.; {\it The ubiquity of Thompson's group F in groups of piecewise linear homeomorphisms of the unit interval}, J. London Math. Soc. {\bf (2) 60} (1999), no. 2, 449--€"460. 
\bibitem{ea}Brin, Matthew G.; {\it Elementary amenable subgroups of R. Thompson's group F}, Internat. J. Algebra Comput. {\bf 15} (2005), no. 4, 619--€"642.
\bibitem{bgeo} Brown, Kenneth S.; Geoghegan, Ross; {\it An infinite-dimensional torsion-free $FP_\infty$ group}, Invent. Math. {\bf 77} (1984), no. 2, 367€--381. 
\bibitem{cfp} Cannon, J.W.; Floyd, W.J.; Parry, W.R.; {\it Introductory Notes on Richard Thompson's Groups},  Enseign. Math. {\bf (2) 42} (1996), no. 3-4, 215--€"256. 
\bibitem{fguz} Geoghegan, Ross; Guzm\'{a}n, Fernando; {\it Associativity and Thompson's group}, {\it Topological and asymptotic aspects of group theory}, Contemp. Math. {\bf 394}, Amer. Math. Soc., Providence, RI, (2006), 113--€"135.
\bibitem{hall} Hall, P.; {\it Wreath powers and characteristically simple groups}, Proc. Cambridge Philos. Soc. {\bf 5 8} (1962), 170-184.
\bibitem{chaingroups} Lodha, Yash; Koberta, Thomas; Kim, Sang-hyun; {\it Chain groups of homeomorphisms of the interval and the circle}, arXiv preprint: \url{https://arxiv.org/abs/1610.04099}, (2016).
\bibitem{ordinals} Sierpinski, Waclaw; \underline{Cardinal and Ordinal Numbers}, Panstwowe Wydawnictwo Naukowe (1958).
\bibitem{orderedsets}Rosenstein, Joseph G.; {\it Linear orderings}, Pure and Applied Mathematics, {\bf 98}. Academic Press, Inc. [Harcourt Brace Jovanovich, Publishers], New York-London, (1982). 
\end{thebibliography}
\end{document}